\documentclass[11pt,reqno]{amsart}
\usepackage[margin=1in]{geometry}
\parindent=.25in

\usepackage{amsthm}
\usepackage{amssymb}
\usepackage{dsfont}
\usepackage{colortbl,dcolumn}
\usepackage{color}
\usepackage{xcolor}
\usepackage{indentfirst}
\usepackage{amsfonts,amsmath}
\usepackage{bm}
\usepackage{upgreek}
\usepackage{hyperref}
\hypersetup{
	colorlinks=true,
	linkcolor=blue,
	filecolor=magenta,      
	urlcolor=blue,
	citecolor=red
}

\setlength{\oddsidemargin}{0.05in}
\setlength{\textheight}{9in}
\setlength{\topmargin}{-0.5in}
\setlength{\textwidth}{6.5in}

\usepackage{graphicx}
\usepackage{caption}
\usepackage{subcaption}

\makeatletter
\g@addto@macro\normalsize{%
}
\makeatother

\newcommand{\lan}{\langle}
\newcommand{\ran}{\rangle}

\newcommand{\cd}{\cdot}

\def\h{\widehat}
\def\wt{\widetilde}
\def\les{\leqslant}
\def\ges{\geqslant}
\def\h{\widehat}
\def\cd{\cdot}

\DeclareMathOperator*{\argmin}{arg\,min}

\newcommand{\beq}[1]{\begin{equation} \label{#1}}
\newcommand{\eeq}{\end{equation}}
\newcommand{\bed}{\begin{displaymath}}
\newcommand{\eed}{\end{displaymath}}
\newcommand{\bea}{\bed\begin{array}{rl}}
\newcommand{\eea}{\end{array}\eed}

\newcommand{\barray}{\begin{array}{ll}}
\newcommand{\earray}{\end{array}}
\def\({\left(}
\def\){\right)}

\newtheorem{theorem}{Theorem}[section]
\newtheorem{lemma}{Lemma}[section]
\newtheorem{assumption}{Assumption}[section]
\newtheorem{proposition}{Proposition}[section]
\newtheorem{definition}{Definition}[section]
\newtheorem{remark}{Remark}[section]

\newtheorem{corollary}{Corollary}[section]

\title[Turnpike properties in LQG $N$-player differential games]{Turnpike properties in linear quadratic Gaussian $N$-player differential games
}
\author[A. Cohen]{Asaf Cohen }
\address{Department of Mathematics\\
University of Michigan\\
Ann Arbor, MI 48109\\
United States
}
\email{shloshim@gmail.com}

\author[J. Jian]{Jiamin Jian}
\address{Department of Mathematics\\
University of Michigan\\
Ann Arbor, MI 48109\\
United States
}
\email{jiaminj@umich.edu }
\date{\today \thanks{* This is the final version of the paper. To appear in \textit{ESAIM: Control, Optimisation and Calculus of Variations}}}

\begin{document}

\maketitle


\begin{abstract}
We consider the long-time behavior of equilibrium
strategies and state trajectories in a linear quadratic $N$-player game with Gaussian initial data. By comparing the finite-horizon game with its ergodic counterpart, we establish exponential convergence estimates between the solutions of the finite-horizon generalized Riccati system and the associated algebraic system arising in the ergodic setting. Building on these results, we prove the convergence of the time-averaged value function and derive a turnpike property for the equilibrium pairs of each player. Importantly, our approach avoids reliance on the mean field game limiting model, allowing for a fully uniform analysis with respect to the number of players $N$. As a result, we further establish a uniform turnpike property for the equilibrium pairs between the finite-horizon and ergodic games with $N$ players. Numerical experiments are also provided to illustrate and support the theoretical results.

\bigskip
\noindent{\bf Keywords:} Stochastic differential games, turnpike property, Riccati equations, forward-backward equations, mean field games, ergodic games.

\noindent{\bf AMS subject classification:} 
Primary: 91A15, 
49N10, 
91A23; 
Secondary: 
49N70, 
34H05. 
\end{abstract}


\section{Introduction}

This paper investigates turnpike properties in linear quadratic Gaussian (LQG) $N$-player differential games, where the dynamics of players are modeled by a system of linear stochastic differential equations (SDEs). The primary objective is to examine the long-time behavior of equilibrium strategies and state trajectories in the finite-horizon game by analyzing their convergence toward those of the associated ergodic game. In the finite-horizon game, each player seeks to minimize a quadratic cost functional, involving both control effort and deviation from a reference state, over a given time interval. As the time horizon tends to infinity, the problem naturally evolves into an ergodic game, where players aim to minimize the long-time-average cost. 

A central focus of this work is the turnpike property, which characterize the convergence of equilibrium pairs from the finite-horizon game to those of the corresponding ergodic game. Our main result shows that the deviation between the equilibrium pairs of the finite-horizon game and those of the ergodic game decays exponentially fast in time.  Specifically, we prove that there exist some positive constants $K^{(N)}$ and $\lambda^{(N)}$, independent of $t$ and $T$, such that the following estimate holds for all $1 \les i \les N$:
$$\mathbb E \Big[ \big|X_{T}^{i}(t) - X^i(t) \big|^2 + \big|\alpha_{T}^{i}(t) - \alpha^i(t) \big|^2 \Big] \les K^{(N)} \big(e^{-\lambda^{(N)} t} + e^{-\lambda^{(N)}(T - t)} \big)$$
for all $T$ and $t \in [0, T]$, where $(X_T^i, \alpha_T^i)$ denotes the equilibrium state-strategy pair for Player $i$ in the finite-horizon game, and $(X^i, \alpha^i)$ denotes the corresponding equilibrium pair for Player $i$ in the ergodic game.
Furthermore, under stronger structural assumptions, specifically, by imposing uniformity of the data in the problem with respect to the number of players $N$, we establish a uniform turnpike property: the convergence is not only exponential but also uniform with respect to the number of players. That is, there exist positive constants $K$ and $\lambda$, independent of $t, T$, and $N$, such that
\begin{equation*}
\sup_{N} \frac{1}{N} \mathbb E \Big[ \big|\bm{X}_{T}(t) - \bm{X}(t) \big|^2 + \big|\bm{\alpha}_{T}(t) - \bm{\alpha}(t) \big|^2 \Big] \les K \big(e^{-\lambda t} + e^{-\lambda(T - t)} \big)
\end{equation*}
for all $T$ and $t \in [0, T]$, where $\bm{X}_T(t) := (X_T^1(t), \dots, X_T^N(t))$, $\bm{X}(t) := (X^1(t), \dots, X^N(t))$, and   $\bm{\alpha}_T(t) := (\alpha^1_T(t), \dots, \alpha_T^N(t))$, $\bm{\alpha}(t) := (\alpha^1(t), \dots, \alpha^N(t))$ are all valued in $\mathbb R^{Nd}$.
These results highlight the persistence of the ergodic equilibrium as a stable attractor for the finite-horizon game, providing new insights into the structure and stability of multi-player stochastic differential games.

\subsection{Relevant works} 

To contextualize our analysis, we review related literature in three key areas. First, we frame the problem within stochastic differential games, as we consider Nash equilibria for $N$-player systems governed by stochastic dynamics. Second, we discuss large population games and their limiting formulation via mean field games. Although our results do not rely on a mean field limit, the coupled structure of the Hamilton-Jacobi-Bellman and Fokker-Planck (HJB-FP) equations in the $N$-player setting parallels that of mean field game systems, and several techniques in our analysis are inspired by this literature. We also position our work within the growing line of research that analyzes large population games directly in the finite-$N$ regime, without relying on the limiting mean field game model. Finally, since a main contribution of this paper is the establishment of a (uniform) turnpike property for the equilibrium pairs, we review existing results on turnpike phenomena in optimal control and differential games, which provide both conceptual and technical foundations for our work.

\subsubsection{Stochastic differential games}

Stochastic differential games provide a fundamental framework for modeling and analyzing dynamic strategic interactions under uncertainty among multiple rational players. These models have found broad applications in diverse areas such as economics, finance, engineering, and biology. In an $N$-player setting, each player controls a stochastic dynamic system and seeks to optimize an individual objective functional that typically depend on the states and controls of all players. The interplay between stochastic effects and strategic competition introduces significant analytical challenges and rich mathematical structures. Both finite-horizon and infinite-horizon formulations of such games have been extensively investigated in the literature; see, for example, \cite{Friedman-1972, Bensoussan-Frehse-2000, Borkar-Ghosh-1992, Bardi-Priuli-2014, Song-Wang-Xu-Zhu-2025}.

\subsubsection{Large population games} Mean field game theory, introduced independently by Lasry and Lions \cite{Lasry-Lions-2007}, and Huang, Malham\'e, and Caines \cite{Huang-Malhame-Caines-2006}, provides a tractable framework for asymptotically analyzing $N$-player stochastic differential games. It approximates Nash equilibria in large-population games by replacing individual interactions with a representative agent interacting with the aggregate population effect. As $N$ grows large, the Nash equilibrium of the $N$-player system converges to the mean field game equilibrium, which is characterized by a coupled system of differential equations: a backward HJB equation for the equilibrium’s cost, and a forward FP equation for the evolution of the state process distribution, capturing the flow of measures under equilibrium.

As the time horizon tends to infinity, ergodic mean field games arise naturally in the study of long-time behavior. In continuous state spaces, ergodic/stationary mean field games with standard controls have been studied in \cite{Feleqi-2013, Arapostathis-2017}, as well as with singular control and regime switching \cite{HYCao, dianetti2023ergodic, aid2025stationary, Ferrari-Tzouanas-2025, cohen-sun2025}. The finite-state counterparts were explored in \cite{Gomes-Mohr-Souza-2013, Cohen-Zell-2023, Cohen-Zell-2025}. In the ergodic settings, the HJB and FP equations take a stationary form, capturing the long-term behavior of the system.
For a comprehensive overview of recent advancements and relevant applications of
mean field game theory, we refer the reader to the monographs \cite{CD18I, CD18II, CDLL19}, and the references therein. 

Current research efforts have investigated high-dimensional stochastic control problems and large-population games without relying on the mean field limiting framework, enabling the study of non-symmetric settings or heterogeneous interactions; see, for example, \cite{Jackson-Lacker-2025, Lacker-Mukherjee-Yeung-2024, Cirant-Jackson-Redaelli-2025}. Our work also follows the philosophy of avoiding the mean field game limit, offering a uniform analysis of $N$-player games, and establishing a uniform turnpike property for $N$-player games.

\subsubsection{Turnpike property}

Recent works have increasingly focused on the long-term behavior of multi-player differential games and mean field games. A particularly notable phenomenon in this context is turnpike property---the tendency of optimal trajectories and controls in dynamic optimization problems to remain close, over most of the time horizon, to those of a corresponding steady-state or static optimization problem. Originating in economic growth theory \cite{Ramsey-1928, Neumann-1945, DSS-1987}, the turnpike property has been extensively studied in both finite- and infinite-dimensional deterministic control systems, encompassing both discrete-time and continuous-time formulations. For further background and developments, we refer to \cite{Porretta-Zuazua-2013, DGSW-2014, Trelat-Zuazua-2015, Grune-Guglielmi-2018, Lou-Wang-2019, Breiten-Pfeiffer-2020, Esteve-Zuazua-2022}, as well as the monographs \cite{Zaslavski-2005} and \cite{Carlson-Haurie-Leizarowitz-2012}. More recently, Gugat, Herty, and Segala \cite{Gugat-Herty-Segala-2024} extended the turnpike property to optimal control problems governed by mean field dynamics, which arise as the limit of systems involving a large number of interacting ordinary differential equations. A comprehensive and up-to-date survey on turnpike theory and its various applications is presented in Tr\'elat and Zuazua \cite{Trelat-Zuazua-2025}.

The study of turnpike phenomena in stochastic optimal control problems is comparatively recent. A significant breakthrough was achieved by Sun, Wang, and Yong \cite{Sun-Wang-Yong-2022}, which was the first to rigorously establish the turnpike property for stochastic linear quadratic optimal control problems. This initiated a wave of systematic investigations into turnpike behavior in continuous-time stochastic control systems, including further developments in \cite{Chen-Luo-2023, Conforti-2023, Sun-Yong-2024-periodic, SBFG-2024, Jian-Jin-Song-Yong-2024, Mei-Wang-Yong-2025}. Turnpike results for mean field control problems were subsequently obtained in \cite{Sun-Yong-2024, Bayraktar-Jian-2025}, while long-term average impulse control problems with specific mean field interactions was studied in \cite{Helmes-Stockbridge-Zhu-2025}.

In parallel, driven by the growing interest in understanding long-term equilibrium behavior in large interacting systems, the study of turnpike properties in multi-player differential games and mean field games has emerged as a rapidly advancing field. Early contributions include the analysis of long-time behavior for mean field game systems with local and nonlocal couplings \cite{CLLP-2012, CLLP-2013}, with further developments addressing broader structural settings in \cite{Gomes-Mohr-Rafael-2010, Cardaliaguet-2013, Cirant-Porretta-2021, Cirant-Meszaros-2024}. Notably, the long-time behavior of the master equation was studied in \cite{Cardaliaguet-Porretta-2019}.


\subsection{Contributions and organization of the paper}
\label{s:contributions}

In this work, we establish turnpike properties for the equilibrium strategies and state trajectories in the $N$-player game defined on a finite horizon by analyzing their convergence toward those of the associated ergodic game. The main contributions of this paper are summarized as follows:

\begin{itemize}
\item We prove the unique solvability of the fully coupled forward-backward ODE system arising from the finite-horizon game; see Proposition \ref{p:result_finite_time}. This is achieved by first proving the solvability of the associated Riccati system, and then applying Leray-Schauder fixed point theorem to the fully coupled forward-backward subsystem. 

\item We derive exponential convergence estimates between the solution to the finite-horizon system and the solution to the algebraic system arising from the ergodic game; see Proposition \ref{p:convergence_of_Riccati}. The most technical part lies in deriving precise estimates for the difference between the solution of fully coupled forward-backward ODEs system and its ergodic counterpart. To address this, we draw on analytical tools developed in \cite{Cardaliaguet-Porretta-2019} and \cite{Cirant-Porretta-2021}, which were originally introduced in the study of the turnpike properties for mean field games.

\item We establish a turnpike property for the equilibrium strategies and state trajectories in the finite-horizon $N$-player game, showing that they remain exponentially close to their ergodic counterparts over majority of the time horizon; see Theorem \ref{t:turnpike_property}. We further prove the ergodicity of the time-averaged value function (Corollary \ref{c:convergence_value_function}). A central contribution of our analysis is the derivation of estimates that are uniform with respect to the number of players $N$. Unlike existing mean field game literature, our result does not rely on the mean field limits, providing robust estimates for games with any finite number of players.
\end{itemize}

Here we compare our results with those of \cite{Cardaliaguet-Porretta-2019} and \cite{Cirant-Porretta-2021}. Their analyses rely crucially on duality arguments for the HJB-FP systems and stability properties of the associated forward-backward PDE systems, which motivate our treatment of the fully coupled system of linear forward-backward ODEs. Nevertheless, our analysis is carried out directly at the level of finite $N$-player differential game, rather than within the mean field limit. This finite $N$-perspective allows us to derive estimates that are uniform with respect to the number of players $N$, which constitutes one of the central contributions of the paper and does not appear in \cite{Cardaliaguet-Porretta-2019, Cirant-Porretta-2021}. Moreover, while those works address infinite-dimensional HJB-FP systems, our framework involves a coupled system of matrix Riccati ODEs together with fully coupled linear forward-backward ODEs. Despite being finite-dimensional, the strongly coupled structure gives rise to distinct analytical challenges and requires different arguments.

The remainder of the paper is organized as follows: Section \ref{s:setup_and_results} presents the formulation of the finite-horizon game and its ergodic counterpart, together with the statement of the main results. Specifically, the analysis of the finite-horizon and ergodic games is carried out in Section \ref{sec:finite_horizon_game} and Section \ref{sec:ergodic_game}, respectively. The convergence of the associated systems of equations and the resulting turnpike properties are presented in Section \ref{s:main_results}. To demonstrate the applicability and verification of the assumptions imposed throughout the paper, we provide two illustrative examples in Section \ref{s:examples}. The detailed proofs are distributed across Sections \ref{s:preliminary_results}-\ref{s:proof_of_main_results}. Section \ref{s:preliminary_results} collects the proofs of several preliminary results. The proof of Proposition \ref{p:result_finite_time} is given in Section \ref{s:result_finite_time}, while Section \ref{s:convergence_Riccati} is devoted to the proof of Proposition \ref{p:convergence_of_Riccati}. 
Section \ref{s:proof_of_main_results} presents the proofs of Corollary \ref{c:convergence_value_function} and Theorem \ref{t:turnpike_property}.
Numerical experiments illustrating the theoretical results are contained in Section \ref{s:numerical_example}. Finally, Section \ref{s:summary} concludes the paper and outlines some directions for future research.

We close this section by introducing some frequently used notation.

\subsection{Notation}
\label{s:notations}

For given positive integers $d, m \in \mathbb N$, we use $\mathbb R^d$ and $\mathbb R^{d \times m}$ to denote the standard $d$-dimensional real Euclidean space and the Euclidean space consisting of all $d \times m$ real matrices, respectively. Additionally, let $\mathbb S^d$, $\mathbb S^d_{+}$, and $\mathbb S^{d}_{++}$ denote the sets of all $d \times d$ real symmetric matrices, symmetric positive semi-definite matrices, and symmetric positive definite matrices, respectively. For matrices $P, Q \in \mathbb S^d$, we write $P \ges Q$ (respectively, $P > Q$) if and only if the matrix $P - Q$ is positive semi-definite (respectively, positive definite).
We use $I_{d}$ to denote the $d \times d$ identity matrix.

We denote by $\lan \cd, \cd \ran$ the inner product of two vectors, and by $|\cdot|$ the Euclidean norm in the corresponding Euclidean vector space. Moreover, we use the superscript $\top$ to indicate the transpose operation of matrices, $\text{det}(\cdot)$ to represent the determinant of matrices, $\text{tr}(\cdot)$ to denote the trace operator, and $\|A\|$ for the spectral norm of a matrix $A$. Let $[N] := \{1, \ldots, N\}$. For a collection of vectors $\{x^i \in \mathbb R^d: i \in[N]\}$, we denote by $\bm{x} = (x^1, \ldots, x^N)$ the concatenated column vector formed as $((x^1)^\top, \ldots, (x^N)^\top)^\top \in \mathbb R^{Nd}$ for simplicity. This notation will be used for convenience when no confusion arises.

Moreover, for any metric space $\mathcal M$, we write $C([0, T]; \mathcal M)$ for the space of continuous functions mapping from $[0, T]$ to $\mathcal M$, equipped with the uniform norm. The collection of probability measures over $\mathbb{R}^d$ is denoted by $\mathcal{P}(\mathbb{R}^d)$.

Finally, we adopt the notation $\mathcal N(\mu, \Sigma^{-1})$ to represent the multivariate Gaussian distribution with mean $\mu \in \mathbb R^d$ and covariance matrix $\Sigma^{-1} \in \mathbb S^{d}_{++}$, whose probability density function is given explicitly by
$$(2 \pi)^{-\frac{d}{2}} \big(\text{det}(\Sigma) \big)^{\frac{1}{2}} \exp \Big\{-\frac{1}{2} (x - \mu)^\top \Sigma (x - \mu) \Big\}.$$


\section{Problem setup and main results}
\label{s:setup_and_results}

In this section, we study two types of open-loop games, where each player’s strategy depends only on their own filtration. We start with the finite-horizon game in Subsection~\ref{sec:finite_horizon_game}, where players minimize their expected total cost over a fixed time interval. We establish solvability and related properties for the corresponding $N$-player game by analyzing an associated system of coupled equations. Alongside, we summarize known results for the ergodic game in Subsection~\ref{sec:ergodic_game}, where players minimize long-run average costs, as presented in \cite{Bardi-Priuli-2014}.  The centerpiece of our contribution,
turnpike properties for the finite-horizon games, are detailed in Subsection~\ref{s:main_results}, and Section~\ref{s:examples} provides examples of models that fit within our framework.





We start with the common ground for both games, which is the probability space and the dynamics of the players. 
Consider a complete filtered probability space $(\Omega, \mathcal F, 
\mathbb P)$, which satisfies the usual conditions and supports $N$ independent $d$-dimensional standard Brownian motions $\{W^1(t), \ldots, W^N(t)\}_{t \ges0}$. We denote the expectation on this space by $\mathbb{E}$. For each $i \in [N]$, denote by $(\mathcal{F}_t^{i})_{t \ges 0}$ the augmentation of the filtration generated by $W^i$.  
For technical reasons, which will become clearer later, we fix an arbitrary initial time $t_0 \ges 0$. We consider the following system of SDEs:
\begin{equation}
\label{eq:state}
\begin{cases}
\vspace{4pt}
\displaystyle
d X^i(t) = (A^i X^i(t) - \alpha^i(t)) dt + \sigma^i d W^i(t), \quad t \ges t_0, \\
\displaystyle X^i(t_0) = x^i_0,
\end{cases}
\end{equation}
for $i \in [N]$,
where $A^i, \sigma^i \in \mathbb R^{d \times d}$ with $\sigma^i$ assumed to be invertible, and $x^i_0 \in\mathbb{R}^d$. 
In the system of SDEs \eqref{eq:state}, the process $X^i=(X^i(t))_{t \ges t_0}$ represents the state of Player $i$ and the process $\alpha^i=(\alpha^i(t))_{t \ges t_0}$ denotes the corresponding control process, also referred to as the strategy of Player $i$.

\subsection{The finite-horizon game}
\label{sec:finite_horizon_game}
Beyond the obvious difference in cost criteria, a key distinction between the two games lies in the definitions of the admissible controls. We denote the time horizon of the game by $T \ges t_0$.
\begin{definition}[Admissible strategies in the finite-horizon game]
\label{d:Admissible_strategies_finite}
    A strategy $\alpha^i$ for Player $i \in [N]$ is called {\rm admissible in the finite-horizon game} if it belongs to the set
\begin{equation*}
\begin{aligned}
&\mathcal{A}^i[t_0,T]: 
= \Big\{\alpha: \Omega \times [t_0, T] \to \mathbb R^{d}: \alpha \hbox{ is $(\mathcal{F}^i_t)_{t \ges t_0}$-progressively measurable, and} \\
& \hspace{3.1in} \mathbb E \Big[\int_{t_0}^T |\alpha(t)|^2 dt \Big] < \infty \Big\}.
\end{aligned}
\end{equation*}
\end{definition}
The cost functional is quadratic in both the control and state processes. Given the initial states $\bm{x}_0 = (x^1_0,  \dots, x^N_0) \in \mathbb R^{Nd}$ at time $t_0$ from all the players, the objective of Player $i$ is to choose a strategy $\alpha^i$ that minimizes the following expected total cost
\begin{equation}
\label{eq:cost_finite_time}
J^i_T(t_0,\bm{x}_0; \bm{\alpha}) := \mathbb E\Big[\int_{t_0}^T \Big(\frac{1}{2} \alpha^i(t)^\top R^i \alpha^i(t) + (\bm{X}(t) - \bar{\bm{x}}_i)^\top \bm{Q}^i (\bm{X}(t) - \bar{\bm{x}}_i) \Big) dt \Big\vert \bm{X}(t_0) = \bm{x}_0 \Big],
\end{equation}
where $(\bm{X}(t))_{t\ges t_0} := (X^1(t),  \dots, X^N(t))_{t\ges t_0}$ is the state-dynamics of all the players, given in \eqref{eq:state}, associated with the strategy profile $(\bm{\alpha}(t))_{t\ges t_0} := (\alpha^1(t),  \dots, \alpha^N(t))_{t\ges t_0}$. In the above, for each $i \in [N]$, $R^i \in \mathbb R^{d \times d}$ is a positive definite symmetric matrix, $\bm{Q}^i$ is a symmetric ${Nd \times Nd}$ matrix, and $\bar{\bm{x}}_i = (\bar{x}_i^1, \dots, \bar x_i^N) \in \mathbb R^{Nd}$ is a given reference position.

Note that we assume players only observe their own dynamics, hence, we adjust the cost to depend on their private initial state and the distribution of the initial states of the other players. For this, we need a few more notations. Denote by $\eta^j$, $j\in[N]$ a distribution on $\mathbb{R}^d$ as well as a vector of distributions for all players except for $i$:
$$\boldsymbol{\eta}^{-i}:=(\eta^1,\ldots,\eta^{i-1},\eta^{i+1},\ldots,\eta^N),$$ and set the cost function for Player $i$, given the initial state to Player $i$, $x^i_0$, and the initial distribution vector for the rest of the players $\boldsymbol{\eta}^{-i}_0=(\mathbb{P} \circ (X^j(t_0))^{-1}: j \in [N] \setminus \{i\})$, 
\begin{align*}
\mathcal{J}_T^i(t_0,x^i_0; \boldsymbol{\eta}^{-i}_0,\boldsymbol{\alpha}) := \int_{\mathbb R^{(N-1)d}} J_T^i \big(t_0, \xi^1, \dots, \xi^{i-1}, x_0^i, \xi^{i+1}, \dots, \xi^{N} ;\boldsymbol{\alpha}\big) \prod_{j \neq i} d \eta_0^j \big(\xi^j \big).
\end{align*} 
In the above, the initial distributions of all players are assumed to be independent. We make this explicit in Assumption \ref{a:intial_states}.

For the simplicity of notation, we define the state-dependent cost function
\begin{equation*}
\begin{aligned}
F^i(\bm{X}(t)) &:= (\bm{X}(t) - \bar{\bm{x}}_i)^\top \bm{Q}^i (\bm{X}(t) - \bar{\bm{x}}_i) 
= \sum_{k = 1}^{N} \sum_{j = 1}^{N} (X^j(t) - \bar{x}_i^j)^\top Q_{jk}^{i} (X^k(t) - \bar{x}_i^k),
\end{aligned}
\end{equation*}
for all $i \in [N]$, where the matrices $\{Q_{jk}^i: j, k \in[N]\}$ are $d \times d$ blocks of $\bm{Q}^i$. Note that, each block $Q_{jk}^i$ captures the sensitivity of Player $i$'s cost to the joint displacement of players $j$ and $k$ from their respective reference positions $\bar{x}_{i}^j$ and $\bar{x}_i^k$ as perceived by Player $i$. In the following, we assume that $Q_{ii}^{i}$ is positive definite for all $i \in [N]$, which reflects the assumption that Player $i$ incurs a strictly positive cost for deviating from their own preferred location $\bar{x}_i^i$. 

We further define the function $f^i:\mathbb{R}^d\times(\mathcal{P}(\mathbb{R}^d))^{N-1}\to\mathbb{R}$ by
\begin{align*}
f^i(x; \boldsymbol{\eta}^{-i}) := \int_{\mathbb R^{(N-1)d}} F^i \big(\xi^1, \dots, \xi^{i-1}, x, \xi^{i+1}, \dots, \xi^{N} \big) \prod_{j \neq i} d \eta^j \big(\xi^j \big).
\end{align*}
Since $f^i (X^i(t); (\mathbb{P} \circ (X^j(t))^{-1}: j \in [N] \setminus \{i\})) = \mathbb E[F^i(\bm{X}(t))|X^i(t)]$, by the tower property, we can rewrite the cost as
\begin{equation*}
\begin{aligned}
& \mathcal{J}_T^i(t_0, x_0^i; \boldsymbol{\eta}^{-i}_0,\boldsymbol{\alpha}) \\
= \ & \mathbb E\Big[\int_{t_0}^T \Big(\frac{1}{2} \alpha^i(t)^\top R^i \alpha^i(t) + f^i\big(X^i(t); (\mathbb{P} \circ (X^j(t))^{-1}: j \in [N] \setminus \{i\}) \big)
\Big) dt \Big\vert X^i(t_0) = x_0^i \Big].
\end{aligned}
\end{equation*}

The optimality in the game is defined via Nash equilibria. For this, we need further notation. Given an admissible strategy profile $\boldsymbol{\alpha} = (\alpha^{1}, \ldots, \alpha^{N}) \in \prod_{j=1}^N \mathcal{A}^j[t_0,T]$ and an individual admissible strategy $\h{\alpha} \in \mathcal{A}^i[t_0,T]$, we denote by $[\boldsymbol{\alpha}^{-i}; \h{\alpha}] := (\alpha^1, \ldots, \alpha^{i-1}, \h{\alpha}, \alpha^{i+1}, \ldots, \alpha^N)$ the strategy profile obtained from $\boldsymbol{\alpha}$ by replacing $\alpha^i$ with $\h{\alpha}$.
\begin{definition}[Nash equilibrium in the finite-horizon game]
\label{d:Nash_equilibrium_finite}
Fix an initial time $t_0$, an 
initial distribution vector $\boldsymbol{\eta}_0=(\eta^1_0,\ldots,\eta^N_0)$, and 
initial state vector of private realizations $\bm{x}_0\in\mathbb{R}^{Nd}$, where each $x^i_0$ is sampled independently from  $\eta^i_0$. We say that a strategy profile $\boldsymbol{\alpha} = (\alpha^{1}, \ldots, \alpha^{N})\in\prod_{j=1}^N\mathcal{A}^j[t_0,T]$ is a 
{\rm 
Nash equilibrium in the finite-horizon game} if for every $i \in [N]$ and every $\h{\alpha} \in \mathcal{A}^i[t_0,T]$,
\[
\mathcal{J}^i_T(t_0, x^i_0 ;\boldsymbol{\eta}^{-i}_0,[\boldsymbol{\alpha}^{-i}; \h{\alpha}]) \ges \mathcal{J}^i_T(t_0, x^i_0 ;\boldsymbol{\eta}^{-i}_0,\boldsymbol{\alpha}).
\]
\end{definition}

For any Nash equilibrium $\boldsymbol{\alpha}$, we refer to the cost for Player $i$ as the \textit{value function of the finite-horizon game}, associated with the initial time $t_0$, the initial private state $x^i_0$ of  Player $i$, the initial distribution vector $\bm{\eta}^{-i}_0$ and the Nash equilibrium $\boldsymbol{\alpha}$. We note that the value function satisfies:
\begin{equation}
\label{eq:value_function_finite_time}
\mathcal{V}_T^i(t_0,x^i_0;\bm{\eta}^{-i}_0) := \mathcal{J}_T^i (t_0, x^i_0 ;\boldsymbol{\eta}^{-i}_0,\boldsymbol{\alpha}) := \inf_{\h\alpha^i \in \mathcal A^i[t_0, T]} \mathcal{J}^i_T(t_0, x^i_0 ;\boldsymbol{\eta}^{-i}_0,[\boldsymbol{\alpha}^{-i}; \h{\alpha}^i]),
\end{equation}
where we omit the equilibrium profile $\bm{\alpha}$ from the value function when it is clear that we are referring to it.


\subsubsection{System of equations in the finite-horizon game}
\label{s:Riccati_systems_of_equations}

In this section, we present the solvability results for the finite-horizon game \eqref{eq:value_function_finite_time}. Specifically, the equilibrium and its corresponding value for the game are characterized by a system of coupled Riccati-type equations, which are derived from the HJB-FP system, as we will discuss.
Define the {\it Hamiltonian} $H^i:\mathbb{R}^d\times\mathbb{R}^d\to\mathbb{R}$ by\footnote{We note that there are some typographical errors concerning the definition of the Hamiltonian $H^i$ in \cite{Bardi-Priuli-2014}. In their formulation, the $i$-th Hamiltonian is written with
a {\it min}, while it should be written with {\it max}.} 
\begin{equation}
\label{eq:Hamiltonian}
H^i(x, p) = \min_{a \in \mathbb R^d} \Big\{p^\top (A^i x - a) + \frac{1}{2} a^\top R^i a \Big\} = p^\top A^i x - \frac{1}{2} p^\top (R^i)^{-1} p,
\end{equation}
and its minimizer, which due to the separability between $x$ and $a$, can be expressed as a function of $p$ alone:
\begin{equation}
\label{eq:a_i_star}
a^{i,*}(p) = \argmin_{a \in \mathbb R^d} \Big\{p^\top (A^i x - a) + \frac{1}{2} a^\top R^i a \Big\}=(R^i)^{-1}p.
\end{equation}
Moreover, we adopt the notation
$$\varsigma^i := \frac{1}{2} \sigma^i (\sigma^i)^\top \in \mathbb S^{d}_{++}.$$
Fix an initial distribution vector for the players $\bm{m}_0=(m^1_0,\ldots,m^N_0)$ 
and recall that the terminal cost is $0$. Then, the system of {\it HJB-FP equations for the finite-horizon game} is given by, for $i \in [N]$,
\begin{equation}
\label{eq:HJB_FP_finite_time}
\begin{cases}
\vspace{4pt}
\displaystyle \partial_t v^i_T (t,x)+ \text{tr}(\varsigma^i D^2 v^i_T(t,x)) + H^i(x, \nabla v^i_T(t,x)) + f^i \big(x; \bm{m}^{-i}_T(t) \big) = 0, \qquad&\text{(HJB)}\\
\vspace{4pt}
\displaystyle \partial_t m^i_T(t) = \text{tr}(\varsigma^i D^2 m^i_T(t)) - \text{div} \left(m^i_T(t) \frac{\partial H^i}{\partial p}(x, \nabla v^i_T(t,x)) \right), &\text{(FP)}\\
\displaystyle v^i_T(T, x) = 0, \quad \bm{m}_T^{-i}(0)=\bm{m}^{-i}_0. 
\end{cases}
\end{equation}

A solution to this system is a couple $\bm{v}_T=(v^1_T,\ldots,v^N_T)$ and $\bm{m}_T=(m^1_T,\ldots,m^N_T)$, where $v^i_T:[0,T]\times\mathbb{R}^d\to\mathbb{R}$ and $m^i_T:[0,T]\to\mathcal{P}(\mathbb{R}^d)$ for all $i \in [N]$. For each $i$, the two equations are coupled through a strategy profile $(\bm{\alpha}_T(t))_{t \in [0, T]}$, where the strategy of Player $i$ is defined by
\[
\alpha^i_T(t) := a^{i,*}\big(\nabla v^i_T(t, X^i_T(t))\big),
\]
with $X^i_T$ denoting the state process of Player $i$ under the strategy profile $\bm{\alpha}_T$. 
The first equation in \eqref{eq:HJB_FP_finite_time} is the HJB equation, derived from a dynamic programming principle. Its solution $v^i_T(t,x)$ represents the value function for Player $i$ under the strategy profile $\bm{\alpha}_T$, for a game starting at time\footnote{This is the main reason we formulated the game with a general initial time $t_0$, rather than restricting it to $t_0=0$.} $t = t_0$, with initial condition $X^i_T(t) = x$, while the other players $j \in [N] \setminus \{i\}$ are initially distributed according to $m_0^j(t)$. 
The second equation in \eqref{eq:HJB_FP_finite_time} is the FP equation, and $m^i_T(t)$ represents the distribution of the state $X^i_T(t)$ for Player $i$ under the strategy profile $\bm{\alpha}_T$. This is summarized in Proposition~\ref{p:result_finite_time} below.

\begin{remark}
\label{r:HJB_FP_finite_horizon}
Recall that in our $N$-player game each player does not observe the states of the others and only knows the initial distribution of the other players. In this context, the system of HJB-FP equations in \eqref{eq:HJB_FP_finite_time} can be interpreted as a finite-horizon mean field game involving 
$N$ types of players. In this system, the FP equations characterize the evolution of the distribution of each player type under equilibrium, while the HJB equations represent the value functions for each type. The analysis of multi-population mean field games dates back to Huang, Malham\'e, and Caines \cite{Huang-Malhame-Caines-2006}. Subsequent developments in multi-population mean field games can be found in \cite{Lachapelle-Wolfram-2011, BPT16, BHL18}. For studies focusing on the ergodic behavior in multi-population mean field games, we refer the reader to \cite{Feleqi-2013, Cirant-2015, CPT17}. Moreover, an intermediate setting between the single- and multi-population frameworks is considered in \cite{Huang-2010, Nourian-Caines-2013, Nguyen-Huang-2012, Carmona-Zhu-2016}, where a population of minor agents interacts with a single major agent.

Probabilistically, the mean field equilibrium is formulated as a fixed-point problem involving flows of probability measures. Specifically: \textnormal{(i)} Fix a deterministic flow of probability measures $\{(m_T^i(t))_{t\in[0,T]}: i \in [N]\}$ on $\mathbb{R}^d$, we solve the standard finite-horizon stochastic control problem \eqref{eq:value_function_finite_time}; \textnormal{(ii)} We then seek flows of probability measures $\{(m_T^i(t))_{t\in[0,T]}: i \in [N]\}$ such that $\mathbb{P} \circ (X_T^i(t; \bm{\alpha}))^{-1} = m_T^i(t)$ for all $t \in [0, T]$ and $i \in [N]$, if $X_T^i(t; \bm{\alpha})$ is a solution of the control problem in \eqref{eq:value_function_finite_time}. For a more detailed definition of mean field equilibrium, we refer the reader to \cite{CD18I} and the references therein.
\end{remark}

Having established the linear dynamics and quadratic cost structure, we now are ready to address the Gaussian component, aside from the noise terms driven by Brownian motions. To proceed, we assume Gaussian initial conditions, as stated in the following assumption.
\begin{assumption}
\label{a:intial_states}
For each $i \in [N]$, the initial state $x_0^i$ of Player $i$ is sampled independently from a Gaussian distribution $\mathcal{N}(\mu_0^i, (\Sigma_0^i)^{-1})$. That is, the initial density $m_0^i$ is Gaussian with mean $\mu_0^i \in \mathbb{R}^d$ and covariance matrix $(\Sigma_0^i)^{-1} \in \mathbb{S}^d_{++}$.
\end{assumption}

\begin{remark}
\label{r:initial_states}
In the finite-horizon game with linear dynamics and quadratic costs, Gaussian initial distributions are preserved over time. Specifically, for each $i \in [N]$, if $x_0^i \sim \mathcal{N}(\mu_0^i, (\Sigma_0^i)^{-1})$, then the state process $X^i$ in \eqref{eq:state} remains Gaussian for all $t \ges 0$, with its law characterized by the evolving mean and covariance. This reduces the infinite-dimensional measure dynamics to a finite-dimensional system, greatly simplifying the problem.
\end{remark}

We start with the ansatz (that will be established in Proposition \ref{p:result_finite_time}) that the systems of HJB-FP equations admit solutions $\{(v^i_T, m^i_T)\}_{i \in[N]}$ of the following explicit form
\begin{equation}
\label{eq:ansatz_value_density}
\begin{aligned}
& v^i_T(t, x) = \frac{1}{2} x^\top \Lambda^i_T(t) x + (\rho^i_T(t))^\top x + \kappa^i_T(t), \\
& m^i_T (t) = \mathcal N \big(\mu^i_T(t), (\Sigma^i_T(t))^{-1} \big),
\end{aligned}
\end{equation}
for all $(t, x) \in [0, T] \times \mathbb R^d$ with $\Lambda^i_T:[0,T]\to \mathbb S^d$, $\Sigma^i_T:[0,T]\to\mathbb S^{d}_{++}$, $\mu^i_T, \rho^i_T: [0, T] \to \mathbb R^d$, and $\kappa^i_T: [0, T] \to \mathbb R$. Here, the density at time $t$, $m^i_T(t)$, can be expressed as follows 
\begin{equation}
\label{eq:m_i}
(m^i_T(t))(x) = (2 \pi)^{-\frac{d}{2}} \big(\text{det}(\Sigma^i_T(t)) \big)^{\frac{1}{2}} \exp\Big\{-\frac{1}{2} (x - \mu^i_T(t))^\top \Sigma^i_T(t) (x - \mu^i_T(t)) \Big\}.
\end{equation}
Then, applying standard techniques from linear quadratic Gaussian $N$-player differential games, where we plug the ansatz in the HJB-FP system and separately compare polynomial terms of orders 0, 1, and 2, we derive the following system of ODEs for the unknowns $\{(\Lambda^i_T, \Sigma^i_T, \mu^i_T, \rho^i_T, \kappa^i_T)\}_{i \in[N]}$, defined on the domain $[0, T]$:
\begin{equation}
\label{eq:Riccati_system}
\begin{cases}
\vspace{4pt}
\displaystyle \frac{d}{dt} \Lambda^i_T(t) + \Lambda^i_T(t) A^i + (A^i)^\top \Lambda^i_T(t) - \Lambda^i_T(t) (R^i)^{-1} \Lambda^i_T(t) + 2 Q^i_{ii} = 0, \\
\vspace{4pt}
\displaystyle \frac{d}{dt} \Sigma^i_T(t) + 2 \Sigma^i_T(t) \varsigma^i \Sigma^i_T(t) + 2 \Sigma^i_T(t) \left(A^i - (R^i)^{-1} \Lambda^i_T(t) \right) = 0, \\
\vspace{4pt}
\displaystyle \frac{d}{dt} \mu^i_T(t) - \left(A^i - (R^i)^{-1} \Lambda^i_T(t) \right) \mu^i_T(t) + (R^i)^{-1} \rho^i_T(t) = 0, \\
\vspace{4pt}
\displaystyle \frac{d}{dt} \rho^i_T(t) +(A^i- (R^i)^{-1}\Lambda^i_T(t))^\top  \rho^i_T(t)  + 2 F_{1}^i (\bm{\mu}_T^{-i}(t)) = 0, \\
\displaystyle \frac{d}{dt} \kappa^i_T(t) + \text{tr}(\varsigma^i \Lambda^i_T(t)) - \frac{1}{2} (\rho^i_T(t))^\top (R^i)^{-1} \rho^i_T(t) + F_0^i(\bm{\mu}_T^{-i}(t), \Sigma_T^{i}(t)) = 0
\end{cases}
\end{equation}
with the initial and terminal conditions
\begin{equation*}
\begin{aligned}
& \Sigma^i_T(0) = \Sigma^i_0 \in \mathbb S^{d}_{++}, \quad \mu^i_T(0) = \mu^i_0 \in \mathbb R^{d}, \\
& \Lambda^i_T(T) = 0 \in \mathbb S^{d}, \quad  \rho^i_T(T) = 0 \in \mathbb R^{d}, \quad \kappa^i_T(T) = 0 \in \mathbb R,
\end{aligned} 
\end{equation*}
for the solvability of finite-horizon game. In the above formulation, $\{F_{1}^i, F^i_0: i \in[N]\}$ arise from the running cost functional $f^i(x;\bm{m}_T^{-i})$ upon substituting the candidate Gaussian form of $m_T^i$. These functions are explicitly defined in terms of the means and covariance matrices of the distributions $\{m_T^i\}_{i \in[N]}$ as follows, i.e., $F^i_0:\mathbb{R}^{(N-1)d}\times\mathbb{S}^d_{++}\to\mathbb{R}$ and $F_{1}^i :\mathbb{R}^{(N-1)d}\to\mathbb{R}$ are given by 
\begin{equation}
\label{eq:F_t}
\begin{aligned}
F_0^i(\bm{y}^{-i}, \Upsilon) &:= \left(\bar{x}_i^i \right)^\top Q_{ii}^i \bar{x}_i^i - \left(\bar{x}_i^i \right)^\top \Bigg(\sum_{j \neq i} Q_{ij}^i \left(y^j- \bar{x}_i^j \right) \Bigg) \\
& \hspace{0.5in} - \Bigg(\sum_{j \neq i} \left(y^j- \bar{x}_i^j \right)^\top Q_{ji}^i \Bigg) \bar{x}_i^i + \sum_{j,k \neq i, j \neq k} \left(y^j- \bar{x}_i^j \right)^\top Q_{jk}^i \left(y^k - \bar{x}_i^k \right)  \\
& \hspace{0.5in} + \sum_{j \neq i} \left(  \text{tr} \left(Q_{jj}^i \Upsilon^{-1} \right)  + \left(y^j- \bar{x}_i^j \right)^\top Q_{jj}^i \left(y^j- \bar{x}_i^j \right) \right), \\
F_{1}^i (\bm{y}^{-i}) &:= - Q_{ii}^i \bar{x}_i^i + \Bigg(\sum_{j \neq i} Q_{ij}^i \left(y^j- \bar{x}_i^j \right) \Bigg),
\end{aligned}
\end{equation}
where $\bm{y}^{-i}=(y^1,\ldots,y^{i-1},y^{i+1},\ldots,y^N)\in\mathbb{R}^{(N-1)d}$.

\subsubsection{Main results of the finite-horizon game for a given $N$} 

The details on the solvability of the finite-horizon HJB-FP system \eqref{eq:HJB_FP_finite_time} and the associated system of ODEs \eqref{eq:Riccati_system} are summarized in Proposition \ref{p:result_finite_time}. To establish these results, we first impose the following structural assumption and introduce some preparatory lemmas that are essential for the subsequent analysis.

\begin{assumption}
\label{a:solvability_Riccati}
For all $i \in [N]$, $\sigma^i$ is an invertible matrix in $\mathbb R^{d \times d}$, $R^i$ belongs to $\mathbb S^{d}_{++}$, and $\bm{Q}^i$ is in $\mathbb S^{Nd}$. Moreover, the block matrix $Q^{i}_{ii}$ belongs to $\mathbb S^{d}_{++}$.
\end{assumption}

Assumption \ref{a:solvability_Riccati} ensures the unique solvability of two key equations: the first Riccati equation in the finite-horizon system \eqref{eq:Riccati_system}, denoted by $\Lambda^i_T(\cdot)$, and the corresponding algebraic Riccati equation $\Lambda^i$, which arises in the ergodic game and will be introduced with more details in a later section. This assumption requires that $\bm{Q}^i$ is symmetric, and $Q^i_{ii}$ is positive definite \footnote{For the well-posedness of the Riccati equation satisfied by $\Lambda_T^i$ and the associated algebraic Riccati equation for $\Lambda^i$, it is suffices to assume that $Q_{ii}^i$ is positive semi-definite. However, to establish the exponential convergence between $\Lambda_T^i$ and $\Lambda^i$, we require the stronger condition that $Q_{ii}^i$ is positive definite.}. However, to guarantee the unique solvability of the entire finite-horizon coupled system \eqref{eq:Riccati_system}, a stronger condition on the positive definiteness of $\bm{Q}^i$ is needed. This condition depends on certain constants associated with $\Lambda^i_T(\cdot)$ and $\Lambda^i$. The following lemma introduces these constants, and Assumption \ref{a:solvability_Riccati_finite} subsequently provides the required conditions on $\bm{Q}^i$. The proof follows directly from Theorem 7.2 in Chapter 6
of \cite{Yong-Zhou-1999}, and Lemma 2.2 and Lemma 2.3 in \cite{Sun-Yong-2024}.


\begin{lemma}
\label{l:solvability_and_convergence_Lambda}
Let Assumption \ref{a:solvability_Riccati} hold. Then, for each $N \ges 2$ and $i \in [N]$, the following statements hold:
\begin{enumerate}
\item[(i)] (Finite-horizon Riccati equation) There exists a unique solution $\Lambda_T^i \in C([0, T]; \mathbb{S}^d_{+})$ to the terminal value problem
\begin{equation}
\label{eq:Lambda_t}
\begin{cases}
\displaystyle \frac{d}{dt} \Lambda^i_T(t) + \Lambda^i_T(t) A^i + (A^i)^\top \Lambda^i_T(t) - \Lambda^i_T(t) (R^i)^{-1} \Lambda^i_T(t) + 2 Q^i_{ii} = 0, \\ 
\displaystyle \Lambda_T^i(T) = 0 \in \mathbb S^d.
\end{cases}
\end{equation}

\item[(ii)] (Algebraic Riccati equation) There exists a unique positive definite solution $\Lambda^i \in \mathbb{S}^d_{++}$ to the algebraic Riccati equation
\begin{equation}
\label{eq:Lambda_i}
\Lambda^i A^i + (A^i)^\top \Lambda^i - \Lambda^i (R^i)^{-1} \Lambda^i + 2 Q^i_{ii} = 0.
\end{equation}
Moreover, the matrix $(R^i)^{-1} \Lambda^i$ stabilizes the system
\begin{equation}
\label{eq:ODE}
\frac{d}{dt} x^i(t) = A^i x^i(t) - \alpha^i(t),
\end{equation}
in the sense that all eigenvalues of $A^i - (R^i)^{-1} \Lambda^i$ have negative real parts. Consequently, there exist constants $K_i, \lambda_i > 0$, independent of $t$ and $T$, such that
\begin{equation}
\label{eq:norm_A_i_estimate}
\big\|e^{(A^i - (R^i)^{-1} \Lambda^i) t} \big\| \les K_i e^{-\lambda_i t}, \quad \forall t \ges 0.
\end{equation}

\item[(iii)] (Exponential convergence) There exist constants $K_{\Lambda^i}, \lambda_{\Lambda^i} > 0$, independent of $t$ and $T$, such that
\begin{equation}
\label{eq:estimate_difference_Lambda}
\big\|\Lambda_T^i(t) - \Lambda^i \big\| \les K_{\Lambda^i} e^{-\lambda_{\Lambda^i} (T-t)}, \quad \forall t \in [0, T].
\end{equation}
\end{enumerate}
\end{lemma}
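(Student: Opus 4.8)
The three statements are classical facts from linear-quadratic (LQ) control theory, and the plan is to read each Riccati object as the value of an auxiliary deterministic LQ problem governed by $\dot x = A^i x - \alpha$ and to exploit the stabilizing structure forced by $Q^i_{ii}\in\mathbb S^d_{++}$. For part (i), I would identify $\Lambda^i_T(t)$ with the value matrix of minimizing $\int_t^T\big(\tfrac12\alpha^\top R^i\alpha + x^\top(2Q^i_{ii})x\big)\,ds$ over the above dynamics with $x(t)=x$, so that the minimal cost equals $x^\top\Lambda^i_T(t)x$. Since $R^i\in\mathbb S^d_{++}$ and $2Q^i_{ii}\in\mathbb S^d_{++}$, the standard solvability theory for LQ Riccati equations (e.g.\ \cite{Yong-Zhou-1999}) gives a unique $\Lambda^i_T\in C([0,T];\mathbb S^d)$; nonnegativity of the running cost forces the value, hence $\Lambda^i_T(t)$, into $\mathbb S^d_{+}$, and finiteness of the cost for the admissible choice $\alpha\equiv0$ supplies the a priori bound ruling out finite-time blow-up, yielding global existence on $[0,T]$.

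For part (ii), I would appeal to the algebraic Riccati theory. Writing the dynamics as $\dot x = A^i x + B\alpha$ with $B=-I_d$, the pair $(A^i,B)$ is controllable (hence stabilizable) because $B$ is invertible, while $2Q^i_{ii}\in\mathbb S^d_{++}$ makes $\big(A^i,(2Q^i_{ii})^{1/2}\big)$ observable (hence detectable). Under stabilizability and detectability the ARE \eqref{eq:Lambda_i} admits a unique stabilizing solution $\Lambda^i\in\mathbb S^d_{+}$ for which $A^i-(R^i)^{-1}\Lambda^i$ is Hurwitz, and observability upgrades it to $\mathbb S^d_{++}$; see \cite{Yong-Zhou-1999, Sun-Yong-2024}. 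Estimate \eqref{eq:norm_A_i_estimate} is then the standard bound on the exponential of a Hurwitz matrix, with $\lambda_i$ any constant strictly below $\min_j(-\operatorname{Re}\mu_j)$ over the eigenvalues $\mu_j$ of $A^i-(R^i)^{-1}\Lambda^i$, and $K_i$ absorbing the Jordan structure.

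Part (iii) is the crux. Writing $G^i:=A^i-(R^i)^{-1}\Lambda^i$ for the Hurwitz closed loop and setting $\Delta(t):=\Lambda^i_T(t)-\Lambda^i$, subtracting \eqref{eq:Lambda_i} from \eqref{eq:Lambda_t} and rearranging the cross terms (using symmetry of $R^i$ and $\Lambda^i$) produces
\[
\dot\Delta(t)+(G^i)^\top\Delta(t)+\Delta(t)G^i-\Delta(t)(R^i)^{-1}\Delta(t)=0,\qquad \Delta(T)=-\Lambda^i.
\]
I would treat this via a Duhamel representation against the Lyapunov semigroup $Z\mapsto e^{-(G^i)^\top\tau}Z\,e^{-G^i\tau}$, whose generator has spectrum equal to the pairwise sums of eigenvalues of $G^i$ and hence lies in the open left half-plane. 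This gives
\[
\Delta(t)=-e^{(G^i)^\top(T-t)}\Lambda^i e^{G^i(T-t)}-\int_t^T e^{(G^i)^\top(s-t)}\,\Delta(s)(R^i)^{-1}\Delta(s)\,e^{G^i(s-t)}\,ds,
\]
whose first term is bounded by $K_i^2\|\Lambda^i\|\,e^{-2\lambda_i(T-t)}$ by \eqref{eq:norm_A_i_estimate}.

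The main obstacle is the quadratic term, which I would control by a two-step bootstrap that crucially keeps every constant independent of $T$. The key observation is that, the Riccati equation being autonomous with terminal data at $T$, the map $\Lambda^i_T(t)$ depends only on the time-to-go $\tau:=T-t$; thus the uniform bound from part (i) gives $\|\Lambda^i_T(t)\|\les M$ with $M$ independent of $T$, and classical LQ convergence (the limit of the finite-horizon value is the stabilizing ARE solution) yields $\|\Delta(t)\|\to0$ as $T-t\to\infty$. Fixing $\tau_0$ so large that $\|\Delta(t)\|\les\delta$ whenever $T-t\ges\tau_0$, with $\delta$ small enough that $K_i^2\|(R^i)^{-1}\|\,\delta$ is dominated by $\lambda_i$, I would restart the Duhamel formula from $t_1:=T-\tau_0$; on $\{t\les t_1\}$ the bound $\|\Delta\|\les\delta$ lets me replace $\|\Delta(s)(R^i)^{-1}\Delta(s)\|$ by $\|(R^i)^{-1}\|\,\delta\,\|\Delta(s)\|$ and close the estimate through Gr\"onwall's inequality (after multiplying by $e^{2\lambda_i(T-t)}$), producing exponential decay at any rate below $2\lambda_i$. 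The residual region $\{T-t<\tau_0\}$ is handled by the uniform bound $M$ at the cost of enlarging $K_{\Lambda^i}$, giving \eqref{eq:estimate_difference_Lambda}. Handling the quadratic nonlinearity while preserving $T$-independence of $K_{\Lambda^i}$ and $\lambda_{\Lambda^i}$ is exactly the point, and is where the duality/Gr\"onwall techniques of \cite{Cardaliaguet-Porretta-2019, Cirant-Porretta-2021} cited in the introduction are tailored to help.
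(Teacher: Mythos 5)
Your proposal is correct, but it takes a more self-contained route than the paper, whose proof of this lemma consists almost entirely of citations: part (i) invokes Theorem 7.2 in Chapter 6 of \cite{Yong-Zhou-1999} (after checking conditions (L1), (L2), (4.23)); part (ii) notes stabilizability of \eqref{eq:ODE} by exhibiting $\Theta^i=A^i+I_d$ and then applies \cite[Lemma 2.2]{Sun-Yong-2024}; part (iii) follows from \cite[Lemma 2.3]{Sun-Yong-2024}. Your reconstructions of (i) and (ii) are the same classical LQ content (value-function representation, controllability of $(A^i,-I_d)$, observability from $Q^i_{ii}\in\mathbb S^d_{++}$), so nothing is gained or lost there beyond self-containedness. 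The interesting divergence is in (iii): your Duhamel identity for $\Delta(t)=\Lambda^i_T(t)-\Lambda^i$ is exactly the one the paper later writes out when proving the uniform-in-$N$ analogue (Lemma \ref{l:uniform_estimate_norm}), but the paper closes that estimate by discarding the sign-definite quadratic term $\Delta(R^i)^{-1}\Delta$ and passing from the resulting one-sided Loewner inequality $\Delta(t)\les -e^{(G^i)^\top(T-t)}\Lambda^i e^{G^i(T-t)}$ directly to a norm bound, whereas you keep the quadratic term and absorb it via a smallness restriction plus a backward Gr\"onwall argument. Your route costs an extra (classical) input — that $\Lambda^i_T(t)\to\Lambda^i$ as the time-to-go $T-t\to\infty$, which you correctly justify by autonomy and monotone convergence, and which is what makes $\tau_0$ and hence $K_{\Lambda^i},\lambda_{\Lambda^i}$ independent of $T$ — but in exchange it is more careful: a one-sided matrix inequality $\Delta(t)\les N\les 0$ only bounds $\lambda_{\max}(\Delta(t))$, not $\|\Delta(t)\|=-\lambda_{\min}(\Delta(t))$, so some control of the integral term is genuinely needed, and your Gr\"onwall bootstrap supplies it. The price is that your constants are less explicit (they depend on $\tau_0$ and $\delta$), which matters for the later uniform-in-$N$ statements but not for this fixed-$i$ lemma.
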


To obtain the results of the finite horizon game, as summarized in Proposition \ref{p:result_finite_time} below, we define the constants
\begin{equation}
\label{eq:K_N_lambda_N}
K^{(N)} := \max \Big\{K_i e^{\frac{K_{\Lambda^i}}{\lambda_{\Lambda^i}}\|(R^i)^{-1}\|}: i \in [N]\Big\} \quad \text{and} \quad \lambda^{(N)} := \min \{\lambda_i: i \in [N]\},
\end{equation}
where $K_i$ and $\lambda_i$ are given in \eqref{eq:norm_A_i_estimate}, and $K_{\Lambda^i}$ and $\lambda_{\Lambda^i}$ are from \eqref{eq:estimate_difference_Lambda}. Moreover, we denote the matrices 
$\bm{R} \in \mathbb{R}^{Nd \times Nd}$ and $\bm{Q} \in \mathbb R^{Nd \times Nd}$ as follows:
\begin{equation}
\label{eq:matrix_R_and_Q}
\bm{R} := \begin{bmatrix}
\vspace{4pt} (R^1)^{-1} & 0 & \cdots & 0 \\
\vspace{4pt} 0 & (R^2)^{-1} & \cdots & 0 \\
\vspace{4pt} \vdots & \vdots & \vdots & \vdots \\
\vspace{4pt} 0 & 0 & \cdots & (R^N)^{-1}
\end{bmatrix}, \quad \bm{Q}:= \begin{bmatrix}
\vspace{4pt} 0 & Q_{12}^1 & \cdots & Q_{1N}^1 \\
\vspace{4pt} Q_{21}^2 & 0 & \cdots & Q_{2N}^2 \\
\vspace{4pt} \vdots & \vdots & \vdots & \vdots \\
\vspace{4pt} Q_{N1}^N & Q_{N2}^N & \cdots &  0
\end{bmatrix}.
\end{equation}
Recall that $Q_{ij}^i \in \mathbb R^{d \times d}$ represents the interaction weight associated with the joint displacement of players $i$ and $j$ in the cost functional of Player $i$.

\begin{assumption}
\label{a:solvability_Riccati_finite}
There exists a constant 
$$\gamma^{(N)} \in \Big(0, \frac{(\lambda^{(N)})^2}{2 (K^{(N)})^2 \|\bm{R}\|^2} \lambda_{\bm{R}, min} \Big),$$
where $K^{(N)}$ and $\lambda^{(N)}$ are constants defined in \eqref{eq:K_N_lambda_N}, and $\lambda_{\bm{R}, min}$ denotes the smallest eigenvalue of $\bm{R}$ (which also depends on $N$), such that
\begin{equation*}
\bm{Q} + \frac{\gamma^{(N)}}{2} I_{Nd} \in \mathbb{S}^{Nd}_{++}.
\end{equation*}
\end{assumption}

\begin{remark}
\label{r:assumption_Q}
Note that the diagonal blocks of $\bm{Q}$ are all zero matrices in $\mathbb{R}^{d \times d}$, which implies that $\bm{Q}$ is an indefinite matrix. Let $\lambda_{\bm{Q},min}$ denote the smallest eigenvalue of $\bm{Q}$. Then, Assumption \ref{a:solvability_Riccati_finite} is equivalent to the existence of a $\gamma^{(N)} \in (0, \frac{(\lambda^{(N)})^2}{2 (K^{(N)})^2 \|\bm{R}\|^2} \lambda_{\bm{R}, min})$ such that $\lambda_{\bm{Q},min} > -\gamma^{(N)}/2$. In other words, the assumption ensures that the spectrum of $\bm{Q}$ is bounded away from $- \infty$, and in particular, that the smallest eigenvalue of $\bm{Q}$ cannot be too negative. This assumption is motivated by standard duality arguments commonly employed in the analysis of HJB-FP system arising in mean field games; see, for instance, \cite{Cardaliaguet-Porretta-2019} and \cite{Cirant-Porretta-2021}.
\end{remark}

We now are ready to present the main results concerning the finite-horizon game in the context of $N$-player interactions. The following proposition establishes the equilibrium strategy and the corresponding equilibrium state trajectory for each player in the finite-horizon game. Furthermore, it characterizes the value function of each player via the solution to the coupled system \eqref{eq:Riccati_system}. This result constitutes a foundational step in establishing the turnpike properties developed in this work. It is also worth emphasizing that the unique solvability of the coupled system \eqref{eq:Riccati_system} is nontrivial, as it involves a subsystem of fully coupled forward-backward ordinary differential equations. The proof of Proposition \ref{p:result_finite_time} is provided in Section \ref{s:result_finite_time}.

\begin{proposition}
\label{p:result_finite_time}
Given $N \ges 2$. Suppose that the finite-horizon game having dynamics \eqref{eq:state} and cost functionals \eqref{eq:cost_finite_time} satisfies Assumptions \ref{a:intial_states}, \ref{a:solvability_Riccati}, and \ref{a:solvability_Riccati_finite}, then the following hold:

\begin{enumerate}
\item[(i)] (Riccati-type system) The system of equations \eqref{eq:Riccati_system} for unknowns $\{\Lambda^i_T, \Sigma^i_T, \mu^i_T, \rho^i_T, \kappa^i_T: i \in[N]\}$ admits a unique solution. Moreover, the solution components are continuous on $[0, T]$, and $\Lambda^i_T(t) \in \mathbb{S}^d_{+}$ for all $t \in [0, T]$ and $i \in [N]$.

\item[(ii)] (HJB-FP and Nash equilibrium)
There exists a unique solution to the system of finite-horizon HJB-FP equation \eqref{eq:HJB_FP_finite_time} in the form of \eqref{eq:ansatz_value_density}. Moreover, the strategy profile
\begin{equation}
\label{eq:optimal_control_finite_time}
\begin{aligned}
\alpha^i_T(t) &= a^{i,*}(\partial_x v^i_T(t, X^i_T(t))) = (R^i)^{-1} \partial_x v^i_T(t, X^i_T(t)) \\
&= (R^i)^{-1} \Lambda^i_T(t) X^i_T(t) + (R^i)^{-1} \rho^i_T(t), \qquad t \in [0, T], \, i \in [N],
\end{aligned}
\end{equation}
is a Nash equilibrium with open-loop feedback strategies, where $a^{i,*}$ is defined in \eqref{eq:a_i_star}, and $(X^i_T(t))_{t \in [0, T]}$ is the corresponding state trajectory governed by the SDE
\begin{equation}
\label{eq:optimal_path_finite_time}
\begin{cases}
\vspace{4pt}
\displaystyle
d X^i_T(t) = \left( (A^i - (R^i)^{-1} \Lambda^i_T(t)) X^i_T(t) - (R^i)^{-1} \rho^i_T(t) \right) dt + \sigma^i dW^i(t), \quad t \in [0, T], \\
X^i_T(0) = x_0^i.
\end{cases}
\end{equation}
This SDE admits an explicit solution:
\begin{align}\label{eq:trajectory}
X^i_T(t) = \Theta_T^i(t) \Big( x_0^i - \int_0^t (\Theta_T^i(s))^{-1} (R^i)^{-1} \rho^i_T(s) \, ds + \int_0^t (\Theta_T^i(s))^{-1} \sigma^i \, dW^i(s) \Big),
\end{align}
where
\begin{align}\label{eq:trajectory2}
\Theta_T^i(t) = \exp\Big( \int_0^t \big(A^i - (R^i)^{-1} \Lambda^i_T(s) \big) \, ds \Big).
\end{align}
Furthermore, the marginal law of the state, $m^i_T(t)$, satisfies
\[
m^i_T(t) = \mathbb{P} \circ (X^i_T(t))^{-1}, \quad \forall t \in [0, T],
\]
where $m^i_T$ solves the Fokker--Planck equation in \eqref{eq:HJB_FP_finite_time}.

\item[(iii)] (Value function) For each $i \in [N]$, the value function of the $i$-th player in the finite-horizon game is given  by
\begin{equation}
\label{eq:value_function_finite_time_explicit}
\mathcal{V}_T^i(0,x;\bm{m}^{-i}_0) = v^i_T(0, x) = \frac{1}{2} x^\top \Lambda^i_T(0) x + (\rho^i_T(0))^\top x + \kappa^i_T(0).
\end{equation}
\end{enumerate}
\end{proposition}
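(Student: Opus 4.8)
The plan is to exploit the partially triangular structure of the Riccati system \eqref{eq:Riccati_system} and solve its components in a definite order, isolating the genuinely coupled part. The equation for $\Lambda^i_T$ is self-contained (a backward matrix Riccati equation), and its unique solvability together with $\Lambda^i_T(t)\in\mathbb{S}^d_{+}$ and continuity is already supplied by Lemma~\ref{l:solvability_and_convergence_Lambda}(i). With $\Lambda^i_T$ in hand, the equation for $\Sigma^i_T$ decouples across players; I would solve it via the substitution $P^i_T:=(\Sigma^i_T)^{-1}$, which turns the quadratic equation into a linear Lyapunov-type ODE for the covariance $P^i_T$ with datum $(\Sigma^i_0)^{-1}\in\mathbb{S}^d_{++}$. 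This gives a unique global solution on $[0,T]$ that stays symmetric positive definite, so $\Sigma^i_T$ is well defined, continuous, and positive definite. Finally, once $\Lambda^i_T,\Sigma^i_T,\mu^i_T,\rho^i_T$ are known, the equation for $\kappa^i_T$ is a backward scalar linear ODE solved by direct quadrature, so its existence, uniqueness, and continuity are immediate.

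The heart of part (i) is the fully coupled forward-backward subsystem for $\{(\mu^i_T,\rho^i_T)\}_{i\in[N]}$, in which $\rho^i_T$ is driven by $F_1^i(\bm{\mu}^{-i}_T)$ and hence couples all players' means. Writing $B^i_T(t):=A^i-(R^i)^{-1}\Lambda^i_T(t)$, this is a linear forward-backward ODE system with split boundary data $\mu^i_T(0)=\mu^i_0$ and $\rho^i_T(T)=0$. I would establish solvability by a Leray--Schauder (Schaefer) fixed point argument: given a candidate mean profile $\bm{\mu}\in C([0,T];\mathbb{R}^{Nd})$, solve the (now decoupled) backward equations for $\rho^i_T$, then the forward equations for $\mu^i_T$, and let this define a map $\Phi$. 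The map is affine and compact (its image lies in $C^1$, so Arzel\`a--Ascoli applies), and its fixed points are exactly the solutions; by linearity, uniqueness then follows from the same estimate applied to the difference of two solutions with null data.

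The hard part, and the reason Assumption~\ref{a:solvability_Riccati_finite} is imposed, is the a priori bound needed to run the fixed point theorem, i.e. to bound $\{\bm{\mu}:\bm{\mu}=\tau\Phi(\bm{\mu}),\ \tau\in[0,1]\}$. I would obtain it from a duality/energy identity: differentiating $\sum_i\langle\mu^i_T,\rho^i_T\rangle$ along the system, the $B^i_T$-terms cancel, and integrating over $[0,T]$ with the boundary conditions yields
\[
\int_0^T\bm{\rho}_T^\top\bm{R}\,\bm{\rho}_T\,dt+2\int_0^T\bm{\mu}_T^\top\bm{Q}\,\bm{\mu}_T\,dt+2\int_0^T\langle\bm{\mu}_T,\bm{b}\rangle\,dt=\sum_{i}\langle\mu^i_0,\rho^i_T(0)\rangle,
\]
where $\bm{b}$ collects the constant reference-position terms. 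Since $\bm{Q}$ is indefinite, the second integral must be absorbed: using the stability estimate \eqref{eq:norm_A_i_estimate}, the convergence $\Lambda^i_T\to\Lambda^i$ from Lemma~\ref{l:solvability_and_convergence_Lambda}(iii), and variation of constants for the forward equation, I would bound the transition operator generated by $B^i_T$ by $K^{(N)}e^{-\lambda^{(N)}(t-s)}$ (the constants in \eqref{eq:K_N_lambda_N} are tailored precisely to this) and deduce $\|\bm{\mu}_T\|_{L^2}^2\le C_0+\tfrac{(K^{(N)})^2\|\bm{R}\|^2}{(\lambda^{(N)})^2}\|\bm{\rho}_T\|_{L^2}^2$. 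Combining with $\bm{\mu}_T^\top\bm{Q}\bm{\mu}_T\ge-\tfrac{\gamma^{(N)}}{2}|\bm{\mu}_T|^2$ (Remark~\ref{r:assumption_Q}) and $\bm{\rho}_T^\top\bm{R}\bm{\rho}_T\ge\lambda_{\bm{R},min}|\bm{\rho}_T|^2$, the strict bound on $\gamma^{(N)}$ in Assumption~\ref{a:solvability_Riccati_finite} makes the coefficient of $\|\bm{\rho}_T\|_{L^2}^2$ strictly positive, giving the desired bound (the linear and boundary terms absorbed by Young's inequality). The same coercivity, with $\tau\bm{Q}$ in place of $\bm{Q}$ (whose smallest eigenvalue is no smaller), makes the bound uniform in $\tau\in[0,1]$. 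Essentially all the difficulty resides in this estimate.

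For part (ii), I would substitute the ansatz \eqref{eq:ansatz_value_density} into the HJB--FP system \eqref{eq:HJB_FP_finite_time} and verify, by matching polynomial coefficients of orders $2,1,0$, that it solves the system precisely when $(\Lambda^i_T,\Sigma^i_T,\mu^i_T,\rho^i_T,\kappa^i_T)$ solve \eqref{eq:Riccati_system}; uniqueness within the ansatz class then follows from part (i). The closed-loop equation \eqref{eq:optimal_path_finite_time} is a linear SDE whose explicit solution \eqref{eq:trajectory}--\eqref{eq:trajectory2} is obtained by variation of constants with fundamental matrix $\Theta^i_T$. To see that $\alpha^i_T$ is a Nash equilibrium, I would freeze the opponents at equilibrium, so that Player $i$ faces a standard time-inhomogeneous LQ stochastic control problem with running cost $f^i(\cdot;\bm{m}^{-i}_T(t))$; the HJB equation is its dynamic programming equation, solved by $v^i_T$, and a standard stochastic verification theorem identifies $\alpha^i_T$ as the unique optimizer with value $v^i_T$. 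Consistency of the laws, $m^i_T(t)=\mathbb{P}\circ(X^i_T(t))^{-1}$, holds because the ODEs for $\mu^i_T$ and $\Sigma^i_T$ are exactly the mean and covariance evolution of the Gaussian law of $X^i_T$, and the simultaneous cross-player resolution of these means in part (i) is what closes the equilibrium fixed point. Part (iii) is then immediate: evaluating the verification theorem at $t=0$ gives $\mathcal{V}^i_T(0,x;\bm{m}^{-i}_0)=v^i_T(0,x)$, which is \eqref{eq:value_function_finite_time_explicit}.
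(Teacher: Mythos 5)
Your proposal is correct and follows essentially the same route as the paper: $\Lambda^i_T$ and $\Sigma^i_T$ are solved first, the coupled $(\mu^i_T,\rho^i_T)$ subsystem is handled by Leray--Schauder with the a priori bound coming from the duality identity for $\sum_i\langle\mu^i_T,\rho^i_T\rangle$, the transition-operator decay $K^{(N)}e^{-\lambda^{(N)}(t-s)}$, and the coercivity margin in Assumption \ref{a:solvability_Riccati_finite}, followed by coefficient matching and a stochastic verification argument for parts (ii)--(iii). The only cosmetic deviation is your treatment of $\Sigma^i_T$ via the linear Lyapunov ODE for $(\Sigma^i_T)^{-1}$, whereas the paper time-reverses the equation into a backward Riccati equation and invokes Yong--Zhou; both are valid, and the paper itself uses your inverse-covariance formulation later in Lemma \ref{l:convergence_Sigma_i}.
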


\begin{remark}
\label{r:solvability_Riccati_finite_time}
The most technical part of the proof of Proposition \ref{p:result_finite_time} lies in establishing the existence and uniqueness of the solution $\{\Lambda^i_T, \Sigma^i_T, \mu^i_T, \rho^i_T, \kappa^i_T: i \in[N]\}$ to the system of ODEs \eqref{eq:Riccati_system}. Here we briefly outline the main idea of the proof.

First, the equations for $\{\Lambda^i_T: i \in[N]\}$ are decoupled both from each other and from the other unknowns, and their unique solvability of $\{\Lambda^i_T: i \in[N]\}$ are already established in Lemma \ref{l:solvability_and_convergence_Lambda}.
Next, given $\Lambda^i_T$, we derive the unique solvability for $\Sigma^i_T$ by the standard results from the theory of Riccati equations (e.g., see \cite{Yong-Zhou-1999} and \cite{Sun-Yong-2020-Springer}) for each $i \in [N]$. Following this, we examine the equations for $\{\mu^i_T, \rho^i_T: i \in[N]\}$, which form a system of fully coupled forward-backward ordinary differential equations. To prove the existence of a solution, we employ the Leray--Schauder fixed point theorem. The critical part of the proof lies in demonstrating the boundedness of the solutions to $\mu^i_T$ and $\rho^i_T$, for which we utilize techniques inspired by \cite{Cardaliaguet-Porretta-2019} and \cite{Cirant-Porretta-2021}. Finally, using the preceding results, we can straightforwardly establish the unique solvability of $\{\kappa^i_T: i \in[N]\}$.
\end{remark}

\subsubsection{Main results of the finite-horizon game uniformly in $N$}
Recall the estimates for the norm of $e^{(A^i - (R^i)^{-1} \Lambda^i) t}$ and $\Lambda_T^i(t) - \Lambda^i $ given in \eqref{eq:norm_A_i_estimate} and \eqref{eq:estimate_difference_Lambda}, respectively. It is important to note that these estimates are not necessarily uniform in the population size $N$. As a result, Assumption \ref{a:solvability_Riccati_finite} may fail to hold for all $N$. 

A natural question is whether Assumption \ref{a:solvability_Riccati_finite} can be strengthened so that it holds uniformly in $N$. To explore this, we divide the conditions into two categories in the following assumption. While the conditions on $\bm{R}$ and $\bm{Q}$ can often be verified explicitly, the constants $\lambda^{(N)}$ and $K^{(N)}$ are more difficult to characterize, as they depend not only on $\|(R^i)^{-1}\|$, but also on the constants $K^i, \lambda^i, K_{\Lambda^i}, \lambda_{\Lambda^i}$, which are defined implicitly in Lemma \ref{l:solvability_and_convergence_Lambda}. We now present a set of sufficient conditions under which these constants can be chosen independently of $N$.




\begin{assumption}
\label{a:uniform_constants}
Assume that either one of the following assumptions holds:
\begin{itemize}
\item[(i)] The family of matrices $\{A^i : i \in [N]\}$ is {\rm uniformly Hurwitz}, namely, there exists a constant $\bar{c} > 0$ such that for every $i \in [N]$, the real part of every eigenvalue $\lambda_i$ of $A^i$ satisfies $\Re(\lambda_i) \les - \bar{c}$. Moreover, there exist positive constants $\beta_{*}, \beta^{*}$ and $c^*$ such that 
$$\beta_{*} I_d \les R^i \les \beta^{*}  I_d, \quad \Lambda^i \les c^* I_d$$ 
for all $i \in [N]$ and $N \ges 2$, where $\Lambda^i$ is the unique positive definite solution to the algebraic Riccati equation \eqref{eq:Lambda_i}.

\item[(ii)] There exist positive constants $\beta_1$, $\beta_{*}$, $\beta^{*}$, $c_{*}$, and $c^{*}$ such that for all $i \in [N]$ and $N \ges 2$,
\[
\beta_{*} I_d \les R^i \les \beta^{*}  I_d, \quad 2 Q^i_{ii} \ges \beta_1 I_d, \quad c_{*} I_d \les \Lambda^i \les c^{*} I_d,
\]
where $\Lambda^i$ denotes the unique positive definite solution to the algebraic Riccati equation \eqref{eq:Lambda_i}.
\end{itemize}
\end{assumption}



There are several sufficient conditions that ensure the eigenvalues of $\Lambda^i$ are uniformly bounded above and below, as required in Assumption \ref{a:uniform_constants}. A standard approach is to construct matrix-valued sub- and supersolutions for the algebraic Riccati equation \eqref{eq:Lambda_i}. The following lemma formalizes such a condition.

\begin{lemma}
\label{l:bound_eigenvalue_lambda}
Let Assumption \ref{a:solvability_Riccati} hold. Suppose there exist constants $0 < c_{*} \les c^{*}$ such that 
\[
(c_{*})^2 (R^i)^{-1} - c_{*} (A^i + (A^i)^\top) \les 2 Q^i_{ii} \les (c^{*})^2 (R^i)^{-1} - c^{*} (A^i + (A^i)^\top)
\]
for all $i \in [N]$ and $N \ges 2$. Then, the unique solution $\Lambda^i$ to equation \eqref{eq:Lambda_i} satisfies
\[
c_{*} I_d \les \Lambda^i \les c^{*} I_d
\]
for all $i \in [N]$ and $N \ges 2$.
\end{lemma}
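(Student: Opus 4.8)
The plan is to recast the two matrix inequalities in the hypothesis as the statements that the constant matrices $c_{*} I_d$ and $c^{*} I_d$ are, respectively, a subsolution and a supersolution of the algebraic Riccati equation \eqref{eq:Lambda_i}. Introducing the Riccati operator $\mathcal{R}^i(X) := X A^i + (A^i)^\top X - X (R^i)^{-1} X + 2 Q_{ii}^i$ for $X \in \mathbb{S}^d$, a direct substitution of $X = c_{*} I_d$ and $X = c^{*} I_d$ shows that the assumed bounds are precisely $\mathcal{R}^i(c_{*} I_d) \ges 0$ and $\mathcal{R}^i(c^{*} I_d) \les 0$, while the object of interest satisfies $\mathcal{R}^i(\Lambda^i) = 0$ by Lemma \ref{l:solvability_and_convergence_Lambda}(ii). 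The two bounds $c_{*} I_d \les \Lambda^i$ and $\Lambda^i \les c^{*} I_d$ will then be proved by two separate comparison arguments; the asymmetry is dictated by the sign of the quadratic remainder produced when two Riccati operators are subtracted, via the algebraic identity $\mathcal{R}^i(\Lambda^i) - \mathcal{R}^i(P) = (A^i_{cl})^\top \Delta + \Delta A^i_{cl} + \Delta (R^i)^{-1} \Delta$, where $\Delta := \Lambda^i - P$ and $A^i_{cl} := A^i - (R^i)^{-1}\Lambda^i$.

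For the lower bound I would argue purely algebraically, exploiting that $A^i_{cl}$ is Hurwitz by Lemma \ref{l:solvability_and_convergence_Lambda}(ii). Taking $P = c_{*} I_d$ in the identity above and using $\mathcal{R}^i(\Lambda^i) = 0$ gives the Lyapunov relation $(A^i_{cl})^\top \Delta + \Delta A^i_{cl} = -\mathcal{R}^i(c_{*} I_d) - \Delta (R^i)^{-1}\Delta$, where now $\Delta = \Lambda^i - c_{*} I_d$. Crucially, both terms on the right are negative semidefinite: $-\mathcal{R}^i(c_{*} I_d) \les 0$ since $c_{*} I_d$ is a subsolution, and $-\Delta (R^i)^{-1}\Delta \les 0$ since $(R^i)^{-1} \in \mathbb{S}^d_{++}$. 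Because $A^i_{cl}$ is Hurwitz, the Lyapunov equation $(A^i_{cl})^\top Y + Y A^i_{cl} = -W$ has the unique solution $Y = \int_0^\infty e^{(A^i_{cl})^\top s} W e^{A^i_{cl} s}\, ds$, which is positive semidefinite whenever $W \ges 0$; applying this with the fixed matrix $W = \mathcal{R}^i(c_{*} I_d) + \Delta (R^i)^{-1}\Delta \ges 0$ forces $\Delta \ges 0$, i.e. $\Lambda^i \ges c_{*} I_d$.

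The main obstacle is the upper bound, for which this algebraic argument fails: taking $P = c^{*} I_d$ produces the quadratic remainder $\Delta (R^i)^{-1}\Delta$ with the unfavorable sign, and no generic ``maximal solution lies below any supersolution'' principle is available (the correct general statement bounds the stabilizing solution below by subsolutions, not above by supersolutions). I would therefore compare against the finite-horizon Riccati flow rather than the algebraic solution directly. Let $\Lambda_T^i$ solve the terminal-value problem \eqref{eq:Lambda_t} from Lemma \ref{l:solvability_and_convergence_Lambda}(i) and set $D(t) := c^{*} I_d - \Lambda_T^i(t)$, so $D(T) = c^{*} I_d > 0$. Differentiating and subtracting the supersolution relation, $D$ solves a linear matrix ODE $\dot D = N(t)^\top D + D N(t) + \mathcal{R}^i(c^{*} I_d)$ with $N(t) := -A^i + \tfrac12 (R^i)^{-1}\big(c^{*} I_d + \Lambda_T^i(t)\big)$, which is continuous on $[0,T]$ since $\Lambda_T^i$ is. Conjugating by the invertible fundamental matrix $\Phi$ of $\dot\Phi = -N\Phi$ turns this into $\tfrac{d}{ds}\big(\Phi^\top D \Phi\big) = \Phi^\top \mathcal{R}^i(c^{*} I_d)\, \Phi \les 0$; integrating backward from $T$ and using $D(T) > 0$ yields $\Phi(t)^\top D(t)\Phi(t) \ges \Phi(T)^\top D(T)\Phi(T) > 0$, hence $\Lambda_T^i(t) \les c^{*} I_d$ for every $t \in [0,T]$. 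Letting $T \to \infty$ with $t$ fixed and invoking the convergence $\Lambda_T^i(t) \to \Lambda^i$ from Lemma \ref{l:solvability_and_convergence_Lambda}(iii) gives $\Lambda^i \les c^{*} I_d$.

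All remaining steps -- verifying the two sub/supersolution inequalities, the Lyapunov reorganization of the cross terms, and the congruence identity for $D$ -- are routine matrix algebra. I expect the only genuinely delicate point to be the routing of the upper estimate through the convergent finite-horizon flow, which is what makes the argument for the supersolution side work despite the unfavorable sign of the quadratic remainder. Finally, the uniformity over $i \in [N]$ and $N \ges 2$ is automatic, as the constants $c_{*}, c^{*}$ and hence the resulting two-sided bound are common to all players.
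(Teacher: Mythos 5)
Your argument is correct, but it takes a different (and more self-contained) route than the paper. The paper's proof is essentially two lines: it verifies, exactly as you do, that $c_{*}I_d$ and $c^{*}I_d$ are a subsolution and a supersolution of \eqref{eq:Lambda_i}, and then invokes the comparison theorem for algebraic Riccati equations from Lancaster--Rodman (Theorem 9.1.1) to conclude both inequalities at once. You instead prove the two bounds by hand: the lower bound via the identity $\mathcal{R}^i(\Lambda^i)-\mathcal{R}^i(P)=(A^i_{cl})^\top\Delta+\Delta A^i_{cl}+\Delta(R^i)^{-1}\Delta$ combined with the integral representation of the solution of a Lyapunov equation with Hurwitz $A^i_{cl}$ (the Hurwitz property being supplied by Lemma \ref{l:solvability_and_convergence_Lambda}(ii)), and the upper bound by monotonicity of the finite-horizon flow $\Lambda^i_T$ against the constant supersolution, followed by the limit $T\to\infty$ using \eqref{eq:estimate_difference_Lambda}. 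Both computations check out (the congruence identity $\tfrac{d}{ds}(\Phi^\top D\Phi)=\Phi^\top\mathcal{R}^i(c^{*}I_d)\Phi\les 0$ is correct with your choice of $N(t)$, and the fixed matrix $W=\mathcal{R}^i(c_{*}I_d)+\Delta(R^i)^{-1}\Delta\ges 0$ legitimately feeds into the Lyapunov integral even though it involves $\Delta$). What your approach buys is independence from the black-box comparison theorem — in particular your caution about the supersolution direction is well taken, since the generic ordering principle for the maximal/stabilizing solution only goes one way (subsolutions lie below it), and routing the upper estimate through the convergent finite-horizon Riccati flow is a clean way to avoid the extra stabilizability hypotheses that a direct two-sided comparison would require. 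The cost is length; the paper's citation-based proof is shorter but leans on the external theorem for both directions. The uniformity in $i$ and $N$ is indeed automatic in both treatments.
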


In the following lemma, we provide the corresponding uniform estimates for \eqref{eq:norm_A_i_estimate} and \eqref{eq:estimate_difference_Lambda} in Lemma \ref{l:solvability_and_convergence_Lambda}. In addition, we show that the inferior limit of the right endpoint of the admissible interval for $\gamma$ in Assumption \ref{a:solvability_Riccati_finite} is strictly positive.

\begin{lemma}
\label{l:uniform_estimate_norm}
Suppose Assumptions \ref{a:solvability_Riccati} and \ref{a:uniform_constants} hold. Then, the constants $K^i, \lambda^i, K_{\Lambda^i}, \lambda_{\Lambda^i}$, defined implicitly in Lemma \ref{l:solvability_and_convergence_Lambda}, can be chosen independently of $i$ and $N$. Moreover, 
\begin{equation}
\label{eq:gamma_bar}
\bar\gamma:= \liminf_{N \to \infty} \frac{(\lambda^{(N)})^2}{2 (K^{(N)})^2 \|\bm{R}\|^2} \lambda_{\bm{R}, \min}>0,
\end{equation}
where $K^{(N)}$ and $\lambda^{(N)}$ are constants defined in \eqref{eq:K_N_lambda_N}, $\bm{R}$ is given in \eqref{eq:matrix_R_and_Q}, and $\lambda_{\bm{R}, min}$ denotes the smallest eigenvalue of $\bm{R}$.
\end{lemma}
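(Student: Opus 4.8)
The plan is to run a Lyapunov argument built on the algebraic identity that the stabilizing solution $\Lambda^i$ of \eqref{eq:Lambda_i} produces, and to check that under either branch of Assumption~\ref{a:uniform_constants} every constant entering \eqref{eq:K_N_lambda_N} depends only on $\beta_*,\beta^*,c_*,c^*,\beta_1,\bar c$. Writing $\mathcal A^i:=A^i-(R^i)^{-1}\Lambda^i$ and using symmetry of $(R^i)^{-1}$ and $\Lambda^i$, \eqref{eq:Lambda_i} gives the exact Lyapunov relation $(\mathcal A^i)^\top\Lambda^i+\Lambda^i\mathcal A^i=-N^i$ with $N^i:=\Lambda^i(R^i)^{-1}\Lambda^i+2Q^i_{ii}>0$. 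Under Assumption~\ref{a:uniform_constants}(ii) the quadratic form $V(x)=x^\top\Lambda^i x$ settles the uniform matrix-exponential bound at once: $c_*|x|^2\le V\le c^*|x|^2$ and $N^i\ge 2Q^i_{ii}\ge\beta_1 I_d$ give $\frac{d}{dt}V(e^{\mathcal A^i t}x)\le-(\beta_1/c^*)V$, hence $\|e^{\mathcal A^i t}\|\le\sqrt{c^*/c_*}\,e^{-\beta_1 t/(2c^*)}$, so $K_i,\lambda_i$ may be taken independent of $i,N$.

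\textbf{The genuine obstacle is branch (i)}, where $\Lambda^i$ carries no uniform lower bound and $V=x^\top\Lambda^i x$ may degenerate. Here I would first turn uniform Hurwitzness into a uniform control of $P_0^i:=\int_0^\infty e^{(A^i)^\top s}e^{A^i s}\,ds$, the solution of $(A^i)^\top P_0^i+P_0^i A^i=-I_d$: since the Lyapunov solution depends continuously on the matrix on the compact set $\{A:\|A\|\le C,\ \Re(\mathrm{eig})\le-\bar c\}$ (boundedness of $\{A^i\}$ being part of the uniform data), one obtains $\underline p I_d\le P_0^i\le\bar p I_d$ with $\underline p,\bar p$ independent of $i,N$. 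I would then use the composite Lyapunov matrix $P:=\Lambda^i+\delta P_0^i$. A direct computation gives $(\mathcal A^i)^\top P+P\mathcal A^i=-N^i-\delta I_d-\delta S^i$ with $S^i:=\Lambda^i(R^i)^{-1}P_0^i+P_0^i(R^i)^{-1}\Lambda^i$, and the point is that $S^i$ is first order in $\Lambda^i$: Cauchy--Schwarz yields $|u^\top S^i u|\le 2(\bar p/\beta_*)|\Lambda^i u|$ for unit $u$, while $u^\top N^i u\ge(1/\beta^*)|\Lambda^i u|^2$. Minimizing the resulting one-variable quadratic in $z=|\Lambda^i u|$ shows that for $\delta$ small enough (uniformly in $i,N$) one has $N^i+\delta I_d+\delta S^i\ge(\delta/2)I_d$, so $P$ is a uniformly nondegenerate Lyapunov matrix and again $K_i,\lambda_i$ are uniform. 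This is exactly where a small $\Lambda^i$ is harmless---the perturbation $S^i$ vanishes with it---while a large $\Lambda^i$ is absorbed by the quadratic term $N^i$.

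With $K_i,\lambda_i$ uniform, the convergence constants follow. Setting $W:=\Lambda_T^i-\Lambda^i$ and subtracting \eqref{eq:Lambda_i} from \eqref{eq:Lambda_t} gives the backward equation $\dot W+(\mathcal A^i)^\top W+W\mathcal A^i-W(R^i)^{-1}W=0$, $W(T)=-\Lambda^i$. The Riccati comparison $0\le\Lambda_T^i(t)\le\Lambda^i\le c^* I_d$ (\cite{Yong-Zhou-1999}) gives the uniform a priori bound $\|W\|\le c^*$. Writing $W$ through its Duhamel representation $W(t)=e^{(\mathcal A^i)^\top(T-t)}W(T)e^{\mathcal A^i(T-t)}+\int_t^T e^{(\mathcal A^i)^\top(s-t)}W(s)(R^i)^{-1}W(s)e^{\mathcal A^i(s-t)}\,ds$ and running the same nonlinear Gronwall estimate as in Lemma~\ref{l:solvability_and_convergence_Lambda}(iii), now with the uniform data $K_i,\lambda_i$, $\|(R^i)^{-1}\|\le1/\beta_*$ and $\|W(T)\|\le c^*$, produces $K_{\Lambda^i},\lambda_{\Lambda^i}$ independent of $i,N$.

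Finally, the lower bound \eqref{eq:gamma_bar} is bookkeeping. Uniformity of $K_i,\lambda_i,K_{\Lambda^i},\lambda_{\Lambda^i}$ and $\|(R^i)^{-1}\|\le1/\beta_*$ give $\lambda^{(N)}\ge\lambda>0$ and $K^{(N)}\le K\exp\{K_\Lambda/(\lambda_\Lambda\beta_*)\}$, both $N$-free. Since $\bm R$ is block-diagonal with blocks $(R^i)^{-1}$, one has $\|\bm R\|=\max_i\|(R^i)^{-1}\|\le1/\beta_*$ and $\lambda_{\bm R,\min}=\min_i\lambda_{\min}((R^i)^{-1})\ge1/\beta^*$; substituting these four $N$-independent bounds into the ratio in \eqref{eq:gamma_bar} bounds it below by a strictly positive constant, so $\bar\gamma>0$. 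The decisive difficulty throughout is the branch-(i) construction: extracting a uniform exponential bound for $e^{\mathcal A^i t}$ with no lower control on $\Lambda^i$, which forces both the compactness passage from ``uniformly Hurwitz'' to a uniform bound on $P_0^i$ and the carefully tuned composite Lyapunov matrix $\Lambda^i+\delta P_0^i$.
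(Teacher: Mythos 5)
Your proof follows the paper's skeleton---a Lyapunov bound for $e^{(A^i-(R^i)^{-1}\Lambda^i)t}$ under each branch of Assumption~\ref{a:uniform_constants}, a perturbative estimate for $\Lambda_T^i-\Lambda^i$ built on that bound, and bookkeeping for $\bar\gamma$---and your branch (ii) argument and final bookkeeping coincide with the paper's. The genuine divergence is branch (i). The paper disposes of it in one line by asserting that $A^i-(R^i)^{-1}\Lambda^i$ inherits the spectral bound $\Re(\mathrm{eig})\les-\bar c$ from $A^i$ because $(R^i)^{-1}\Lambda^i$ has positive spectrum; that inference is not valid for non-commuting matrices (subtracting a symmetric positive definite matrix from a Hurwitz matrix can even destroy stability), so your composite Lyapunov matrix $\Lambda^i+\delta P_0^i$ supplies an actual proof where the paper has only an assertion, and the mechanism you isolate---the cross term $S^i$ is first order in $\Lambda^i$ while $N^i$ controls it quadratically---is exactly the right one. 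The cost is the extra hypothesis $\sup_{i,N}\|A^i\|<\infty$, which you need for the compactness bound on $P_0^i$ and which is not stated in Assumption~\ref{a:uniform_constants}(i); note, however, that without some such bound the uniform constant $K_1^*$ cannot exist at all (uniformly Hurwitz spectra alone do not control $\|e^{A^it}\|$ uniformly), so you are making explicit a hypothesis the paper also uses tacitly.

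Two further points on the constants $K_{\Lambda^i},\lambda_{\Lambda^i}$. First, the quadratic integral in your Duhamel formula should carry a minus sign, $W(t)=e^{(\mathcal{A}^i)^\top(T-t)}W(T)e^{\mathcal{A}^i(T-t)}-\int_t^T e^{(\mathcal{A}^i)^\top(s-t)}W(s)(R^i)^{-1}W(s)e^{\mathcal{A}^i(s-t)}\,ds$; with a plus sign the comparison you intend fails. Second, and more substantively, the ``nonlinear Gronwall'' does not close on its own: feeding the a priori bound $\|W\|\les c^*$ into the quadratic term yields decay at rate $2\lambda^*-(K_1^*)^2c^*/\beta_*$, which need not be positive. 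To be fair, the paper's own version of this step has a parallel defect (it passes from the semidefinite inequality $W(t)\les e^{(\mathcal{A}^i)^\top(T-t)}W(T)e^{\mathcal{A}^i(T-t)}\les0$ to a norm bound in the wrong direction), so in both cases the honest justification is that the constants in \cite[Lemma 2.3]{Sun-Yong-2024}, invoked for Lemma~\ref{l:solvability_and_convergence_Lambda}(iii), depend only on $K_i,\lambda_i,\|(R^i)^{-1}\|,\|\Lambda^i\|$, all of which you have made uniform. If you want a self-contained argument, the factorized linear equation $\frac{d}{dt}W+W(A^i-(R^i)^{-1}\Lambda^i)+(A^i-(R^i)^{-1}\Lambda^i_T(t))^\top W=0$ eliminates the quadratic term entirely, at the price of requiring an exponential bound on the time-varying closed-loop flow that must be obtained without presupposing the convergence of $\Lambda_T^i$ to $\Lambda^i$.
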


The proofs of Lemmas \ref{l:bound_eigenvalue_lambda} and \ref{l:uniform_estimate_norm} are presented in Section \ref{s:preliminary_results}. The next assumption strengthens Assumption \ref{a:solvability_Riccati_finite} to ensure it holds uniformly in $N$.

\begin{assumption}
\label{a:solvability_Riccati_finite_uniform} 
Let $\bm{Q}$ and $\bar{\gamma}$ be defined as in \eqref{eq:matrix_R_and_Q} and \eqref{eq:gamma_bar}, respectively. Assume that there exists a constant $\gamma\in(0,\bar\gamma)$, such that for all $N \ges 2$,
\[
\bm{Q} + \frac{\gamma}{2} I_{Nd} \in \mathbb{S}^{Nd}_{++}.
\]
Moreover, suppose that the matrix $\bm{Q}$ is uniformly bounded above for all $N \ges 2$; that is, there exists a positive constant $\beta_2$ such that $\|\bm{Q}\| \les \beta_2$ for all $N \ges 2$.
\end{assumption}

\begin{corollary}
\label{c:result_finite_horzion_uniform}
Under Assumptions \ref{a:intial_states}, \ref{a:solvability_Riccati}, and \ref{a:solvability_Riccati_finite_uniform}, Proposition \ref{p:result_finite_time} holds for all $N \ges 2$. 
\end{corollary}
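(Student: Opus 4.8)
The plan is to reduce the uniform claim to a per-$N$ invocation of Proposition \ref{p:result_finite_time}. Fixing an arbitrary $N \ges 2$, I would check that Assumptions \ref{a:intial_states}, \ref{a:solvability_Riccati}, and \ref{a:solvability_Riccati_finite} all hold for this particular $N$, and then apply Proposition \ref{p:result_finite_time} directly. Since Assumptions \ref{a:intial_states} and \ref{a:solvability_Riccati} are hypotheses of the corollary and carry over verbatim, the entire burden is to deduce the per-$N$ condition of Assumption \ref{a:solvability_Riccati_finite} from the single uniform condition of Assumption \ref{a:solvability_Riccati_finite_uniform}.

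Write $r_N := \frac{(\lambda^{(N)})^2}{2 (K^{(N)})^2 \|\bm{R}\|^2}\lambda_{\bm{R}, \min}$ for the right endpoint of the admissible $\gamma$-interval appearing in Assumption \ref{a:solvability_Riccati_finite}. The key structural observation is that $r_N$ is \emph{non-increasing} in $N$. Indeed, from the definitions in \eqref{eq:K_N_lambda_N}, as the index set grows from $[N]$ to $[N+1]$ the maximum $K^{(N)} = \max_{i \in [N]}\{\cdots\}$ can only increase while the minimum $\lambda^{(N)} = \min_{i \in [N]}\lambda_i$ can only decrease; and since $\bm{R} = \diag((R^1)^{-1}, \dots, (R^N)^{-1})$ is block-diagonal, its spectrum is the union of the spectra of the blocks, so $\|\bm{R}\| = \max_{i \in [N]}\|(R^i)^{-1}\|$ is non-decreasing while $\lambda_{\bm{R}, \min} = \min_{i \in [N]}\lambda_{\min}((R^i)^{-1})$ is non-increasing. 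Hence $r_N$ is a ratio of a non-increasing positive numerator to a non-decreasing positive denominator, and is therefore non-increasing.

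Because $r_N$ is non-increasing and strictly positive for each finite $N$, the sequence converges and satisfies $r_N \ges \lim_{M \to \infty} r_M = \liminf_{M \to \infty} r_M = \bar\gamma$ for every $N \ges 2$; the strict positivity of the limit $\bar\gamma$ is furnished by Lemma \ref{l:uniform_estimate_norm} (under Assumption \ref{a:uniform_constants}, which is implicitly in force through the very definition of $\bar\gamma$ in \eqref{eq:gamma_bar}), so the interval $(0, \bar\gamma)$ is nonempty. Given the constant $\gamma \in (0, \bar\gamma)$ supplied by Assumption \ref{a:solvability_Riccati_finite_uniform}, it follows that $\gamma \in (0, \bar\gamma) \subseteq (0, r_N)$ for every $N$, while the same assumption also guarantees $\bm{Q} + \frac{\gamma}{2} I_{Nd} \in \mathbb{S}^{Nd}_{++}$ for every $N$. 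Taking $\gamma^{(N)} := \gamma$ thus verifies Assumption \ref{a:solvability_Riccati_finite} for each fixed $N$, and Proposition \ref{p:result_finite_time} applies; as $N$ was arbitrary, the conclusion holds for all $N \ges 2$.

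I expect the only non-routine point to be the monotonicity of $r_N$. Without it, the hypothesis $\gamma < \bar\gamma = \liminf_N r_N$ would only control $r_N$ for all sufficiently large $N$, leaving the finitely many small values of $N$ in doubt; the monotonicity upgrades the $\liminf$ to an infimum and handles all $N \ges 2$ at once with the single choice $\gamma^{(N)} = \gamma$. Note that the uniform upper bound $\|\bm{Q}\| \les \beta_2$ in Assumption \ref{a:solvability_Riccati_finite_uniform} is not required for this corollary; it is reserved for the uniform convergence and turnpike estimates established later.
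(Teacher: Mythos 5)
Your overall strategy --- fix $N$, deduce the per-$N$ Assumption \ref{a:solvability_Riccati_finite} from the uniform Assumption \ref{a:solvability_Riccati_finite_uniform}, and invoke Proposition \ref{p:result_finite_time} --- is exactly what the corollary calls for (the paper states it without a separate proof, treating this reduction as immediate), and you correctly isolate the one non-routine point: upgrading $\gamma<\bar\gamma=\liminf_{N}r_N$ to $\gamma<r_N$ for \emph{every} $N\ges2$, in your notation $r_N=\frac{(\lambda^{(N)})^2}{2(K^{(N)})^2\|\bm{R}\|^2}\lambda_{\bm{R},\min}$.

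The gap is in the monotonicity claim for $r_N$. Your argument that $K^{(N)}$ can only grow and $\lambda^{(N)}$ can only shrink as the index set enlarges from $[N]$ to $[N+1]$ tacitly assumes that the per-player constants $K_i,\lambda_i,K_{\Lambda^i},\lambda_{\Lambda^i},\|(R^i)^{-1}\|$ are the same in the $N$-player and $(N+1)$-player games. The paper's framework is a triangular array: for each $N$ one is given fresh data $A^i,\sigma^i,R^i,\bm{Q}^i$, $i\in[N]$; in particular $Q^i_{ii}$ (hence $\Lambda^i$, hence $K_i,\lambda_i,K_{\Lambda^i},\lambda_{\Lambda^i}$) is a block of an $Nd\times Nd$ matrix that may change with $N$ --- the paper's own examples have blocks of $\bm{Q}^i$ scaling like $N^{-1}$, and its uniform hypotheses are all quantified ``for all $i\in[N]$ and $N\ges2$.'' Without such nestedness $r_N$ need not be monotone, so $\gamma<\liminf_N r_N$ only controls all sufficiently large $N$; the finitely many remaining $N$ are not covered, and you cannot repair them by shrinking $\gamma^{(N)}$, since a smaller $\gamma^{(N)}$ makes the requirement $\bm{Q}+\frac{\gamma^{(N)}}{2}I_{Nd}\in\mathbb{S}^{Nd}_{++}$ harder rather than easier. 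Note also that Assumption \ref{a:uniform_constants} is necessarily in force (otherwise $\bar\gamma$ in \eqref{eq:gamma_bar} is undefined), and under it Lemma \ref{l:uniform_estimate_norm} already makes $K^{(N)}$ and $\lambda^{(N)}$ independent of $N$; so the max/min part of your monotonicity argument is moot, and the only genuinely $N$-dependent factor in $r_N$ is $\lambda_{\bm{R},\min}/\|\bm{R}\|^2$, whose monotonicity again requires the $R^i$ to be nested across $N$. A clean repair is either to assume nestedness of the per-player data explicitly, or to use the two-sided bound $\beta_*I_d\les R^i\les\beta^*I_d$ together with the $N$-independent $K^{(N)},\lambda^{(N)}$ to replace the $\liminf$ by a genuine uniform lower bound $\inf_{N\ges2}r_N>0$, which is evidently how the paper intends $\bar\gamma$ to be used.
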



\subsection{The ergodic game}
\label{sec:ergodic_game}

In this section, we build on the setting of the finite-horizon game and present the setup and the equivalent results for the ergodic game, which was studied in \cite{Bardi-Priuli-2014}. In this setting, each player $i \in[N]$ aims to minimize a long-time average cost functional over the infinite time horizon $[0, \infty)$. Before introducing it, we set up the set of admissible strategies in the ergodic game.
\begin{definition}[Admissible strategies in the ergodic game]
    A strategy $\alpha^i$ is called an {\rm admissible strategy in the ergodic game} if: 
    \begin{itemize}
        \item It is adapted to $(\mathcal{F}^i_t)_{t \ges 0}$;
        \item For any $t \ges 0$, the expectations $\mathbb{E}[|\alpha^i(t)|^2]$ and $\mathbb{E}[|X^i(t)|^2]$ are finite, where $(X^i(t))_{t \ges 0}$ is the corresponding state dynamics in \eqref{eq:state};
        \item $(X^i(t))_{t \ges 0}$ is ergodic in the following sense: there exists a probability measure $\pi^i$ on $\mathbb{R}^d$ such that $\int_{\mathbb{R}^d} |x|^2 d\pi^i(x) < \infty$ 
        and 
    \begin{align}\label{eq:ergodic}
        \lim_{T \to \infty} \frac{1}{T} \mathbb{E}\Big[\int_0^T h(X_{t}^i) dt\Big] = \int_{\mathbb{R}^d} h(x) d\pi^i(x),
        \end{align}
        locally uniformly with respect to the initial state $x_0^i$ for all polynomial functions  $h$ of degree at most 2.
    \end{itemize}
    We denote by $\mathcal{A}^i[0,\infty)$ the set of all admissible strategies for Player $i$ in the ergodic game. 
\end{definition}

In the ergodic case, the initial state is irrelevant. However, for consistency with the finite-horizon formulation, we present only the cost version that depends on the initial private state and the initial distributions of the other players. The cost function for Player $i \in [N]$ is given by:
\begin{equation}
\label{eq:cost_ergodic}
\mathcal{J}^i_{\infty}(x^i_0; \bm{\eta}^{-i}_0, \bm{\alpha}) := \liminf_{T \to \infty} \frac{1}{T} \mathbb E\Big[\int_0^T \Big(\frac{1}{2} \alpha^i(t)^\top R^i \alpha^i(t) + (\bm{X}(t) - \bar{\bm{x}}_i)^\top \bm{Q}^i (\bm{X}(t) - \bar{\bm{x}}_i) \Big) dt \Big].
\end{equation}
The associated {\it ergodic value (function) of the ergodic game} is defined as
\begin{equation}
\label{eq:value_function_ergodic}
\mathcal{V}_{\infty}^i(x^i_0; \bm{\eta}^{-i}_0) := \mathcal{J}^i_{\infty}(x^i_0; \bm{\eta}^{-i}_0, \bm{\alpha}) := \inf_{\h{\alpha}^i \in \mathcal A^i[0, \infty)} \mathcal{J}^i_{\infty}(x^i_0; \bm{\eta}^{-i}_0, [\bm{\alpha}^{-i};\h{\alpha}^i]).
\end{equation}


The definition of Nash equilibrium in the ergodic game is equivalent to the one in the finite-horizon case, yet for the completeness of presentation, we provide it here.
\begin{definition}[Nash equilibrium in the ergodic game]
Fix an initial  distribution vector $\boldsymbol{\eta}_0=(\eta^1_0,\ldots,\eta^N_0)$ and 
initial state vector of private realizations $\bm{x}_0\in\mathbb{R}^{Nd}$, where each $x^i_0$ is sampled independently from  $\eta^i_0$. We say that a strategy profile $\boldsymbol{\alpha} = (\alpha^{1}, \ldots, \alpha^{N})\in\prod_{j=1}^N\mathcal{A}^j[0,\infty)$ is a 
{\rm 
Nash equilibrium in the ergodic game} if for every $i \in [N]$ and every $\h{\alpha} \in \mathcal{A}^i[0,\infty)$,
\[
\mathcal{J}^i_\infty(x^i_0 ;\boldsymbol{\eta}^{-i}_0,[\boldsymbol{\alpha}^{-i}; \h{\alpha}]) \ges \mathcal{J}^i_\infty(x^i_0 ;\boldsymbol{\eta}^{-i}_0,\boldsymbol{\alpha}).
\]
\end{definition}

The {\it stationary HJB-FP equations} are given by, for $i \in [N]$,
\begin{equation}
\label{eq:HJB_FP_ergodic}
\begin{cases}
\vspace{4pt}
\displaystyle \text{tr}(\varsigma^i D^2 v^i(x)) + H^i(x, \nabla v^i(x)) + f^i(x; \bm{m}^{-i}) = c^i, \qquad&\text{(HJB)} \\
\displaystyle \text{tr}(\varsigma^i D^2 m^i) - \text{div} \Big(m^i \frac{\partial H^i}{\partial p}(x, \nabla v^i(x)) \Big) = 0. &\text{(FP)} 
\end{cases}
\end{equation}
A solution to this system is a triplet $\bm{c}=(c^1,\ldots,c^N)\in\mathbb{R}^N$, $\bm{v}=(v^1,\ldots,v^N):\mathbb{R}^d\to\mathbb{R}^N$, and $\bm{m}=(m^1,\ldots,m^N)\in\mathcal{P}(\mathbb{R}^d)$. 
For each $i$, the two equations are coupled through a strategy profile $(\bm{\alpha}(t))_{t \ges 0}$, where the strategy of Player $i$ is defined by
\[
\alpha^i(t) := a^{i,*}\big(\nabla v^i(t, X^i(t))\big),
\]
with $X^i$ denoting the state process of Player $i$ under the strategy profile $\bm{\alpha}$. 
The first equation is the ergodic HJB equation, derived from a dynamic programming principle. Its solution is the couple $c^i$ and $v^i(x)$, where the former stands for the {\it value} for Player $i$ under the strategy profile $\bm{\alpha}$ and is equal to $\mathcal{V}_{\infty}^i(x^i_0; \bm{\eta}^{-i}_0)$ and, as expected in ergodic problems, it is independent of the initial positions of the players. 
The {\it relative value} $v^i$ encodes the ``cost-to-go'' from each state, reflecting both immediate costs and the asymptotic behavior of the system. While it plays a role similar to the value function in the finite-horizon case, it is more accurately characterized as the limit of value function differences in the discounted game as the discount rate approaches zero. The solution for the second equation $m^i$ is the stationary distribution of Player $i$'s state under the strategy profile $\bm{\alpha}$.

Similarly to the finite-horizon case, we consider the system of stationary HJB-FP equations \eqref{eq:HJB_FP_ergodic}, which admits solutions with $\{(v^i, m^i)\}_{i \in[N]}$ of the following explicit form
\begin{equation}
\label{eq:ansatz_value_density_ergodic}
\begin{aligned}
& v^i(x) = \frac{1}{2} x^\top \Lambda^i x + (\rho^i)^\top x , \\
& m^i = \mathcal N \big(\mu^i, (\Sigma^i)^{-1} \big),
\end{aligned}
\end{equation}
where $\Lambda^i \in \mathbb{S}^d$, $\Sigma^i \in \mathbb{S}^d_{++}$, and $\mu^i, \rho^i \in \mathbb{R}^d$ for all $i \in [N]$. Consequently, the system of HJB-FP equations \eqref{eq:HJB_FP_ergodic} is linked to the following system of equations, which, in this case, are algebraic: 
\begin{equation}
\label{eq:Riccati_system_ergodic}
\begin{cases}
\vspace{4pt}
\displaystyle \Lambda^i A^i + (A^i)^\top \Lambda^i - \Lambda^i (R^i)^{-1} \Lambda^i + 2 Q^i_{ii} = 0, \\
\vspace{4pt}
\displaystyle \Sigma^i \varsigma^i \Sigma^i + \Sigma^i \left(A^i - (R^i)^{-1} \Lambda^i \right) = 0, \\
\vspace{4pt}
\displaystyle \left(A^i - (R^i)^{-1} \Lambda^i \right) \mu^i - (R^i)^{-1} \rho^i = 0, \\
\vspace{4pt}
\displaystyle (A^i- (R^i)^{-1}\Lambda^i )^\top \rho^i + 2 F_{1}^i(\bm{\mu}^{-i}) = 0, \\
\displaystyle \text{tr}(\varsigma^i \Lambda^i) - \frac{1}{2} (\rho^i)^\top (R^i)^{-1} \rho^i + F_0^i(\bm{\mu}^{-i}, \Sigma^i) = c^i.
\end{cases}
\end{equation}
The solution to the above system is $\{\Lambda^i, \Sigma^i, \mu^i, \rho^i, c^i: i \in[N]\}$ with 
$c^i \in \mathbb{R}$ for all $i \in [N]$. In the system of equations \eqref{eq:Riccati_system_ergodic}, the coefficients $\{F_{1}^i, F_0^i: i \in[N]\}$ are also derived from the running cost functional and are explicitly given in \eqref{eq:F_t}. 

\begin{remark}
\label{r:equivalance_Riccati_ergodic}
It is worth noting that the system of algebraic equations \eqref{eq:Riccati_system_ergodic} is equivalent to the one obtained in the work of Bardi and Priuli \cite{Bardi-Priuli-2014}, namely:
\begin{equation*}
\begin{cases}
\vspace{4pt}
\displaystyle \Lambda^i = R^i(\varsigma^i \Sigma^i + A^i), \\
\vspace{4pt}
\displaystyle \rho^i = - R^i \varsigma^i \Sigma^i \mu^i, \\
\vspace{4pt}
\displaystyle \Sigma^i \varsigma^i R^i \varsigma^i \Sigma^i - (A^i)^\top R^i A^i = 2 Q^i_{ii}, \\
\vspace{4pt}
\displaystyle - \Sigma^i \varsigma^i R^i \varsigma^i \Sigma^i \mu^i = 2 F_{1}^i(\bm{\mu}^{-i}), \\
\displaystyle \frac{1}{2} (\mu^i)^\top  \Sigma^i \varsigma^i R^i \varsigma^i \Sigma^i \mu^i - \text{tr}(\varsigma^i R^i \varsigma^i \Sigma^i + \varsigma^i R^i A^i) + c^i = F_0^i(\bm{\mu}^{-i}, \Sigma^i).
\end{cases}
\end{equation*}
We begin with the equation for $\Sigma^i$ in \eqref{eq:Riccati_system_ergodic}. Since $\Sigma^i$ is positive definite, we readily obtain $\Lambda^i = R^i(\varsigma^i \Sigma^i + A^i)$. Next, from the third equation for $\mu^i$ in \eqref{eq:Riccati_system_ergodic} and using the identity
$$A^i - (R^i)^{-1} \Lambda^i = - \varsigma^i \Sigma^i,$$
we derive $\rho^i = - R^i \varsigma^i \Sigma^i \mu^i$. 
Substituting the identity $\Lambda^i = R^i(\varsigma^i \Sigma^i + A^i)$ into the first equation for $\Lambda^i$ in \eqref{eq:Riccati_system_ergodic}, and applying the symmetry relation $$R^i(\varsigma^i \Sigma^i + A^i) = (\Sigma^i \varsigma^i + (A^i)^\top) R^i,$$ 
we obtain the equation 
$$\Sigma^i \varsigma^i R^i \varsigma^i \Sigma^i - (A^i)^\top R^i A^i = 2 Q^i_{ii}.$$ 
Similarly, by plugging $\rho^i = - R^i \varsigma^i \Sigma^i \mu^i$ into the fourth equation of \eqref{eq:Riccati_system_ergodic} and using the identity $(A^i - (R^i)^{-1} \Lambda^i)^\top = - \Sigma^i \varsigma^i$ again, we deduce 
$$- \Sigma^i \varsigma^i R^i \varsigma^i \Sigma^i \mu^i = 2 F_{1}^i(\bm{\mu}^{-i}).$$ 
Finally, using the previously established expressions for $\Lambda^i$ and $\rho^i$, it is immediate that the last equation in \eqref{eq:Riccati_system_ergodic} is equivalent to
$$\frac{1}{2} (\mu^i)^\top  \Sigma^i \varsigma^i R^i \varsigma^i \Sigma^i \mu^i - \text{tr}(\varsigma^i R^i \varsigma^i \Sigma^i + \varsigma^i R^i A^i) + c^i = F_0^i(\bm{\mu}^{-i}, \Sigma^i).$$
\end{remark}

In order to obtain the unique solvability of the algebraic system \eqref{eq:Riccati_system_ergodic} and to derive the corresponding results for the ergodic game in $N$-player games, we introduce the following assumption. To proceed, we define the matrix $\bm{M} \in \mathbb R^{Nd \times Nd}$ as
\begin{equation}
\label{eq:M_matrix}
\bm{M} = (M_{ij})_{i,j \in[N]}, \quad M_{ij} = Q_{ij}^i + \frac{1}{2} \delta_{ij} (A^i)^\top R^i A^i \in \mathbb R^{d \times d},
\end{equation}
where $\delta_{ij}$ is the Kronecker delta.

\begin{assumption}
\label{a:solvability_Riccati_ergodic}
For each $i \in [N]$, every symmetric and positive definite solution $\Sigma$ of the algebraic Riccati equation 
$$\Sigma \varsigma^i R^i \varsigma^i \Sigma - (A^i)^\top R^i A^i = 2 Q^i_{ii}$$
is also a solution to the Sylvester equation
$$\Sigma \varsigma^i R^i - R^i \varsigma^i \Sigma = R^i A^i - (A^i)^\top R^i.$$
Moreover, the block matrix $\bm{M}$ in \eqref{eq:M_matrix} is invertible. 
\end{assumption}

\begin{remark}
\label{r:assumption_solvability_ergodic}
Assumption \ref{a:solvability_Riccati_ergodic} is adopted from \cite{Bardi-Priuli-2014} to guarantee the solvability of the algebraic system \eqref{eq:Riccati_system_ergodic}. The condition on the solution to the algebraic Riccati equation for $\Sigma$
is imposed to ensure that $\Lambda^i$ is symmetric for each $i \in [N]$. In addition, the condition on the matrix $\bm{M}$ is required to guarantee the unique solvability of $\{\mu^i: i\in [N]\}$.
\end{remark}

Based on the assumptions introduced above, we now establish the following results concerning the ergodic game in the context of $N$-player interactions. These results were originally proved in \cite[Theorem 3.1]{Bardi-Priuli-2014}.

\begin{proposition}
\label{p:result_ergodic}
Suppose that the $N$-player game with dynamics given by \eqref{eq:state} and cost functionals defined in \eqref{eq:cost_ergodic} satisfies Assumptions \ref{a:solvability_Riccati} and \ref{a:solvability_Riccati_ergodic}, then the following facts hold:
\begin{itemize}
\item[(i)] (System of algebraic equations) There exists a unique solution $\{\Lambda^i, \Sigma^i, \mu^i, \rho^i, c^i: i \in[N]\}$ to the system of algebraic equations \eqref{eq:Riccati_system_ergodic}. In addition, $\Lambda^i \in \mathbb S^{d}_{++}$ for all $i \in [N]$.

\item[(ii)] (HJB-FP and Nash equilibrium) The system of stationary HJB-FP equations \eqref{eq:HJB_FP_ergodic} admits a unique solution $\{(v^i, m^i, c^i): i \in[N]\}$ in the form of \eqref{eq:ansatz_value_density_ergodic}. Moreover, the strategy profile 
\begin{equation}
\label{eq:optimal_control_ergodic}
\begin{aligned}
\alpha^{i}(t) &= a^{i,*}(\partial_x v^i(X^i(t))) = (R^i)^{-1} \partial_x v^i(X^{i}(t)) \\
& = (R^i)^{-1} \Lambda^i X^{i}(t) + (R^i)^{-1} \rho^i, \quad t \ges 0, \, i \in [N],
\end{aligned}
\end{equation}
is a Nash equilibrium with open-loop feedback strategies in the ergodic game, where $a^{i,*}$ is given in \eqref{eq:a_i_star}, and $(X^i(t))_{t \ges 0}$ is the corresponding equilibrium state path governed by the SDE
\begin{equation}
\label{eq:optimal_path_ergodic}
\begin{cases}
\vspace{4pt}
\displaystyle
d X^{i}(t) =\big((A^i - (R^i)^{-1} \Lambda^i) X^{i}(t) - (R^i)^{-1} \rho^i \big) dt + \sigma^i d W^i(t), \quad t \ges 0, \\
\displaystyle X^{i}(0) = x_0^i.
\end{cases}
\end{equation}
This SDE admits an explicit solution:
\begin{equation*}
\begin{aligned}
X^i(t) & = e^{(A^i-(R^i)^{-1} \Lambda^i) t} x_0^i - \int_0^t  e^{(A^i-(R^i)^{-1} \Lambda^i) (t-s)} (R^i)^{-1} \rho^i ds \\
& \hspace{0.5in} + \int_0^t  e^{(A^i-(R^i)^{-1} \Lambda^i) (t-s)} \sigma^i d W^i(s), \quad \forall t \ges 0.
\end{aligned}
\end{equation*}
Furthermore, for each $i \in [N]$, the equilibrium state process $X^i$ of Player $i$ in \eqref{eq:optimal_path_ergodic} satisfies 
$$m^i(t) = \mathbb{P} \circ (X^i(t))^{-1}, \quad \forall t \ges 0,$$
where $m^i$ is the solution to the FP equation in \eqref{eq:HJB_FP_ergodic}, i.e., the invariant measure of $X^i$ coincides with the measure $m^i$.

\item[(iii)] (Value and relative value) For each $i \in [N]$, the relative value function of Player $i$ in the ergodic game is given explicitly by
$$v^i(x) = \frac{1}{2} x^\top \Lambda^i x + (\rho^i)^\top x,$$
and the value of Player $i$ is
$$\mathcal{V}_{\infty}^i(x; \bm{m}_0^{-i}) = c^i = \text{tr}(\varsigma^i \Lambda^i) - \frac{1}{2} (\rho^i)^\top (R^i)^{-1} \rho^i + F_0^i(\bm{\mu}^{-i}, \Sigma^i), \quad \forall x \in \mathbb R^d,$$
where 
$\{\Lambda^i, \Sigma^i, \rho^i, \mu^i: i \in [N] \}$ is the solution to the system \eqref{eq:Riccati_system_ergodic}.
\end{itemize}
\end{proposition}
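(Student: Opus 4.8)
The plan is to establish the three parts in order, taking advantage of the fact that the algebraic system \eqref{eq:Riccati_system_ergodic} decouples into a triangular cascade and that, by Remark \ref{r:equivalance_Riccati_ergodic}, it is equivalent to the system solved in \cite{Bardi-Priuli-2014}. For part (i) I would first treat the first equation of \eqref{eq:Riccati_system_ergodic} in isolation: it is precisely the algebraic Riccati equation \eqref{eq:Lambda_i}, so Lemma \ref{l:solvability_and_convergence_Lambda}(ii) already supplies a unique solution $\Lambda^i \in \mathbb S^d_{++}$ together with the decisive fact that $\widehat A^i := A^i - (R^i)^{-1}\Lambda^i$ is Hurwitz. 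The solvability of all remaining unknowns will be inherited from this stability.

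With $\Lambda^i$ fixed, the second equation factors, using invertibility of $\Sigma^i$, into $\varsigma^i \Sigma^i + \widehat A^i = 0$, which determines $\Sigma^i = -(\varsigma^i)^{-1}\widehat A^i$ uniquely. Its symmetry---equivalently that of $\Lambda^i = R^i(\varsigma^i \Sigma^i + A^i)$---is exactly what the Sylvester-equation clause of Assumption \ref{a:solvability_Riccati_ergodic} enforces, as explained in Remark \ref{r:assumption_solvability_ergodic}; once symmetry is secured, positive definiteness follows because $\Sigma^i$ is then the inverse of the invariant covariance of the stable Ornstein--Uhlenbeck dynamics driven by $\widehat A^i$, i.e. it solves the Lyapunov equation $\widehat A^i (\Sigma^i)^{-1} + (\Sigma^i)^{-1}(\widehat A^i)^\top + 2\varsigma^i = 0$ with $\widehat A^i$ Hurwitz and $\varsigma^i$ positive definite. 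For the means I would eliminate $\rho^i$ between the third and fourth equations: the third gives $\rho^i = R^i \widehat A^i \mu^i$, and substituting into the fourth, together with the explicit form of $F_1^i$ in \eqref{eq:F_t} and the identity $(\widehat A^i)^\top R^i \widehat A^i = (A^i)^\top R^i A^i + 2Q_{ii}^i$, yields a linear system $2\bm M \bm\mu = b$ for $\bm\mu = (\mu^1,\dots,\mu^N)$ whose coefficient matrix is precisely the $\bm M$ of \eqref{eq:M_matrix}. Invertibility of $\bm M$, assumed in Assumption \ref{a:solvability_Riccati_ergodic}, then gives unique $\{\mu^i\}$, hence unique $\{\rho^i\}$, and each $c^i$ is read off directly from the last equation, completing (i). (Alternatively, (i) may simply be cited from \cite[Theorem 3.1]{Bardi-Priuli-2014} through the equivalence of Remark \ref{r:equivalance_Riccati_ergodic}.)

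For part (ii) I would substitute the ansatz \eqref{eq:ansatz_value_density_ergodic} into \eqref{eq:HJB_FP_ergodic} and separate the quadratic, linear, and constant terms in $x$, verifying that $(v^i,m^i,c^i)$ solves the stationary HJB--FP system if and only if $\{\Lambda^i,\Sigma^i,\mu^i,\rho^i,c^i\}$ solves \eqref{eq:Riccati_system_ergodic}; uniqueness within the ansatz class is then immediate from (i). To show that the candidate feedback \eqref{eq:optimal_control_ergodic} is a Nash equilibrium I would argue player by player: freezing all opponents $j \neq i$ at equilibrium fixes the measures $\bm m^{-i}$ (since each player's dynamics is driven only by its own noise and control), so that Player $i$ faces a standard ergodic linear-quadratic control problem whose HJB equation is exactly the first line of \eqref{eq:HJB_FP_ergodic}. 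For an arbitrary admissible deviation $\widehat\alpha^i$ I would apply It\^o's formula to $v^i(X^i(t))$, use that the Hamiltonian $H^i$ is attained at $a^{i,*}(\nabla v^i)$, divide by $T$ and send $T\to\infty$; the ergodicity encoded in the admissibility class forces $\tfrac{1}{T}\mathbb E[v^i(X^i(T)) - v^i(x_0^i)] \to 0$, giving $\mathcal J^i_\infty \ges c^i$ with equality precisely at $\alpha^i$. The explicit form of the state path comes from solving the linear SDE \eqref{eq:optimal_path_ergodic} by variation of constants, and matching its Gaussian invariant law to $\mathcal N(\mu^i,(\Sigma^i)^{-1})$ reduces to the mean and covariance relations already encoded in the third and second Riccati equations. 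Part (iii) is then a direct read-off: $v^i$ is the relative value and the ergodic constant $c^i$ is the value.

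I expect the main obstacle to lie in the ergodic verification argument of part (ii) rather than in the algebraic solvability. Controlling the long-time-average cost of arbitrary admissible deviations requires a careful justification that the time-averaged boundary term vanishes and that the quadratic growth of $v^i$ is compatible with the second-moment and ergodicity conditions in the definition of $\mathcal A^i[0,\infty)$. A secondary delicate point is the symmetry and positive definiteness of $\Sigma^i$ in part (i), which genuinely uses both clauses of Assumption \ref{a:solvability_Riccati_ergodic}; since the full statement is \cite[Theorem 3.1]{Bardi-Priuli-2014}, the cleanest route is to borrow that result for (i) and concentrate the new work on the verification steps.
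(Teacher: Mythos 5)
The paper does not actually prove this proposition: it is imported wholesale from \cite[Theorem 3.1]{Bardi-Priuli-2014} via the equivalence in Remark \ref{r:equivalance_Riccati_ergodic} (see Remark \ref{r:solvability_Riccati_ergodic}), so your fallback of citing that result for part (i) coincides with what the authors do, and your verification sketch for parts (ii)--(iii) is consistent with the intended (uncited, standard) argument. Your algebra for the mean system is also correct: eliminating $\rho^i$ via $\rho^i = R^i(A^i-(R^i)^{-1}\Lambda^i)\mu^i$ and using $(A^i-(R^i)^{-1}\Lambda^i)^\top R^i (A^i-(R^i)^{-1}\Lambda^i) = (A^i)^\top R^i A^i + 2Q^i_{ii}$ does reduce the fourth equation to $\bm{M}\bm{\mu}=\bm{q}$ with $\bm{M}$ as in \eqref{eq:M_matrix}.

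There is, however, one step in your self-contained route for part (i) that does not go through as written: the symmetry of $\Sigma^i$. You solve the $\Lambda$-equation first (Lemma \ref{l:solvability_and_convergence_Lambda}(ii)), set $\Sigma^i := -(\varsigma^i)^{-1}\bigl(A^i-(R^i)^{-1}\Lambda^i\bigr)$, and invoke the Sylvester clause of Assumption \ref{a:solvability_Riccati_ergodic} to get symmetry. But that clause is a conditional whose hypothesis is that $\Sigma$ is \emph{already} a symmetric positive definite solution of the quadratic matrix equation $\Sigma\varsigma^i R^i\varsigma^i\Sigma-(A^i)^\top R^iA^i=2Q^i_{ii}$; it cannot be applied to a candidate whose symmetry is precisely what you are trying to establish, and your subsequent Lyapunov argument for positive definiteness also presupposes symmetry. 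The cascade has to be run in the opposite order, as in the cited reference: first obtain the unique symmetric positive definite solution $\Sigma^i$ of the quadratic matrix equation (this uses $\varsigma^iR^i\varsigma^i\succ0$ and $(A^i)^\top R^iA^i+2Q^i_{ii}\succ0$, via \cite[Proposition 2.4]{Bardi-Priuli-2014}), then use the Sylvester clause to conclude that $\Lambda^i := R^i(\varsigma^i\Sigma^i+A^i)$ is symmetric, verify it solves \eqref{eq:Lambda_i}, and finally identify it with the unique positive definite solution from Lemma \ref{l:solvability_and_convergence_Lambda}(ii). With that reordering the rest of your argument (the linear system for $\bm{\mu}$, the read-off of $\rho^i$ and $c^i$, and the It\^o/ergodic verification with the vanishing time-averaged boundary term) is sound.
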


\begin{remark}
\label{r:solvability_Riccati_ergodic}
The solvability of the algebraic system \eqref{eq:Riccati_system_ergodic}, as well as the characterization of the value $\mathcal{V}_{\infty}^i$ for the $i$-th player in the $N$-player game over infinite-horizon, were established in \cite{Bardi-Priuli-2014} under Assumptions \ref{a:solvability_Riccati} and \ref{a:solvability_Riccati_ergodic}.
In Proposition \ref{p:result_ergodic}, we present and summarize the equilibrium strategy and equilibrium path for each player in the ergodic game. These results will serve as fundamental building blocks in the proof of turnpike properties in this paper.
\end{remark}


\subsection{The turnpike results}
\label{s:main_results}

This section includes the main results of this work. To establish the desired turnpike properties for the pairs of equilibrium strategies and equilibrium state trajectories arising from the finite-horizon and ergodic formulations of the $N$-player differential game, it is crucial to analyze the convergence of the solutions to the systems \eqref{eq:Riccati_system} and \eqref{eq:Riccati_system_ergodic}.

To this end, we begin by establishing the convergence results between the solutions to the systems \eqref{eq:Riccati_system} and \eqref{eq:Riccati_system_ergodic} in the following proposition. Specifically, we provide essential estimates for the parameter functions $\Lambda^i_T, \Sigma^i_T, \mu^i_T$ and $\rho^i_T$ encountered in the finite-horizon game \eqref{eq:value_function_finite_time}, as well as the corresponding constant matrices and vectors $\Lambda^i, \Sigma^i, \mu^i$ and $\rho^i$ arising in the ergodic game \eqref{eq:value_function_ergodic}. We anticipate a natural convergence between these quantities. Moreover, these estimates constitute the principal technical challenge for proving the turnpike properties. The detailed and rigorous proof is deferred to Section \ref{s:convergence_Riccati}.

To proceed, we denote vectors $\h{\bm{\rho}}(t) \in \mathbb{R}^{Nd}$ and $\h{\bm{\mu}}(t) \in \mathbb{R}^{Nd}$ 
$$\h{\bm{\rho}}(t) := \begin{bmatrix}
\vspace{4pt} \rho_T^1(t) - \rho^1 \\
\vspace{4pt} \rho_T^2(t) - \rho^2 \\
\vspace{4pt} \vdots \\
\vspace{4pt} \rho_T^N(t) - \rho^N
\end{bmatrix}, \quad \h{\bm{\mu}}(t) := \begin{bmatrix}
\vspace{4pt} \mu_T^1(t) - \mu^1 \\ \vspace{4pt} \mu_T^2(t) - \mu^2 \\ \vspace{4pt} \vdots \\ 
\vspace{4pt} \mu_T^N(t) - \mu^N
\end{bmatrix}$$
for all $t \in [0, T]$, and introduce the following assumption, which is required  for uniform-in-$N$ results.

\begin{assumption}
\label{a:uniform_constant_2}
There exists a constant $\beta_3>0$ such that for all $i \in [N]$ and $N \ges 2$, the quantities
$$\max_{j \in [N]}|\bar{x}_j^i|, \quad \|\varsigma^i\|, \quad \|(\Sigma_0^i)^{-1}\|, \quad |\mu^i_0|, \quad \|(\Sigma^i)^{-1}\|, \quad |\mu^i|, \quad |\rho^i|$$
and
$$\sum_{i = 1}^N \sup_{j \neq i} \|Q_{ij}^i\|, \quad \sum_{i = 1}^N \sum_{j \neq i} \sup_{k \neq j, i} \|Q_{jk}^i\|, \quad \sum_{i = 1}^N \sum_{k \neq i} \sup_{j \neq k, i} \|Q_{jk}^i\|, \quad \sum_{j \neq i} \|Q_{jj}^i\|, \quad \sum_{i=1}^{N} \sup_{j \neq i} \|Q_{jj}^i\|$$
are all bounded above by $\beta_3$, where $\{\Sigma^i, \mu^i, \rho^i: i \in[N]\}$ is the solution to the corresponding subsystem in the algebraic system \eqref{eq:Riccati_system_ergodic}.
\end{assumption}

\begin{proposition}
\label{p:convergence_of_Riccati}
Fix $N \ges 2$, and suppose that Assumptions 
\ref{a:solvability_Riccati}, \ref{a:solvability_Riccati_finite}, and \ref{a:solvability_Riccati_ergodic} 
hold. Let $\{\Lambda^i_T, \Sigma^i_T, \mu^i_T, \rho^i_T: i \in[N]\}$ denote the solution to the system of equations \eqref{eq:Riccati_system}, and let $\{\Lambda^i, \Sigma^i, \mu^i, \rho^i: i \in[N]\}$ be the solution to the algebraic system \eqref{eq:Riccati_system_ergodic}. Then, there exist some constants $\widehat{K}^{(N)} > 0$ and $\widehat{\lambda}^{(N)} > 0$, independent of $t$ and $T$, such that the following exponential convergence estimates hold:
\begin{equation}
\label{eq:estimate_Lambda_i}
\sup_{i \in [N]}\|\Lambda^i_T(t) - \Lambda^i\| \les
\widehat{K}^{(N)} e^{-\widehat{\lambda}^{(N)}(T-t)},
\end{equation}
\begin{equation}
\label{eq:estimate_Sigma_i}
\sup_{i \in [N]}\|(\Sigma^i_T(t))^{-1} - (\Sigma^i)^{-1} \| \les \widehat{K}^{(N)} \big(e^{-\widehat{\lambda}^{(N)} t} + e^{-\widehat{\lambda}^{(N)}(T-t)} \big),
\end{equation}
and
\begin{equation}
\label{eq:estimate_mu_rho_i}
\sup_{i \in [N]} \big(|\mu^i_T(t) - \mu^i| + |\rho^i_T(t) - \rho^i| \big) \les \widehat{K}^{(N)} \big(e^{-\widehat{\lambda}^{(N)} t} + e^{-\widehat{\lambda}^{(N)} (T-t)} \big)
\end{equation}
for all $T$ and $t \in [0, T]$.

Moreover, if in addition, Assumptions \ref{a:uniform_constants}, \ref{a:solvability_Riccati_finite_uniform}, and \ref{a:uniform_constant_2} hold, then we get the existence of constants $\widehat{K}> 0$ and $\widehat{\lambda} > 0$, independent of $t,T$, and $N$, such that the following exponential convergence estimates hold uniformly in $N$:
\begin{equation}
\label{eq:estimate_Lambda_i_uniform}
\sup_{N} \sup_{i \in [N]}\|\Lambda^i_T(t) - \Lambda^i\| \les
\widehat{K} e^{-\widehat{\lambda}(T-t)},
\end{equation}
\begin{equation}
\label{eq:estimate_Sigma_i_uniform}
\sup_{N}\sup_{i \in [N]}\|(\Sigma^i_T(t))^{-1} - (\Sigma^i)^{-1} \| \les \widehat{K} \big(e^{-\widehat{\lambda} t} + e^{-\widehat{\lambda}(T-t)} \big),
\end{equation}
and
\begin{equation}
\label{eq:estimate_mu_rho_i_uniform}
\sup_{N} \frac{1}{\sqrt{N}} \big(|\h{\bm{\mu}}(t)| + |\h{\bm{\rho}}(t)| \big) \les \widehat{K} \big(e^{-\widehat{\lambda} t} + e^{- \widehat{\lambda} (T-t)} \big)
\end{equation}
for all $T$ and $t \in [0, T]$.
\end{proposition}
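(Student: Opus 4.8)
The plan is to handle the five-component Riccati systems in their natural order of decoupling: the $\Lambda$-equations of \eqref{eq:Riccati_system} and \eqref{eq:Riccati_system_ergodic} are self-contained, while the remaining equations depend on them only through the time-varying drift $\bar A^i_T(t):=A^i-(R^i)^{-1}\Lambda^i_T(t)$ and its limit $\bar A^i:=A^i-(R^i)^{-1}\Lambda^i$. The estimate \eqref{eq:estimate_Lambda_i} is then immediate: it is exactly Lemma~\ref{l:solvability_and_convergence_Lambda}(iii), taking $\widehat K^{(N)}=\max_i K_{\Lambda^i}$ and $\widehat\lambda^{(N)}=\min_i\lambda_{\Lambda^i}$. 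The single most useful preliminary tool, which I would establish first, is a transition-matrix bound for $\bar A^i_T$. Writing $\bar A^i_T(t)=\bar A^i+E^i_T(t)$ with $\|E^i_T(t)\|\les\|(R^i)^{-1}\|K_{\Lambda^i}e^{-\lambda_{\Lambda^i}(T-t)}$, a Gronwall/perturbation argument based on the exponential stability \eqref{eq:norm_A_i_estimate} of $\bar A^i$ shows that the state-transition matrix $\Phi^i_T(t,s)$ of $\dot x=\bar A^i_T(t)x$ satisfies $\|\Phi^i_T(t,s)\|\les K^{(N)}e^{-\lambda^{(N)}(t-s)}$ for $t\ges s$; crucially, the amplification factor produced by the cumulative perturbation $\int_s^t\|E^i_T\|\les(K_{\Lambda^i}/\lambda_{\Lambda^i})\|(R^i)^{-1}\|$ is precisely the quantity appearing inside the definition of $K^{(N)}$ in \eqref{eq:K_N_lambda_N}, with rate $\lambda^{(N)}$.

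For the covariance estimate \eqref{eq:estimate_Sigma_i} I would pass to $P^i_T:=(\Sigma^i_T)^{-1}$ and $P^i:=(\Sigma^i)^{-1}$. Being the covariance of the explicit Gaussian state \eqref{eq:trajectory}, $P^i_T$ solves the Lyapunov ODE $\dot P^i_T=\bar A^i_T P^i_T+P^i_T(\bar A^i_T)^\top+2\varsigma^i$ with $P^i_T(0)=(\Sigma^i_0)^{-1}$, while $P^i$ solves the algebraic Lyapunov equation $\bar A^i P^i+P^i(\bar A^i)^\top+2\varsigma^i=0$. The difference $D^i:=P^i_T-P^i$ then satisfies the linear matrix ODE $\dot D^i=\bar A^i_T D^i+D^i(\bar A^i_T)^\top+G^i$ with source $G^i=E^i_T P^i+P^i(E^i_T)^\top$ obeying $\|G^i(t)\|\les C e^{-\lambda^{(N)}(T-t)}$ and fixed initial mismatch $D^i(0)=(\Sigma^i_0)^{-1}-(\Sigma^i)^{-1}$. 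Variation of parameters gives $\|D^i(t)\|\les\|\Phi^i_T(t,0)\|^2\|D^i(0)\|+\int_0^t\|\Phi^i_T(t,s)\|^2\|G^i(s)\|\,ds$; the first term yields the $e^{-\lambda^{(N)} t}$ contribution (the covariance forgets its initialization) and the explicit exponential integral in the second produces the $e^{-\lambda^{(N)}(T-t)}$ contribution, which is \eqref{eq:estimate_Sigma_i}.

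The heart of the proof is \eqref{eq:estimate_mu_rho_i}. Subtracting the third and fourth equations of \eqref{eq:Riccati_system_ergodic} from those of \eqref{eq:Riccati_system} and using the affine structure of $F^i_1$ in \eqref{eq:F_t}, the stacked differences $\widehat{\bm\mu}=\bm\mu_T-\bm\mu$ and $\widehat{\bm\rho}=\bm\rho_T-\bm\rho$ solve the linear forward-backward system
\begin{equation*}
\tfrac{d}{dt}\widehat{\bm\mu}=\bm B_T\widehat{\bm\mu}-\bm R\widehat{\bm\rho}+\bm e_\mu,\qquad \tfrac{d}{dt}\widehat{\bm\rho}=-2\bm Q\widehat{\bm\mu}-\bm B_T^\top\widehat{\bm\rho}-\bm e_\rho,
\end{equation*}
where $\bm B_T=\diag(\bar A^i_T)$, the matrices $\bm R,\bm Q$ are those of \eqref{eq:matrix_R_and_Q}, the sources $\bm e_\mu=(E^i_T\mu^i)_i$ and $\bm e_\rho=((E^i_T)^\top\rho^i)_i$ decay like $e^{-\lambda^{(N)}(T-t)}$, and the boundary data are \emph{split}: $\widehat{\bm\mu}(0)=\bm\mu_0-\bm\mu$ and $\widehat{\bm\rho}(T)=-\bm\rho$. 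Here I would run the duality/energy argument alluded to in Remark~\ref{r:assumption_Q}. Differentiating the pairing $\langle\widehat{\bm\rho},\widehat{\bm\mu}\rangle$, the skew-adjoint terms cancel and one gets $\tfrac{d}{dt}\langle\widehat{\bm\rho},\widehat{\bm\mu}\rangle=-2\langle\bm Q\widehat{\bm\mu},\widehat{\bm\mu}\rangle-\langle\bm R\widehat{\bm\rho},\widehat{\bm\rho}\rangle-\langle\bm e_\rho,\widehat{\bm\mu}\rangle+\langle\widehat{\bm\rho},\bm e_\mu\rangle$. Integrating over $[0,T]$, invoking the coercivity $2\langle\bm Q x,x\rangle\ges-\gamma^{(N)}|x|^2$ from Assumption~\ref{a:solvability_Riccati_finite} and $\langle\bm R y,y\rangle\ges\lambda_{\bm R,\min}|y|^2$, and combining with the forward control $\|\widehat{\bm\mu}\|_{L^2}\lesssim (K^{(N)}\|\bm R\|/\lambda^{(N)})\|\widehat{\bm\rho}\|_{L^2}+(\text{data})$ coming from the transition-matrix bound, the quadratic term in $\widehat{\bm\rho}$ can be absorbed precisely because $\gamma^{(N)}<\tfrac{(\lambda^{(N)})^2}{2(K^{(N)})^2\|\bm R\|^2}\lambda_{\bm R,\min}$. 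This yields the a priori energy bound $\|\widehat{\bm\mu}\|_{L^2}^2+\|\widehat{\bm\rho}\|_{L^2}^2\lesssim|\widehat{\bm\mu}(0)|^2+|\widehat{\bm\rho}(T)|^2+(\text{source})$, after using Young's inequality on the boundary cross-terms $\langle\widehat{\bm\rho}(0),\widehat{\bm\mu}(0)\rangle$ and $\langle\widehat{\bm\rho}(T),\widehat{\bm\mu}(T)\rangle$. I would then bootstrap from this $L^2$ control to pointwise two-sided decay by reinserting it into the variation-of-parameters representations: the forward flow of $\widehat{\bm\mu}$ transports the $t=0$ data with rate $e^{-\lambda^{(N)} t}$, the backward flow of $\widehat{\bm\rho}$ transports the $t=T$ data with rate $e^{-\lambda^{(N)}(T-t)}$, and the exponentially small sources feed only the $e^{-\lambda^{(N)}(T-t)}$ branch. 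I expect this closing of the coupled forward-backward estimate to be the \emph{main obstacle}: the drift matrices $\bar A^i_T$ are Hurwitz but not pointwise dissipative (one cannot assume $\bar A^i_T+(\bar A^i_T)^\top\prec0$), so only the integrated transition-matrix stability is available, and the exact matching of the smallness threshold on $\gamma^{(N)}$ against the absorption constant is what makes Assumption~\ref{a:solvability_Riccati_finite} the right hypothesis.

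For the uniform-in-$N$ statements \eqref{eq:estimate_Lambda_i_uniform}--\eqref{eq:estimate_mu_rho_i_uniform}, I would rerun the same three steps while keeping every constant explicit. Under Assumptions~\ref{a:uniform_constants} and \ref{a:solvability_Riccati_finite_uniform}, Lemmas~\ref{l:bound_eigenvalue_lambda} and \ref{l:uniform_estimate_norm} furnish $N$-independent choices of $K_i,\lambda_i,K_{\Lambda^i},\lambda_{\Lambda^i}$ and of the threshold $\bar\gamma$ in \eqref{eq:gamma_bar}, so the transition-matrix bound and the $\Lambda,\Sigma$ estimates become uniform in $i$ and $N$. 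For $(\mu,\rho)$, the coercivity $\bm Q+\tfrac\gamma2 I_{Nd}\succ0$ and the norm bound $\|\bm Q\|\les\beta_2$ hold uniformly by Assumption~\ref{a:solvability_Riccati_finite_uniform}, while Assumption~\ref{a:uniform_constant_2} controls the aggregated data: $|\widehat{\bm\mu}(0)|^2=\sum_i|\mu^i_0-\mu^i|^2\lesssim N$, $|\widehat{\bm\rho}(T)|^2=|\bm\rho|^2\lesssim N$, and the row/column sums of the blocks $Q^i_{jk}$ together with the uniform bounds on $|\mu^i|,|\rho^i|$ make $\|\bm e_\mu\|_{L^2},\|\bm e_\rho\|_{L^2}\lesssim\sqrt N$. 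The energy estimate then produces $|\widehat{\bm\mu}(t)|+|\widehat{\bm\rho}(t)|\lesssim\sqrt N\,(e^{-\widehat\lambda t}+e^{-\widehat\lambda(T-t)})$, which is exactly \eqref{eq:estimate_mu_rho_i_uniform} after dividing by $\sqrt N$; the factor $\sqrt N$ is simply the signature of measuring the full $\mathbb R^{Nd}$ vector rather than a single player's component.
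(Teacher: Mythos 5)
Your architecture coincides with the paper's: $\Lambda$ first via Lemma \ref{l:solvability_and_convergence_Lambda} (iii) and Lemma \ref{l:uniform_estimate_norm}, a transition-matrix bound for $\bar A^i_T$ (the paper's Lemma \ref{l:norm_A_i_T_estimate}), the Lyapunov equation for $(\Sigma^i_T)^{-1}$ with variation of parameters against an exponentially decaying source (Lemma \ref{l:convergence_Sigma_i}), and the duality identity $-\tfrac{d}{dt}\langle\hat{\bm\mu},\hat{\bm\rho}\rangle=\langle\bm R\hat{\bm\rho},\hat{\bm\rho}\rangle+\langle2\bm Q\hat{\bm\mu},\hat{\bm\mu}\rangle+\cdots$ with absorption exactly at the threshold of Assumption \ref{a:solvability_Riccati_finite}. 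The bookkeeping for the uniform-in-$N$ statements and the $\sqrt N$ scaling is also correct, and your choice of splitting (time-varying drift $\bar A^i_T$ with sources built from the constant ergodic data, versus the paper's constant drift $\bar{\bm A}$ with sources built from $\bm\rho_T,\bm\mu_T$) is an immaterial variant.

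The gap is the final step for $(\hat{\bm\mu},\hat{\bm\rho})$: you cannot pass from the $L^2$ energy bound to the pointwise estimate $e^{-\lambda t}+e^{-\lambda(T-t)}$ by a single reinsertion into the variation-of-parameters formulas. In the Duhamel representation of $\hat{\bm\mu}$ the forcing is not only the exponentially small source $\bm e_\mu$ but also the coupling term $-\bm R\hat{\bm\rho}(s)$, about which at that stage you only know $\|\hat{\bm\rho}\|_{L^2([0,T])}\lesssim M$ with $M^2=|\hat{\bm\mu}(0)|^2+|\hat{\bm\rho}(T)|^2$; the convolution $\int_0^t e^{-\lambda(t-s)}|\hat{\bm\rho}(s)|\,ds$ then yields only a uniform bound of order $M$ on $[0,T]$, not interior decay, and symmetrically $\hat{\bm\rho}$ is driven by $-2\bm Q\hat{\bm\mu}$, so feeding the sup bound back in is circular. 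The paper closes this loop by a localization-and-iteration argument (Lemmas \ref{l:convergence_mu_rho_1}--\ref{l:convergence_mu_rho_2}): the mean value theorem applied to $\int_0^T(|\hat{\bm\mu}|^2+|\hat{\bm\rho}|^2)\,dt\les KM^2$ produces points $\xi_\tau\in(0,\tau/2)$ and $\eta_\tau\in(T-\tau/2,T)$ where the solution has size $M/\sqrt{\tau}$; rerunning the energy estimate on $[\xi_\tau,\eta_\tau]$ gives $|\hat{\bm\rho}(t)|^2+|\hat{\bm\mu}(t)|^2\les\tfrac12 M^2$ on $[\tau,T-\tau]$ for a suitable fixed $\tau$, and iterating this contraction yields the exponential rate. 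A separate decomposition is needed for the inhomogeneous system, where the backward-decaying source in fact contaminates \emph{both} branches and produces factors $(1+t)e^{-\lambda t}+(1+T-t)e^{-\lambda(T-t)}$ that must afterwards be absorbed into a slightly smaller exponent. Without this (or an equivalent) mechanism your argument establishes boundedness of $(\hat{\bm\mu},\hat{\bm\rho})$ but not \eqref{eq:estimate_mu_rho_i}.
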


\begin{remark}
\label{r:convergence_Riccati}
The most technical part of the proof of Proposition \ref{p:convergence_of_Riccati} involves deriving estimates between the solution to subsystems $\{\mu^i_T, \rho^i_T: i \in[N] \}$ and $\{\mu^i, \rho^i: i \in[N] \}$. In this proof, we utilize techniques inspired by the works of \cite{Cardaliaguet-Porretta-2019} and \cite{Cirant-Porretta-2021}. The detailed steps of the proof are provided in Section \ref{s:convergence_Riccati}. 
\end{remark}

The convergence results established in Proposition \ref{p:convergence_of_Riccati} reveal the following interesting connection between the value function $\mathcal{V}_T^i$ associated with the finite-horizon game and the value $c^i$ arising in the ergodic game for each player $i \in [N]$.

\begin{corollary}
\label{c:convergence_value_function}
Fix $N \ges 2$ and suppose that Assumptions \ref{a:intial_states}, \ref{a:solvability_Riccati}, \ref{a:solvability_Riccati_finite}, and \ref{a:solvability_Riccati_ergodic} 
hold. Then, for all $x \in \mathbb{R}^d$ and $i \in [N]$,
\begin{equation}
\label{eq:relative_value_c_turnpike}
\lim_{T \to \infty} \frac{1}{T} \mathcal{V}_T^i(0,x;\bm{m}^{-i}_0) = c^i,
\end{equation}
where $\mathcal{V}_T^i(0,x;\bm{m}^{-i}_0)$, given explicitly in \eqref{eq:value_function_finite_time_explicit}, denotes the value function of Player $i$ for the finite-horizon game, and $c^i$ represents the value of the $i$-th player obtained from the solution to the system of algebraic equations \eqref{eq:Riccati_system_ergodic}.

Moreover, if in addition, Assumptions \ref{a:uniform_constants}, \ref{a:solvability_Riccati_finite_uniform}, and \ref{a:uniform_constant_2}  hold, then the following limit holds in a uniform way:
\begin{equation}
\label{eq:relative_value_c_turnpike_uniform}
\lim_{T \to \infty} \sup_N \frac{1}{N} \sum_{i = 1}^{N} \Big|\frac{1}{T} \mathcal{V}_T^i(0,x;\bm{m}^{-i}_0)- c^i \Big|=0.
\end{equation}
\end{corollary}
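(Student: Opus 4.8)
\emph{Proof proposal.} The plan is to work directly from the explicit representation of the value function in \eqref{eq:value_function_finite_time_explicit},
\begin{equation*}
\frac{1}{T}\mathcal{V}_T^i(0,x;\bm{m}^{-i}_0) = \frac{1}{2T}\,x^\top \Lambda^i_T(0)\, x + \frac{1}{T}\,(\rho^i_T(0))^\top x + \frac{1}{T}\,\kappa^i_T(0),
\end{equation*}
and to show that the first two terms are negligible while $\tfrac1T\kappa^i_T(0)\to c^i$. Since \eqref{eq:estimate_Lambda_i} (at $t=0$) and the boundedness of $\Lambda^i$ give $\|\Lambda^i_T(0)\|\les \|\Lambda^i\|+\widehat{K}^{(N)}$, and \eqref{eq:estimate_mu_rho_i} together with the boundedness of $\rho^i$ gives $|\rho^i_T(0)|\les |\rho^i|+2\widehat{K}^{(N)}$, both are bounded uniformly in $T$; hence the quadratic and linear terms are $O(1/T)$ and vanish as $T\to\infty$. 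The whole content therefore sits in the constant term.

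For that term I would integrate the last ODE in \eqref{eq:Riccati_system}. Writing the running quantity
\begin{equation*}
g^i_T(t):=\text{tr}(\varsigma^i\Lambda^i_T(t))-\tfrac12\,(\rho^i_T(t))^\top (R^i)^{-1}\rho^i_T(t)+F_0^i(\bm{\mu}_T^{-i}(t),\Sigma^i_T(t)),
\end{equation*}
the terminal condition $\kappa^i_T(T)=0$ yields $\kappa^i_T(0)=\int_0^T g^i_T(t)\,dt$. Comparing with the fifth equation of \eqref{eq:Riccati_system_ergodic}, the constant $c^i$ is exactly the value of the analogous ergodic quantity $g^i:=\text{tr}(\varsigma^i\Lambda^i)-\tfrac12(\rho^i)^\top(R^i)^{-1}\rho^i+F_0^i(\bm{\mu}^{-i},\Sigma^i)$, so that $\tfrac1T\kappa^i_T(0)-c^i=\tfrac1T\int_0^T\big(g^i_T(t)-g^i\big)\,dt$. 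Splitting $g^i_T(t)-g^i$ into the trace part, the quadratic-in-$\rho$ part, and the $F_0^i$ part, each difference is controlled by the Riccati convergence estimates \eqref{eq:estimate_Lambda_i}, \eqref{eq:estimate_Sigma_i}, \eqref{eq:estimate_mu_rho_i} (using $a^\top M a-b^\top M b=(a-b)^\top M(a+b)$ for the quadratic term and the polynomial dependence of $F_0^i$ on $\bm{\mu}^{-i}$ and $\Sigma^{-1}$). This produces, for a constant $C^{(N)}$ independent of $t,T$, the bound $|g^i_T(t)-g^i|\les C^{(N)}\big(e^{-\widehat{\lambda}^{(N)}t}+e^{-\widehat{\lambda}^{(N)}(T-t)}\big)$; integrating and using $\int_0^T e^{-\widehat{\lambda}^{(N)}t}\,dt\les 1/\widehat{\lambda}^{(N)}$ gives $|\tfrac1T\kappa^i_T(0)-c^i|\les 2C^{(N)}/(T\widehat{\lambda}^{(N)})\to0$, proving \eqref{eq:relative_value_c_turnpike}.

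For the uniform statement \eqref{eq:relative_value_c_turnpike_uniform} I would use the same decomposition, but now bound the player-average $\tfrac1N\sum_{i=1}^N|\tfrac1T\mathcal{V}_T^i-c^i|$ uniformly in $N$ via the uniform estimates \eqref{eq:estimate_Lambda_i_uniform}--\eqref{eq:estimate_mu_rho_i_uniform}. The quadratic/linear-in-$x$ terms are handled as before using \eqref{eq:estimate_Lambda_i_uniform} and the uniform bounds of Assumptions \ref{a:uniform_constants} and \ref{a:uniform_constant_2}; for the linear term one passes to $\tfrac1N\sum_i|\rho^i_T(0)|\les \beta_3+\tfrac{1}{\sqrt N}|\h{\bm{\rho}}(0)|$ by Cauchy--Schwarz and \eqref{eq:estimate_mu_rho_i_uniform}. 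The trace and $\rho$-parts of $\tfrac1N\sum_i|g^i_T(t)-g^i|$ reduce directly to $\tfrac{1}{\sqrt N}|\h{\bm{\rho}}(t)|$ and $\sup_i\|(\Sigma^i_T(t))^{-1}-(\Sigma^i)^{-1}\|$ after using the uniform bounds $\|\varsigma^i\|,\|(R^i)^{-1}\|\les \text{const}$; integrating in $t$ and dividing by $T$ sends each to $0$ as $T\to\infty$, uniformly in $N$.

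The main obstacle is the $F_0^i$ contribution in the uniform case. Because $F_0^i$ in \eqref{eq:F_t} carries $O(N^2)$ cross terms $\sum_{j,k\neq i,\,j\neq k}(\cdot)^\top Q^i_{jk}(\cdot)$ together with the $\sum_{j\neq i}$ linear and trace terms, its player-average cannot be absorbed by a single uniform estimate; instead I would expand each product difference as above, apply Cauchy--Schwarz over the player index to replace $\tfrac1N\sum_i\sum_{j}|\mu^j_T-\mu^j|\,\|Q^i_{jk}\|$-type sums by $\tfrac{1}{\sqrt N}|\h{\bm{\mu}}(t)|$ multiplied by the structural block-sums $\sum_i\sup_{j}\|Q_{ij}^i\|$, $\sum_i\sum_j\sup_k\|Q^i_{jk}\|$, $\sum_i\sum_k\sup_j\|Q^i_{jk}\|$, $\sum_i\sup_j\|Q^i_{jj}\|$ of Assumption \ref{a:uniform_constant_2}, and absorb the reference positions via $\max_j|\bar{x}_j^i|\les\beta_3$ and $|\mu^i|\les\beta_3$. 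These block-sums are exactly calibrated so that, after Cauchy--Schwarz, the factors of $N$ cancel against the $\tfrac1{\sqrt N}$ in \eqref{eq:estimate_mu_rho_i_uniform}, leaving a bound of the form $\widehat{K}\,(\text{const})\,(e^{-\widehat{\lambda} t}+e^{-\widehat{\lambda}(T-t)})$ uniform in $N$, whose time-average vanishes as $T\to\infty$. Verifying that this cancellation genuinely occurs for every term of $F_0^i$ without a residual factor of $N$ is the delicate bookkeeping at the heart of the proof.
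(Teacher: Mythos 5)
Your proposal is correct and follows essentially the same route as the paper's proof: the same three-term decomposition of $\tfrac{1}{T}\mathcal{V}_T^i(0,x;\bm{m}^{-i}_0)$, the same identification of $c^i$ with the ergodic running quantity via the representation $\kappa_T^i(0)=\int_0^T g_T^i(t)\,dt$, the same factorization $(a-b)^\top M (a+b)$ for the $\rho$-quadratic term, and the same Cauchy--Schwarz-over-players reduction to $\tfrac{1}{\sqrt{N}}|\h{\bm{\rho}}(t)|$, $\tfrac{1}{\sqrt{N}}|\h{\bm{\mu}}(t)|$ combined with the block-sum bounds of Assumption \ref{a:uniform_constant_2} for the uniform statement. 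The $F_0^i$ bookkeeping you flag as delicate is handled in the paper at the same level of detail you describe, so no gap remains.
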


We now present the main result of this paper: a turnpike property that characterizes the relationship between the pairs of equilibrium strategies and equilibrium states of the finite-horizon game \eqref{eq:value_function_finite_time} and those of the ergodic game \eqref{eq:value_function_ergodic}.

\begin{theorem}
\label{t:turnpike_property}
Fix $N \ges 2$ and suppose that Assumptions \ref{a:intial_states}, \ref{a:solvability_Riccati}, \ref{a:solvability_Riccati_finite}, and \ref{a:solvability_Riccati_ergodic} 
hold.  For each $i \in [N]$, let $(X_T^i, \alpha_T^i)$ denote the equilibrium pair for the finite-horizon game given in \eqref{eq:optimal_control_finite_time}--\eqref{eq:optimal_path_finite_time}, and let $(X^i, \alpha^i)$ be the equilibrium pair for the ergodic game defined in \eqref{eq:optimal_control_ergodic}--\eqref{eq:optimal_path_ergodic} . Both processes $X_T^i$ and $X^i$ are defined on the same probability space and are driven by the same Brownian motion. Then, there exist some constants $\widetilde{K}^{(N)} > 0$ and $\widetilde{\lambda}^{(N)} > 0$, independent of $t$ and $T$, such that
\begin{equation}
\label{eq:turnpike_property}
\sup_{i \in [N]}\mathbb E \Big[ \big|X_{T}^{i}(t) - X^i(t) \big|^2 + \big|\alpha_{T}^{i}(t) - \alpha^i(t) \big|^2 \Big] \les \widetilde{K}^{(N)} \big(e^{-\widetilde{\lambda}^{(N)} t} + e^{-\widetilde{\lambda}^{(N)}(T - t)} \big)
\end{equation}
for all $T$ and $t \in [0, T]$.

Moreover, if in addition, Assumptions \ref{a:uniform_constants}, \ref{a:solvability_Riccati_finite_uniform}, and \ref{a:uniform_constant_2} hold, then we get the existence of constants $\wt{K}> 0$ and $\wt{\lambda} > 0$, independent of $t,T$, and $N$, such that the following exponential convergence estimate holds uniformly in $N$:
\begin{equation}
\label{eq:turnpike_property_uniform}
\sup_{N} \frac{1}{N} \mathbb E \Big[ \big|\bm{X}_{T}(t) - \bm{X}(t) \big|^2 + \big|\bm{\alpha}_{T}(t) - \bm{\alpha}(t) \big|^2 \Big] \les \widetilde{K} \big(e^{-\widetilde{\lambda} t} + e^{-\widetilde{\lambda}(T - t)} \big)
\end{equation}
for all $T$ and $t \in [0, T]$, where $\bm{X}_T(t) := (X_T^1(t), \dots, X_T^N(t))$, $\bm{X}(t) := (X^1(t), \dots, X^N(t))$, $\bm{\alpha}_T(t) := (\alpha^1_T(t), \dots, \alpha_T^N(t))$, and $\bm{\alpha}(t) := (\alpha^1(t), \dots, \alpha^N(t))$ are all valued in $\mathbb R^{Nd}$.
\end{theorem}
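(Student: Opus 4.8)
The plan is to exploit the fact, stated in the theorem, that for each $i$ both equilibrium states $X_T^i$ and $X^i$ are driven by the \emph{same} Brownian motion $W^i$ and share the initial condition $x_0^i$. Setting $\Delta X^i := X_T^i - X^i$ and subtracting the linear SDEs \eqref{eq:optimal_path_finite_time} and \eqref{eq:optimal_path_ergodic}, the diffusion terms $\sigma^i\,dW^i$ cancel exactly, so $\Delta X^i$ solves the random linear ODE with zero initial data
\[
\frac{d}{dt}\Delta X^i(t) = \bar A_T^i(t)\,\Delta X^i(t) - (R^i)^{-1}\big(\Lambda_T^i(t) - \Lambda^i\big)X^i(t) - (R^i)^{-1}\big(\rho_T^i(t) - \rho^i\big),
\]
where $\bar A_T^i(t) := A^i - (R^i)^{-1}\Lambda_T^i(t)$ and $\bar A^i := A^i - (R^i)^{-1}\Lambda^i$. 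First I would write the solution by Duhamel's formula, $\Delta X^i(t) = \int_0^t \Phi_T^i(t,s)\,g^i(s)\,ds$, where $\Phi_T^i(t,s)$ is the state-transition matrix of $\dot x = \bar A_T^i(t)x$ and $g^i$ collects the two forcing terms above. Note the homogeneous contribution vanishes because $\Delta X^i(0)=0$, so the entire discrepancy is driven by the Riccati differences.

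The key ingredient is an exponential bound $\|\Phi_T^i(t,s)\| \les K e^{-\lambda(t-s)}$ for $0\les s \les t \les T$, which follows from the Hurwitz property of the limiting drift $\bar A^i$ (Lemma \ref{l:solvability_and_convergence_Lambda}(ii)) together with the exponential convergence $\Lambda_T^i(t)\to\Lambda^i$ of Proposition \ref{p:convergence_of_Riccati}; this robustness-of-stability estimate is precisely of the type appearing in the proof of that proposition. Combining it with Minkowski's integral inequality in $L^2(\Omega)$, the bound $\sup_{s}\mathbb E[|X^i(s)|^2] < \infty$ (valid since $\bar A^i$ is Hurwitz), and the convergence estimates \eqref{eq:estimate_Lambda_i}--\eqref{eq:estimate_mu_rho_i}, I would be left with convolution integrals of the form $\int_0^t e^{-\lambda(t-s)}\big(e^{-\widehat\lambda^{(N)}s} + e^{-\widehat\lambda^{(N)}(T-s)}\big)\,ds$. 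A routine computation shows each is bounded by $C\big(e^{-\widetilde\lambda^{(N)} t} + e^{-\widetilde\lambda^{(N)}(T-t)}\big)$ for any $\widetilde\lambda^{(N)} < \min(\lambda, \widehat\lambda^{(N)})$, the strict inequality absorbing the resonant factor $t\,e^{-\lambda t}$ when $\lambda = \widehat\lambda^{(N)}$. Squaring yields $\mathbb E[|\Delta X^i(t)|^2] \les \widetilde K^{(N)}\big(e^{-\widetilde\lambda^{(N)}t} + e^{-\widetilde\lambda^{(N)}(T-t)}\big)$.

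For the strategy difference I would use the decomposition $\Delta\alpha^i(t) = (R^i)^{-1}\big[\Lambda_T^i(t)\Delta X^i(t) + (\Lambda_T^i(t)-\Lambda^i)X^i(t) + (\rho_T^i(t)-\rho^i)\big]$ and bound each term in $L^2(\Omega)$: the first from the state estimate and the uniform boundedness of $\|\Lambda_T^i(t)\|$ (it is positive semidefinite and converges to $\Lambda^i$), the second and third directly from \eqref{eq:estimate_Lambda_i} and \eqref{eq:estimate_mu_rho_i}. All three share the same exponential profile, giving the analogous bound for $\mathbb E[|\Delta\alpha^i(t)|^2]$. Taking the supremum over $i\in[N]$, with all constants depending on $N$ but not on $t,T$, establishes \eqref{eq:turnpike_property}.

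The uniform-in-$N$ statement \eqref{eq:turnpike_property_uniform} is the delicate part, because the convergence of $\mu,\rho$ in \eqref{eq:estimate_mu_rho_i_uniform} is controlled only in the \emph{aggregate} $\ell^2$ sense $\frac{1}{\sqrt{N}}\big(|\h{\bm{\mu}}(t)| + |\h{\bm{\rho}}(t)|\big)$, not player by player. The remedy is to preserve this $\ell^2$ structure: rather than bounding each $|\Delta X^i|$ and taking a supremum, I would estimate $\frac{1}{N}\sum_{i}\mathbb E[|\Delta X^i(t)|^2]$ directly. Applying the Cauchy--Schwarz inequality to pull the square inside the Duhamel integral, $\big(\int_0^t e^{-\lambda(t-s)}|\rho_T^i(s)-\rho^i|\,ds\big)^2 \les \frac{1}{\lambda}\int_0^t e^{-\lambda(t-s)}|\rho_T^i(s)-\rho^i|^2\,ds$, then summing over $i$ and exchanging sum and integral converts $\frac{1}{N}\sum_i|\rho_T^i(s)-\rho^i|^2 = \frac{1}{N}|\h{\bm{\rho}}(s)|^2$ into exactly the quantity bounded uniformly by \eqref{eq:estimate_mu_rho_i_uniform}. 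The $\Lambda$-forcing and strategy terms carry over as before, now using \eqref{eq:estimate_Lambda_i_uniform}, a uniform transition-matrix bound, and the uniform bounds $\beta_* I_d \les R^i$ and $\sup_{N,i,s}\mathbb E[|X^i(s)|^2] < \infty$ supplied by Assumptions \ref{a:uniform_constants} and \ref{a:uniform_constant_2}, and are then averaged over $i$. The main obstacle, and the only genuinely new difficulty beyond the single-player turnpike argument, is exactly this propagation of the aggregate $\ell^2$ bound through the convolution; once the square is pulled inside and sum and integral are exchanged, the remaining estimates mirror the non-uniform case with every constant independent of $N$.
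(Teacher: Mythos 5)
Your proposal is correct and follows essentially the same route as the paper: cancel the Brownian terms, apply Duhamel with the exponential transition-matrix bound of Lemma \ref{l:norm_A_i_T_estimate}, feed in the Riccati convergence estimates of Proposition \ref{p:convergence_of_Riccati} together with the uniform second-moment bound on $X^i$, and decompose $\alpha_T^i-\alpha^i$ through the feedback form; for the uniform-in-$N$ part you correctly identify that the aggregate $\ell^2$ structure of \eqref{eq:estimate_mu_rho_i_uniform} must be preserved. The only (immaterial) difference is bookkeeping in that step --- you pull the square inside the convolution by Cauchy--Schwarz and then sum over $i$, whereas the paper stacks the system into $\mathbb{R}^{Nd}$ and bounds the vector norm of the Duhamel integral directly via the block-diagonal transition matrix \eqref{eq:norm_bar_A_uniform}.
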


The proofs of Corollary \ref{c:convergence_value_function} and Theorem \ref{t:turnpike_property} are presented in Section \ref{s:proof_of_main_results}.


\subsection{Examples}
\label{s:examples}

So far we have used some abstract conditions in Assumptions \ref{a:solvability_Riccati_finite}, \ref{a:uniform_constants}, \ref{a:solvability_Riccati_finite_uniform}, \ref{a:solvability_Riccati_ergodic}, and \ref{a:uniform_constant_2} to ensure the solvability of the systems of equations \eqref{eq:Riccati_system} and \eqref{eq:Riccati_system_ergodic}, and to establish the turnpike results presented in Section \ref{s:main_results}. In this section, we borrow examples from \cite{Bardi-Priuli-2014} and demonstrate that these assumptions can be readily verified in certain settings.

\subsubsection{Example 1: Symmetric system}

Consider an $N$-player game with dynamics \eqref{eq:state} and cost functional \eqref{eq:cost_finite_time} and suppose that Assumptions \ref{a:intial_states} and \ref{a:solvability_Riccati} hold. Moreover, we assume the players are \textit{nearly identical} in the sense of \cite[Definition 4.2]{Bardi-Priuli-2014}. Specifically, for all $i \in [N]$, the following conditions are satisfied:
\begin{itemize}
\item $A^i = A \in \mathbb{S}^d$, $\sigma^i = \sigma = a_1 I_d$ with $a_1 \in \mathbb{R} \setminus \{0\}$;
\item $R^i = R = a_2 I_d$ with $a_2 > 0$;
\item $\bm{Q}^i$ is given as following: $Q_{ii}^{i} = Q > 0$, and
\begin{equation*}
\begin{aligned}
& Q_{ij}^i = B \ges 0, \quad Q_{jj}^i = C_i, \quad \forall j \neq i, \\
& Q_{jk}^i = D_i, \quad j, k \neq i, j \neq k. 
\end{aligned}
\end{equation*}
\end{itemize} The players are not fully identical, as the secondary displacement cost terms $C_i$ and $D_i$ may differ across individuals. 

Under this setting, we can verify that Assumption \ref{a:solvability_Riccati_ergodic} is satisfied. In particular, the algebraic Riccati equation
$$\Sigma \varsigma^i R^i \varsigma^i \Sigma - (A^i)^\top R^i A^i = 2 Q^i_{ii}$$
can be reduced to
$$\Sigma^2 = \frac{1}{a_1^4 a_2} (2 Q + a_2 A^2),$$
and it is straightforward to verify that the  Sylvester equation
$$\Sigma \varsigma^i R^i - R^i \varsigma^i \Sigma = R^i A^i - (A^i)^\top R^i$$
is always satisfied. Moreover, from the definition of $\bm{M}$ and $M_{ij}$ in \eqref{eq:M_matrix}, we have
$$M_{ij} = \begin{cases}
B,  & \text{if } j \neq i, \\
Q + \frac{a_2}{2} A^2, & \text{if } j = i.
\end{cases}$$
which implies that the block matrix $\bm{M}$ is invertible. 

To verify the condition $c_{*} I_d \les \Lambda^i \les c^{*} I_d$ in Assumption \ref{a:uniform_constants}, it suffices to require that
$$\frac{(c_*)^2}{a_2} I_d - 2 c_{*} A \les 2 Q \les \frac{(c^*)^2}{a_2} I_d - 2 c^* A$$
as shown in Lemma \ref{l:bound_eigenvalue_lambda}. 

We now check Assumptions \ref{a:solvability_Riccati_finite} and \ref{a:solvability_Riccati_finite_uniform}. Under the structure specified above, and using the definition of $\bm{Q}$ in \eqref{eq:matrix_R_and_Q}, we obtain that
$$\bm{Q} = \begin{bmatrix}
\vspace{4pt} 0 & B & \cdots & B \\
B & 0 & \cdots & B \\
\vspace{4pt} \vdots & \vdots & \cdots & \vdots \\
B & B & \cdots & 0
\end{bmatrix} = \begin{bmatrix}
\vspace{4pt} 0 & I_d & \cdots & I_d \\
I_d & 0 & \cdots & I_d \\
\vspace{4pt} \vdots & \vdots & \cdots & \vdots \\
I_d & I_d & \cdots & 0
\end{bmatrix} \otimes B = \begin{bmatrix}
\vspace{4pt} 0 & 1 & \cdots & 1 \\
1 & 0 & \cdots & 1 \\
\vspace{4pt} \vdots & \vdots & \cdots & \vdots \\
1 & 1 & \cdots & 0
\end{bmatrix} \otimes I_d \otimes B,$$
where $\otimes$ denotes the Kronecker product. Therefore, by assuming that the largest eigenvalue of the matrix $B$ is sufficiently small and of order $O(N^{-1})$, we ensure that Assumptions \ref{a:solvability_Riccati_finite} and \ref{a:solvability_Riccati_finite_uniform} are satisfied. 

To further ensure that the condition on $\bm{Q}^i$ in Assumption \ref{a:uniform_constant_2} holds, we require that
$$N\|B\|, \quad (N-1) \|C_i\|, \quad \sum_{i=1}^{N} \|C_i\|, \quad (N-1) \sum_{i=1}^{N} \|D_i\|$$
are all uniformly bounded with respect to $N$ for all $i \in [N]$. This implies that the largest eigenvalues of the matrices $B$ and $\{C_i: i \in [N]\}$ must be of order $O(N^{-1})$.

\subsubsection{Example 2: Consensus models} As in Example 1, we assume that Assumptions \ref{a:intial_states} and \ref{a:solvability_Riccati} are satisfied. In this section, we consider a class of simple consensus models in multi-player systems, where each player aims to minimize their deviation from the rest of the group. Specifically, the cost function $F^i$ is defined as
$$F^i(\bm{X}(t)) = \frac{1}{N-1} \sum_{j \neq i} (X^i(t) - X^j(t))^\top Q (X^i(t) - X^j(t)), \quad \forall i \in [N],$$
where $Q \in \mathbb{S}^d_{++}$. Under this formulation, each player seeks a position as close as possible to the positions of the other players. In this case, the corresponding matrix $\bm{Q}^i$ for each $i \in [N]$ is specified by $Q_{ii}^i = Q$ and
\begin{equation*}
\begin{aligned}
& Q_{ij}^i = - \frac{1}{N-1} Q, \quad Q_{jj}^i = \frac{1}{N-1} Q, \quad \forall j \neq i, \\
& Q_{jk}^i = 0, \quad j, k \neq i, j \neq k. 
\end{aligned}
\end{equation*}
Moreover, the reference states are set to zero, i.e., $\bar x_i^j = 0$ for all $i, j \in [N]$.

We also assume that the dynamics and cost structure are identical for all players, so that the players are identical. For simplicity, for all $i \in [N]$, we still consider the following conditions are satisfied:
\begin{itemize}
\item $A^i = A \in \mathbb{S}^d$, $\sigma^i = \sigma = a_1 I_d$ with $a_1 \in \mathbb{R} \setminus \{0\}$;
\item $R^i = R = a_2 I_d$ with $a_2 > 0$.
\end{itemize}
Under this setting, Assumption \ref{a:solvability_Riccati_ergodic} can be verified in the same manner. By the definition of $\bm{M}$ and $M_{ij}$ in \eqref{eq:M_matrix}, we obtain
$$M_{ij} = \begin{cases}
- \frac{1}{N-1} Q,  & \text{if } j \neq i, \\
Q + \frac{a_2}{2} A^2, & \text{if } j = i,
\end{cases}$$
thus $\bm{M}$ is a strictly diagonally dominant block matrix, which implies that $\bm{M}$ is invertible. Consequently, by the result of Remark \ref{r:equivalance_Riccati_ergodic}, the equation for $\bm{\mu} = (\mu^1, \dots, \mu^N)$ in system \eqref{eq:Riccati_system_ergodic} admits a unique solution, which satisfies $\mu^i = 0 \in \mathbb R^d$ for all $i \in [N]$.

Similarly to Example 1, by the result in Lemma \ref{l:bound_eigenvalue_lambda}, to verify the condition $c_{*} I_d \les \Lambda^i \les c^{*} I_d$ in Assumption \ref{a:uniform_constants}, it suffices to require that
$$\frac{(c_*)^2}{a_2} I_d - 2 c_{*} A \les 2 Q \les \frac{(c^*)^2}{a_2} I_d - 2 c^* A.$$

Moreover, in this consensus setting, the block matrix $\bm{Q}$ in \eqref{eq:matrix_R_and_Q} can be simplified to
$$\bm{Q} = - \frac{1}{N-1} \begin{bmatrix}
\vspace{4pt} 0 & 1 & \cdots & 1 \\
1 & 0 & \cdots & 1 \\
\vspace{4pt} \vdots & \vdots & \cdots & \vdots \\
1 & 1 & \cdots & 0
\end{bmatrix} \otimes I_d \otimes Q.$$
Hence, by assuming that the eigenvalues of $Q$ are sufficiently small, Assumptions \ref{a:solvability_Riccati_finite} and \ref{a:solvability_Riccati_finite_uniform} can be ensured. 

Finally, based on the definition of $\bm{Q}^i$ in the consensus models, it is straightforward to verify that the following quantities
$$\sum_{i = 1}^N \sup_{j \neq i} \|Q_{ij}^i\|, \quad \sum_{i = 1}^N \sum_{j \neq i} \sup_{k \neq j, i} \|Q_{jk}^i\|, \quad \sum_{i = 1}^N \sum_{k \neq i} \sup_{j \neq k, i} \|Q_{jk}^i\|, \quad \sum_{j \neq i} \|Q_{jj}^i\|, \quad \sum_{i=1}^{N} \sup_{j \neq i} \|Q_{jj}^i\|$$
are all uniform bounded with respect to $N$, which is required in Assumption \ref{a:uniform_constant_2}.


\section{Proof of preliminary results}
\label{s:preliminary_results}

In this section, we provide the proofs of several preliminary results, including Lemma \ref{l:bound_eigenvalue_lambda} and Lemma \ref{l:uniform_estimate_norm}. Moreover, building upon Lemma \ref{l:solvability_and_convergence_Lambda} and these results, we derive both non-uniform and uniform estimates for the norm of the matrix exponential $e^{\int_0^t (A^i - (R^i)^{-1} \Lambda^i_T(s)) ds}$ in Lemma \ref{l:norm_A_i_T_estimate} below. The non-uniform and uniform bounds are used in the proofs of Proposition \ref{p:result_finite_time} and Proposition \ref{p:convergence_of_Riccati}, respectively.

First, we present the proof of Lemma \ref{l:bound_eigenvalue_lambda}.

\begin{proof}[Proof of Lemma \ref{l:bound_eigenvalue_lambda}]
We begin by considering the candidate $\Lambda_* = c_{*} I_d$, and evaluate it through the algebraic Riccati equation \eqref{eq:Lambda_i}. A straightforward computation yields
\begin{equation*}
\Lambda_* A^i + (A^i)^\top \Lambda_* - \Lambda_* (R^i)^{-1} \Lambda_* + 2 Q^i_{ii} = 2 Q^i_{ii} + c_{*} (A^i + (A^i)^\top) - (c_{*})^2 (R^i)^{-1}  \ges 0,
\end{equation*}
which implies that $\Lambda_* = c_{*} I_d$ serves as a subsolution to the equation \eqref{eq:Lambda_i}. 
By the comparison theorem for algebraic Riccati equations, see \cite[Theorem 9.1.1]{Lancaster1995}, we deduce that
$$ c_{*} I_d =\Lambda_* \les \Lambda^i, \quad \forall i \in [N] \text { and } N \ges 2.$$
Analogously, we examine the upper bound candidate $\Lambda^* = c^{*} I_d$, for which
\begin{equation*}
\Lambda^* A^i + (A^i)^\top \Lambda^* - \Lambda^* (R^i)^{-1} \Lambda^* + 2 Q^i_{ii} = 2 Q^i_{ii} + c^{*} (A^i + (A^i)^\top) - (c^{*})^2 (R^i)^{-1}  \les 0,
\end{equation*}
establishing that $\Lambda^* = c^{*} I_d$ is a supersolution to the equation \eqref{eq:Lambda_i}.
By the comparison theorem for algebraic Riccati equations again, we derive that
$$\Lambda^i \les \Lambda = c^{*} I_d, \quad \forall i \in [N] \text { and } N \ges 2.$$
Therefore, the solution $\Lambda^i$ to \eqref{eq:Lambda_i} satisfies the uniform bounds
$$c_{*} I_d \les \Lambda^i \les c^{*} I_d, \quad \forall i \in [N] \text { and } N \ges 2.$$
\end{proof}

Next, we prove Lemma \ref{l:uniform_estimate_norm}, which provides uniform-in-$N$ estimates for the norm of the matrix exponential $e^{(A^i - (R^i)^{-1} \Lambda^i) t}$ and the deviation $\Lambda_T^i(t) - \Lambda^i$.

\begin{proof}[Proof of Lemma \ref{l:uniform_estimate_norm}]
First, we establish the uniform estimate for the norm of the matrix exponential $e^{(A^i - (R^i)^{-1} \Lambda^i) t}$ under Assumption \ref{a:solvability_Riccati} and Assumption \ref{a:uniform_constants} (i). Since $R^i$ and $\Lambda^i$ are positive definite matrices, by Assumption \ref{a:uniform_constants} (i), the matrix $A^i - (R^i)^{-1} \Lambda^i$ is uniform Hurwitz, i.e., the real part of every eigenvalue $\lambda_i$ of $A^i - (R^i)^{-1} \Lambda^i$ satisfies $\Re(\lambda_i) \les - \bar{c}$. Let $\lambda^* = \bar{c}$. It follows that there exists $K_1^* > 0$, independent of $t, T$ and $N$, such that
\begin{equation}
\label{eq:norm_A_estimate_uniform}
\sup_{N}\sup_{i \in [N]}\big\|e^{(A^i - (R^i)^{-1} \Lambda^i) t} \big\| \les K_1^* e^{-\lambda^* t}, \quad \forall t \ges 0.
\end{equation}

Next, we recover the uniform estimate \eqref{eq:norm_A_estimate_uniform}, with potentially different constants $K_1^*$ and $\lambda^*$, under Assumption \ref{a:solvability_Riccati} and Assumption \ref{a:uniform_constants} (ii). From equation \eqref{eq:Lambda_i}, we have
\begin{equation*}
\Lambda^i (A^i - (R^i)^{-1} \Lambda^i) + (A^i - (R^i)^{-1} \Lambda^i)^\top \Lambda^i = - \Lambda^i (R^i)^{-1} \Lambda^i - 2 Q^i_{ii} \les - \beta_1 I_d
\end{equation*}
for all $i \in [N]$ and $N \ges 2$, since $R^i$ is positive definite and $2Q^{i}_{ii} \ges \beta_1 I_d$ by Assumption \ref{a:solvability_Riccati} and Assumption \ref{a:uniform_constants} (ii). Consider the following ODE which is valued in $\mathbb R^d$:
$$\frac{d}{dt} x^i(t) = (A^i - (R^i)^{-1} \Lambda^i) x^i(t).$$
Then, the derivative of the quadratic form $\lan \Lambda^i x^i(t), x^i(t) \ran$ satisfies
\begin{equation*}
\begin{aligned}
& \frac{d}{dt} \lan \Lambda^i x^i(t), x^i(t) \ran \\
= \ & \lan \Lambda^i (A^i - (R^i)^{-1} \Lambda^i) x^i(t), x^i(t) \ran + \lan \Lambda^i x^i(t), (A^i - (R^i)^{-1} \Lambda^i) x^i(t) \ran \\
\les \ & - \beta_1 |x^i(t)|^2, \quad \forall t \ges 0,
\end{aligned}
\end{equation*}
for all $i \in [N]$ and $N \ges 2$. From Assumption \ref{a:uniform_constants} (ii), it is clear that
$$c_{*} |x^i(t)|^2 \les (x^i(t))^\top \Lambda^i x^i(t) \les c^{*} |x^i(t)|^2, \quad \forall  t \ges 0.$$
Combining these inequalities, we obtain
$$- \beta_1 |x^i(t)|^2 \les - \frac{\beta_1}{c^{*}} (x^i(t))^\top \Lambda^i x^i(t),$$
which implies that 
$$\frac{d}{dt} \left((x^i(t))^\top \Lambda^i x^i(t) \right)\les - \frac{\beta_1}{c^{*}} (x^i(t))^\top \Lambda^i x^i(t).$$
Hence, we derive the following inequalities
$$c_{*} |x^i(t)|^2 \les (x^i(t))^\top \Lambda^i x^i(t) \les  (x^i(0))^\top \Lambda^i x^i(0) e^{- \frac{\beta_1}{c^{*}} t}, \quad \forall t \ges 0.$$
Let $\Phi^i$ denote the fundamental solution to the matrix ODE
\begin{equation}
\label{eq:ODE_Phi_i}
\frac{d}{dt} \Phi^i(t) = (A^i - (R^i)^{-1} \Lambda^i) \Phi^i(t), \quad \Phi^i(0) = I_d.
\end{equation}
Then we obtain the uniform exponential estimate
$$\|\Phi^i(t)\| = \big\|e^{(A^i - (R^i)^{-1} \Lambda^i)t} \big\| \les \frac{1}{c_{*}} \|\Lambda^i\| e^{- \frac{\beta_1}{c^{*}} t} \les \frac{c^{*}}{c_{*}} e^{- \frac{\beta_1}{c^{*}} t}, \quad \forall t \ges 0,$$
for all $i \in [N]$ and $N \ges 2$. By choosing $K_1^{*} = \frac{c^{*}}{c_{*}}$ and $\lambda^{*} = \frac{\beta_1}{c^{*}}$, we arrive at the uniform estimate \eqref{eq:norm_A_estimate_uniform}.

Then, following the approach of \cite{Sun-Wang-Yong-2022}, we establish a uniform estimate for the norm of $\Lambda^i_T(t) - \Lambda^i$ by using the exponential estimate \eqref{eq:norm_A_estimate_uniform}. Define the deviation $$\h{\Lambda}^i(t) = \Lambda^i_T(t) - \Lambda^i, \quad \forall t \in [0, T], \ i \in [N].$$ 
Then, from the equations \eqref{eq:Lambda_t} and \eqref{eq:Lambda_i}, we deduce that
\begin{equation*}
\begin{cases}
\vspace{4pt}
\displaystyle
\frac{d}{dt} \h{\Lambda}^i(t) + \h{\Lambda}^i(t) (A^i - (R^i)^{-1} \Lambda^i) + (A^i - (R^i)^{-1} \Lambda^i)^\top \h{\Lambda}^i(t)- \h{\Lambda}^i(t) (R^i)^{-1} \h{\Lambda}^i(t) = 0, \\
\displaystyle
\h{\Lambda}^i(T) = - \Lambda^i.
\end{cases}
\end{equation*}
Let $\wt{\Lambda}^i(t) = \h{\Lambda}^i(T-t)$ for all $t \in [0, T]$ and $i \in [N]$. It follows that
\begin{equation*}
\begin{cases}
\vspace{4pt}
\displaystyle
\frac{d}{dt} \wt{\Lambda}^i(t) = \wt{\Lambda}^i(t) (A^i - (R^i)^{-1} \Lambda^i) + (A^i - (R^i)^{-1} \Lambda^i)^\top \wt{\Lambda}^i(t)- \wt{\Lambda}^i(t) (R^i)^{-1} \wt{\Lambda}^i(t), \\
\displaystyle
\wt{\Lambda}^i(0) = - \Lambda^i.
\end{cases}
\end{equation*}
Then, using the chain rule and the ODE \eqref{eq:ODE_Phi_i}, we compute
\begin{equation*}
\frac{d}{ds} \big((\Phi^i(s))^\top \wt{\Lambda}^i(t-s) \Phi^i(s) \big) = (\Phi^i(s))^\top \wt{\Lambda}^i(t-s)(R^i)^{-1} \wt{\Lambda}^i(t-s) \Phi^i(s)
\end{equation*}
for all $0 \les s \les t \les T$. Consequently, integrating the above identity from $s$ to $t$,
\begin{equation*}
\begin{aligned}
\wt{\Lambda}^i(t-s) &= \big(\Phi^i(t) (\Phi^i(s))^{-1} \big)^\top \wt{\Lambda}^i(0) \Phi^i(t) (\Phi^i(s))^{-1} \\
& \hspace{0.3in} - \int_s^t \big(\Phi^i(r) (\Phi^i(s))^{-1} \big)^\top \wt{\Lambda}^i(t-r) (R^i)^{-1} \wt{\Lambda}^i(t-r) \Phi^i(r) (\Phi^i(s))^{-1} dr.
\end{aligned}
\end{equation*}
Taking $s = 0$ yields
\begin{equation*}
\begin{aligned}
\wt{\Lambda}^i(t) &= (\Phi^i(t))^\top \wt{\Lambda}^i(0) \Phi^i(t) - \int_0^t (\Phi^i(r))^\top \wt{\Lambda}^i(t-r) (R^i)^{-1} \wt{\Lambda}^i(t-r) \Phi^i(r) dr \\
&\les (\Phi^i(t))^\top \wt{\Lambda}^i(0) \Phi^i(t)
\end{aligned}
\end{equation*}
as $R^i$ is positive definite. Using the estimate \eqref{eq:norm_A_estimate_uniform}, we conclude that
\begin{equation*}
\begin{aligned}
\sup_{N} \sup_{i \in [N]} \|\wt{\Lambda}^i(t)\| \les \sup_{N} \sup_{i \in [N]} \big(K_1^{*} e^{-\lambda^{*} t} \big)^2 \|\wt{\Lambda}^i(0)\| \les c^{*} \big(K_1^{*} \big)^2 e^{- 2 \lambda^{*} t} \les K_2^{*} e^{- 2 \lambda^{*} t}
\end{aligned}
\end{equation*}
for all $t \in [0, T]$, where we set $K_2^{*} := c^{*} (K_1^{*})^2$. Hence, by the definition of $\wt{\Lambda}^i$, we obtain the desired estimate that
\begin{equation}
\label{eq:estimate_lambda_uniform_lambda_*}
\sup_{N}\sup_{i \in [N]} \|\Lambda^i_T(t) - \Lambda^i\| \les K_2^* e^{- 2 \lambda^* (T-t)}, \quad \forall 0 \les t \les T.
\end{equation}

Finally, we show that the inferior limit of the right endpoint of the admissible interval for $\gamma$ in Assumption \ref{a:solvability_Riccati_finite} is strictly positive. By the definition of $K^{(N)}$ and $\lambda^{(N)}$ in \eqref{eq:K_N_lambda_N}, and using the uniform lower bound $R^i \ges \beta_{*} I_d$ for all $i \in [N]$ and $N \ges 2$ from Assumption \ref{a:uniform_constants}, we have
$$K^{(N)} = K_1^{*} e^{\frac{K_2^*}{2 \beta_{*} \lambda^*}}, \quad \lambda^{(N)} = \lambda^*$$
in this case, which are both independent of $N$. In addition, by the uniform boundedness of $R^i$ in Assumption \ref{a:uniform_constants} again, it is clear that
$$\bar\gamma:= \liminf_{N \to \infty} \frac{(\lambda^{(N)})^2}{2 (K^{(N)})^2 \|\bm{R}\|^2} \lambda_{\bm{R}, \min}>0.$$
Moreover, we obtain the inequality
$$\bar{\gamma} < \frac{(\lambda^*)^2}{2 (K_1^{*})^2 \|\bm{R}\|^2} \lambda_{\bm{R}, \min}.$$
\end{proof}

To proceed, for any $t \in [0, T]$, we define the block-diagonal matrix $\bar{\bm{A}}_T(t) \in \mathbb{R}^{Nd \times Nd}$ by
\begin{equation}
\label{eq:matrix_A_t}
\bar{\bm{A}}_T(t) := \begin{bmatrix}
\vspace{4pt} \bar{A}^1_T(t) & 0 & \cdots & 0 \\
\vspace{4pt} 0 & \bar{A}^2_T(t) & \cdots & 0 \\
\vspace{4pt} \vdots & \vdots & \vdots & \vdots \\
\vspace{4pt} 0 & 0 & \cdots & \bar{A}^N_T(t)
\end{bmatrix},
\end{equation}
where $\bar{A}^i_T(t) := (A^i - (R^i)^{-1} \Lambda^i_T(t))^\top$ for all $i \in [N]$ and $t \in [0, T]$, with $\Lambda^i_T$ being the solutions to \eqref{eq:Lambda_t}. We now establish both non-uniform and uniform estimates for the norm of the matrix exponential $e^{\int_0^t \bar{A}^i_T(s) ds}$ in the following lemma. The uniform bound plays a crucial role in the subsequent analysis of the unique solvability of the subsystem in \eqref{eq:Riccati_system} governing $\{\rho^i_T, \mu^i_T: i \in[N]\}$.

\begin{lemma}
\label{l:norm_A_i_T_estimate}
Fix $N \ges 2$. Under Assumption \ref{a:solvability_Riccati}, the following estimate holds for all $i \in [N]$:
\begin{equation}
\label{eq:norm_bar_A_i_T_estimate}
\big\|e^{\int_t^s \bar{A}_T^i(r) dr} \big\| \les K^{(N)} e^{-\lambda^{(N)} (s-t)}, \quad \forall 0 \les t \les s \les T,
\end{equation}
where $K^{(N)}$ and $\lambda^{(N)}$ are defined in \eqref{eq:K_N_lambda_N}. As a consequence, we obtain the corresponding estimate for the block-diagonal matrix exponential:
\begin{equation}
\label{eq:norm_bar_A}
\big\|e^{\int_t^s \bar{\bm{A}}_T(r) dr} \big\| \les K^{(N)} e^{- \lambda^{(N)} (s-t)}, \quad \forall 0 \les t \les s \les T.
\end{equation}

If in addition, Assumption \ref{a:uniform_constants} holds, then there exists a positive constant $K_3^*$, independent of $t, s, T$, and $N$, such that
\begin{equation}
\label{eq:norm_bar_A_T_i_estimate_uniform}
\sup_{N}\sup_{i \in [N]} \big\|e^{\int_t^s \bar{A}_T^i(r) dr} \big\| \les K_3^* e^{- \lambda^* (s-t)}, \quad \forall 0 \les t \les s \les T,
\end{equation}
where $\lambda^*$ is from the estimate \eqref{eq:norm_A_estimate_uniform}. Consequently, the corresponding estimate for the block-diagonal matrix exponential holds:
\begin{equation}
\label{eq:norm_bar_A_uniform}
\big\|e^{\int_t^s \bar{\bm{A}}_T(r) dr} \big\| \les K_3^* e^{- \lambda^* (s-t)}, \quad \forall 0 \les t \les s \les T
\end{equation}
for all $N \ges 2$.
\end{lemma}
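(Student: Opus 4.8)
The plan is to read $e^{\int_t^s \bar{A}_T^i(r)\,dr}$ as the state--transition (evolution) operator $\Phi_T^i(s,t)$ of the linear system $\dot y(r)=\bar{A}_T^i(r)y(r)$ with $\Phi_T^i(t,t)=I_d$, and to treat $\bar{A}_T^i$ as an exponentially small time--dependent perturbation of the stable constant matrix governing the ergodic flow. Concretely, I would write $\bar{A}_T^i(r)=B^i+E^i(r)$, where $B^i:=(A^i-(R^i)^{-1}\Lambda^i)^\top$ and, using the symmetry of $R^i$, $\Lambda^i$ and $\Lambda_T^i(r)$, $E^i(r):=-(\Lambda_T^i(r)-\Lambda^i)(R^i)^{-1}$. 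The two ingredients I need are already in Lemma \ref{l:solvability_and_convergence_Lambda}: since the spectral norm is invariant under transposition, estimate \eqref{eq:norm_A_i_estimate} gives $\|e^{B^i\tau}\|=\|e^{(A^i-(R^i)^{-1}\Lambda^i)\tau}\|\les K_i e^{-\lambda_i\tau}$ for $\tau\ges0$, while \eqref{eq:estimate_difference_Lambda} yields $\|E^i(r)\|\les K_{\Lambda^i}\|(R^i)^{-1}\|\,e^{-\lambda_{\Lambda^i}(T-r)}$.

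Next I would apply the variation--of--constants (Duhamel) identity
\[
\Phi_T^i(s,t)=e^{B^i(s-t)}+\int_t^s e^{B^i(s-r)}E^i(r)\,\Phi_T^i(r,t)\,dr,
\]
take spectral norms, and run Gronwall's inequality on $u(s):=e^{\lambda_i(s-t)}\|\Phi_T^i(s,t)\|$. The decisive feature is that the perturbation is integrable uniformly in $t,s,T$: by the exponential decay in \eqref{eq:estimate_difference_Lambda},
\[
\int_t^s\|E^i(r)\|\,dr\les K_{\Lambda^i}\|(R^i)^{-1}\|\int_t^s e^{-\lambda_{\Lambda^i}(T-r)}\,dr\les \frac{K_{\Lambda^i}\|(R^i)^{-1}\|}{\lambda_{\Lambda^i}},
\]
since $s\les T$. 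Gronwall then produces a bound of the form $\|\Phi_T^i(s,t)\|\les K_i\exp\!\big(\tfrac{K_{\Lambda^i}}{\lambda_{\Lambda^i}}\|(R^i)^{-1}\|\big)e^{-\lambda_i(s-t)}$, and taking the maximum over $i\in[N]$ for the prefactor and the minimum over $i$ for the rate reproduces exactly the constants $K^{(N)},\lambda^{(N)}$ of \eqref{eq:K_N_lambda_N}, establishing \eqref{eq:norm_bar_A_i_T_estimate}. (Working in the weighted norm induced by a Lyapunov matrix for $B^i$, in which $e^{B^i\tau}$ is a strict contraction, keeps the growth constant out of the Gronwall exponent and pins down the prefactor precisely.) The block--diagonal estimate \eqref{eq:norm_bar_A} is then immediate: $\bar{\bm{A}}_T(r)$ is block diagonal, hence so is its evolution operator, whose spectral norm is the maximum of the block norms $\|e^{\int_t^s\bar{A}_T^i(r)\,dr}\|$.

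For the uniform--in--$N$ statements I would repeat the same argument verbatim, but feed it the $N$--independent inputs supplied by Lemma \ref{l:uniform_estimate_norm}: under Assumption \ref{a:uniform_constants} the stable--flow bound \eqref{eq:norm_A_estimate_uniform} replaces \eqref{eq:norm_A_i_estimate} with the uniform pair $K_1^*,\lambda^*$, the deviation bound \eqref{eq:estimate_lambda_uniform_lambda_*} replaces \eqref{eq:estimate_difference_Lambda} with rate $2\lambda^*$, and $\|(R^i)^{-1}\|\les 1/\beta_*$ follows from $R^i\ges\beta_*I_d$. The same Duhamel--Gronwall computation then yields \eqref{eq:norm_bar_A_T_i_estimate_uniform} with a prefactor $K_3^*$ and rate $\lambda^*$ independent of $i$ and $N$, and the block--diagonal version \eqref{eq:norm_bar_A_uniform} follows as before. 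I expect the main obstacle to be purely in bookkeeping the time--dependence: one must verify that the perturbation $E^i(r)$ is integrable with a bound independent of $t,s,T$ (which is exactly where the exponential--in--$(T-r)$ decay of $\Lambda_T^i-\Lambda^i$ from Lemma \ref{l:solvability_and_convergence_Lambda}(iii) is indispensable), and, in the uniform case, that every constant entering the Gronwall factor---$K_1^*,\lambda^*,K_2^*,\beta_*$---has been certified $N$--independent beforehand.
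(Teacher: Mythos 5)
Your proposal is correct in substance but takes a genuinely different route from the paper. The paper's proof is a one-line factorization: it writes $e^{\int_t^s \bar{A}_T^i(r)\,dr}= e^{\int_t^s (R^i)^{-1}(\Lambda^i-\Lambda_T^i(r))\,dr}\,e^{(A^i-(R^i)^{-1}\Lambda^i)(s-t)}$, bounds the second factor by \eqref{eq:norm_A_i_estimate} and the first by $\exp\big(\|(R^i)^{-1}\|\int_t^s K_{\Lambda^i}e^{-\lambda_{\Lambda^i}(T-r)}\,dr\big)\les e^{\frac{K_{\Lambda^i}}{\lambda_{\Lambda^i}}\|(R^i)^{-1}\|}$, which delivers exactly the constants in \eqref{eq:K_N_lambda_N}; the uniform case and the block-diagonal reduction are then handled exactly as you do. Your Duhamel--Gronwall argument on the evolution operator $\Phi_T^i(s,t)$ is more robust: the paper's splitting $e^{M+N}=e^{M}e^{N}$ tacitly requires the two integrated matrices to commute, whereas your perturbative identity holds for the genuine transition operator of $\dot y=\bar A_T^i(r)y$ with no commutativity hypothesis, which is the object actually used in \eqref{eq:trajectory}--\eqref{eq:trajectory2}. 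The price you pay is in the constants: the naive Gronwall step yields $u(s)\les K_i\exp\big(K_i\int_t^s\|E^i(r)\|\,dr\big)$, so the prefactor becomes $K_i e^{K_i\frac{K_{\Lambda^i}}{\lambda_{\Lambda^i}}\|(R^i)^{-1}\|}$ rather than the $K_i e^{\frac{K_{\Lambda^i}}{\lambda_{\Lambda^i}}\|(R^i)^{-1}\|}$ fixed in \eqref{eq:K_N_lambda_N}; your Lyapunov-norm remedy removes $K_i$ from the exponent but reintroduces a norm-equivalence factor in front. Since the lemma pins $K^{(N)}$ to the specific expression \eqref{eq:K_N_lambda_N}, you should either note that $K^{(N)}$ can be enlarged accordingly (which is harmless everywhere it is used) or track the equivalence constant explicitly; otherwise the argument, including the uniform-in-$N$ part fed by Lemma \ref{l:uniform_estimate_norm} and the block-diagonal passage to $\bar{\bm{A}}_T$, matches the paper's.
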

\begin{proof}
From the estimate \eqref{eq:estimate_difference_Lambda} in Lemma \ref{l:solvability_and_convergence_Lambda}, for each $i \in [N]$, there exist positive constants $K_{\Lambda^i}$ and $\lambda_{\Lambda^i}$, independent of $t$ and $T$, such that
\begin{equation*}
\big\|\Lambda_T^i(t) - \Lambda^i \big\| \les K_{\Lambda^i} e^{-\lambda_{\Lambda^i} (T-t)}, \quad \forall 0 \les t \les T.
\end{equation*}
Thus, applying the estimate \eqref{eq:norm_A_i_estimate}, 
for each $i \in [N]$, we derive that
\begin{equation*}
\begin{aligned}
\big\|e^{\int_t^s (A^i - (R^i)^{-1} \Lambda^i_T(r)) dr} \big\| &= \big\|e^{\int_t^s (R^i)^{-1}(\Lambda^i - \Lambda_T^i(r)) dr} e^{(A^i - (R^i)^{-1} \Lambda^i) (s-t)} \big\| \\
& \les K_i e^{-\lambda_i (s-t)} \big\|e^{\int_t^s (R^i)^{-1}(\Lambda^i - \Lambda_T^i(r)) dr} \big\| \\
& \les K_i e^{-\lambda_i (s-t)} e^{\|(R^i)^{-1}\| \int_t^s K_{\Lambda^i} e^{- \lambda_{\Lambda^i} (T-r)} dr} \\
& \les K_i e^{\frac{K_{\Lambda^i}}{\lambda_{\Lambda^i}}\|(R^i)^{-1}\|} e^{-\lambda_i (s-t)}
\end{aligned}
\end{equation*}
for all $0 \les t \les s \les T$. By the definition of $K^{(N)}$ and $\lambda^{(N)}$ in \eqref{eq:K_N_lambda_N}, we conclude the estimate \eqref{eq:norm_bar_A_i_T_estimate}.
Recalling the definition of $\bar{\bm{A}}_T$ in \eqref{eq:matrix_A_t} and $\bar{A}^i_T(t) = (A^i - (R^i)^{-1} \Lambda^i_T(t))^\top$ for all $i \in [N]$ and $t \in [0, T]$, the proof of the desired estimate \eqref{eq:norm_bar_A} is thus completed.

Using a similar argument as above, together with the results from Lemma \ref{l:uniform_estimate_norm}, and noting that $R^i \ges \beta_{*} I_d$ for all $i \in [N]$ and $N \ges 2$, we may choose the constant $K_3^{*} =: K_1^{*} e^{\frac{K_2^*}{2 \beta_{*} \lambda^*}}$ such that
\begin{equation*}
\sup_{N}\sup_{i \in [N]} \big\|e^{\int_t^s (A^i - (R^i)^{-1} \Lambda^i_T(r)) dr} \big\| 
\les K_3^{*} e^{-\lambda^{*} (s-t)}
\end{equation*}
for all $0 \les t \les s \les T$. This completes the proof of estimate \eqref{eq:norm_bar_A_T_i_estimate_uniform}. Then, the estimate \eqref{eq:norm_bar_A_uniform} follows directly from the definition of $\bar{\bm{A}}_T$ in \eqref{eq:matrix_A_t} and $\bar{A}^i_T(t) = (A^i - (R^i)^{-1} \Lambda^i_T(t))^\top$ for all $i \in [N]$ and $t \in [0, T]$.
\end{proof}


\section{Proof of Proposition \ref{p:result_finite_time}: Solvability of the finite-horizon game}
\label{s:result_finite_time}

In this section, we provide a step-by-step proof of Proposition \ref{p:result_finite_time}. We begin in Section \ref{s:proof_part_i_finite_time} by establishing the unique solvability of the coupled system \eqref{eq:Riccati_system}, thereby proving part (i) of the proposition. Building on this result, Section \ref{s:proof_part_ii_finite_time} addresses the unique solvability of the finite-horizon HJB-FP system \eqref{eq:HJB_FP_finite_time} in the form of \eqref{eq:ansatz_value_density}. We then derive the equilibrium strategy and the corresponding equilibrium state trajectory for each player, thus completing the proof of part (ii). Finally, in Section \ref{s:proof_part_iii_finite_time}, we characterize the value function for each player in the finite-horizon game, which concludes the proof of part (iii).



\subsection{Proof of part (i) of Proposition \ref{p:result_finite_time}}
\label{s:proof_part_i_finite_time}

We now establish the solvability of the system of equations \eqref{eq:Riccati_system}, which arises in the analysis of the finite-horizon game \eqref{eq:value_function_finite_time}, under Assumptions \ref{a:solvability_Riccati} and \ref{a:solvability_Riccati_finite}. Our analysis follows the approach outlined in Remark \ref{r:solvability_Riccati_finite_time}.

Throughout the following analysis in this section, we use $K$ and $\lambda$ to denote two generic positive constants that are independent of $t$ and $T$, and may vary from line to line. When necessary, we write $K(N)$ to emphasize the dependence of the constant $K$ on the number of players $N$.

Recall that we proved the unique solvability of $\Lambda^i_T$ in \eqref{eq:Lambda_t} for each $i \in [N]$ in Lemma \ref{l:solvability_and_convergence_Lambda}. Next, given that $\Lambda^i_T$ is the solution to \eqref{eq:Lambda_t} for each $i \in [N]$, we proceed to establish the unique solvability of $\Sigma^i_T$ in the system of ODEs \eqref{eq:Riccati_system}.

\begin{lemma}
\label{l:solvability_Sigma_i}
Let Assumption \ref{a:solvability_Riccati} hold and $\Lambda^i_T$ be the solution to \eqref{eq:Lambda_t}. Then, for each $i \in [N]$, the ODE
\begin{equation}
\begin{cases}
\label{eq:Sigma_t}
\vspace{4pt}
\displaystyle \frac{d}{dt} \Sigma^i_T(t) + 2 \Sigma^i_T(t) \varsigma^i \Sigma^i_T(t) + 2 \Sigma^i_T(t) \left(A^i - (R^i)^{-1} \Lambda^i_T(t) \right) = 0, \\
\displaystyle \Sigma^i_T(0) = \Sigma^i_0 \in \mathbb S^{d}_{++}
\end{cases}
\end{equation}
admits a unique positive definition $\Sigma^i_T \in C([0, T]; \mathbb S^{d}_{++})$.
\end{lemma}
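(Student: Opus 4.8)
The plan is to reduce the quadratic matrix Riccati initial value problem \eqref{eq:Sigma_t} to a \emph{linear} matrix ODE by the change of variables $P^i_T := (\Sigma^i_T)^{-1}$, exploiting the structural feature that \eqref{eq:Sigma_t} contains no zeroth-order (constant-in-$\Sigma^i_T$) term, so that its inverse obeys an affine equation whose global solvability and positivity are transparent. The role of $P^i_T$ is exactly that of the covariance appearing in the Gaussian ansatz, and the linear equation below is its Lyapunov evolution.

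First I would record that, by part (i) of Lemma \ref{l:solvability_and_convergence_Lambda}, the map $t \mapsto \Lambda^i_T(t)$ is continuous on $[0,T]$; hence $t \mapsto A^i - (R^i)^{-1}\Lambda^i_T(t)$ is continuous, and the right-hand side of \eqref{eq:Sigma_t}, being a quadratic polynomial in $\Sigma^i_T$ with continuous time-dependent coefficients, is locally Lipschitz in the matrix variable, uniformly on compact $t$-intervals. The Cauchy--Lipschitz theorem then yields a unique maximal $C^1$ solution on a subinterval of $[0,T]$ containing $0$, which remains symmetric and, since $\Sigma^i_0 \in \mathbb S^d_{++}$, stays positive definite (hence invertible) on a neighbourhood of $0$. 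The remaining work is to extend this solution to all of $[0,T]$ while keeping it in $\mathbb S^d_{++}$.

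To that end, on any subinterval where $\Sigma^i_T(t)$ is invertible I would set $P^i_T(t) := (\Sigma^i_T(t))^{-1}$ and differentiate using $\tfrac{d}{dt}(\Sigma^i_T)^{-1} = -(\Sigma^i_T)^{-1}\dot\Sigma^i_T(\Sigma^i_T)^{-1}$. Substituting \eqref{eq:Sigma_t} and cancelling the factors of $\Sigma^i_T$ against its inverse turns the equation into the linear Lyapunov-type ODE
\begin{equation*}
\frac{d}{dt}P^i_T(t) = \big(A^i - (R^i)^{-1}\Lambda^i_T(t)\big) P^i_T(t) + P^i_T(t)\big(A^i - (R^i)^{-1}\Lambda^i_T(t)\big)^\top + 2\varsigma^i, \qquad P^i_T(0) = (\Sigma^i_0)^{-1}.
\end{equation*}
Because this ODE is linear with continuous coefficients, it admits a unique solution on the whole interval $[0,T]$ with no finite-time blow-up; moreover its right-hand side maps symmetric matrices to symmetric matrices, so $P^i_T(t) \in \mathbb S^d$ for every $t$.

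Finally I would establish that $P^i_T(t)$ stays positive definite. Writing $\Psi^i_T$ for the fundamental matrix of $\dot\Psi = (A^i - (R^i)^{-1}\Lambda^i_T(t))\Psi$ with $\Psi^i_T(0)=I_d$, the variation-of-constants formula gives
\begin{equation*}
P^i_T(t) = \Psi^i_T(t)(\Sigma^i_0)^{-1}\Psi^i_T(t)^\top + 2\int_0^t \Psi^i_T(t)\Psi^i_T(s)^{-1}\,\varsigma^i\,\big(\Psi^i_T(t)\Psi^i_T(s)^{-1}\big)^\top\,ds .
\end{equation*}
The first term is positive definite because $(\Sigma^i_0)^{-1} \in \mathbb S^d_{++}$ and $\Psi^i_T(t)$ is invertible, while the integrand is positive semidefinite since $\varsigma^i \in \mathbb S^d_{++}$; hence $P^i_T(t) \in \mathbb S^d_{++}$, and in particular invertible, for all $t \in [0,T]$. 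Setting $\Sigma^i_T := (P^i_T)^{-1}$ then produces a symmetric, positive definite, continuous solution of \eqref{eq:Sigma_t} on all of $[0,T]$ (reversing the computation confirms it solves \eqref{eq:Sigma_t}), which agrees with the local solution by uniqueness. Uniqueness in $C([0,T];\mathbb S^d_{++})$ follows because the inverse of any such solution solves the same linear ODE for $P^i_T$ and hence coincides with it. The main obstacle is precisely this global-in-time control: a priori a quadratic Riccati ODE could blow up before time $T$ or leave $\mathbb S^d_{++}$, and the point of the inverse substitution is that it converts both nonlinear difficulties into the manifestly global and positivity-preserving linear problem for $P^i_T$.
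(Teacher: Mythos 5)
Your proposal is correct and reaches the same conclusion as the paper, but by a genuinely different route. The paper first records the symmetry identity $\Sigma^i_T(t)\,(A^i-(R^i)^{-1}\Lambda^i_T(t)) = (A^i-(R^i)^{-1}\Lambda^i_T(t))^\top\,\Sigma^i_T(t)$, rewrites \eqref{eq:Sigma_t} as a symmetric matrix Riccati equation, time-reverses it via $\bar\Sigma^i_T(t)=\Sigma^i_T(T-t)$ into a terminal-value problem, and then invokes the standard Riccati solvability theorem (Theorem 7.2 in Chapter 6 of \cite{Yong-Zhou-1999}). You instead invert the unknown, converting the quadratic problem into a linear Lyapunov equation solved explicitly by variation of constants, so that global existence, symmetry and positive definiteness are read off from the explicit formula rather than cited; this is more self-contained, and it is essentially the computation the paper itself performs later for $(\Sigma^i_T)^{-1}$ in the proof of Lemma \ref{l:convergence_Sigma_i}. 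The paper's route buys brevity by outsourcing the hard part to a reference; yours buys transparency. One step you should make explicit rather than fold into ``cancelling the factors'': direct substitution of \eqref{eq:Sigma_t} into $\frac{d}{dt}(\Sigma^i_T)^{-1}=-(\Sigma^i_T)^{-1}\dot\Sigma^i_T(\Sigma^i_T)^{-1}$ gives $\dot P^i_T = 2\big(A^i-(R^i)^{-1}\Lambda^i_T(t)\big)P^i_T + 2\varsigma^i$, and passing to the symmetric Lyapunov form $\dot P^i_T = MP^i_T+P^i_TM^\top+2\varsigma^i$ uses precisely the identity $MP^i_T=P^i_TM^\top$ (equivalently $\Sigma^i_T M=M^\top\Sigma^i_T$) that the paper records at the outset of its proof, and which is forced on any symmetric solution of \eqref{eq:Sigma_t}. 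With that sentence added, your argument is complete and matches the paper's conclusion, including uniqueness, since the inverse of any solution in $C([0,T];\mathbb S^d_{++})$ obeys a linear ODE with a unique solution.
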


\begin{proof}
Taking the transpose of on both sides of \eqref{eq:Sigma_t}, we obtain the following equation:
$$\frac{d}{dt} \Sigma^i_T(t) + 2 \Sigma^i_T(t) \varsigma^i \Sigma^i_T(t) + 2 \left(A^i - (R^i)^{-1} \Lambda^i_T(t) \right)^\top \Sigma^i_T(t) = 0,$$
which implies the symmetry condition
$$\Sigma^i_T(t) \left(A^i - (R^i)^{-1} \Lambda^i_T(t) \right) = \left(A^i - (R^i)^{-1} \Lambda^i_T(t) \right)^\top \Sigma^i_T(t).$$
Recall that $\bar{A}_T^i(t) = (A^i - (R^i)^{-1} \Lambda^i_T(t))^\top$ for all $i \in [N]$ and $t \in [0, T]$, the ODE for $\Sigma^i_T$ can be rewritten as
\begin{equation*}
\frac{d}{dt} \Sigma^i_T(t) + 2 \Sigma^i_T(t) \varsigma^i \Sigma^i_T(t) + \Sigma^i_T(t) (\bar A_T^i(t))^\top + \bar A_T^i(t) \Sigma^i_T(t) = 0, \quad \Sigma^i_T(0) = \Sigma^i_0.
\end{equation*}
Define $\bar \Sigma^i_T(t) = \Sigma^i_T(T-t)$. We derive the backward form of the equation satisfied by $\bar \Sigma^i_T$ as follows:
\begin{equation*}
\begin{cases}
\vspace{4pt}
\displaystyle \frac{d}{dt} \bar \Sigma^i_T(t) - \bar \Sigma^i_T(t) (\bar A_T^i(t))^\top - \bar A_T^i(t) \bar \Sigma^i_T(t) - 2 \bar \Sigma^i_T(t) \varsigma^i \bar \Sigma^i_T(t)  = 0, \\
\displaystyle \bar \Sigma^i_T(T) = \Sigma^i_0.
\end{cases}
\end{equation*}
Since $\varsigma^i > 0$ and $\Sigma_0^i > 0$ for each $i \in [N]$, it follows from Theorem 7.2 in Chapter 6 of \cite{Yong-Zhou-1999} that the backward equation for $\bar \Sigma^i_T$ admits a unique solution in $C([0, T]; \mathbb S^{d}_{++})$. Consequently, the original forward equation \eqref{eq:Sigma_t} also admits a unique solution in $C([0, T]; \mathbb S^{d}_{++})$.
\end{proof} 

In what follows, we focus on the proof of the unique solvability of the subsystem in \eqref{eq:Riccati_system} governing $\{\rho^i_T, \mu^i_T: i \in[N]\}$. This system corresponds to the third and fourth lines in \eqref{eq:Riccati_system}. It is important to note that this subsystem forms a fully coupled forward-backward system of ODEs, whose solvability is inherently nontrivial and requires a careful and rigorous analysis.

\begin{lemma}
\label{l:solvability_mu_rho_i}
Let Assumptions \ref{a:solvability_Riccati} and \ref{a:solvability_Riccati_finite} hold, and let $\Lambda^i_T$ be the solution to \eqref{eq:Lambda_t} for each $i \in [N]$. Then, there exists a unique solution $\{\rho^i_T, \mu^i_T: i \in[N]\}$ to the following fully coupled forward backward system of ODEs
\begin{equation}
\label{eq:rho_mu_t}
\begin{cases}
\vspace{4pt}
\displaystyle \frac{d}{dt} \rho^i_T(t) + \left(A^i - (R^i)^{-1} \Lambda^i_T(t) \right)^\top \rho^i_T(t) + 2 F_{1}^i(\bm{\mu}_T^{-i}(t)) = 0, \\
\vspace{4pt}
\displaystyle \frac{d}{dt} \mu^i_T(t) - \left(A^i - (R^i)^{-1} \Lambda^i_T(t) \right) \mu^i_T(t) + (R^i)^{-1} \rho^i_T(t) = 0, \\
\displaystyle \rho_T^i(T) = 0 \in \mathbb R^d, \, \mu_T^i(0) = \mu_0^i \in \mathbb R^d,
\end{cases}
\end{equation}
where
$$F_{1}^i(\bm{\mu}_T^{-i}(t)) = - Q_{ii}^i \bar{x}_i^i + \sum_{j \neq i} Q_{ij}^i \big(\mu^j_T(t)- \bar{x}_i^j \big).$$
\end{lemma}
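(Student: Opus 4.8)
The plan is to follow the outline in Remark~\ref{r:solvability_Riccati_finite_time}: exploit the fact that, once $\Lambda^i_T$ is fixed, the backward $\rho$-equations and the forward $\mu$-equations decouple across players, set up a fixed-point map, apply the Leray--Schauder theorem, and reduce everything to a single \emph{a priori} $L^2$-bound obtained from a duality identity. Write $B^i(t):=A^i-(R^i)^{-1}\Lambda^i_T(t)$, so $\bar A^i_T(t)=(B^i(t))^\top$. For a fixed $\bm{\mu}\in C([0,T];\mathbb R^{Nd})$, the first line of \eqref{eq:rho_mu_t} is a decoupled family of backward linear ODEs with the unique solution
$$\rho^i_T(t)=2\int_t^T e^{\int_t^s \bar A^i_T(r)\,dr}\,F_1^i(\bm{\mu}^{-i}(s))\,ds,$$
and inserting these into the second line gives a decoupled family of forward linear ODEs with the unique solution
$$\mu^i_T(t)=e^{\int_0^t B^i(r)\,dr}\mu_0^i-\int_0^t e^{\int_s^t B^i(r)\,dr}(R^i)^{-1}\rho^i_T(s)\,ds.$$
This defines an affine map $\mathcal T:\bm{\mu}\mapsto\bm{\mu}_T$ on $C([0,T];\mathbb R^{Nd})$; it is continuous since $F_1^i$ is affine and the coefficients are continuous, and it is compact, because $\mathcal T$ sends bounded sets to sets bounded in $C^1([0,T];\mathbb R^{Nd})$, which are relatively compact by Arzel\`a--Ascoli. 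By the Leray--Schauder theorem, a fixed point exists provided the set of all $\bm{\mu}$ satisfying $\bm{\mu}=\tau\mathcal T(\bm{\mu})$ for some $\tau\in[0,1]$ is bounded; a fixed point is exactly a solution of \eqref{eq:rho_mu_t}.

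The core is the uniform a priori bound, obtained by a duality computation. For the pair $(\bm{\rho}_T,\bm{\mu}_T)$ produced by $\mathcal T$, differentiating $\lan\rho^i_T,\mu^i_T\ran$ and using the two equations, the terms carrying $B^i$ cancel and
$$\frac{d}{dt}\lan\rho^i_T,\mu^i_T\ran=-2\lan F_1^i(\bm{\mu}^{-i}),\mu^i_T\ran-\lan\rho^i_T,(R^i)^{-1}\rho^i_T\ran.$$
Integrating over $[0,T]$, using $\rho^i_T(T)=0$ and $\mu^i_T(0)=\mu^i_0$, summing over $i$, and recognizing $\sum_i\lan\sum_{j\neq i}Q_{ij}^i\mu^j,\mu^i_T\ran$ in terms of the matrix $\bm{Q}$ of \eqref{eq:matrix_R_and_Q}, one arrives at
$$\int_0^T\lan\bm{R}\bm{\rho}_T,\bm{\rho}_T\ran\,dt+2\int_0^T\lan\bm{Q}\bm{\mu},\bm{\mu}_T\ran\,dt+2\int_0^T\lan\bm{b},\bm{\mu}_T\ran\,dt=\lan\bm{\rho}_T(0),\bm{\mu}_0\ran,$$
where $\bm{b}$ collects the constants $-Q_{ii}^i\bar x_i^i-\sum_{j\neq i}Q_{ij}^i\bar x_i^j$. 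In the relation $\bm{\mu}=\tau\bm{\mu}_T$ the indefinite term becomes $\tau\lan\bm{Q}\bm{\mu}_T,\bm{\mu}_T\ran$, so the factor $\tau\le1$ is harmless; using $\lan\bm{Q}v,v\ran\ges-\tfrac{\gamma^{(N)}}{2}|v|^2$ from Assumption~\ref{a:solvability_Riccati_finite} (see Remark~\ref{r:assumption_Q}) and $\bm{R}\ges\lambda_{\bm{R},\min}I_{Nd}$ gives
$$\lambda_{\bm{R},\min}\|\bm{\rho}_T\|_{L^2}^2\les\gamma^{(N)}\|\bm{\mu}_T\|_{L^2}^2+2\|\bm{b}\|_{L^2}\|\bm{\mu}_T\|_{L^2}+|\bm{\rho}_T(0)|\,|\bm{\mu}_0|,$$
with $\|\cdot\|_{L^2}=\|\cdot\|_{L^2(0,T;\mathbb R^{Nd})}$.

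To close this I would bound $\|\bm{\mu}_T\|_{L^2}$ in terms of $\|\bm{\rho}_T\|_{L^2}$ using the forward representation together with the decay estimate $\|e^{\int_s^t\bar A^i_T(r)dr}\|\les K^{(N)}e^{-\lambda^{(N)}(t-s)}$ of Lemma~\ref{l:norm_A_i_T_estimate} (the spectral norm is unchanged under transposition, so the same bound applies to the forward flow $e^{\int_s^t B^i(r)dr}$). Young's convolution inequality then yields
$$\|\bm{\mu}_T\|_{L^2}^2\les C\,|\bm{\mu}_0|^2+2\Big(\frac{K^{(N)}\|\bm{R}\|}{\lambda^{(N)}}\Big)^2\|\bm{\rho}_T\|_{L^2}^2,$$
and a Cauchy--Schwarz bound on the backward representation controls $|\bm{\rho}_T(0)|$ by $\|\bm{\mu}_T\|_{L^2}$ plus data, keeping everything in $L^2$. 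Substituting into the duality inequality and absorbing the lower-order cross terms by Young's inequality, the coefficient multiplying $\|\bm{\rho}_T\|_{L^2}^2$ on the right is $2\gamma^{(N)}(K^{(N)}\|\bm{R}\|/\lambda^{(N)})^2$; the admissibility bound $\gamma^{(N)}<\lambda_{\bm{R},\min}(\lambda^{(N)})^2/(2(K^{(N)})^2\|\bm{R}\|^2)$ of Assumption~\ref{a:solvability_Riccati_finite} is \emph{exactly} what forces this to be strictly below $\lambda_{\bm{R},\min}$, so the estimate closes and delivers a bound on $\|\bm{\rho}_T\|_{L^2}$, hence on $\|\bm{\mu}_T\|_{L^2}$ and on $\|\bm{\mu}\|_{L^2}=\tau\|\bm{\mu}_T\|_{L^2}$, depending only on the data and uniform in $\tau$; upgrading to $C([0,T])$-bounds via the integral representations finishes the boundedness step. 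This closing argument is the main obstacle: the forward estimate and the duality estimate are each circular in isolation, and only the smallness of $\gamma^{(N)}$ built into Assumption~\ref{a:solvability_Riccati_finite} breaks the circularity. Existence then follows from Leray--Schauder, and for uniqueness I would apply the very same duality estimate to the difference of two solutions, which solves the homogeneous system ($\bm{b}=0$, $\bm{\mu}_0=0$); the closed estimate forces $\|\bm{\rho}_T\|_{L^2}=0$, whence $\bm{\rho}_T\equiv0$ and, by the forward equation with zero data, $\bm{\mu}_T\equiv0$.
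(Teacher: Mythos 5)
Your proposal is correct and follows essentially the same route as the paper's proof: a Leray--Schauder fixed-point argument whose a priori bound comes from the duality identity for $\langle\bm{\rho}_T,\bm{\mu}_T\rangle$, the exponential decay of $e^{\int_s^t\bar A_T^i}$ from Lemma \ref{l:norm_A_i_T_estimate}, the convolution inequality bounding $\|\bm{\mu}_T\|_{L^2}$ by $\|\bm{\rho}_T\|_{L^2}$, and absorption of the $\gamma^{(N)}$-term using the admissibility bound in Assumption \ref{a:solvability_Riccati_finite}, with uniqueness obtained by running the same estimate on the homogeneous difference system. The only (cosmetic) differences are that you scale the output of the full map by the homotopy parameter and keep the affine term $\bm{b}$ separate via Cauchy--Schwarz, whereas the paper places $\ell$ inside the forward equation and uses the combined inequality $\langle 2\bm{Q}v,v\rangle-\langle 2\bm{q},v\rangle\ges-\gamma|v|^2$ for large $|v|$.
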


\begin{proof}
The proof consists of two parts: the existence and uniqueness of the solution.

\vspace{5pt}\noindent{\bf Existence.} 
To obtain the solvability of the system of equations \eqref{eq:rho_mu_t}, we apply the Leray--Schauder fixed point theorem: Let $\Psi$ be a continuous and compact mapping from a Banach space $\mathbb X$ to itself such that the set 
$$\mathcal S = \{\check{\bm{\mu}} \in \mathbb X: \check{\bm{\mu}} = \ell \Psi(\check{\bm{\mu}}) \hbox{ for some } 0 \les \ell \les 1 \}$$
is bounded, then $\Psi$ has a fixed point. For simplicity of notation, we denote the following vectors in $\mathbb R^{Nd}$:
\begin{equation*}
\check{\bm{\rho}}_T(t) := \begin{bmatrix}
\check{\rho}_T^1(t) \\
\vspace{4pt} \vdots \\
\vspace{4pt} \check{\rho}_T^N(t)
\end{bmatrix}, \quad \check{\bm{\mu}}_T(t) := \begin{bmatrix}
\vspace{4pt} \check\mu_T^1(t) \\ \vspace{4pt} \vdots \\ 
\vspace{4pt} \check\mu_T^N(t)
\end{bmatrix},
\end{equation*}
and we define a mapping $\bm{\Psi}: C([0, T]; \mathbb R^{Nd}) \to C([0, T]; \mathbb R^{Nd})$ as follows:
$$\wt{\bm{\mu}}_T = \bm{\Psi}(\check{\bm{\mu}}_T) := \bm{\Psi}_2 \circ \bm{\Psi}_1(\check{\bm{\mu}}_T).$$
In the above,
\begin{enumerate}
\item[(i)] $\check{\bm{\rho}}_T = \bm{\Psi}_1(\check{\bm{\mu}}_T)$, where $\check{\rho}_T^i = \Psi_1^i(\check{\bm{\mu}}_T)$ is the solution to 
\begin{equation*}
\begin{cases}
\vspace{4pt}
\displaystyle \frac{d}{dt} \check{\rho}^i_T(t) + \left(A^i - (R^i)^{-1} \Lambda^i_T(t) \right)^\top \check{\rho}^i_T(t) + 2 F_{1}^i(\check{\bm{\mu}}_T^{-i}(t)) = 0, \\
\check{\rho}_T^i(T) = 0,
\end{cases}
\end{equation*}
with a given $\check{\bm{\mu}}_T = (\check{\mu}_T^1, \dots, \check{\mu}_T^N) \in C([0, T]; \mathbb R^{Nd})$ for all $i \in [N]$;

\item[(ii)] $\wt{\bm{\mu}}_T = \bm{\Psi}_2(\check{\bm{\rho}}_T)$, where $\wt{\mu}_T^i = \Psi_2^i(\check{\bm{\rho}}_T)$ is the solution to 
\begin{equation*}
\begin{cases}
\vspace{4pt}
\displaystyle \frac{d}{dt} \wt{\mu}_T^i(t) - \left(A^i - (R^i)^{-1} \Lambda^i_T(t) \right) \wt{\mu}_T^i(t) + (R^i)^{-1} \check{\rho}^i_T(t) = 0, \\
\displaystyle \wt{\mu}_T^i(0) = \mu_0^i,
\end{cases}
\end{equation*}
with a given $\check{\bm{\rho}}_T = (\check{\rho}_T^1, \dots, \check{\rho}_T^N) \in C([0, T]; \mathbb R^{Nd})$ for all $i \in [N]$.
\end{enumerate}

Recall that $\bar{A}_T^i(t) = (A^i - (R^i)^{-1} \Lambda^i_T(t))^\top$ for all $i \in [N]$ and $t \in [0, T]$. Then, we have
$$\frac{d}{dt} \check{\rho}^i_T(t) + \bar{A}_T^i(t) \check{\rho}^i_T(t) + 2 F_{1}^i(\check{\bm{\mu}}_T^{-i}(t)) = 0, \quad 
\check{\rho}^i_T(T) = 0,$$
which yields the explicit representation
\begin{align}
\label{e:check_rho} 
\check{\rho}^i_T(t) = \Psi_1^i(\check{\bm{\mu}}_T(t)) = 2 \int_t^T e^{\int_t^s \bar{A}_T^i(r) dr} F_{1}^i(\check{\bm{\mu}}_T^{-i}(s)) ds, \quad \forall t \in [0, T]
\end{align}
for all $i \in [N]$. Thus, $\check{\rho}_T^i \in C^1([0, T]; \mathbb R^d)$ for a given $\check{\bm{\mu}}_T \in C([0, T]; \mathbb R^{Nd})$. Similarly, for a given $\check{\bm{\rho}}_T \in C([0, T]; \mathbb R^{Nd})$, the function $\wt{\mu}_T^i$ satisfies
$$\wt{\mu}_T^i(t) = \Psi_2^i(\check{\bm{\rho}}_T(t)) = e^{\int_0^t (\bar{A}_T^i(s))^\top ds} \mu_0^i - \int_0^t e^{\int_s^t (\bar{A}_T^i(r))^\top dr} (R^i)^{-1} \check{\rho}_T^i(s) ds, \quad \forall t \in [0, T]$$
for all $i \in [N]$, which implies that $\wt{\mu}_T^i \in C^1([0, T]; \mathbb R^d)$.

We now outline our approach to establishing the existence of a solution in two steps as follows.

\textit{Step 1}: First, we show that $\bm{\Psi}$ is a continuous and compact mapping from $\mathbb X:= C([0, T]; \mathbb R^{Nd})$ to
itself. 
Let $\check{\bm{\mu}}_T = (\check{\mu}_T^1, \dots, \check{\mu}_T^N)$ and $\check{\bm{\nu}}_T = (\check{\nu}_T^1, \dots, \check{\nu}_T^N)$ be two elements in $\mathbb X$. Then, by \eqref{e:check_rho}, for each $i \in [N]$,
$$\Psi_1^i(\check{\bm{\mu}}_T(t)) - \Psi_1^i(\check{\bm{\nu}}_T(t)) 
=  2 \int_t^T e^{\int_t^s \bar{A}_T^i(r) dr} \big(F^{i}_{1}(\check{\bm{\mu}}_T^{-i}(s)) - F^{i}_{1}(\check{\bm{\nu}}_T^{-i}(s) )\big) ds,$$
where
$$F^{i}_{1}(\check{\bm{\mu}}_T^{-i}(s)) - F^{i}_{1}(\check{\bm{\nu}}_T^{-i}(s)) = \sum_{j \neq i} Q_{ij}^i \left(\check{\mu}_T^j(s)- \check{\nu}_T^j(s) \right).$$
From the estimate \eqref{eq:norm_bar_A_i_T_estimate} in Lemma \ref{l:norm_A_i_T_estimate}, there exist some positive constants $K$ and $\lambda$, independent of $t, s$, and $T$,  such that
$$\big\|e^{\int_t^s \bar{A}_T^i(r) dr} \big\| \les K e^{-\lambda (s-t)}, \quad \forall 0 \les t \les s \les T.$$
Hence, we obtain the following inequalities
\begin{equation*}
\begin{aligned}
\big|\Psi_1^i(\check{\bm{\mu}}_T(t)) - \Psi_1^i(\check{\bm{\nu}}_T(t)) \big| & \les K \int_t^T e^{-\lambda (s-t)} \sqrt{N} \Bigg(\sum_{j=1}^{N} \big|\check{\mu}_T^j(s) - \check{\nu}_T^j(s) \big|^2\Bigg)^{\frac{1}{2}} ds \\
& \les K(N) \sup_{0 \les s \les T} |\check{\bm{\mu}}_T(s) - \check{\bm{\nu}}_T(s)| \int_t^T e^{-\lambda (s-t)} ds \\
& \les K(N) d(\check{\bm{\mu}}_T, \check{\bm{\nu}}_T),
\end{aligned}
\end{equation*}
where $K(N)$ is used to emphasize the dependence of the constant $K$ on the number of players $N$, and $d(\check{\bm{\mu}}_T, \check{\bm{\nu}}_T):= \sup_{0 \les s \les T} |\check{\bm{\mu}}_T(s) - \check{\bm{\nu}}_T(s)|$ denotes the uniform metric on the space $\mathbb{X}$. Therefore, it is cleat that
\begin{equation*}
d(\bm{\Psi}_1(\check{\bm{\mu}}_T), \bm{\Psi}_1(\check{\bm{\nu}}_T)) = \sup_{0 \les t \les T} \Bigg(\sum_{j=1}^{N} \big|\Psi_1^j(\check{\bm{\mu}}_T(t)) - \Psi_1^j(\check{\bm{\nu}}_T(t)) \big|^2\Bigg)^{\frac{1}{2}} \les K(N) d(\check{\bm{\mu}}_T, \check{\bm{\nu}}_T),
\end{equation*}
which yields that $\bm{\Psi}_1$ is a Lipschitz continuous mapping from $\mathbb X$ to $C^1([0, T]; \mathbb R^{Nd})\subset\mathbb{X}$. 

Next, with the given of $\check{\bm{\rho}}_T^{j} = (\check{\rho}^{j1}_T, \dots, \check{\rho}_T^{jN})$ in $\mathbb X$ for $j = 1, 2$, we have
$$\wt{\mu}_T^{1i}(t) - \wt{\mu}_T^{2i}(t) := \Psi_2^i(\check{\bm{\rho}}_T^1(t)) - \Psi_2^i(\check{\bm{\rho}}_T^2(t)) = \int_0^t e^{\int_s^t (\bar{A}_T^i(r))^\top dr} (R^i)^{-1} \big(\check{\rho}_T^{2i}(s) - \check{\rho}_T^{1i}(s) \big) ds.$$
Using the estimate \eqref{eq:norm_bar_A_i_T_estimate} again, we obtain
\begin{equation*}
\begin{aligned}
\big|\Psi_2^i(\check{\bm{\rho}}_T^1(t)) - \Psi_2^i(\check{\bm{\rho}}_T^2(t)) \big| & \les K \int_0^t e^{-\lambda (t-s)} \big|\check{\rho}_T^{2i}(s) - \check{\rho}_T^{1i}(s) \big| ds \\
& \les K \sup_{0 \les s \les t} \big|\check{\rho}_T^{2i}(s) - \check{\rho}_T^{1i}(s)\big| \\
& \les K d(\check{\bm{\rho}}_T^1, \check{\bm{\rho}}_T^2).
\end{aligned}
\end{equation*}
for some positive $K$ and $\lambda$. Thus, by a similar argument, we derive
\begin{equation*}
\begin{aligned}
d \big(\bm{\Psi}_2(\check{\bm{\rho}}_T^1), \bm{\Psi}_2(\check{\bm{\rho}}_T^2) \big) & = \sup_{0 \les t \les T} \Bigg(\sum_{j=1}^{N} \big|\Psi_2^j(\check{\bm{\rho}}_T^1(t)) - \Psi_2^j(\check{\bm{\rho}}_T^2(t)) \big|^2\Bigg)^{\frac{1}{2}} \\
& \les K(N) d \big(\check{\bm{\rho}}_T^1, \check{\bm{\rho}}_T^2 \big),
\end{aligned}
\end{equation*}
which shows that $\bm{\Psi}_2$ is a Lipschitz continuous mapping from $\mathbb X$ to $C^1([0, T]; \mathbb R^{Nd})\subset\mathbb{X}$. Therefore, combining the results above, we conclude that $\bm{\Psi} = \bm{\Psi}_2 \circ \bm{\Psi}_1$ is a Lipschitz mapping from $\mathbb X$ to itself, which yields that $\bm{\Psi}$ is a continuous and compact mapping from $\mathbb X$ to $\mathbb X$. 

\textit{Step 2:} Next, we prove that the set
$$\mathcal S = \{\check{\bm{\mu}}_T \in \mathbb X: \check{\bm{\mu}}_T = \ell \bm{\Psi}(\check{\bm{\mu}}_T) \hbox{ for some } 0 \les \ell \les 1 \}$$
is bounded. Fix $\ell \in [0, 1]$, the identity $\check{\bm{\mu}}_T = \ell \bm{\Psi}(\check{\bm{\mu}}_T) = \ell \wt{\bm{\mu}}_T$ implies that $(\check{\bm{\mu}}_T, \check{\bm{\rho}}_T)$ satisfies
\begin{equation*}
\begin{cases}
\vspace{4pt}
\displaystyle \frac{d}{dt} \check{\rho}^i_T(t) + \left(A^i - (R^i)^{-1} \Lambda^i_T(t) \right)^\top \check{\rho}^i_T(t) + 2 F_{1}^i(\check{\bm{\mu}}_T^{-i}(t)) = 0, \\
\vspace{4pt}
\displaystyle \frac{d}{dt} \check{\mu}^i_T(t) - \left(A^i - (R^i)^{-1} \Lambda^i_T(t) \right) \check{\mu}^i_T(t) + \ell (R^i)^{-1} \check{\rho}^i_T(t) = 0, \\
\displaystyle \check{\rho}_T^i(T) = 0, \, \check{\mu}_T^i(0) = \ell \mu_0^i.
\end{cases}
\end{equation*}
We then introduce the vector $\bm{q} \in \mathbb R^{Nd}$, defined as follows:
\begin{equation}
\label{eq:vector_q}
\bm{q} := \begin{bmatrix}
\vspace{4pt} \sum_{j=1}^N Q_{1j}^1 \bar{x}_1^j \\ \vspace{4pt} \sum_{j=1}^N Q_{2j}^2 \bar{x}_2^j \\ \vspace{4pt} \vdots \\ \vspace{4pt} \sum_{j=1}^N Q_{Nj}^N \bar{x}_N^j
\end{bmatrix}.
\end{equation}
Then, the above system of equations can be written as
\begin{equation}
\label{eq:rho_mu_t_system_bounded}
\begin{cases}
\vspace{4pt}
\displaystyle \frac{d}{dt} \check{\bm{\rho}}_T(t) = - \bar{\bm{A}}_T(t) \check{\bm{\rho}}_T(t) - 2 \bm{Q} \check{\bm{\mu}}_T(t) + 2 \bm{q}, \\
\vspace{4pt}
\displaystyle \frac{d}{dt} \check{\bm{\mu}}_T(t) =  (\bar{\bm{A}}_T(t))^\top \check{\bm{\mu}}_T(t) - \ell \bm{R} \check{\bm{\rho}}_T(t), \\
\displaystyle \check{\bm{\rho}}_T(T) = 0, \, \check{\bm{\mu}}_T(0) = \ell \bm{\mu}_0,
\end{cases}
\end{equation}
where $\bm{\mu}_0 = (\mu_0^1, \dots, \mu_0^N) \in \mathbb R^{Nd}$, $\bm{Q}$ and $\bm{R}$ are defined in \eqref{eq:matrix_R_and_Q}, and $\bar{\bm{A}}_T(t)$ is given in \eqref{eq:matrix_A_t}. If $\ell = 0$, the boundedness of $(\check{\bm{\mu}}_T, \check{\bm{\rho}}_T)$ to \eqref{eq:rho_mu_t_system_bounded} follows directly from the estimate \eqref{eq:norm_bar_A} in Lemma \ref{l:norm_A_i_T_estimate}. So, in the following, we consider the case $\ell \in (0, 1]$. In addition, to establish the boundedness of $\mathcal S$, it suffices to consider the regime where $|\check{\bm{\mu}}_T|$ is sufficiently large. From the system of equations \eqref{eq:rho_mu_t_system_bounded}, it is clear that
\begin{equation*}
\begin{aligned}
& - \frac{d}{dt} \lan \check{\bm{\mu}}_T(t), \check{\bm{\rho}}_T(t) \ran \\
= \ & - \{ \lan (\bar{\bm{A}}_T(t))^\top \check{\bm{\mu}}_T(t) - \ell \bm{R} \check{\bm{\rho}}_T(t), \check{\bm{\rho}}_T(t) \ran + \lan \check{\bm{\mu}}_T(t), - \bar{\bm{A}}_T(t) \check{\bm{\rho}}_T(t) - 2 \bm{Q} \check{\bm{\mu}}_T(t) + 2 \bm{q} \ran \} \\
= \ & \lan \ell \bm{R} \check{\bm{\rho}}_T(t), \check{\bm{\rho}}_T(t) \ran + \lan 2 \bm{Q} \check{\bm{\mu}}_T(t), \check{\bm{\mu}}_T(t) \ran - \lan 2 \bm{q}, \check{\bm{\mu}}_T(t) \ran.
\end{aligned}
\end{equation*}
Under Assumption \ref{a:solvability_Riccati_finite}, it is evident that there exists a constant $C > 0$ such that
\begin{equation}
\label{eq:assumption_Q_matrix_1_inequality}
\lan 2 \bm{Q} v, v \ran - \lan 2 \bm{q}, v \ran \ges -\gamma |v|^2, \quad \forall v \in \mathbb{R}^{Nd} \text{ with } |v| > C
\end{equation}
for some constant $\gamma \in (0, \frac{(\lambda^{(N)})^2}{2 (K^{(N)})^2 \|\bm{R}\|^2} \lambda_{\bm{R}, min})$. Thus, applying the inequality \eqref{eq:assumption_Q_matrix_1_inequality}, and integrating both sides of the above identity over the interval $[0, T]$, we obtain
$$\int_0^T \ell \lan \bm{R} \check{\bm{\rho}}_T(t), \check{\bm{\rho}}_T(t) \ran dt \les \lan \check{\bm{\mu}}_T(0), \check{\bm{\rho}}_T(0) \ran - \lan \check{\bm{\mu}}_T(T), \check{\bm{\rho}}_T(T) \ran + \int_0^T \gamma |\check{\bm{\mu}}_T(t)|^2 dt.$$
From the second equation in the system \eqref{eq:rho_mu_t_system_bounded}, the semi-explicit representation of $\check{\bm{\mu}}_T$ in terms of $\check{\bm{\rho}}_T$ is given by
\begin{equation*}
\check{\bm{\mu}}_T(t) = e^{\int_0^t (\bar{\bm{A}}_T(s))^\top ds} \check{\bm{\mu}}_T(0) - \int_0^t e^{\int_s^t (\bar{\bm{A}}_T(r))^\top dr} \ell \bm{R} \check{\bm{\rho}}_T(s) ds.
\end{equation*}
It follows from the estimate \eqref{eq:norm_bar_A} that
\begin{equation}
\label{eq:estimation_mu_bounded}
\begin{aligned}
|\check{\bm{\mu}}_T(t)|^2 & \les 2 \Big\| e^{\int_0^t (\bar{\bm{A}}_T(s))^\top ds} \Big\|^2 |\check{\bm{\mu}}_T(0)|^2  + 2 \Big|\int_0^t e^{\int_s^t (\bar{\bm{A}}_T(r))^\top dr} \ell \bm{R} \check{\bm{\rho}}_T(s) ds \Big|^2 \\
& \les 2 \big(K^{(N)}\big)^2 e^{-2 \lambda^{(N)} t} |\check{\bm{\mu}}_T(0)|^2 + 2 \big(K^{(N)}\big)^2 \ell^2 \|\bm{R}\|^2 \Big|\int_0^t e^{-\lambda^{(N)} (t - s)} \check{\bm{\rho}}_T(s) ds \Big|^2
\end{aligned}
\end{equation}
for all $t \in [0, T]$. Integrating both sides over $[0, T]$, we deduce
$$\int_0^T |\check{\bm{\mu}}_T(t)|^2 dt \les \frac{\big(K^{(N)}\big)^2}{\lambda^{(N)}} |\check{\bm{\mu}}_T(0)|^2 + 2 \big(K^{(N)}\big)^2 \ell^2 \|\bm{R}\|^2 \int_0^T \Big|\int_0^t e^{- \lambda^{(N)} (t - s)} \check{\bm{\rho}}_T(s) ds \Big|^2 dt.$$
Thus, as $\check{\bm{\rho}}_T(T) = 0$ and $0< \ell \les 1$, we derive the following inequality
\begin{equation*}
\begin{aligned}
\int_0^T \lan \bm{R} \check{\bm{\rho}}_T(t), \check{\bm{\rho}}_T(t) \ran dt & \les \frac{1}{\ell} \lan \check{\bm{\mu}}_T(0), \check{\bm{\rho}}_T(0) \ran  + \frac{\gamma \big(K^{(N)}\big)^2}{\ell \lambda^{(N)}} |\check{\bm{\mu}}_T(0)|^2 \\
& \hspace{0.3in} + 2 \gamma \big(K^{(N)}\big)^2 \|\bm{R}\|^2 \int_0^T \Big|\int_0^t e^{-\lambda^{(N)} (t - s)} \check{\bm{\rho}}_T(s) ds \Big|^2 dt.
\end{aligned}
\end{equation*}
Next, for any fixed $\lambda > 0$, we denote
$$\bm{g}_{\lambda}(t) := \int_0^t e^{-\lambda(t - s)} \check{\bm{\rho}}_T(s) ds = e^{-\lambda t} \int_0^t e^{\lambda s} \check{\bm{\rho}}_T(s) ds$$
and
$$\|\bm{g}_{\lambda}\|_{L^2([0, T])} := \Big(\int_0^T |\bm{g}_{\lambda}(t)|^2 dt \Big)^{\frac{1}{2}}.$$
Since
$\bm{g}_{\lambda}'(t) = - \lambda \bm{g}_{\lambda}(t) + \check{\bm{\rho}}_T(t)$,
multiplying both sides by $(\bm{g}_{\lambda}(t))^\top$ and integrating over $[0, T]$, we obtain
$$\int_0^T (\bm{g}_{\lambda}(t))^\top \bm{g}_{\lambda}'(t) dt + \lambda \int_0^T |\bm{g}_{\lambda}(t)|^2 dt = \int_0^T (\bm{g}_{\lambda}(t))^\top \check{\bm{\rho}}_T(t) dt.$$
It is clear that
$$\int_0^T (\bm{g}_{\lambda}(t))^\top \bm{g}_{\lambda}'(t) dt = \frac{1}{2} |\bm{g}_{\lambda}(T)|^2 - \frac{1}{2} |\bm{g}_{\lambda}(0)|^2 = \frac{1}{2} |\bm{g}_{\lambda}(T)|^2$$
since $\bm{g}_{\lambda}(0) = 0$. Thus, we have
$$\lambda \|\bm{g}_{\lambda}\|^2_{L^2([0, T])} = \lambda \int_0^T |\bm{g}_{\lambda}(t)|^2 dt \les \int_0^T (\bm{g}_{\lambda}(t))^\top \check{\bm{\rho}}_T(t) dt \les \|\bm{g}_{\lambda}\|_{L^2([0, T])} \|\check{\bm{\rho}}_T\|_{L^2([0, T])}$$
by H\"older's inequality, which yields that $\|\bm{g}_{\lambda}\|^2_{L^2([0, T])} \les \frac{1}{\lambda^2} \|\check{\bm{\rho}}_T\|^2_{L^2([0, T])}$, i.e., 
\begin{equation}
\label{eq:estimation_double_integral}
\int_0^T \Big|\int_0^t e^{-\lambda(t - s)} \check{\bm{\rho}}_T(s) ds \Big|^2 dt \les \frac{1}{\lambda^2} \int_0^T |\check{\bm{\rho}}_T(t)|^2 dt.
\end{equation}
By using the inequality \eqref{eq:estimation_double_integral}, we derive
\begin{equation*}
\begin{aligned}
\int_0^T \lan \bm{R} \check{\bm{\rho}}_T(t), \check{\bm{\rho}}_T(t) \ran dt & \les \frac{1}{\ell} \lan \check{\bm{\mu}}_T(0), \check{\bm{\rho}}_T(0) \ran  + \frac{\gamma \big(K^{(N)}\big)^2}{\ell \lambda^{(N)}} |\check{\bm{\mu}}_T(0)|^2 \\
& \hspace{0.5in} + \frac{2 \gamma \big(K^{(N)}\big)^2 \|\bm{R}\|^2}{(\lambda^{(N)})^2} \int_0^T |\check{\bm{\rho}}_T(t)|^2 dt.
\end{aligned}
\end{equation*}
Under Assumption \ref{a:solvability_Riccati_finite}, we have $\gamma < \frac{(\lambda^{(N)})^2}{2 (K^{(N)})^2 \|\bm{R}\|^2} \lambda_{\bm{R}, min}$, so that the last term on the right-hand side of the above inequality can be absorbed by the left-hand side, which follows that
$$ \int_0^T |\check{\bm{\rho}}_T(t)|^2 dt \les K \big(|\check{\bm{\mu}}_T(0)| |\check{\bm{\rho}}_T(0)| + |\check{\bm{\mu}}_T(0)|^2 \big)$$
for some positive constant $K$. Returning to the estimate \eqref{eq:estimation_mu_bounded} and applying H\"older’s inequality, we obtain
\begin{equation*}
\begin{aligned}
|\check{\bm{\mu}}_T(t)|^2 \les K e^{-2 \lambda t} |\check{\bm{\mu}}_T(0)|^2 + K (1 - e^{-2\lambda t}) \int_0^t |\check{\bm{\rho}}_T(s)|^2 ds, \quad \forall t \in [0, T]
\end{aligned}
\end{equation*}
for some $K, \lambda > 0$. Taking the supremum over $t \in [0, T]$, we conclude
$$\sup_{t \in [0, T]} |\check{\bm{\mu}}_T(t)|^2 \les K \big(|\check{\bm{\mu}}_T(0)| |\check{\bm{\rho}}_T(0)| + |\check{\bm{\mu}}_T(0)|^2 \big).$$
Next, from the first equation in the system \eqref{eq:rho_mu_t_system_bounded} along with the terminal condition $\check{\bm{\rho}}_T(T) = 0$, the semi-explicit form of $\check{\bm{\rho}}_T$ in terms of $\check{\bm{\mu}}_T$ is
$$\check{\bm{\rho}}_T(t) = \int_t^T e^{\int_t^s \bar{\bm{A}}_T(r) dr} (2 \bm{Q} \check{\bm{\mu}}_T(s) - 2 \bm{q}) ds,$$
which gives that
\begin{equation*}
|\check{\bm{\rho}}_T(0)| \les \int_0^T \Big\|e^{\int_0^s \bar{\bm{A}}_T(r) dr} \Big\| \big(2 \|\bm{Q}\| |\check{\bm{\mu}}_T(s)| + 2 |\bm{q}| \big) ds \les K \Big(\sup_{t \in [0, T]} |\check{\bm{\mu}}_T(t)| + 1 \Big)
\end{equation*}
by using the exponential decay estimate in \eqref{eq:norm_bar_A} from Lemma \ref{l:norm_A_i_T_estimate}.
Hence, by Young's inequality, we obtain the uniform boundedness of $\check{\bm{\mu}}$ on $[0, T]$:
\begin{equation}
\label{eq:mu_bound}
\sup_{t \in [0, T]} |\check{\bm{\mu}}_T(t)|^2 \les K \big(|\check{\bm{\mu}}_T(0)| + |\check{\bm{\mu}}_T(0)|^2 \big),
\end{equation}
which concludes that $\mathcal S$ is a bounded set. Therefore, by the Leray--Schauder fixed point theorem, $\bm{\Psi}$ has a fixed point, i.e., the system \eqref{eq:rho_mu_t} admits a solution. Moreover, using the semi-explicit representation of $\check{\bm{\rho}}_T$ once again, for all $t \in [0, T]$, we have
\begin{equation*}
\begin{aligned}
|\check{\bm{\rho}}_T(t)| &= \int_t^T \Big\| e^{\int_t^s \bar{\bm{A}}_T(r) dr} \Big\| \big(2 \|\bm{Q}\| |\check{\bm{\mu}}_T(s)| + 2 |\bm{q}| \big) ds \\
& \les K \Big(\sup_{t \in [0, T]} |\check{\bm{\mu}}_T(t)| + 1 \Big) \int_t^T e^{- \lambda (s - t)} ds \\
& \les K \big(1 + |\check{\bm{\mu}}_T(0)| \big),
\end{aligned}
\end{equation*}
which show that $\check{\bm{\rho}}_T$ is also uniformly bounded on $[0, T]$.

\vspace{5pt}\noindent{\bf Uniqueness.} Finally, we prove the uniqueness of the solution to the system of equations \eqref{eq:rho_mu_t}. Note that, for a given collection $\{\Lambda^i_T: i \in[N]\}$, the equations for $\{\mu^i_T, \rho^i_T: i \in[N]\}$ form a system of linear ODEs. Suppose there exist two solutions $(\bm{\rho}_T^1, \bm{\mu}_T^1)$ and $(\bm{\rho}_T^2, \bm{\mu}_T^2)$ to \eqref{eq:rho_mu_t}. Then, their difference satisfies the system:
\begin{equation*}
\begin{cases}
\vspace{4pt}
\displaystyle \frac{d}{dt} \big(\bm{\rho}_T^1(t) - \bm{\rho}_T^2(t) \big) = - \bar{\bm{A}}_T(t) \big(\bm{\rho}_T^1(t) - \bm{\rho}_T^2(t) \big) - 2 \bm{Q} \big( \bm{\mu}_T^1(t) - \bm{\mu}_T^2(t) \big), \\
\vspace{4pt}
\displaystyle \frac{d}{dt} \big(\bm{\mu}_T^1(t) - \bm{\mu}_T^2(t) \big) =  (\bar{\bm{A}}_T(t))^\top \big(\bm{\mu}_T^1(t) - \bm{\mu}_T^2(t) \big) - \bm{R} \big(\bm{\rho}_T^1(t) - \bm{\rho}_T^2(t) \big), \\
\displaystyle \bm{\rho}_T^1(T) - \bm{\rho}_T^2(T) = 0, \ \bm{\mu}_T^1(0) - \bm{\mu}_T^2(0) = 0.
\end{cases}
\end{equation*}
By the similar argument for the boundedness of $\mathcal S$ in the above and the result in \eqref{eq:mu_bound}, we derive that
$$\sup_{t \in [0, T]} \big|\bm{\mu}_T^1(t) - \bm{\mu}_T^2(t) \big|^2 \les K \big(\big|\bm{\mu}_T^1(0) - \bm{\mu}_T^2(0) \big| + \big|\bm{\mu}_T^1(0) - \bm{\mu}_T^2(0) \big|^2 \big) = 0,$$
which yields $\bm{\mu}_T^1(t) = \bm{\mu}_T^2(t)$ for all $t \in [0, T]$. From the first equation in the above system of equations, we also obtain $\bm{\rho}_T^1(t) = \bm{\rho}_T^2(t)$ for all $t \in [0, T]$. Thus, the proof of uniqueness of solution to \eqref{eq:rho_mu_t} is completed.
\end{proof}

We are now in a position to prove part (i) of Proposition \ref{p:result_finite_time}, building upon the analytical framework and intermediate results developed in this section.

\begin{proof}[Proof of part \textnormal{(i)} of Proposition \ref{p:result_finite_time}] 
By the results of Lemma \ref{l:solvability_and_convergence_Lambda}, Lemma \ref{l:solvability_Sigma_i}, and Lemma \ref{l:solvability_mu_rho_i}, we establish the unique solvability of the subsystem in \eqref{eq:Riccati_system} governing the unknowns $\{\Lambda^i_T, \Sigma^i_T, \mu^i_T, \rho^i_T: i \in[N]\}$. Moreover, for each $i \in [N]$, the functions $\Lambda^i_T, \Sigma^i_T, \mu^i_T$ and $\rho^i_T$ are continuous on $[0, T]$, with $\Lambda^i_T(t)$ being positive semi-definite and $\Sigma^i_T(t)$ being positive definite for all $t \in [0, T]$. Then, given $\Lambda^i_T, \rho^i_T, \Sigma^i_T$ and $\mu^i_T$, we now consider the last equation in \eqref{eq:Riccati_system}, which yields
\begin{equation*}
\kappa_T^i(T) - \kappa_T^i(t) = - \int_t^T \Big( \text{tr}(\varsigma^i \Lambda_T^i(s)) - \frac{1}{2} (\rho_T^i(s))^\top (R^i)^{-1} \rho_T^i(s) + F_0^i(\bm{\mu}_T^{-i}(s), \Sigma_T^{i}(s)) \Big) ds
\end{equation*}
by taking integration over $[t, T]$ on both sides. Hence, we obtain the explicit expression
\begin{equation}
\label{eq:k_i_explicit}
\kappa_T^i(t) = \int_t^T \Big( \text{tr}(\varsigma^i \Lambda_T^i(s)) - \frac{1}{2} (\rho_T^i(s))^\top (R^i)^{-1} \rho_T^i(s) + F_0^i(\bm{\mu}_T^{-i}(s), \Sigma_T^{i}(s)) \Big) ds
\end{equation}
as $\kappa_T^i(T) = 0$, which implies that $\kappa_T^i$ is a continuous function on $[0, T]$ for all $i \in [N]$. This completes the proof of part (i) in Proposition \ref{p:result_finite_time}.
\end{proof}

\subsection{Proof of part (ii) of Proposition \ref{p:result_finite_time}}
\label{s:proof_part_ii_finite_time}
In this section, we present the complete proof of part (ii) of Proposition \ref{p:result_finite_time}. We begin in Section \ref{s:derivation_Riccati_equations} by adopting the standard quadratic value function and Gaussian density ansatz, commonly used in linear quadratic Gaussian $N$-player differential games, to reduce the infinite-dimensional HJB-FP system to a finite-dimensional system of ODEs—namely, the coupled system \eqref{eq:Riccati_system}. Leveraging the unique solvability of this system, established in part (i) of Proposition \ref{p:result_finite_time}, we deduce the unique solvability of the finite-horizon HJB-FP system \eqref{eq:HJB_FP_finite_time} in the form of \eqref{eq:ansatz_value_density}. In Section \ref{s:trajectory}, we characterize the trajectory of the players’ dynamics under the strategy profile~$\bm{\alpha}_T$. Finally, in Section \ref{s:equilibrium_proof}, we show that the strategy profile $\bm{\alpha}_T$ constitutes an equilibrium.


\subsubsection{Systems of Riccati-type equations}
\label{s:derivation_Riccati_equations}

In this section, we adopt the standard methodology for linear quadratic Gaussian $N$-player games, see \cite{Bardi-2012} and \cite{Bardi-Priuli-2014}, to derive the system of ODEs associated with the finite-horizon game. 

We begin by postulating that the coupled HJB-FP system admits a solution $\{(v_T^i, m^i_T) :i \in[N]\}$ of the form in \eqref{eq:ansatz_value_density} with the density $m_T^i$ defined explicitly in \eqref{eq:m_i}. 

First, we substitute the expression for $m_T^i$ into the FP equations in \eqref{eq:HJB_FP_finite_time}. From \eqref{eq:m_i}, we deduce that 
$$\nabla m_T^i(t, x) = - m_T^i(t, x) \Sigma^i_T(t) (x - \mu^i_T(t)).$$
Similarly, from the explicit form of $v_T^i$ in \eqref{eq:ansatz_value_density}, we have $\nabla v_T^i(t, x) = \Lambda^i_T(t) x + \rho^i_T(t)$. Hence, by substituting these expressions in the FP equations in \eqref{eq:HJB_FP_finite_time} and recalling the expression of $H^i$ in \eqref{eq:Hamiltonian}, we obtain
\begin{equation*}
\begin{aligned}
\partial_t m_T^i(t, x) &= \text{tr}(\varsigma^i D^2 m_T^i(t, x)) - \text{div} \Big(m_T^i(t, x) \frac{\partial H^i}{\partial p}(x, \nabla v_T^i(t, x)) \Big) \\
&= \text{div} \Big(\varsigma^i \nabla m_T^i(t, x) - m_T^i(t, x) \frac{\partial H^i}{\partial p}(x, \nabla v_T^i(t, x)) \Big) \\
&= - \text{div} \big(m_T^i(t, x) \big[\varsigma^i \Sigma^i_T(t) (x - \mu^i_T(t))) + A^i x - (R^i)^{-1} \Lambda^i_T(t) x - (R^i)^{-1} \rho^i_T(t) \big] \big) \\
&= - m_T^i(t, x) \text{tr} \big(\varsigma^i \Sigma^i_T(t) + A^i - (R^i)^{-1} \Lambda^i_T(t) \big) \\
& \hspace{0.4in} - (\nabla m_T^i(t, x))^\top \big(\varsigma^i \Sigma^i_T(t) (x - \mu^i_T(t))) + A^i x - (R^i)^{-1} \Lambda^i_T(t) x - (R^i)^{-1} \rho^i_T(t) \big),
\end{aligned}
\end{equation*}
which implies that
\begin{equation*}
\begin{aligned}
\partial_t m_T^i(t, x) &= m_T^i(t, x) (x - \mu^i_T(t))^\top \Sigma^i_T(t) \varsigma^i \Sigma^i_T(t) (x - \mu^i_T(t)) \\
& \hspace{0.4in} + m_T^i(t, x) (x - \mu^i_T(t))^\top \Sigma^i_T(t) \big(A^i - (R^i)^{-1} \Lambda^i_T(t) \big) (x - \mu^i_T(t)) \\
& \hspace{0.4in} + m_T^i(t, x) (x - \mu^i_T(t))^\top \Sigma^i_T(t) \big(A^i - (R^i)^{-1} \Lambda^i_T(t) \big) \mu^i_T(t) \\
& \hspace{0.4in} - m_T^i(t, x) (x - \mu^i_T(t))^\top \Sigma^i_T(t) (R^i)^{-1} \rho^i_T(t) \\
& \hspace{0.4in} - m_T^i(t, x) \text{tr} \big(\varsigma^i \Sigma^i_T(t) + A^i - (R^i)^{-1} \Lambda^i_T(t) \big).
\end{aligned}
\end{equation*}
On the other hand, from the explicit form of the density function $m_T^i$ in \eqref{eq:m_i}, we derive that
\begin{equation*}
\begin{aligned}
\partial_t m_T^i(t, x) &= \frac{1}{2} m_T^i(t, x) \big(\text{det} (\Sigma^i_T(t)) \big)^{-1} \frac{d}{dt} \big(\text{det}(\Sigma^i_T(t)) \big) \\
& \hspace{0.4 in} + m_T^i(t, x) \frac{d}{dt} \Big\{-\frac{1}{2} (x - \mu^i_T(t))^\top \Sigma^i_T(t) (x - \mu^i_T(t)) \Big\}.
\end{aligned}
\end{equation*}
Note that
$$\frac{d}{dt} \big(\text{det}(\Sigma^i_T(t)) \big) = \text{det}(\Sigma^i_T(t)) \text{tr} \Big((\Sigma^i_T(t))^{-1} \frac{d}{dt} \Sigma^i_T(t) \Big),$$
we obtain another identity for $\partial_t m_T^i(t, x)$ as following
\begin{equation*}
\begin{aligned}
\partial_t m_T^i(t, x) &= \frac{1}{2} m_T^i(t, x) \text{tr} \Big((\Sigma^i_T(t))^{-1} \frac{d}{dt} \Sigma^i_T(t) \Big) + m_T^i(t, x) \Big( \frac{d}{dt} \mu^i_T(t) \Big)^\top \Sigma^i_T(t) (x - \mu^i_T(t)) \\
& \hspace{0.4in}  - \frac{1}{2} m_T^i(t, x) (x - \mu^i_T(t))^\top \Big(\frac{d}{dt} \Sigma^i_T(t)\Big) (x - \mu^i_T(t)).
\end{aligned}
\end{equation*}
Combining the two identities for $\partial_t m_T^i(t, x)$, which are both quadratic functions with respect to $x$, we deduce the following system of equations
\begin{equation*}
\begin{cases}
\vspace{4pt}
\displaystyle \frac{d}{dt} \Sigma^i_T(t) + 2 \Sigma^i_T(t) \varsigma^i \Sigma^i_T(t) + 2 \Sigma^i_T(t) \left(A^i - (R^i)^{-1} \Lambda^i_T(t) \right) = 0, \\
\vspace{4pt}
\displaystyle \frac{d}{dt} \mu^i_T(t) - \left(A^i - (R^i)^{-1} \Lambda^i_T(t) \right) \mu^i_T(t) + (R^i)^{-1} \rho^i_T(t) = 0, \\
\displaystyle \frac{1}{2} \text{tr} \Big((\Sigma^i_T(t))^{-1} \frac{d}{dt} \Sigma^i_T(t) \Big) = - \text{tr} \big(\varsigma^i \Sigma^i_T(t) + A^i - (R^i)^{-1} \Lambda^i_T(t) \big)
\end{cases}
\end{equation*}
for all $i \in [N]$. It is worth noting that the first equation in the above system implies the last one, i.e.,
\begin{equation*}
\begin{aligned}
\frac{1}{2} \text{tr} \Big((\Sigma^i_T(t))^{-1} \frac{d}{dt} \Sigma^i_T(t) \Big) & = \frac{1}{2} \text{tr} \big((\Sigma^i_T(t))^{-1} \big[- 2 \Sigma^i_T(t) \varsigma^i \Sigma^i_T(t) - 2 \Sigma^i_T(t) \left(A^i - (R^i)^{-1} \Lambda^i_T(t) \right) \big] \big) \\
&= - \text{tr} \big(\varsigma^i \Sigma^i_T(t) + A^i - (R^i)^{-1} \Lambda^i_T(t) \big).
\end{aligned}
\end{equation*}
From Assumption \ref{a:intial_states}, the initial state of Player $i$ is sampled from a normal distribution $\mathcal N(\mu_0^i, (\Sigma^i_0)^{-1})$ with $\mu_0^i \in \mathbb R^d$ and $\Sigma^i_0 \in \mathbb S_{++}^{d}$. Then, the FP equations in \eqref{eq:HJB_FP_finite_time}
can be reduced to the following system of forward ODEs
\begin{equation}
\label{eq:mu_sigma}
\begin{cases}
\vspace{4pt}
\displaystyle \frac{d}{dt} \Sigma^i_T(t) + 2 \Sigma^i_T(t) \varsigma^i \Sigma^i_T(t) + 2 \Sigma^i_T(t) \left(A^i - (R^i)^{-1} \Lambda^i_T(t) \right) = 0, \\
\vspace{4pt}
\displaystyle \frac{d}{dt} \mu^i_T(t) - \left(A^i - (R^i)^{-1} \Lambda^i_T(t) \right) \mu^i_T(t) + (R^i)^{-1} \rho^i_T(t) = 0, \\
\displaystyle
\Sigma^i_T(0) = \Sigma^i_0, \, \mu^i_T(0) = \mu^i_0.
\end{cases}
\end{equation}

Next, let us consider the HJB equations
\begin{equation}
\label{eq:HJB_equation}
\begin{cases}
\vspace{4pt}
\displaystyle
\partial_t v^i_T(t,x) + \text{tr}(\varsigma^i D^2 v^i_T(t,x)) + H^i(x, \nabla v^i_T(t,x)) + f^i \big(x; \bm{m}^{-i}_T(t) \big) = 0, \\
\displaystyle
v_T^i(T, x) = 0.
\end{cases}
\end{equation}
From the explicit form of the candidate solutions to $v_T^i$ in \eqref{eq:ansatz_value_density}, we have $\partial_t v_T^i(t, x) = \frac{1}{2} x^\top (\frac{d}{dt}\Lambda^i_T(t)) x + (\frac{d}{dt}\rho^i_T(t))^\top x + \frac{d}{dt} \kappa_T^i(t)$, $\nabla v_T^i(t, x) = \Lambda^i_T(t) x + \rho^i_T(t)$ and $D^2 v_T^i(t, x) = \Lambda^i_T(t)$, since $\Lambda^i_T(t) \in \mathbb S^d$ for all $t \in [0, T]$. Then, plugging these derivatives in the HJB equations \eqref{eq:HJB_equation} and recalling the expression of $H^i$ in \eqref{eq:Hamiltonian}, we obtain that
\begin{equation*}
\begin{aligned}
0 &= \frac{1}{2} x^\top \Big(\frac{d}{dt} \Lambda^i_T(t) + \Lambda^i_T(t) A^i + (A^i)^\top \Lambda^i_T(t) - \Lambda^i_T(t) (R^i)^{-1} \Lambda^i_T(t) \Big) x \\
& \hspace{0.5in} + x^\top \Big(\frac{d}{dt} \rho^i_T(t) - \Lambda^i_T(t) (R^i)^{-1} \rho^i_T(t) + (A^i)^\top \rho^i_T(t) \Big) \\
& \hspace{0.5in} + \frac{d}{dt} \kappa_T^i(t) + \text{tr}(\varsigma^i \Lambda^i_T(t)) - \frac{1}{2} (\rho^i_T(t))^\top (R^i)^{-1} \rho^i_T(t) + f^i(x; \bm{m}_T^{-i}(t)),
\end{aligned}
\end{equation*}
which holds for all $x \in \mathbb R^d$. Moreover, by the definition of $f^i$ and $F^i$ in Section \ref{sec:finite_horizon_game}, we know that
\begin{equation*}
\begin{aligned}
f^i(x^i; \bm{m}_T^{-i}(t)) & = \sum_{k = 1}^{N} \sum_{j = 1}^{N} \int_{\mathbb R^{(N-1)d}} (x^j - \bar{x}_i^j)^\top Q_{jk}^{i} (x^k - \bar{x}_i^k) \prod_{l \neq i} d(m_T^{l}(t)) (x^l) \\
&= (x^i)^\top Q^i_{ii} x^i + (x^i)^\top F_{1}^i(\bm{\mu}_T^{-i}(t)) + F_{2}^i(\bm{\mu}_T^{-i}(t)) x^i + F_0^i(\bm{\mu}_T^{-i}(t), \Sigma^i_T(t))
\end{aligned}
\end{equation*}
for all $x^i \in \mathbb{R}^d$, with the coefficients $\{F_{1}^i, F_0^i: i \in[N]\}$ are given by \eqref{eq:F_t}, and
$$F_{2}^i (\bm{y}^{-i}) := -  \left(\bar{x}_i^i \right)^\top Q_{ii}^i + \Bigg(\sum_{j \neq i} \left(y^j- \bar{x}_i^j \right)^\top Q_{ji}^i \Bigg)$$
for $\bm{y}^{-i}=(y^1,\ldots,y^{i-1},y^{i+1},\ldots,y^N)\in\mathbb{R}^{(N-1)d}$. By Assumption \ref{a:solvability_Riccati}, $\bm{Q}^i \in \mathbb S^{Nd}$, thus we have $(Q_{jk}^i)^\top = Q_{kj}^i$ for all $j, k \in [N]$. Hence, $F_{1}^i(\bm{\mu}_T^{-i}(t)) = (F_{2}^i(\bm{\mu}_T^{-i}(t)))^\top$ for all $i \in [N]$. Therefore, we conclude that the system of HJB equations in \eqref{eq:HJB_equation} can be reduced to the following system of ODEs
\begin{equation*}
\begin{cases}
\vspace{4pt}
\displaystyle \frac{d}{dt} \Lambda^i_T(t) + \Lambda^i_T(t) A^i + (A^i)^\top \Lambda^i_T(t) - \Lambda^i_T(t) (R^i)^{-1} \Lambda^i_T(t) + 2 Q^i_{ii} = 0, \\
\vspace{4pt}
\displaystyle \frac{d}{dt} \rho^i_T(t) - \Lambda^i_T(t) (R^i)^{-1} \rho^i_T(t) + (A^i)^\top \rho^i_T(t) + 2 F_{1}^i(\bm{\mu}_T^{-i}(t)) = 0, \\
\vspace{4pt}
\displaystyle \frac{d}{dt} \kappa_T^i(t) + \text{tr}(\varsigma^i \Lambda^i_T(t)) - \frac{1}{2} (\rho^i_T(t))^\top (R^i)^{-1} \rho^i_T(t) + F_0^i(\bm{\mu}_T^{-i}(t), \Sigma_T^i(t)) = 0
\end{cases}
\end{equation*}
with the terminal conditions
$$\Lambda_T^i(T) = 0 \in \mathbb S^d, \quad \rho_T^i(T) = 0 \in \mathbb R^d, \quad \kappa_T^i(T) = 0 \in \mathbb R.$$

By combining the system of ODEs for $\{\Sigma^i_T, \mu^i_T: i \in[N]\}$, derived from the FP equations in \eqref{eq:HJB_FP_finite_time}, with the system of ODEs for $\{\Lambda^i_T, \rho^i_T, \kappa_T^i: i \in[N]\}$, obtained from the HJB equations \eqref{eq:HJB_equation}, we arrive at the complete system of coupled ODEs \eqref{eq:Riccati_system}, which characterizes the solvability of the finite-horizon game with $N$ players.

To conclude this part, we have shown that there exists a solution to the HJB-FP system of the form given in \eqref{eq:ansatz_value_density}, characterized by the coupled system \eqref{eq:Riccati_system}. In combination with the uniqueness of the solution to \eqref{eq:Riccati_system}, established in part (i) of Proposition~\ref{p:result_finite_time}, we conclude that the HJB-FP system \eqref{eq:HJB_FP_finite_time} admits a unique solution of the form \eqref{eq:ansatz_value_density}.



\subsubsection{The equilibrium state trajectory} 
\label{s:trajectory}

Consider the strategy profile given in \eqref{eq:optimal_control_finite_time}, along with the corresponding state trajectory governed by the SDE in \eqref{eq:optimal_path_finite_time}. Applying the integrating factor method, we derive \eqref{eq:trajectory}--\eqref{eq:trajectory2}.

Due to the exponential decay estimate \eqref{eq:norm_bar_A_i_T_estimate} established in Lemma \ref{l:norm_A_i_T_estimate}, we obtain $\|\Theta_T^i(t)\| \les K^{(N)} e^{-\lambda^{(N)} t}$ for all $t \in [0, T]$ and $i \in [N]$. It follows that $\mathbb E[|X_T^i(t)|^2] < \infty$ for all $t \in [0, T]$ and $i \in [N]$. Then, by the linearity of the strategy profile in \eqref{eq:optimal_control_finite_time}, and since both $\Lambda_T^i$ and $\rho_T^i$ are continuous functions on $[0, T]$, we obtain $\mathbb E [\int_0^T |\alpha_T^i(t)|^2 dt] < \infty$. It yields that $\alpha_T^i$ in \eqref{eq:optimal_control_finite_time} is an admissible strategy for Player $i$ in the finite-horizon game by Definition \ref{d:Admissible_strategies_finite}.   

From the explicit form of the solution to \eqref{eq:optimal_path_finite_time}, it follows that the state process $X_T^i(t)$ is normally distributed with mean
$$\mathbb E[X^i_T(t)] = \Theta_T^i(t) \Big( \mu_0^i - \int_0^t (\Theta_T^i(s))^{-1} (R^i)^{-1} \rho^i_T(s) \, ds \Big),$$
and covariance matrix
$$Var(X_T^i(t)) = \Theta_T^i(t) \Big((\Sigma^i_0)^{-1} + \int_0^t (\Theta_T^i(s))^{-1} \sigma^i (\sigma^i)^\top ((\Theta_T^i(s))^\top)^{-1} ds \Big) (\Theta_T^i(t))^\top.$$
Hence, it is straightforward that $X_T^i$ is a Gaussian process and $X_T^i(t) \sim \mathcal N(\mu_T^i(t), (\Sigma_T^i(t))^{-1})$ for all $t \in [0, T]$ with $(\mu_T^i, \Sigma_T^i)$ is the solution to \eqref{eq:mu_sigma}. Therefore, from the results in Section \ref{s:derivation_Riccati_equations}, the marginal law of the state satisfies $m^i_T(t) = \mathbb{P} \circ (X^i_T(t))^{-1}$ for all $t \in [0, T]$, where $m^i_T$ solves the FP equation in \eqref{eq:HJB_FP_finite_time}.

In fact, the strategy of Player $i$ alone determines its dynamics, regardless of the behavior of the other players. That is, the process $(X^i_T(t))_{t \in [0,T]}$ is independent of $\bm{\alpha}_T^{-i}$. This is because strategies are in open-loop, and players do not observe the others. This observation will play a role in the following subsection.

\subsubsection{The strategy profile $\bm{\alpha}_T$ is a Nash equilibrium}
\label{s:equilibrium_proof} 

 In this section, we take the initial time $t_0 = 0$. Now, we show that the strategy profile in \eqref{eq:optimal_control_finite_time} is a Nash equilibrium with open-loop feedback strategies. To this end, fix $\bm{\alpha}_T^{-i}$ and consider an arbitrary admissible strategy $\alpha^i \in \mathcal{A}^i[0, T]$ for Player $i$. By It\^o's formula, one obtains
\begin{equation*}
\begin{aligned}
\mathbb E[v_T^i(T, X^i(T))] &= \mathbb E \Big[v_T^i(0, X^i(0)) + \int_{0}^T \Big(\frac{\partial v^i}{\partial t}(s, X^i(s)) + \big(A^i X^i(s) - \alpha^i(s)\big)^\top \nabla v_T^i(s, X^i(s)) \\
&\hspace{0.5in} + \frac{1}{2}\text{tr} \big(\sigma^i (\sigma^i)^\top D^2 v_T^i(s, X^i(s))\big) \Big) ds \Big]. 
\end{aligned}
\end{equation*}
By the definition of Hamiltonian $H^i$ in \eqref{eq:Hamiltonian}, we have
$$\big(A^i X^i(s) - \alpha^i(s) \big)^\top \nabla v_T^i(s, X^i(s)) \ges H^i \big(X^i(s), \nabla v_T^i(s, X^i(s))\big) - \frac{1}{2} (\alpha^i(s))^\top R^i \alpha^i(s)$$
for all $s \in [0, T]$. Then, from the HJB equation \eqref{eq:HJB_equation}, we derive 
\begin{equation*}
\begin{aligned}
0 = \mathbb E[v_T^i(T, X^i(T))] & \ges \mathbb E \Big[v_T^i(0, X^i(0)) + \int_{0}^T \Big(\frac{\partial v^i}{\partial t}(s, X^i(s)) + H^i \big(X^i(s), \nabla v_T^i(s, X^i(s))\big)  \\
&\hspace{0.5in} - \frac{1}{2} (\alpha^i(s))^\top R^i \alpha^i(s) + \frac{1}{2}\text{tr} \big(\sigma^i (\sigma^i)^\top D^2 v_T^i(s, X^i(s))\big) \Big) ds \Big] \\
&= v_T^i(0, x_0^i) + \mathbb E \Big[\int_{0}^T \Big( - f^i(X^i(s), \bm{m}_T^{-i}(s)) - \frac{1}{2} (\alpha^i(s))^\top R^i \alpha^i(s) \Big) ds \Big],
\end{aligned}
\end{equation*}
which implies that
\begin{equation*}
\begin{aligned}
v_T^i(0, x^i_0) & \les \mathbb E \Big[\int_{0}^T \Big( f^i(X^i(s), \bm{m}_T^{-i}(s)) + \frac{1}{2} (\alpha^i(s))^\top R^i \alpha^i(s) \Big) ds \Big] \\
& = \mathcal J_T^i(0, x^i_0; \bm{m}_0^{-i}, [\bm{\alpha}_T^{-i}; \alpha^i])
\end{aligned}
\end{equation*}
for all $\alpha^i \in \mathcal{A}^i[0, T]$. The last equality follows from the last comment in the previous paragraph, which implies that  $\bm{m}^{-i}_T=\mathbb{P}\circ(\bm{X}^{-i}_T)^{-1}$, since the dynamics for players $j\in[N]\setminus\{i\}$ are independent of the strategy of Player $i$. 

Finally, by the definition of $a^{i,*}$ in \eqref{eq:a_i_star} and the strategy $\alpha_T^i$ in \eqref{eq:optimal_control_finite_time}, one can deduce that the inequalities above hold with equalities, and hence, 
$$v_T^i(0, x^i_0) = \mathcal J_T^i(0, x^i_0; \bm{m}_0^{-i}, [\bm{\alpha}_T^{-i}; \alpha_T^i]).$$
Therefore, we conclude that the strategy profile in \eqref{eq:optimal_control_finite_time} is a Nash equilibrium with open-loop feedback strategies.

\subsection{Proof of part (iii) of Proposition \ref{p:result_finite_time}}
\label{s:proof_part_iii_finite_time}

The explicit form of the value function $v_T^i$ given in \eqref{eq:value_function_finite_time_explicit} for each player $i$ follows from the results in Section \ref{s:proof_part_ii_finite_time} and the stochastic verification theorem; see \cite[Theorem 5.1]{Yong-Zhou-1999} and Theorem 8.1 in Chapter III of \cite{Fleming-Soner-2006}. By the definition of the value function in \eqref{eq:value_function_finite_time} and the fact that
$$v_T^i(0, x^i_0) = \mathcal J_T^i(0, x^i_0; \bm{m}_0^{-i}, [\bm{\alpha}_T^{-i}; \alpha_T^i]),$$
let $x_0^i = x \in \mathbb R^d$, we conclude that
\begin{equation*}
\begin{aligned}
\mathcal{V}_T^i(0,x;\bm{m}^{-i}_0) & = \mathcal{J}_T^i (0, x; \bm{m}^{-i}_0, \bm{\alpha}_T) = v_T^i(0, x) \\
& = \frac{1}{2} x^\top \Lambda^i_T(0) x + (\rho^i_T(0))^\top x + \kappa^i_T(0).
\end{aligned}
\end{equation*}


\section{Proof of Proposition \ref{p:convergence_of_Riccati}: Turnpike for the systems of coupled equations}
\label{s:convergence_Riccati}

In this section, we investigate exponential convergence estimates between the solution of the finite-horizon system of coupled ODEs \eqref{eq:Riccati_system} and the solution to the system of algebraic equations \eqref{eq:Riccati_system_ergodic}. We focus on establishing the uniform convergence estimates; the non-uniform version follows by a similar, and in fact simpler, argument and is therefore omitted.
Throughout the analysis in this section, we use $K$, $\lambda$, $\h{K}$, and $\h{\lambda}$ to denote generic positive constants that are independent of $t, T$, and $N$, and which may vary from line to line.

Recall that we proved the uniform exponential convergence estimate for $\|\Lambda^i_T(t) - \Lambda^i\|$ in Lemma \ref{l:uniform_estimate_norm}. We now proceed to prove the estimate \eqref{eq:estimate_Sigma_i_uniform} as stated in Proposition \ref{p:convergence_of_Riccati}. Recall that
\begin{equation*}
\begin{cases}
\vspace{4pt}
\displaystyle \frac{d}{dt} \Sigma^i_T(t) + 2 \Sigma^i_T(t) \varsigma^i \Sigma^i_T(t) + 2 \Sigma^i_T(t) \left(A^i - (R^i)^{-1} \Lambda^i_T(t) \right) = 0, \\
\vspace{4pt}
\displaystyle 2 \Sigma^i \varsigma^i \Sigma^i + 2 \Sigma^i \left(A^i - (R^i)^{-1} \Lambda^i \right) = 0, \\
\displaystyle \Sigma^i_T(0) = \Sigma^i_0.
\end{cases}
\end{equation*}

\begin{lemma}
\label{l:convergence_Sigma_i}
Suppose Assumptions \ref{a:solvability_Riccati}, \ref{a:uniform_constants}, \ref{a:solvability_Riccati_ergodic}, and \ref{a:uniform_constant_2} hold. For each $i \in [N]$, let $\Sigma^i_T$ denote the solution to the ODE \eqref{eq:Sigma_t}, and let $\Sigma^i$ denote the solution to the algebraic Riccati equation for $\Sigma^i$ in \eqref{eq:Riccati_system_ergodic}. Then, $(\Sigma^i_T)^{-1}$ is uniformly bounded on $[0, T]$ for all $i \in [N]$ and $N \ges 2$. Moreover, there exist some constants $\h{K} > 0$ and $\h{\lambda} > 0$, independent of $t, T$, and $N$, such that the following estimate holds uniformly in $N$:
\begin{equation*}
\sup_{N}\sup_{i \in [N]}\|(\Sigma^i_T(t))^{-1} - (\Sigma^i)^{-1} \| \les \widehat{K} \big(e^{-\widehat{\lambda} t} + e^{-\widehat{\lambda}(T-t)} \big), \quad \forall t \in [0, T].
\end{equation*}
\end{lemma}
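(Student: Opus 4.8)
The plan is to pass from the matrix Riccati equation for $\Sigma^i_T$ to a \emph{linear} Lyapunov-type equation for its inverse $P^i_T := (\Sigma^i_T)^{-1}$, which is well defined and continuous on $[0,T]$ since $\Sigma^i_T \in C([0,T];\mathbb S^d_{++})$ by Lemma \ref{l:solvability_Sigma_i}. Writing $\bar A^i_T(t) = (A^i - (R^i)^{-1}\Lambda^i_T(t))^\top$ and differentiating the identity $P^i_T \Sigma^i_T = I_d$, while exploiting the symmetry relation $\Sigma^i_T(A^i - (R^i)^{-1}\Lambda^i_T) = (A^i - (R^i)^{-1}\Lambda^i_T)^\top \Sigma^i_T$ derived in the proof of Lemma \ref{l:solvability_Sigma_i}, I obtain the symmetric Lyapunov ODE
\[
\frac{d}{dt}P^i_T(t) = 2\varsigma^i + (\bar A^i_T(t))^\top P^i_T(t) + P^i_T(t)\,\bar A^i_T(t), \qquad P^i_T(0) = (\Sigma^i_0)^{-1}.
\]
Inverting the algebraic equation for $\Sigma^i$ in the same way gives the stationary relation $2\varsigma^i + (\bar A^i)^\top P^i + P^i \bar A^i = 0$ with $P^i := (\Sigma^i)^{-1}$ and $\bar A^i := (A^i - (R^i)^{-1}\Lambda^i)^\top$.

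Next I set $D^i_T := P^i_T - P^i$ and subtract. The crucial observation is that, if one retains the \emph{full} time-varying coefficient $\bar A^i_T$ in the linear part, the difference satisfies the genuinely linear equation
\[
\frac{d}{dt}D^i_T(t) = (\bar A^i_T(t))^\top D^i_T(t) + D^i_T(t)\,\bar A^i_T(t) + G^i_T(t),
\]
with forcing $G^i_T := (E^i_T)^\top P^i + P^i E^i_T$ and $E^i_T(t) := \bar A^i_T(t) - \bar A^i = -(\Lambda^i_T(t) - \Lambda^i)(R^i)^{-1}$; in particular the quadratic-in-$D^i_T$ term is absorbed into the time-varying operator, so no Gronwall or bootstrap is required. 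By Lemma \ref{l:uniform_estimate_norm} one has $\|\Lambda^i_T(t) - \Lambda^i\| \les K_2^* e^{-2\lambda^*(T-t)}$ uniformly in $i$ and $N$, and $\|(R^i)^{-1}\| \les 1/\beta_*$, whence $\|E^i_T(t)\| \les K e^{-\lambda^*(T-t)}$; combined with the uniform bound $\|P^i\| = \|(\Sigma^i)^{-1}\| \les \beta_3$ from Assumption \ref{a:uniform_constant_2}, this yields $\|G^i_T(t)\| \les \widehat K e^{-\widehat\lambda(T-t)}$ with all constants independent of $N$.

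Because the equation for $D^i_T$ is linear, I solve it exactly by variation of constants: with $\Phi^i_T(t,s)$ the transition matrix of $\dot x = (\bar A^i_T(t))^\top x$,
\[
D^i_T(t) = \Phi^i_T(t,0)\,D^i_T(0)\,\Phi^i_T(t,0)^\top + \int_0^t \Phi^i_T(t,s)\,G^i_T(s)\,\Phi^i_T(t,s)^\top \, ds.
\]
The uniform exponential bound $\|\Phi^i_T(t,s)\| \les K_3^* e^{-\lambda^*(t-s)}$ from Lemma \ref{l:norm_A_i_T_estimate} (estimate \eqref{eq:norm_bar_A_T_i_estimate_uniform}), together with $\|D^i_T(0)\| \les \|(\Sigma^i_0)^{-1}\| + \|(\Sigma^i)^{-1}\| \les 2\beta_3$ from Assumption \ref{a:uniform_constant_2}, bounds the first term by $K e^{-2\lambda^* t}$, giving the $e^{-\widehat\lambda t}$ contribution. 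For the integral, inserting the bounds on $\Phi^i_T$ and $G^i_T$ produces $\int_0^t e^{-2\lambda^*(t-s)} e^{-\lambda^*(T-s)}\,ds \les K e^{-\lambda^*(T-t)}$, the desired $e^{-\widehat\lambda(T-t)}$ contribution. Choosing $\widehat\lambda = \lambda^*$ and using $e^{-2\lambda^* t} \les e^{-\lambda^* t}$ yields the stated two-sided estimate uniformly in $i$ and $N$, and the uniform boundedness of $(\Sigma^i_T)^{-1} = P^i_T$ follows at once from $\|P^i_T\| \les \|P^i\| + \|D^i_T\| \les \beta_3 + 2\widehat K$.

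The main obstacle I anticipate is twofold. First, one must correctly use the symmetry relation to see that the difference equation is \emph{exactly} linear, so the Riccati nonlinearity never reappears in $D^i_T$; this linearization is what makes the sharp two-sided turnpike bound accessible by a single variation-of-constants formula. Second, one must verify that \emph{every} constant—in the transition-matrix bound, in $\|G^i_T\|$, and in the initial datum $\|D^i_T(0)\|$—is uniform in the number of players $N$, which is precisely what Assumptions \ref{a:uniform_constants} and \ref{a:uniform_constant_2} and the uniform estimates of Lemmas \ref{l:uniform_estimate_norm} and \ref{l:norm_A_i_T_estimate} are designed to provide.
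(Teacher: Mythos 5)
Your proof is correct, and it follows the same overall strategy as the paper (invert the Riccati equation to obtain a linear Lyapunov ODE for $(\Sigma^i_T)^{-1}$, subtract the stationary relation, and apply variation of constants with the uniform exponential bounds), but with a genuinely different decomposition of the difference equation. The paper keeps the \emph{constant} operator $\bar A^i$ in the homogeneous part, so its forcing term $\wt Q^i_T(t)=(R^i)^{-1}(\Lambda^i_T(t)-\Lambda^i)(\Sigma^i_T(t))^{-1}+(\Sigma^i_T(t))^{-1}(\Lambda^i_T(t)-\Lambda^i)(R^i)^{-1}$ involves the unknown $(\Sigma^i_T(t))^{-1}$; this forces a separate preliminary step in which the uniform boundedness of $(\Sigma^i_T)^{-1}$ on $[0,T]$ is established first (via a variation-of-constants representation driven by the time-varying flow of $\bar A^i_T$ and the bound \eqref{eq:norm_bar_A_T_i_estimate_uniform}), after which $\|\wt Q^i_T(t)\|\les Ke^{-2\lambda^*(T-t)}$ and the main estimate follow using the constant-coefficient bound \eqref{eq:norm_A_estimate_uniform}. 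You instead keep the \emph{time-varying} operator $\bar A^i_T(t)$ in the homogeneous part, so your forcing $G^i_T=(E^i_T)^\top P^i+P^iE^i_T$ involves only the stationary $(\Sigma^i)^{-1}$, which is bounded directly by Assumption \ref{a:uniform_constant_2}; the a priori boundedness of $(\Sigma^i_T)^{-1}$ is then no longer an input but a corollary of the final estimate. This is slightly more economical. Two small caveats: there is no ``quadratic-in-$D^i_T$'' term to absorb in either decomposition—the Riccati nonlinearity already disappears upon inversion, and the issue your choice avoids is a term \emph{linear} in $D^i_T$ with small coefficient (equivalently, the need for the a priori bound); and your identification of the transition matrix of $\dot x=(\bar A^i_T(t))^\top x$ with $e^{\int(\bar A^i_T)^\top}$ implicitly assumes a commuting family, though the paper adopts the same convention throughout, so this is consistent with the framework you were given.
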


\begin{proof}
First, we derive the differentiation for $(\Sigma^i)^{-1}$ and $(\Sigma^i_T)^{-1}$ for each $i \in [N]$. From the algebraic Riccati equation for $\Sigma^i$ in \eqref{eq:Riccati_system_ergodic}, we have
$$\frac{d}{dt} \Sigma^i + 2 \Sigma^i \varsigma^i \Sigma^i + 2 \Sigma^i (A^i - (R^i)^{-1} \Lambda^i) = 0,$$
where recall that $\frac{d}{dt}\Sigma^i=0$ since $\Sigma^i$ is deterministic. 
Since $\Sigma^i$ is invertible, we obtain that 
$$\frac{d}{dt} (\Sigma^i)^{-1} = 2 \varsigma^i + 2 (A^i - (R^i)^{-1} \Lambda^i) (\Sigma^i)^{-1},$$
which yields 
$$\frac{d}{dt} (\Sigma^i)^{-1} = 2 \varsigma^i + (A^i - (R^i)^{-1} \Lambda^i) (\Sigma^i)^{-1} + (\Sigma^i)^{-1} (A^i - (R^i)^{-1} \Lambda^i)^\top$$
as $\Sigma^i$ and $\varsigma^i$ are symmetric.
Similarly, for the ODE \eqref{eq:Sigma_t}, we derive
$$\frac{d}{dt} \Sigma^i_T(t) + 2 \Sigma^i_T(t) \varsigma^i \Sigma^i_T(t) + \Sigma^i_T(t) (\bar{A}_T^i(t))^\top + \bar{A}_T^i(t) \Sigma^i_T(t) = 0$$
as $\bar{A}_T^i(t) := (A^i - (R^i)^{-1} \Lambda^i_T(t))^\top$. 
Note that
$$\frac{d}{dt} (\Sigma^i_T(t))^{-1} = - (\Sigma^i_T(t))^{-1} \Big(\frac{d}{dt} \Sigma^i_T(t) \Big) (\Sigma^i_T(t))^{-1},$$
and since $\Sigma^i_T(t)$ is positive definite for all $t \in [0, T]$, we can get
\begin{equation*}
\begin{cases}
\vspace{4pt}
\displaystyle \frac{d}{dt} (\Sigma^i_T(t))^{-1} = (\bar{A}_T^i(t))^\top (\Sigma^i_T(t))^{-1} + (\Sigma^i_T(t))^{-1} \bar{A}_T^i(t) + 2 \varsigma^i, \\
\displaystyle (\Sigma^i_T(0))^{-1} = (\Sigma^i_0)^{-1}.
\end{cases}
\end{equation*}
Thus, the ODE for $(\Sigma^i_T)^{-1}$ can be rewritten as
\begin{equation*}
\begin{cases}
\vspace{4pt}
\displaystyle \frac{d}{dt} (\Sigma^i_T(t))^{-1} = \left(A^i - (R^i)^{-1} \Lambda^i \right) (\Sigma^i_T(t))^{-1} + (\Sigma^i_T(t))^{-1} \left(A^i - (R^i)^{-1} \Lambda^i \right)^\top \\
\vspace{4pt}
\displaystyle \hspace{1in} + 2 \varsigma^i + (R^i)^{-1} (\Lambda^i - \Lambda^i_T(t)) (\Sigma^i_T(t))^{-1} + (\Sigma^i_T(t))^{-1} (\Lambda^i - \Lambda^i_T(t)) (R^i)^{-1}, \\
\displaystyle (\Sigma^i_T(0))^{-1} = (\Sigma^i_0)^{-1}
\end{cases}
\end{equation*}
since $\Lambda^i$, $\Lambda^i_T(t)$ and $R^i$ are symmetric. For simplicity, for all $i \in [N]$ and $t \in [0, T]$, we denote 
\begin{equation*}
\begin{aligned}
\h{\Sigma}^i(t) & = (\Sigma^i)^{-1} - (\Sigma^i_T(t))^{-1}, \\
\bar{A}^i & = (A^i - (R^i)^{-1} \Lambda^i)^\top, \\
\wt{Q}_T^i(t) & = (R^i)^{-1} (\Lambda^i_T(t) - \Lambda^i) (\Sigma^i_T(t))^{-1} + (\Sigma^i_T(t))^{-1} (\Lambda^i_T(t) - \Lambda^i) (R^i)^{-1}.
\end{aligned}
\end{equation*}
Then, we arrive at
\begin{equation*}
\begin{cases}
\vspace{4pt}
\displaystyle \frac{d}{dt} \h{\Sigma}^i(t) = \h{\Sigma}^i(t) \bar{A}^i + (\bar{A}^i)^\top \h{\Sigma}^i(t) + \wt{Q}_T^i(t), \\
\displaystyle \h{\Sigma}^i(0) = (\Sigma^i)^{-1} - (\Sigma^i_0)^{-1}.
\end{cases}
\end{equation*}

Next, we show the uniform (with respect to $i,N$, and $T$) boundedness of $(\Sigma^i_T)^{-1}$ on $[0, T]$ to obtain a prior estimation to $\wt{Q}^i_T$ on $[0, T]$. 
For this, consider the differential equation
$$d \wt{\Phi}^i(t) = - \bar{A}_T^i(t) \wt{\Phi}^i(t) dt, \quad \wt{\Phi}^i(0) := \wt{\Phi}^i_0 = I_d,$$
which admits a unique solution $$\wt{\Phi}^i(t) = e^{-\int_0^t \bar{A}_T^i(s) ds}$$
for all $t \in [0, T]$. Then, it is clear that
\begin{equation*}
\begin{aligned}
\frac{d}{dt} \Big((\wt{\Phi}^i(t))^\top (\Sigma^i_T(t))^{-1} \wt{\Phi}^i(t) \Big) &= - (\wt{\Phi}^i(t))^\top (\bar{A}_T^i(t))^\top  (\Sigma^i_T(t))^{-1} \wt{\Phi}^i(t) + (\wt{\Phi}^i(t))^\top \frac{d}{dt} \left((\Sigma^i_T(t))^{-1} \right) \wt{\Phi}^i(t) \\ 
& \hspace{0.5in} - (\wt{\Phi}^i(t))^\top (\Sigma^i_T(t))^{-1} \bar{A}_T^i(t) \wt{\Phi}^i(t)  \\
&= 2 (\wt{\Phi}^i(t))^\top \varsigma^i \wt{\Phi}^i(t),
\end{aligned}
\end{equation*}
which yields
$$(\wt{\Phi}^i(t))^\top (\Sigma^i_T(t))^{-1} \wt{\Phi}^i(t) = (\wt{\Phi}^i_0)^\top (\Sigma^i_0)^{-1} \wt{\Phi}^i_0 + 2 \int_0^t (\wt{\Phi}^i(s))^\top \varsigma^i \wt{\Phi}^i(s) ds.$$
Since $\wt{\Phi}^i_0  = I_d$ and $\wt{\Phi}^i(t)$ is invertible, we obtain
$$(\Sigma^i_T(t))^{-1} = ((\wt{\Phi}^i(t))^{-1})^\top (\Sigma^i_0)^{-1} (\wt{\Phi}^i(t))^{-1} + 2 \int_0^t ((\wt{\Phi}^i(t))^{-1})^\top (\wt{\Phi}^i(s))^\top \varsigma^i \wt{\Phi}^i(s) (\wt{\Phi}^i(t))^{-1} ds.$$
From the estimate \eqref{eq:norm_bar_A_T_i_estimate_uniform}, the following estimates hold:
$$\sup_{N} \sup_{i \in [N]} \|(\wt{\Phi}^i(t))^{-1} \|^2 = \sup_{N} \sup_{i \in [N]} \big\|e^{\int_0^t \bar{A}_T^i(s) ds} \big\|^2 \les (K_3^*)^2 e^{- 2 \lambda^{*} t},$$
and
$$\sup_{N} \sup_{i \in [N]} \big\|\wt{\Phi}^i(s)(\wt{\Phi}^i(t))^{-1} \big\|^2 = \sup_{N} \sup_{i \in [N]} \big\|e^{\int_s^t \bar{A}_T^i(r) dr} \big\|^2 \les (K_3^*)^2 e^{- 2 \lambda^{*} (t-s)}$$
for all $0 \les s \les t \les T$. 
Thus, by the uniformly boundedness of $\|(\Sigma^i_0)^{-1} \|$ and $\|\varsigma^i\|$ from Assumption \ref{a:uniform_constant_2}, we derive the following estimates:
\begin{equation*}
\begin{aligned}
\sup_{N} \sup_{i \in [N]} \|(\Sigma^i_T(t))^{-1} \| &\les \sup_{N} \sup_{i \in [N]} \Big(\|(\Sigma^i_0)^{-1} \| \|(\wt{\Phi}^i(t))^{-1} \|^2 + 2 \int_0^t \|\varsigma^i\| \big \|\wt{\Phi}^i(s) (\wt{\Phi}^i(t))^{-1} \big\|^2 ds \Big) \\
&\les \sup_{N} \sup_{i \in [N]} \Big((K_3^{*})^2 e^{- 2 \lambda^{*} t} \|(\Sigma^i_0)^{-1} \| + 2 (K_3^*)^2 \|\varsigma^i \| e^{- 2 \lambda^* t} \int_0^t e^{2 \lambda^* s} ds \Big) \\
& = \sup_{N} \sup_{i \in [N]} \Big((K_3^{*})^2 e^{- 2 \lambda^{*} t} \|(\Sigma^i_0)^{-1} \| + \frac{(K_3^*)^2 \|\varsigma^i\|}{\lambda^*} \big(1 - e^{-2 \lambda^* t} \big) \Big) \\
& \les K
\end{aligned}
\end{equation*}
for all $t \in [0, T]$ for some positive constant $K$, which is independent of $t, T$ and $N$. It shows the uniform boundedness of $(\Sigma^i_T)^{-1}$ on $[0, T]$ for all $i \in [N]$ and $N \ges 2$.

By the estimate \eqref{eq:estimate_lambda_uniform_lambda_*}, and the uniformly boundedness of $\|(\Sigma^i_T(t))^{-1} \|$ and $\|(R^i)^{-1}\|$, there exists positive constant $K$, independent of $t, T$ and $N$, such that
$$\sup_{N} \sup_{i \in [N]} \big\|\wt{Q}_T^i(t) \big\| \les Ke^{-2 \lambda^* (T - t)}, \quad \forall t \in [0, T].$$

Next, we consider the differential equation
$$d \bar{\Phi}^i(t) = - \bar{A}^i \bar{\Phi}^i(t) dt, \quad \bar{\Phi}^i(0) = I_d.$$ 
Applying a similar argument as above, we obtain
$$\h{\Sigma}^i(t)  = ((\bar{\Phi}^i(t))^{-1})^\top \h{\Sigma}^i_0 (\bar{\Phi}^i(t))^{-1} + \int_0^t ((\bar{\Phi}^i(t))^{-1})^\top (\bar{\Phi}^i(s))^\top \wt{Q}_T^i(s) \bar{\Phi}^i(s) (\bar{\Phi}^i(t))^{-1} ds.$$
Note that $\bar{\Phi}^i(t) = e^{-\bar{A}^i t}$, by the estimate \eqref{eq:norm_A_estimate_uniform}, 
$$\sup_{N} \sup_{i \in [N]} \big\|(\bar{\Phi}^i(t))^{-1} \big\|^2 = \sup_{N} \sup_{i \in [N]} \big\|e^{\bar{A}^i t} \big\|^2 \les (K_1^*)^2 e^{- 2 \lambda^* t}$$
and
$$\sup_{N} \sup_{i \in [N]} \big\|\bar{\Phi}^i(s)(\bar{\Phi}^i(t))^{-1} \big\|^2 = \sup_{N} \sup_{i \in [N]} \big\|e^{\bar{A}^i (t-s)} \big\|^2 \les (K_1^*)^2 e^{- 2 \lambda^{*} (t-s)}$$
for all $0 \les s \les t$. Therefore, the following estimation is derived
\begin{equation*}
\begin{aligned}
\sup_{N} \sup_{i \in [N]} \big\|\h{\Sigma}^i(t) \big\| &\les \sup_{N} \sup_{i \in [N]} \Big( \big\|\h{\Sigma}^i_0 \big\| \big\|(\bar{\Phi}^i(t))^{-1} \big\|^2 + \int_0^t \big\|\wt{Q}^i(s) \big\| \big\|\bar{\Phi}^i(s) (\bar{\Phi}^i(t))^{-1} \big\|^2 ds \Big) \\
&\les \sup_{N} \sup_{i \in [N]} \Big( (K_1^*)^2 e^{- 2 \lambda^* t} \big\|\h{\Sigma}^i_0 \big\| + K (K_1^*)^2 \int_0^t e^{-2 \lambda^{*} (T-s)} e^{- 2 \lambda^{*} (t-s)} ds \Big) \\
& = \sup_{N} \sup_{i \in [N]} \Big((K_1^*)^2 e^{- 2 \lambda^* t} \big\|\h{\Sigma}^i_0 \big\| + \frac{K (K_1^*)^2}{4 \lambda^*} \big(e^{-2 \lambda^{*}(T-t)} - e^{-2\lambda^{*} (T+t)} \big) \Big) \\
& \les \h{K} \big(e^{-\h{\lambda} t} + e^{-\h{\lambda}(T-t)} \big)
\end{aligned}
\end{equation*}
for all $t \in [0, T]$ with some positive constants $\h{K}$ and $\h{\lambda}$ by the uniformly boundedness of $\|\h{\Sigma}_0^i\|$ from Assumption \ref{a:uniform_constant_2}. Moreover, it is clear that $\h{K}$ and $\h{\lambda}$ are independent of $t, T$ and $N$. This completes the proof of the desired result.
\end{proof}

In what follows, we establish exponential convergence estimates for the families $\{\rho^i_T - \rho^i: i \in[N]\}$ and $\{\mu^i_T - \mu^i: i \in[N]\}$, where $\{\rho^i_T, \mu^i_T: i \in[N]\}$ is the solution to the subsystem of ODEs in \eqref{eq:Riccati_system}, and $\{\rho^i, \mu^i: i \in[N]\}$ denotes the solution to the subsystem of algebraic equations in \eqref{eq:Riccati_system_ergodic}. 

From the systems of equations \eqref{eq:Riccati_system} and \eqref{eq:Riccati_system_ergodic}, since
\begin{equation*}
\begin{aligned}
F_{1}^i(\bm{\mu}_T^{-i}(t)) - F_{1}^i(\bm{\mu}^{-i}) &= - Q_{ii}^i \bar{x}_i^i + \sum_{j \neq i} Q_{ij}^i \left(\mu^j_T(t)- \bar{x}_i^j \right) + Q_{ii}^i \bar{x}_i^i - \sum_{j \neq i} Q_{ij}^i \left(\mu^j - \bar{x}_i^j \right) \\
&= \sum_{j \neq i} Q_{ij}^i \left(\mu^j_T(t)- \mu^j \right),
\end{aligned}
\end{equation*}
for all $i \in [N]$, we observe that
\begin{equation}
\label{eq:rho_mu_diff}
\begin{cases}
\vspace{4pt}
\displaystyle \frac{d}{dt} \h \rho^i(t) = - \bar{A}^i \h \rho^i(t) + (\Lambda^i_T(t) - \Lambda^i) (R^i)^{-1} \rho^i_T(t) - 2 \sum_{j \neq i} Q_{ij}^{i} \h \mu^j(t), \\
\vspace{4pt}
\displaystyle \frac{d}{dt} \h \mu^i(t) = (\bar{A}^i)^\top \h \mu^i(t) - (R^i)^{-1} (\Lambda^i_T(t) - \Lambda^i) \mu^i_T(t) - (R^i)^{-1} \h \rho^i(t), \\
\displaystyle \h \rho^i(T) = - \rho^i, \, \h \mu^i(0) = \mu_0^i - \mu^i,
\end{cases}
\end{equation}
where
$$\h \rho^i(t) = \rho^i_T(t) - \rho^i, \quad \hbox{and } \quad \h \mu^i(t) = \mu^i_T(t) - \mu^i, \quad \forall t \in [0, T].$$
The unique solvability of the system of equations \eqref{eq:rho_mu_diff} for $\{\h \rho^i, \h \mu^i: i \in[N]\}$ follows directly from the unique solvability of the original systems $\{\rho_T^i, \mu_T^i: i \in[N]\}$, established in Lemma \ref{l:solvability_mu_rho_i}, and $\{\rho^i, \mu^i: i \in[N]\}$, established in Proposition \ref{p:result_ergodic} (i).

For the simplicity of notations, we define the block-diagonal matrices
\begin{equation*}
\bar{\bm{A}} := \begin{bmatrix}
\vspace{4pt} \bar{A}^1 & 0 & \cdots & 0 \\
\vspace{4pt} 0 & \bar{A}^2 & \cdots & 0 \\
\vspace{4pt} \vdots & \vdots & \vdots & \vdots \\
\vspace{4pt} 0 & 0 & \cdots & \bar{A}^N
\end{bmatrix}, \quad \h{\bm{\Lambda}}(t) := \begin{bmatrix}
\vspace{4pt} \Lambda_T^1(t) - \Lambda^1 & 0 & \cdots & 0 \\
\vspace{4pt} 0 & \Lambda_T^2(t) - \Lambda^2 & \cdots & 0 \\
\vspace{4pt} \vdots & \vdots & \vdots & \vdots \\
\vspace{4pt} 0 & 0 & \cdots & \Lambda_T^N(t) - \Lambda^N
\end{bmatrix},
\end{equation*}
and denote the vectors $\bm{\rho} = (\rho^1, \dots, \rho^N) \in \mathbb R^{Nd}$ and $\bm{\mu} = (\mu^1, \dots, \mu^N) \in \mathbb R^{Nd}$.
Then, the system of equations \eqref{eq:rho_mu_diff} can be rewritten in matrix form as
\begin{equation}
\label{eq:rho_mu_diff_matrix}
\begin{cases}
\vspace{4pt}
\displaystyle \frac{d}{dt} \h{\bm{\rho}}(t) = - \bar{\bm{A}} \h{\bm{\rho}}(t) + \h{\bm{\Lambda}}(t) \bm{R} \bm{\rho}_T(t) - 2 \bm{Q} \h{\bm{\mu}}(t), \\
\vspace{4pt}
\displaystyle \frac{d}{dt} \h{\bm{\mu}}(t) =  (\bar{\bm{A}})^\top \h{\bm{\mu}}(t) - \bm{R} \h{\bm{\Lambda}}(t) \bm{\mu}_T(t) - \bm{R} \h{\bm{\rho}}(t), \\
\displaystyle \h{\bm{\rho}}(T) = - \bm{\rho}, \, \h{\bm{\mu}}(0) = \bm{\mu}_0 - \bm{\mu},
\end{cases}
\end{equation}
where $\bm{Q}$ and $\bm{R}$ are defined in \eqref{eq:matrix_R_and_Q}. 

From the definition of $\bar{\bm{A}}$ and the estimate \eqref{eq:norm_A_estimate_uniform}, we conclude that the following estimate holds for all $N \ges 2$:
\begin{equation}
\label{eq:norm_wt_A}
\big\|e^{\bar{\bm{A}} t} \big\| \les K_1^{*} e^{- \lambda^{*} t}, \quad \forall t \ges 0,
\end{equation}
where $K_1^{*}$ and $\lambda^{*}$ are constants given in the proof of Lemma \ref{l:uniform_estimate_norm}.

The proof of the exponential convergence estimates for $(\h{\bm{\rho}}, \h{\bm{\mu}})$ in~\eqref{eq:rho_mu_diff_matrix} proceeds through the following steps:

\begin{itemize}
    \item First, we establish an estimate for the solution to the homogeneous system~\eqref{eq:rho_mu_diff_matrix_1} in Lemma~\ref{l:convergence_mu_rho_1}.
    
    \item Next, by incorporating an additional linear term $\bm{B}^N(t)$ in the ODE satisfied by $\h{\bm{\mu}}$, we derive an estimate for the perturbed system~\eqref{eq:rho_mu_diff_matrix_2} in Lemma~\ref{l:convergence_mu_rho_2}.
    
    \item Then, using the results of Lemmas~\ref{l:convergence_mu_rho_1} and~\ref{l:convergence_mu_rho_2}, we obtain a uniform estimate for the full system~\eqref{eq:rho_mu_diff_matrix} in Lemma~\ref{l:convergence_mu_rho_i}.
    
    \item Finally, combining the above results and applying Assumption~\ref{a:uniform_constant_2}, we conclude the desired uniform estimate~\eqref{eq:estimate_mu_rho_i_uniform} in Proposition~\ref{p:convergence_of_Riccati} for the system of equations~\eqref{eq:rho_mu_diff_matrix}.
\end{itemize}

\begin{lemma}
\label{l:convergence_mu_rho_1}
Let Assumptions \ref{a:solvability_Riccati}, \ref{a:uniform_constants}, and \ref{a:solvability_Riccati_finite_uniform} hold. There exist some positive constants $K$ and $\lambda$, independent of $t, T$, and $N$, such that if $(\h{\bm{\rho}}, \h{\bm{\mu}})$ is a solution to the homogeneous system of equations 
\begin{equation}
\label{eq:rho_mu_diff_matrix_1}
\begin{cases}
\vspace{4pt}
\displaystyle \frac{d}{dt} \h{\bm{\rho}}(t) = - \bar{\bm{A}} \h{\bm{\rho}}(t) - 2 \bm{Q} \h{\bm{\mu}}(t), \\
\vspace{4pt}
\displaystyle \frac{d}{dt} \h{\bm{\mu}}(t) =  (\bar{\bm{A}})^\top \h{\bm{\mu}}(t) - \bm{R} \h{\bm{\rho}}(t), \\
\displaystyle \h{\bm{\rho}}(T) = - \bm{\rho}, \, \h{\bm{\mu}}(0) = \bm{\mu}_0 - \bm{\mu},
\end{cases}
\end{equation}
then the following estimate holds for all $N \ges 2$:
$$|\h{\bm{\rho}}(t)| + |\h{\bm{\mu}}(t)| \les K \big(|\h{\bm{\mu}}(0)| + |\h{\bm{\rho}}(T)| \big) \big( e^{-\lambda t} + e^{-\lambda(T-t)} \big), \quad \forall t \in [0, T].$$
\end{lemma}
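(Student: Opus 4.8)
The plan is to treat the homogeneous system \eqref{eq:rho_mu_diff_matrix_1} as a linear forward--backward (Hamiltonian) system and to run an energy/duality argument of exactly the type already used in the proof of Lemma~\ref{l:solvability_mu_rho_i}, now pushed from mere boundedness to two-sided exponential decay. The three structural facts I will exploit are: the uniform Hurwitz bound $\|e^{\bar{\bm{A}}t}\|\les K_1^*e^{-\lambda^* t}$ from \eqref{eq:norm_wt_A} (valid also for $(\bar{\bm{A}})^\top$, since the two matrices are transposes of one another), the uniform coercivity $\bm{R}\ges\lambda_{\bm{R},\min}I_{Nd}$, and the near-positivity $\lan 2\bm{Q}v,v\ran\ges-\gamma|v|^2$ with $\gamma<\bar\gamma$ furnished by Assumption~\ref{a:solvability_Riccati_finite_uniform} and \eqref{eq:gamma_bar}. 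All constants produced below depend only on $K_1^*,\lambda^*,\lambda_{\bm{R},\min},\|\bm{R}\|,\|\bm{Q}\|$ and $\gamma$, hence are uniform in $t,T,N$.

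First I would establish a \emph{global} $L^2$ bound. Differentiating the pairing and cancelling the antisymmetric terms gives
\[
\frac{d}{dt}\lan\h{\bm{\mu}}(t),\h{\bm{\rho}}(t)\ran=-\lan\bm{R}\h{\bm{\rho}}(t),\h{\bm{\rho}}(t)\ran-\lan2\bm{Q}\h{\bm{\mu}}(t),\h{\bm{\mu}}(t)\ran.
\]
Integrating over $[0,T]$ and using the two coercivity bounds yields
\[
\lambda_{\bm{R},\min}\|\h{\bm{\rho}}\|_{L^2}^2-\gamma\|\h{\bm{\mu}}\|_{L^2}^2\les\lan\h{\bm{\mu}}(0),\h{\bm{\rho}}(0)\ran-\lan\h{\bm{\mu}}(T),\h{\bm{\rho}}(T)\ran.
\]
Writing $\h{\bm{\mu}}$ through the variation-of-constants formula and invoking the smoothing inequality \eqref{eq:estimation_double_integral}, I bound $\|\h{\bm{\mu}}\|_{L^2}^2$ by $C|\h{\bm{\mu}}(0)|^2+\tfrac{2(K_1^*)^2\|\bm{R}\|^2}{(\lambda^*)^2}\|\h{\bm{\rho}}\|_{L^2}^2$; the condition $\gamma<\bar\gamma$ is precisely what makes the $\|\h{\bm{\rho}}\|_{L^2}^2$ contribution absorbable on the left. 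The unknown endpoint values $\h{\bm{\rho}}(0)$ and $\h{\bm{\mu}}(T)$ on the right are controlled by the data $\h{\bm{\mu}}(0),\h{\bm{\rho}}(T)$ and the $L^2$ norms through their Duhamel representations and absorbed by Young's inequality, exactly as in Lemma~\ref{l:solvability_mu_rho_i}. This gives $\|\h{\bm{\rho}}\|_{L^2}^2+\|\h{\bm{\mu}}\|_{L^2}^2\les C(|\h{\bm{\mu}}(0)|^2+|\h{\bm{\rho}}(T)|^2)$ and, feeding this back into the representations, the uniform pointwise bound $\sup_{[0,T]}(|\h{\bm{\rho}}|+|\h{\bm{\mu}}|)\les C(|\h{\bm{\mu}}(0)|+|\h{\bm{\rho}}(T)|)=:CD$.

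The decisive step is to upgrade this to the two-sided rate. I would rerun the Step~1 estimate verbatim on the shrinking symmetric intervals $[\tau,T-\tau]$, which is legitimate since the restricted system has the same form with interval-independent constants, obtaining $z(\tau)\les C(|\h{\bm{\mu}}(\tau)|^2+|\h{\bm{\rho}}(T-\tau)|^2)$ for $z(\tau):=\int_\tau^{T-\tau}(|\h{\bm{\rho}}|^2+|\h{\bm{\mu}}|^2)\,dt$. Bounding the two endpoint values by the local energy through the governing ODEs and a mean-value/averaging argument, so that $|\h{\bm{\mu}}(\tau)|^2+|\h{\bm{\rho}}(T-\tau)|^2\les C\big(z(\tau)-z(\tau+1)\big)$, produces the recursion $z(\tau)\les C\big(z(\tau)-z(\tau+1)\big)$, i.e. $z(\tau+1)\les\theta\,z(\tau)$ with $\theta=(C-1)/C<1$. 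Iterating from $z(0)\les CD^2$ gives the floor-free localized decay $z(\tau)\les CD^2e^{-c\tau}$. Finally I extract the pointwise statement: for $t\in[0,T/2]$ the energy on $[t-1,t]$ is at most $z(t-1)\les CD^2e^{-ct}$, and the ODE/averaging bound then gives $|\h{\bm{\rho}}(t)|+|\h{\bm{\mu}}(t)|\les CDe^{-ct/2}$ (the range $t\les1$ being covered by the uniform bound of Step~1); the interval $[T/2,T]$ is treated symmetrically, which assembles into $|\h{\bm{\rho}}(t)|+|\h{\bm{\mu}}(t)|\les CD(e^{-\lambda t}+e^{-\lambda(T-t)})$ with $\lambda=c/2$.

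The main obstacle is exactly this localized-decay step. The global $L^2$ bound, or any estimate obtained by the triangle inequality and Gronwall applied to the Duhamel formulas, cannot by itself deliver the rate: the coupling constants $\|\bm{R}\|,\|\bm{Q}\|$ are not small, so a naive weighted-norm contraction fails, and the stabilizing cancellation that renders the Hamiltonian system hyperbolic lives in the antisymmetric pairing $\lan\h{\bm{\mu}},\h{\bm{\rho}}\ran$ rather than in any norm bound. The geometric iteration over unit-length subintervals is the device that converts the coercivity (through $\gamma<\bar\gamma$) into a genuine exponential dichotomy, and carrying it out with constants independent of $N$ is what ultimately rests on the uniform hypotheses of Assumptions~\ref{a:uniform_constants} and \ref{a:solvability_Riccati_finite_uniform}.
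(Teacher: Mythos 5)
Your proposal reproduces the paper's proof almost step for step through the global bound: the same cross-pairing identity $-\frac{d}{dt}\lan\h{\bm{\mu}},\h{\bm{\rho}}\ran=\lan\bm{R}\h{\bm{\rho}},\h{\bm{\rho}}\ran+\lan2\bm{Q}\h{\bm{\mu}},\h{\bm{\mu}}\ran$, the same absorption of $\gamma\int|\h{\bm{\mu}}|^2$ via the Duhamel representation, the smoothing inequality \eqref{eq:estimation_double_integral}, and the threshold $\gamma<\bar\gamma$, leading to $\sup_{[0,T]}(|\h{\bm{\rho}}|+|\h{\bm{\mu}}|)\les C(|\h{\bm{\mu}}(0)|+|\h{\bm{\rho}}(T)|)$ and the $L^2$ bound \eqref{eq:mu_rho_hat_integral_bound}. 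The localization-and-iteration phase is also the same idea as the paper's; you merely package it as a unit-step difference recursion $z(\tau+1)\les\theta z(\tau)$ rather than the paper's ``choose $\tau$ large enough that the constant becomes $1/2$, then halve over blocks of length $\tau$.'' Both devices convert the localized energy estimate into exponential decay.

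There is, however, one step that would fail as written: the claim $|\h{\bm{\mu}}(\tau)|^2+|\h{\bm{\rho}}(T-\tau)|^2\les C\big(z(\tau)-z(\tau+1)\big)$. The difference $z(\tau)-z(\tau+1)$ is the energy on $[\tau,\tau+1]\cup[T-\tau-1,T-\tau]$, i.e.\ the strips just \emph{inside} $[\tau,T-\tau]$. Recovering $\h{\bm{\mu}}(\tau)$ from its values on $[\tau,\tau+1]$ means running the forward equation backward, which requires a bound on $\|e^{-(\bar{\bm{A}})^\top u}\|$ for $u\les1$, hence on $\|\bar{\bm{A}}\|$ itself; the hypotheses give only the decay $\|e^{\bar{\bm{A}}t}\|\les K_1^*e^{-\lambda^*t}$ and no uniform upper bound on $\|A^i\|$, so this is the anti-dissipative direction and is not controlled. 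The same issue occurs for $\h{\bm{\rho}}(T-\tau)$ from $[T-\tau-1,T-\tau]$, and again in your final pointwise extraction where you bound $|\h{\bm{\rho}}(t)|$ by the energy on $[t-1,t]$. Note also that the energy estimate on $[\tau,T-\tau]$ produces all four endpoint quantities $\h{\bm{\mu}}(\tau),\h{\bm{\rho}}(\tau),\h{\bm{\mu}}(T-\tau),\h{\bm{\rho}}(T-\tau)$, not just the two you list. The repair is routine and is exactly what the paper does: pick the mean-value points $\xi_\tau\in(0,\tau/2)$ and $\eta_\tau\in(T-\tau/2,T)$ \emph{outside} the target interval, run the energy estimate on $[\xi_\tau,\eta_\tau]\supset[\tau,T-\tau]$, and propagate $\h{\bm{\mu}}$ only forward from $\xi_\tau$ and $\h{\bm{\rho}}$ only backward from $\eta_\tau$, so every semigroup appears in its contractive direction. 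Equivalently, your recursion survives if you draw the endpoint controls from the outer adjacent strips where needed, yielding $z(\tau)\les C\big(z(\tau-1)-z(\tau+1)\big)$, which iterates to geometric decay just as well.
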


\begin{proof}
From the system of equations \eqref{eq:rho_mu_diff_matrix_1}, we observe that
\begin{equation}
\label{eq:mu_rho_hat_differential_matrix_1}
\begin{aligned}
& - \frac{d}{dt} \lan \h{\bm{\mu}}(t), \h{\bm{\rho}}(t) \ran \\
= \ & - \big\{ \lan (\bar{\bm{A}})^\top \h{\bm{\mu}}(t) - \bm{R} \h{\bm{\rho}}(t), \h{\bm{\rho}}(t) \ran + \lan \h{\bm{\mu}}(t), - \bar{\bm{A}} \h{\bm{\rho}}(t) - 2 \bm{Q} \h{\bm{\mu}}(t) \ran \big\} \\
= \ & \lan \bm{R} \h{\bm{\rho}}(t), \h{\bm{\rho}}(t) \ran + \lan 2 \bm{Q} \h{\bm{\mu}}(t), \h{\bm{\mu}}(t) \ran,
\end{aligned}
\end{equation}
Under Assumption \ref{a:solvability_Riccati_finite_uniform}, for the specified constant $\gamma \in (0, \bar\gamma)$, it is straightforward that
\begin{equation}
\label{eq:assumption_Q_matrix_2_inequality}
\lan 2 \bm{Q} v, v \ran \ges -\gamma |v|^2, \quad \forall v \in \mathbb R^{Nd}.
\end{equation}
Taking integration from $0$ to $T$ on both sides of \eqref{eq:mu_rho_hat_differential_matrix_1} and using the inequality \eqref{eq:assumption_Q_matrix_2_inequality}, we obtain
$$\int_0^T \lan \bm{R} \h{\bm{\rho}}(t), \h{\bm{\rho}}(t) \ran dt \les \lan \h{\bm{\mu}}(0), \h{\bm{\rho}}(0) \ran - \lan \h{\bm{\mu}}(T), \h{\bm{\rho}}(T) \ran + \gamma \int_0^T |\h{\bm{\mu}}(t)|^2 dt.$$
By the integrating factor method and from the second equation in \eqref{eq:rho_mu_diff_matrix_1}, 
$$\h{\bm{\mu}}(t) = e^{(\bar{\bm{A}})^\top t} \h{\bm{\mu}}(0) - \int_0^t e^{(\bar{\bm{A}})^\top (t -s)} \bm{R} \h{\bm{\rho}}(s) ds.$$
It follows from the estimation \eqref{eq:norm_wt_A} that, for all $t \in [0, T]$, 
\begin{equation}
\label{eq:mu_hat_bound_1}
\begin{aligned}
|\h{\bm{\mu}}(t)|^2 & \les 2 \Big\| e^{(\bar{\bm{A}})^\top t} \Big\|^2 |\h{\bm{\mu}}(0)|^2 + 2 \Big| \int_0^t e^{(\bar{\bm{A}})^\top (t -s)} \bm{R} \h{\bm{\rho}}(s) ds \Big|^2 \\
& \les 2 (K_1^{*})^2 e^{-2 \lambda^{*} t} |\h{\bm{\mu}}(0)|^2 + 2 (K_1^*)^2 \|\bm{R}\|^2 \Big| \int_0^t e^{-\lambda^{*} (t -s)} \h{\bm{\rho}}(s) ds \Big|^2.
\end{aligned}
\end{equation}
By using the inequality \eqref{eq:estimation_double_integral}, we have
\begin{equation*}
\begin{aligned}
& \int_0^T \lan \bm{R} \h{\bm{\rho}}(t), \h{\bm{\rho}}(t) \ran dt \\
\les \ & \lan \h{\bm{\mu}}(0), \h{\bm{\rho}}(0) \ran - \lan \h{\bm{\mu}}(T), \h{\bm{\rho}}(T) \ran + \frac{\gamma (K_1^*)^2}{\lambda^*} |\h{\bm{\mu}}(0)|^2 + 2 \gamma (K_1^*)^2 \|\bm{R}\|^2 \int_0^T \Big| \int_0^t e^{-\lambda^{*} (t -s)} \h{\bm{\rho}}(s) ds \Big|^2 dt \\
\les \ & \lan \h{\bm{\mu}}(0), \h{\bm{\rho}}(0) \ran - \lan \h{\bm{\mu}}(T), \h{\bm{\rho}}(T) \ran + \frac{\gamma (K_1^*)^2}{\lambda^{*}} |\h{\bm{\mu}}(0)|^2 + \frac{2 \gamma (K_1^*)^2 \|\bm{R}\|^2}{(\lambda^{*})^2} \int_0^T  |\h{\bm{\rho}}(t)|^2 dt.
\end{aligned}
\end{equation*}
Recall that $\lambda_{\bm{R}, min}$ denotes the smallest eigenvalue of $\bm{R}$. 
Then, by Assumption \ref{a:solvability_Riccati_finite_uniform} and the proof of Lemma \ref{l:uniform_estimate_norm}, as $\bm{R}$ is positive definite and $\gamma < \bar{\gamma} < \frac{(\lambda^{*})^2}{2 (K_1^*)^2 \|\bm{R}\|^2} \lambda_{\bm{R}, min}$, we know that
$$\frac{2 \gamma (K_1^*)^2 \|\bm{R}\|^2}{(\lambda^{*})^2} \int_0^T |\h{\bm{\rho}}(t)|^2 dt < \int_0^T \lan \bm{R} \h{\bm{\rho}}(t), \h{\bm{\rho}}(t) \ran dt,$$
and it gives
\begin{equation}
\label{eq:rho_hat_bound}
\int_0^T |\h{\bm{\rho}}(t)|^2 dt \les K \big(|\h{\bm{\mu}}(0)||\h{\bm{\rho}}(0)| + |\h{\bm{\mu}}(T)| |\h{\bm{\rho}}(T)| + |\h{\bm{\mu}}(0)|^2 \big)
\end{equation}
for some constant $K > 0$. From the estimate in \eqref{eq:mu_hat_bound_1}, and by H\"older's inequality, there exist some positive $K$ and $\lambda$ such that
\begin{equation*}
\begin{aligned}
|\h{\bm{\mu}}(t)|^2 & \les K e^{-2 \lambda t} |\h{\bm{\mu}}(0)|^2 + \frac{K}{\lambda}(1 - e^{-2\lambda t}) \int_0^t |\h{\bm{\rho}}(s)|^2 ds \\
& \les K \big(|\h{\bm{\mu}}(0)||\h{\bm{\rho}}(0)| + |\h{\bm{\mu}}(T)| |\h{\bm{\rho}}(T)| + |\h{\bm{\mu}}(0)|^2 \big)
\end{aligned}
\end{equation*}
for all $t \in [0, T]$. Taking supremum over $t$ on $[0, T]$ and applying Young's inequality, and modifying $K$ if necessary, we derive that
\begin{align}\label{eq:hat_mu}
\sup_{t \in [0, T]}|\h{\bm{\mu}}(t)|^2 \les K \big(|\h{\bm{\mu}}(0)||\h{\bm{\rho}}(0)| + |\h{\bm{\rho}}(T)|^2 + |\h{\bm{\mu}}(0)|^2 \big).
\end{align}
Next, from the first equation in the  system \eqref{eq:rho_mu_diff_matrix_1}, the semi-explicit form of $\h{\bm{\rho}}$ in terms of $\h{\bm{\mu}}$ is given by
\begin{align}
\label{eq:hat_rho}
\h{\bm{\rho}}(t) = e^{\bar{\bm{A}} (T-t)} \h{\bm{\rho}}(T) + \int_t^T 2 e^{\bar{\bm{A}} (s-t)} \bm{Q} \h{\bm{\mu}}(s) ds.
\end{align}
Plugging $t = 0$, by estimate \eqref{eq:norm_wt_A} again and since $\|\bm{Q}\|$ is uniformly bounded above from Assumption \ref{a:solvability_Riccati_finite_uniform}, we have the following inequalities
\begin{equation*}
\begin{aligned}
|\h{\bm{\rho}}(0)| & \les \big\|e^{\bar{\bm{A}} T} \big\| |\h{\bm{\rho}}(T)| + \int_0^T 2 \big\|e^{\bar{\bm{A}} s} \big\| \|\bm{Q} \| |\h{\bm{\mu}}(s)| ds \\
& \les K e^{-\lambda T} |\h{\bm{\rho}}(T)| + K \sup_{t \in [0, T]} |\h{\bm{\mu}}(t)| \int_0^T e^{-\lambda s} ds \\
& \les K e^{-\lambda T} |\h{\bm{\rho}}(T)| + K \sup_{t \in [0, T]} |\h{\bm{\mu}}(t)|
\end{aligned}
\end{equation*}
for some positive constants $K$ and $\lambda$, independent of $t, T$, and $N$. Then, by using Young's inequality again for \eqref{eq:hat_mu}, we conclude that
$$\sup_{t \in [0, T]} |\h{\bm{\mu}}(t)|^2 \les K \big(|\h{\bm{\rho}}(T)|^2 + |\h{\bm{\mu}}(0)|^2 \big).$$
With the similar argument as above, it is clear from \eqref{eq:hat_rho} that
\begin{equation*}
\begin{aligned}
|\h{\bm{\rho}}(t)|^2 \les 2 \big\|e^{\bar{\bm{A}} (T-t)} \big\|^2 |\h{\bm{\rho}}(T)|^2 + 2 \Big| \int_t^T 2 e^{\bar{\bm{A}} (s-t)} \bm{Q} \h{\bm{\mu}}(s) ds \Big|^2 \les K |\h{\bm{\rho}}(T)|^2 + K \sup_{t \in [0, T]} |\h{\bm{\mu}}(t)|^2
\end{aligned}
\end{equation*}
for all $t \in [0, T]$, which yields that
$$\sup_{t \in [0, T]} \big(|\h{\bm{\rho}}(t)|^2 + |\h{\bm{\mu}}(t)|^2 \big) \les K \big(|\h{\bm{\rho}}(T)|^2 + |\h{\bm{\mu}}(0)|^2 \big).$$
By using the inequalities \eqref{eq:mu_hat_bound_1}, \eqref{eq:rho_hat_bound}, and \eqref{eq:estimation_double_integral},
\begin{equation*}
\begin{aligned}
\int_0^T |\h{\bm{\mu}}(t)|^2 dt & \les K |\h{\bm{\mu}}(0)|^2 \int_0^T e^{-2 \lambda t} dt + K \int_0^T \Big| \int_0^t e^{-\lambda (t -s)} \h{\bm{\rho}}(s) ds \Big|^2 dt \\
& \les K |\h{\bm{\mu}}(0)|^2  + \frac{K}{\lambda^2} \int_0^T |\h{\bm{\rho}}(t)|^2 dt \\
& \les K \big(|\h{\bm{\rho}}(T)|^2 + |\h{\bm{\mu}}(0)|^2 \big).
\end{aligned}
\end{equation*}
The same bound (with potentially different $K$) holds for $\int_0^T|\h{\bm{\rho}}(t)|^2 dt$, thus
\begin{equation}
\label{eq:mu_rho_hat_integral_bound}
\int_0^T \big( |\h{\bm{\mu}}(t)|^2 + |\h{\bm{\rho}}(t)|^2 \big) dt \les K \big(|\h{\bm{\rho}}(T)|^2 + |\h{\bm{\mu}}(0)|^2 \big).
\end{equation}

Next, we show that there exists a $\tau > 0$ such that for all $T > 2 \tau$,
\begin{equation}
\label{eq:bound_half_tau}
|\h{\bm{\rho}}(t)|^2 + |\h{\bm{\mu}}(t)|^2 \les \frac{1}{2} \big(|\h{\bm{\rho}}(T)|^2 + |\h{\bm{\mu}}(0)|^2 \big), \quad \forall t \in [\tau, T - \tau].
\end{equation}
Fix a $\tau \in (0, \frac{T}{2})$. Using the inequality \eqref{eq:mu_rho_hat_integral_bound}, by the mean value theorem for the integral, we obtain that there exist some $\xi_\tau \in (0, \frac{\tau}{2})$ and $\eta_\tau \in (T - \frac{\tau}{2}, T)$ such that
$$|\h{\bm{\rho}}(\xi_\tau)|^2 + |\h{\bm{\mu}}(\xi_\tau)|^2 \les \frac{K}{\tau} M^2, \quad \text{and} \quad |\h{\bm{\rho}}(\eta_\tau)|^2 + |\h{\bm{\mu}}(\eta_\tau)|^2 \les \frac{K}{\tau} M^2,$$
where $M^2 := |\h{\bm{\rho}}(T)|^2 + |\h{\bm{\mu}}(0)|^2$. The constant $K$ appearing here and throughout the remainder of the proof is independent of $\tau$, as well as of $t$, $T$, and $N$. Recall that the semi-explicit form of $\h{\bm{\mu}}$ in terms of $\h{\bm{\rho}}$ is given by
$$\h{\bm{\mu}}(t) = e^{(\bar{\bm{A}})^\top (t - \xi_\tau)} \h{\bm{\mu}}(\xi_\tau) - \int_{\xi_\tau}^t e^{(\bar{\bm{A}})^\top (t -s)} \bm{R} \h{\bm{\rho}}(s) ds$$
for $t \ges \xi_\tau$, thus
\begin{align}\label{eq:hat_mu_bound}
|\h{\bm{\mu}}(t)|^2 \les 2 (K_1^*)^2 e^{- 2 \lambda^{*} (t - \xi_\tau)} |\h{\bm{\mu}}(\xi_\tau)|^2 + 2 (K_1^*)^2 \|\bm{R}\|^2 \Big|\int_{\xi_\tau}^t e^{- \lambda^{*} (t -s)} \h{\bm{\rho}}(s) ds \Big|^2.
\end{align}
Taking integration from $\xi_\tau$ to $\eta_\tau$, and using the inequality \eqref{eq:estimation_double_integral}, we have
\begin{equation*}
\begin{aligned}
\int_{\xi_\tau}^{\eta_\tau} |\h{\bm{\mu}}(t)|^2 dt & \les 2 (K_1^*)^2 |\h{\bm{\mu}}(\xi_\tau)|^2 \int_{\xi_\tau}^{\eta_\tau} e^{- 2 \lambda^{*} (t - \xi_\tau)} dt + 2 (K_1^*)^2 \|\bm{R}\|^2 \int_{\xi_\tau}^{\eta_\tau} \Big|\int_{\xi_\tau}^t e^{- \lambda^{*} (t -s)} \h{\bm{\rho}}(s) ds \Big|^2 dt \\
& \les \frac{(K_1^*)^2}{\lambda^{*}} |\h{\bm{\mu}}(\xi_\tau)|^2 + \frac{2 (K_1^*)^2 \|\bm{R}\|^2}{(\lambda^{*})^2} \int_{\xi_\tau}^{\eta_\tau} |\h{\bm{\rho}}(t)|^2 dt.
\end{aligned}
\end{equation*}
Moreover, from \eqref{eq:mu_rho_hat_differential_matrix_1} and \eqref{eq:assumption_Q_matrix_2_inequality},  we derive
\begin{equation*}
\begin{aligned}
\int_{\xi_\tau}^{\eta_\tau} \lan \bm{R} \h{\bm{\rho}}(t), \h{\bm{\rho}}(t) \ran dt & \les \lan \h{\bm{\mu}}(\xi_\tau), \h{\bm{\rho}}(\xi_\tau) \ran - \lan \h{\bm{\mu}}(\eta_\tau), \h{\bm{\rho}}(\eta_\tau) \ran + \gamma \int_{\xi_\tau}^{\eta_\tau}  |\h{\bm{\mu}}(t)|^2 dt \\
& \les \lan \h{\bm{\mu}}(\xi_\tau), \h{\bm{\rho}}(\xi_\tau) \ran - \lan \h{\bm{\mu}}(\eta_\tau), \h{\bm{\rho}}(\eta_\tau) \ran + \frac{\gamma (K_1^*)^2}{\lambda^{*}} |\h{\bm{\mu}}(\xi_\tau)|^2 \\
& \hspace{0.5in} + \frac{2 \gamma (K_1^*)^2 \|\bm{R}\|^2}{(\lambda^{*})^2} \int_{\xi_\tau}^{\eta_\tau} |\h{\bm{\rho}}(t)|^2 dt.
\end{aligned}
\end{equation*}
Since $\bm{R}$ is positive definite and $\gamma < \bar{\gamma} < \frac{(\lambda^{*})^2}{2 (K_1^*)^2 \|\bm{R}\|^2} \lambda_{\bm{R}, min}$, we establish the following estimate
$$\int_{\xi_\tau}^{\eta_\tau} |\h{\bm{\rho}}(t)|^2 dt \les K \big( |\h{\bm{\mu}}(\xi_\tau)|^2 + |\h{\bm{\mu}}(\xi_\tau)| |\h{\bm{\rho}}(\xi_\tau)| + |\h{\bm{\mu}}(\eta_\tau)| |\h{\bm{\rho}}(\eta_\tau)| \big) \les \frac{K}{\tau} M^2$$
for some $K > 0$. Thus, by \eqref{eq:hat_mu_bound} 
and using H\"older's inequality, since $\bm{R}$ is uniformly bounded from Assumption \ref{a:uniform_constants}, there exist $K$ and $\lambda > 0$ such that
\begin{equation}
\label{eq:bound_mu_hat_M}
|\h{\bm{\mu}}(t)|^2 \les K e^{- 2 \lambda (t - \xi_\tau)} |\h{\bm{\mu}}(\xi_\tau)|^2 + K \int_{\xi_\tau}^t e^{- 2\lambda (t -s)} ds \int_{\xi_\tau}^t |\h{\bm{\rho}}(s)|^2 ds \les \frac{K}{\tau} M^2
\end{equation}
for $t \in [\xi_\tau, \eta_\tau]$. From the semi-explicit form of $\h{\bm{\rho}}$ in terms of $\h{\bm{\mu}}$ in \eqref{eq:hat_rho} and the assumption that $\|\bm{Q}\|$ is uniformly bounded above in Assumption \ref{a:solvability_Riccati_finite_uniform}, for $t \in [\xi_\tau, \eta_\tau]$, we also derive that
\begin{equation}
\label{eq:bound_rho_hat_M}
\begin{aligned}
|\h{\bm{\rho}}(t)| & \les \big\|e^{\bar{\bm{A}} (\eta_\tau - t)}\big\| |\h{\bm{\rho}}(\eta_\tau)| + \int_t^{\eta_\tau} 2 \big\|e^{\bar{\bm{A}} (s-t)}\big\| \|\bm{Q}\| |\h{\bm{\mu}}(s)| ds \\
& \les K e^{-\lambda(\eta_\tau - t)} |\h{\bm{\rho}}(\eta_\tau)| + K \sup_{t \in [\xi_{\tau}, \eta_{\tau}]} |\h{\bm{\mu}}(t)| \int_t^{\eta_\tau} e^{-\lambda(s - t)} ds \\
& \les \frac{K}{\sqrt{\tau}} M.
\end{aligned}
\end{equation}
The above inequality \eqref{eq:bound_rho_hat_M}, together with \eqref{eq:bound_mu_hat_M}, imply \eqref{eq:bound_half_tau} for a convenient choice of $\tau$. Finally, by iteration of the inequality \eqref{eq:bound_half_tau}, we conclude that there exist some positive constants $K$ and $\lambda$, independent of $t, T$, and $N$, such that 
$$|\h{\bm{\rho}}(t)| + |\h{\bm{\mu}}(t)| \les K \big(|\h{\bm{\mu}}(0)| + |\h{\bm{\rho}}(T)|\big) \big( e^{-\lambda t} + e^{-\lambda(T-t)} \big), \quad \forall t \in (0, T).$$
Moreover, by the uniformly boundedness result of $\h{\bm{\rho}}$ and $\h{\bm{\mu}}$, we complete the proof of the desired result that the following estimate holds uniformly for all $N \ges 2$:
$$|\h{\bm{\rho}}(t)| + |\h{\bm{\mu}}(t)| \les K \big(|\h{\bm{\mu}}(0)| + |\h{\bm{\rho}}(T)| \big) \big( e^{-\lambda t} + e^{-\lambda(T-t)} \big), \quad \forall t \in [0, T].$$
\end{proof}

\begin{lemma}
\label{l:convergence_mu_rho_2}
Let Assumptions \ref{a:solvability_Riccati}, \ref{a:uniform_constants}, and \ref{a:solvability_Riccati_finite_uniform} hold. Consider a collection of functions $\bm{B}^N : [0, T] \to \mathbb{R}^{Nd}$ for $N \ges 2$. In addition, assume that there exist constants $K_{\bm{B}^N} > 0$ (possibly depending on $N$, but independent of $t$ and $T$), and $\check{\lambda} > 0$ (independent of $t$, $T$, and $N$), such that for all $t \in [0, T]$ and all $N \ges 2$,
\[
|\bm{B}^N(t)| \les K_{\bm{B}^N} e^{-\check{\lambda} (T - t)}.
\]
Then, there exist positive constants $K$ and $\lambda$, independent of $t, T$, and $N$, such that any solution $(\h{\bm{\rho}}, \h{\bm{\mu}})$ for the system of equations 
\begin{equation}
\label{eq:rho_mu_diff_matrix_2}
\begin{cases}
\vspace{4pt}
\displaystyle \frac{d}{dt} \h{\bm{\rho}}(t) = - \bar{\bm{A}} \h{\bm{\rho}}(t) - 2 \bm{Q} \h{\bm{\mu}}(t), \\
\vspace{4pt}
\displaystyle \frac{d}{dt} \h{\bm{\mu}}(t) =  (\bar{\bm{A}})^\top \h{\bm{\mu}}(t) - \bm{R} \h{\bm{\rho}}(t) - \bm{B}^N(t), \\
\displaystyle \h{\bm{\rho}}(T) = 0, \, \h{\bm{\mu}}(0) = 0,
\end{cases}
\end{equation}
satisfies the following estimate for all $N \ges 2$:
$$|\h{\bm{\rho}}(t)| + |\h{\bm{\mu}}(t)| \les K K_{\bm{B^N}} \big((1+t) e^{-\lambda t} + (1 + T - t) e^{-\lambda(T-t)} \big), \quad \forall t \in [0, T].$$
\end{lemma}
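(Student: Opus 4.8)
The plan is to adapt the energy method from the proof of Lemma \ref{l:convergence_mu_rho_1}, now carrying the forcing $\bm{B}^N$ through every estimate. First I would record the energy identity: differentiating the cross term $\lan\h{\bm{\mu}},\h{\bm{\rho}}\ran$ and substituting the two equations of \eqref{eq:rho_mu_diff_matrix_2}, the skew-symmetric contributions involving $\bar{\bm{A}}$ cancel, leaving
\[
-\frac{d}{dt}\lan \h{\bm{\mu}}(t), \h{\bm{\rho}}(t)\ran = \lan \bm{R}\h{\bm{\rho}}(t), \h{\bm{\rho}}(t)\ran + \lan 2\bm{Q}\h{\bm{\mu}}(t), \h{\bm{\mu}}(t)\ran + \lan \bm{B}^N(t), \h{\bm{\rho}}(t)\ran .
\]
Integrating over $[0,T]$, both boundary terms vanish since $\h{\bm{\rho}}(T)=0$ and $\h{\bm{\mu}}(0)=0$, so after invoking \eqref{eq:assumption_Q_matrix_2_inequality} the identity collapses to
\[
\int_0^T \lan \bm{R}\h{\bm{\rho}}, \h{\bm{\rho}}\ran\, dt \les \gamma \int_0^T |\h{\bm{\mu}}|^2\, dt + \int_0^T |\bm{B}^N|\,|\h{\bm{\rho}}|\, dt .
\]

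Second, to close the $L^2$ estimate I would use the forward representation $\h{\bm{\mu}}(t) = -\int_0^t e^{(\bar{\bm{A}})^\top(t-s)}(\bm{R}\h{\bm{\rho}}(s)+\bm{B}^N(s))\,ds$ together with the uniform bound \eqref{eq:norm_wt_A} and the convolution inequality \eqref{eq:estimation_double_integral}, giving $\int_0^T|\h{\bm{\mu}}|^2 \les K\big(\int_0^T|\h{\bm{\rho}}|^2 + \int_0^T|\bm{B}^N|^2\big)$. Since $\int_0^T |\bm{B}^N|^2 \les K_{\bm{B}^N}^2/(2\check{\lambda})$ by hypothesis, and since $\gamma < \bar{\gamma} < \frac{(\lambda^*)^2}{2(K_1^*)^2\|\bm{R}\|^2}\lambda_{\bm{R},\min}$ lets the $\gamma$-term be absorbed into the coercive left-hand side (using $\lan\bm{R}\h{\bm{\rho}},\h{\bm{\rho}}\ran \ges \lambda_{\bm{R},\min}|\h{\bm{\rho}}|^2$ and Young's inequality on the $|\bm{B}^N||\h{\bm{\rho}}|$ term), I would obtain the global bound $\int_0^T(|\h{\bm{\rho}}|^2+|\h{\bm{\mu}}|^2)\,dt \les K K_{\bm{B}^N}^2$, with $K$ independent of $t,T,N$ under Assumptions \ref{a:uniform_constants} and \ref{a:solvability_Riccati_finite_uniform}.

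Third, I would promote the $L^2$ bound to pointwise decay by replicating the interior iteration of Lemma \ref{l:convergence_mu_rho_1}. Using the mean value theorem, pick $\xi_\tau\in(0,\tau/2)$ and $\eta_\tau\in(T-\tau/2,T)$ at which $|\h{\bm{\rho}}|^2+|\h{\bm{\mu}}|^2 \les (K/\tau)K_{\bm{B}^N}^2$, then propagate via the variation-of-constants formulas for $\h{\bm{\mu}}$ (forward from $\xi_\tau$) and $\h{\bm{\rho}}$ (backward from $\eta_\tau$, using $\h{\bm{\rho}}(t)=\int_t^T 2e^{\bar{\bm{A}}(s-t)}\bm{Q}\h{\bm{\mu}}(s)\,ds$), now retaining the extra convolution $\int e^{-\lambda^*(t-s)}|\bm{B}^N(s)|\,ds \les K_{\bm{B}^N}\int e^{-\lambda^*(t-s)}e^{-\check{\lambda}(T-s)}\,ds$. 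Establishing a forced analogue of the half-interval contraction \eqref{eq:bound_half_tau} and iterating it then yields the claimed bound.

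The hardest part will be this last step: the polynomial prefactors $(1+t)$ and $(1+T-t)$ are precisely the signature of the resonant case $\check{\lambda}=\lambda^*$, in which convolving the semigroup decay against the forcing produces a linear-in-distance factor rather than clean exponential decay. To treat all relations between $\check{\lambda}$ and $\lambda^*$ uniformly, I would fix $\lambda:=\min\{\lambda^*,\check{\lambda}\}$ (or any strictly smaller rate) and absorb the resonant growth into $(1+t)$ near $t=0$ and into $(1+T-t)$ near $t=T$, where $\bm{B}^N$ is largest; keeping every constant independent of $N$ relies on $K_1^*,\lambda^*,\gamma,\bar{\gamma},\lambda_{\bm{R},\min}$ and $\|\bm{R}\|$ being controlled uniformly, which is guaranteed by Assumptions \ref{a:uniform_constants} and \ref{a:solvability_Riccati_finite_uniform}, while $K_{\bm{B}^N}$ enters only linearly throughout.
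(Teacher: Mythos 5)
Your proposal is correct in substance and its first two steps coincide exactly with the paper's: the cross-term energy identity with the extra $\lan \bm{B}^N,\h{\bm{\rho}}\ran$ term, vanishing boundary contributions, absorption of the $\gamma$-term using $\gamma<\bar\gamma$, and the resulting bounds $\int_0^T|\h{\bm{\rho}}|^2\,dt\les KK_{\bm{B}^N}^2$ and $\sup_{[0,T]}(|\h{\bm{\rho}}|+|\h{\bm{\mu}}|)\les KK_{\bm{B}^N}$ are all obtained the same way. Where you diverge is the promotion to pointwise decay. You propose to rerun the $\xi_\tau,\eta_\tau$ mean-value/half-interval contraction argument from Lemma \ref{l:convergence_mu_rho_1} with the forcing carried along, and you attribute the $(1+t)$, $(1+T-t)$ prefactors to a convolution resonance at $\check\lambda=\lambda^*$. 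The paper instead restricts the solution to $[\tau,T-\tau]$ and splits it as $(\wt{\bm{\rho}}_1,\wt{\bm{\mu}}_1)+(\wt{\bm{\rho}}_2,\wt{\bm{\mu}}_2)$, where the first piece solves the \emph{homogeneous} system \eqref{eq:rho_mu_diff_matrix_1} with boundary data $\h{\bm{\mu}}(\tau)$, $\h{\bm{\rho}}(T-\tau)$ and is controlled by invoking Lemma \ref{l:convergence_mu_rho_1} as a black box, while the second solves the forced system with zero boundary data and shifted forcing, controlled by time-invariance as $e^{-\lambda\tau}\zeta(t-\tau)$ with $\zeta(\tau):=\sup_{T\ges 2\tau,\,t\in[\tau,T-\tau]}(|\h{\bm{\rho}}(t)|+|\h{\bm{\mu}}(t)|)$; the functional recursion $e^{\lambda(t+\tau)}\zeta(t+\tau)\les KK_{\bm{B}^N}e^{\lambda\tau}+e^{\lambda t}\zeta(t)$ then yields $\zeta(t)\les KK_{\bm{B}^N}(1+t)e^{-\lambda t}$, and the choice $\tau=t$ or $\tau=T-t$ finishes. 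This decomposition is worth adopting: if you try to build your ``forced analogue of the half-interval contraction'' directly, the restricted problem on $[\tau,T-\tau]$ carries \emph{both} nonzero boundary data and the forcing, and separating them is precisely what lets the already-proved homogeneous lemma absorb the boundary part; moreover the paper's recursion makes transparent that the polynomial prefactor comes from accumulating an additive constant over $\sim t/\tau$ iterations, which is the discrete counterpart of the resonance you identify. Your step three is therefore the right idea but is the one place where the argument is asserted rather than executed, and fleshing it out would most naturally land you on the paper's decomposition.
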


\begin{proof}
To avoid cumbersome notation throughout the proof, we remove the superscript $N$ from $\bm{B}^N$. We first prove that the solution $(\h{\bm{\rho}}, \h{\bm{\mu}})$ to \eqref{eq:rho_mu_diff_matrix_2} is uniformly bounded. This result can be obtained by a similar argument as Lemma \ref{l:convergence_mu_rho_1}. From the system of equations \eqref{eq:rho_mu_diff_matrix_2}, it is clear that
\begin{equation*}
\begin{aligned}
& - \frac{d}{dt} \lan \h{\bm{\mu}}(t), \h{\bm{\rho}}(t) \ran \\
= \ & - \big\{ \lan (\bar{\bm{A}})^\top \h{\bm{\mu}}(t) - \bm{R} \h{\bm{\rho}}(t) - \bm{B}(t), \h{\bm{\rho}}(t) \ran + \lan \h{\bm{\mu}}(t), - \bar{\bm{A}} \h{\bm{\rho}}(t) - 2 \bm{Q} \h{\bm{\mu}}(t) \ran \big\} \\
= \ & \lan \bm{R} \h{\bm{\rho}}(t), \h{\bm{\rho}}(t) \ran + \lan 2 \bm{Q} \h{\bm{\mu}}(t), \h{\bm{\mu}}(t) \ran + \lan \bm{B}(t), \h{\bm{\rho}}(t) \ran ,
\end{aligned}
\end{equation*}
which gives the following inequality
\begin{equation*}
\int_0^T \lan \bm{R} \h{\bm{\rho}}(t), \h{\bm{\rho}}(t) \ran dt \les \gamma \int_0^T |\h{\bm{\mu}}(t)|^2 dt - \int_0^T \lan \bm{B}(t), \h{\bm{\rho}}(t) \ran dt
\end{equation*}
by the facts that $\h{\bm{\rho}}(T) = 0$ and $\h{\bm{\mu}}(0) = 0$, and the inequality for $\bm{Q}$ in \eqref{eq:assumption_Q_matrix_2_inequality} obtained by Assumption \ref{a:solvability_Riccati_finite_uniform}. Applying the integrating factor method and from the second equation in \eqref{eq:rho_mu_diff_matrix_2}, as $\h{\bm{\mu}}(0) = 0$, we derive
$$\h{\bm{\mu}}(t) = - \int_0^t e^{(\bar{\bm{A}})^\top (t -s)} \big( \bm{B}(s) + \bm{R} \h{\bm{\rho}}(s) \big) ds.$$
It yields the following estimates
\begin{equation}
\label{eq:mu_hat_bound_2}
\begin{aligned}
|\h{\bm{\mu}}(t)|^2 & \les 2 \Big| \int_0^t e^{(\bar{\bm{A}})^\top (t -s)} \bm{B}(s) ds \Big|^2 + 2 \Big| \int_0^t e^{(\bar{\bm{A}})^\top (t -s)} \bm{R} \h{\bm{\rho}}(s) ds \Big|^2 \\
& \les 2 (K_1^*)^2 K_{\bm{B}}^2 \int_0^t e^{-2\lambda^{*}(t-s)}ds \int_0^t e^{-2\check{\lambda} (T-s)}ds \\
& \hspace{0.5in} + 2 (K_1^*)^2 \|\bm{R}\|^2 \Big| \int_0^t e^{-\lambda^{*} (t -s)} \h{\bm{\rho}}(s) ds \Big|^2 \\
& \les \frac{(K_1^*)^2}{2 \lambda^{*} \check{\lambda}} K_{\bm{B}}^2 e^{-2\check{\lambda}(T-t)} + 2 (K_1^*)^2 \|\bm{R}\|^2 \Big| \int_0^t e^{-\lambda^{*} (t -s)} \h{\bm{\rho}}(s) ds \Big|^2
\end{aligned}
\end{equation}
for all $t \in [0, T]$ by applying the estimate \eqref{eq:norm_wt_A}. Using the inequality \eqref{eq:estimation_double_integral}, we have
\begin{equation*}
\begin{aligned}
\int_0^T \lan \bm{R} \h{\bm{\rho}}(t), \h{\bm{\rho}}(t) \ran dt & \les \frac{\gamma (K_1^*)^2}{4 \lambda^{*} \check{\lambda}^2} K_{\bm{B}}^2 + 2 \gamma (K_1^*)^2 \|\bm{R}\|^2 \int_0^T \Big| \int_0^t e^{-\lambda^{*} (t -s)} \h{\bm{\rho}}(s) ds \Big|^2 dt \\
& \hspace{0.5in}- \int_0^T \lan \bm{B}(t), \h{\bm{\rho}}(t) \ran dt \\
& \les \frac{\gamma (K_1^*)^2}{4 \lambda^{*} \check{\lambda}^2} K_{\bm{B}}^2 + \frac{2 \gamma (K_1^*)^2 \|\bm{R}\|^2}{(\lambda^{*})^2} \int_0^T |\h{\bm{\rho}}(t)|^2 dt - \int_0^T \lan \bm{B}(t), \h{\bm{\rho}}(t) \ran dt.
\end{aligned}
\end{equation*}
Since $\gamma < \bar{\gamma} < \frac{(\lambda^{*})^2}{2 (K_1^*)^2 \|\bm{R}\|^2} \lambda_{\bm{R}, min}$ from Assumption \ref{a:solvability_Riccati_finite_uniform}, $|\bm{B}(t)| \les K_{\bm{B}} e^{-\check{\lambda} (T-t)}$ for all $t \in [0, T]$, and $\bm{R}$ is positive definite, by Young's inequality, one can derive that there exists a positive constant $K$ such that
$$\int_0^T |\h{\bm{\rho}}(t)|^2 dt \les K K_{\bm{B}}^2$$
for all $N \ges 2$. Thus, from the inequality \eqref{eq:mu_hat_bound_2}, and by H\"older's inequality, 
$$|\h{\bm{\mu}}(t)|^2 \les K K_{\bm{B}}^2 e^{-2\check{\lambda} (T-t)} + \frac{K}{2\lambda} \big(1 - e^{-2 \lambda t} \big) \int_0^t |\h{\bm{\rho}}(s)|^2 ds$$
for some positive $K$ and $\lambda$. Taking supremum over $t$ on $[0, T]$, we obtain the uniformly upper bound for $\h{\bm{\mu}}$:
$$\sup_{t \in [0, T]} |\h{\bm{\mu}}(t)|^2 \les KK_{\bm{B}}^2.$$
Similarly, from the first equation in system \eqref{eq:rho_mu_diff_matrix_2}, the semi-explicit representation of $\h{\bm{\rho}}$ in terms of $\h{\bm{\mu}}$ is given by
$$\h{\bm{\rho}}(t) = \int_t^T 2 e^{\bar{\bm{A}} (s-t)} \bm{Q} \h{\bm{\mu}}(s) ds$$
since $\h{\bm{\rho}}(T) = 0$. By the uniformly boundedness of $\|\bm{Q}\|$ from Assumption \ref{a:solvability_Riccati_finite_uniform}, it follows that
$$\sup_{t \in [0, T]} |\h{\bm{\rho}}(t)| \les K \sup_{t \in [0, T]} |\h{\bm{\mu}}(t)| \les K K_{\bm{B}}.$$
Therefore, we complete the proof that $(\h{\bm{\rho}}, \h{\bm{\mu}})$ is uniformly bounded, i.e.,
$$\sup_{t \in [0, T]} \big(|\h{\bm{\rho}}(t)| + |\h{\bm{\mu}}(t)| \big) \les KK_{\bm{B}}$$
for all $N \ges 2$.

Next, for $\tau > 0$, we set
$$\zeta(\tau) = \sup_{t, T} \big(|\h{\bm{\rho}}(t)| + |\h{\bm{\mu}}(t)| \big),$$
where the supremum is taken over any $T \ges 2 \tau$ and $t \in [\tau, T -\tau]$. Thus, $\zeta(\tau)$ is bounded by $KK_{\bm{B}}$ for any $\tau$. Note that the restriction $(\wt{\bm{\rho}}, \wt{\bm{\mu}})$ of $(\h{\bm{\rho}}, \h{\bm{\mu}})$ to $[\tau, T- \tau]$ can be written as
$$(\wt{\bm{\rho}}, \wt{\bm{\mu}}) = (\wt{\bm{\rho}}_1, \wt{\bm{\mu}}_1) + (\wt{\bm{\rho}}_2, \wt{\bm{\mu}}_2),$$
where $(\wt{\bm{\rho}}_1, \wt{\bm{\mu}}_1)$ solves the equation \eqref{eq:rho_mu_diff_matrix_1} with the boundary condition $\wt{\bm{\mu}}_1(\tau) = \h{\bm{\mu}}(\tau)$ and $\wt{\bm{\rho}}_1(T - \tau) = \h{\bm{\rho}}(T - \tau)$, while $(\wt{\bm{\rho}}_2, \wt{\bm{\mu}}_2)$ solves the equation \eqref{eq:rho_mu_diff_matrix_2} on the time interval $[\tau, T - \tau]$ with homogeneous conditions. From the results in Lemma \ref{l:convergence_mu_rho_1}, we have, for any $t \in [\tau, T - \tau]$, 
\begin{equation*}
\begin{aligned}
|\wt{\bm{\rho}}_1(t)| + |\wt{\bm{\mu}}_1(t)| & \les K \big(|\h{\bm{\rho}}(T-\tau)| + |\h{\bm{\mu}}(\tau)| \big) \big( e^{-\lambda (t-\tau)} + e^{-\lambda(T-\tau -t)} \big) \\
& \les K K_{\bm{B}} \big( e^{-\lambda (t-\tau)} + e^{-\lambda(T-\tau -t)} \big).
\end{aligned}
\end{equation*}
Moreover, the restriction of $\bm{B}$ to $[\tau, T - \tau]$ satisfies
$|\bm{B}(t)| \les K_{\bm{B}} e^{-\lambda \tau} e^{-\lambda(T - t - \tau)}$ for some positive $\lambda$. So, by the linearity and invariance in time of the equation, we get
\begin{equation*}
\begin{aligned}
|\wt{\bm{\rho}}_2(t)| + |\wt{\bm{\mu}}_2(t)| \les e^{-\lambda \tau} \zeta(t- \tau), \quad \forall t \in [\tau, T - \tau],
\end{aligned}
\end{equation*}
as we may set $\wt{\bm{\mu}}_2(t) = \h{\bm{\mu}}(t - \tau)$ and $\wt{\bm{\rho}}_2(t) = \h{\bm{\rho}}(t - \tau)$ on $[\tau, T - \tau]$. Plugging together the estimates of $(\wt{\bm{\rho}}_1, \wt{\bm{\mu}}_1)$ and $(\wt{\bm{\rho}}_2, \wt{\bm{\mu}}_2)$, we obtain, for any $t \ges \tau$, 
\begin{equation*}
\begin{aligned}
& \sup_{s \in [t + \tau, T - \tau - t]} \big(|\h{\bm{\rho}}(s)| + |\h{\bm{\mu}}(s)| \big) \\
\les \ & \sup_{s \in [t + \tau, T - \tau - t]} \big\{ K K_{\bm{B}} \big( e^{-\lambda (s-\tau)} + e^{-\lambda(T-\tau -s)} \big) + e^{-\lambda \tau} \zeta(s- \tau)  \big\} \\
\les \ & K K_{\bm{B}} e^{-\lambda t} + e^{-\lambda \tau} \zeta(t).
\end{aligned}
\end{equation*}
Taking supremum over $(\h{\bm{\rho}}, \h{\bm{\mu}})$ and multiplying by $e^{\lambda(t + \tau)}$ to the above inequality gives
$$e^{\lambda(t+\tau)} \zeta(t + \tau) \les K K_{\bm{B}} e^{\lambda \tau} + e^{\lambda t} \zeta(t),$$
from which we infer that $\zeta(t) \les K K_{\bm{B}}(1+t) e^{-\lambda t}$. 
By the definition of $\zeta$, this implies the conclusion that
$$|\h{\bm{\rho}}(t)| + |\h{\bm{\mu}}(t)| \les K K_{\bm{B}} \big((1+t) e^{-\lambda t} + (1 + T - t) e^{-\lambda(T-t)} \big)$$
for all $t \in [0, T]$ when choosing $\tau = t$ if $t \in [0, T/2]$ and $\tau = T - t$ otherwise. 
\end{proof}

We are now in a position to establish the uniform exponential convergence estimates for 
$(\h{\bm{\rho}}, \h{\bm{\mu}})$ as governed by the system \eqref{eq:rho_mu_diff_matrix}.

\begin{lemma}
\label{l:convergence_mu_rho_i}
Suppose Assumptions \ref{a:solvability_Riccati}, \ref{a:uniform_constants}, \ref{a:solvability_Riccati_finite_uniform}, and \ref{a:solvability_Riccati_ergodic} hold. Let $\{\rho^i_T, \mu^i_T: i \in[N]\}$ be the solution to the system of ODEs \eqref{eq:rho_mu_t}, and let $\{\rho^i, \mu^i: i \in[N]\}$ denote the solution to the system of algebraic equations in \eqref{eq:Riccati_system_ergodic}. Then, there exist some constants $\h{K} > 0$ and $\h{\lambda} > 0$, independent of $t, T$, and $N$, such that the following exponential convergence estimate holds for all $N \ges 2$:
\begin{equation*}
|\h{\bm{\rho}}(t)| + |\h{\bm{\mu}}(t)| \les \h{K} \big(1 + |\bm{\mu}_0| + |\bm{\rho}| + |\bm{\mu} - \bm{\mu}_0| \big) \big(e^{-\h{\lambda} t} + e^{-\h{\lambda}(T-t)} \big), \quad \forall t \in [0, T].
\end{equation*}
\end{lemma}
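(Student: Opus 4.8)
The plan is to reduce the full inhomogeneous system \eqref{eq:rho_mu_diff_matrix} to the two model systems \eqref{eq:rho_mu_diff_matrix_1} and \eqref{eq:rho_mu_diff_matrix_2} already analyzed, by means of a linear decomposition together with one explicit particular solution. The obstruction is that \eqref{eq:rho_mu_diff_matrix} carries \emph{two} forcing terms, namely $\h{\bm{\Lambda}}(t)\bm{R}\bm{\rho}_T(t)$ in the $\h{\bm{\rho}}$ equation and $-\bm{R}\h{\bm{\Lambda}}(t)\bm{\mu}_T(t)$ in the $\h{\bm{\mu}}$ equation, whereas Lemma \ref{l:convergence_mu_rho_2} only accommodates a forcing in the $\h{\bm{\mu}}$ equation. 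So first I would eliminate the $\h{\bm{\rho}}$-forcing by introducing the particular solution $\bm{\rho}_p(t) := -\int_t^T e^{\bar{\bm{A}}(s-t)}\h{\bm{\Lambda}}(s)\bm{R}\bm{\rho}_T(s)\,ds$, which by a direct differentiation solves $\frac{d}{dt}\bm{\rho}_p = -\bar{\bm{A}}\bm{\rho}_p + \h{\bm{\Lambda}}(t)\bm{R}\bm{\rho}_T(t)$ with $\bm{\rho}_p(T)=0$. Writing $\h{\bm{\rho}} = \bm{\rho}_p + \wt{\bm{\rho}}$, a short computation shows that $(\wt{\bm{\rho}},\h{\bm{\mu}})$ obeys exactly the structure of \eqref{eq:rho_mu_diff_matrix_2}: the $\wt{\bm{\rho}}$ equation becomes the clean homogeneous form $\frac{d}{dt}\wt{\bm{\rho}} = -\bar{\bm{A}}\wt{\bm{\rho}} - 2\bm{Q}\h{\bm{\mu}}$, while the $\h{\bm{\mu}}$ equation acquires the single combined forcing $\bm{B}^N(t) := \bm{R}\h{\bm{\Lambda}}(t)\bm{\mu}_T(t) + \bm{R}\bm{\rho}_p(t)$, together with the boundary data $\wt{\bm{\rho}}(T) = -\bm{\rho}$ and $\h{\bm{\mu}}(0) = \bm{\mu}_0 - \bm{\mu}$.

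Next, I would verify that $\bm{B}^N$ satisfies the right-endpoint decay hypothesis of Lemma \ref{l:convergence_mu_rho_2}. This rests on two ingredients: the uniform estimate $\|\h{\bm{\Lambda}}(t)\| \les K_2^* e^{-2\lambda^*(T-t)}$ from \eqref{eq:estimate_lambda_uniform_lambda_*} (block-diagonal, hence passing to the $Nd$-dimensional norm), and the semigroup bound \eqref{eq:norm_wt_A}. Combining these inside the integral defining $\bm{\rho}_p$ gives $|\bm{\rho}_p(t)| \les K e^{-\lambda^*(T-t)}\sup_{s}|\bm{\rho}_T(s)|$, so that both summands of $\bm{B}^N$ decay like $e^{-\lambda^*(T-t)}$ and one may take $\check{\lambda} = \lambda^*$ and $K_{\bm{B}^N} = K\big(\sup_s|\bm{\mu}_T(s)| + \sup_s|\bm{\rho}_T(s)|\big)$. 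Here the essential auxiliary input is the uniform-in-$N$ boundedness of the finite-horizon solutions, $\sup_s|\bm{\mu}_T(s)| + \sup_s|\bm{\rho}_T(s)| \les K(1 + |\bm{\mu}_0|)$, which follows from the a priori bounds derived in the proof of Lemma \ref{l:solvability_mu_rho_i} (the estimate \eqref{eq:mu_bound} and the subsequent control of $\check{\bm{\rho}}_T$), once the constants appearing there are taken uniform under Assumptions \ref{a:uniform_constants} and \ref{a:solvability_Riccati_finite_uniform}.

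With $\bm{B}^N$ controlled, I would decompose $(\wt{\bm{\rho}},\h{\bm{\mu}}) = (\wt{\bm{\rho}}_1,\h{\bm{\mu}}_1) + (\wt{\bm{\rho}}_2,\h{\bm{\mu}}_2)$, where the first pair solves the homogeneous system \eqref{eq:rho_mu_diff_matrix_1} with boundary data $\wt{\bm{\rho}}_1(T) = -\bm{\rho}$, $\h{\bm{\mu}}_1(0) = \bm{\mu}_0 - \bm{\mu}$, and the second pair solves \eqref{eq:rho_mu_diff_matrix_2} with forcing $\bm{B}^N$ and homogeneous boundary data. Lemma \ref{l:convergence_mu_rho_1} bounds the first pair by $K(|\bm{\mu}_0 - \bm{\mu}| + |\bm{\rho}|)(e^{-\lambda t} + e^{-\lambda(T-t)})$, and Lemma \ref{l:convergence_mu_rho_2} bounds the second by $KK_{\bm{B}^N}\big((1+t)e^{-\lambda t} + (1+T-t)e^{-\lambda(T-t)}\big)$. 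Adding these together with the bound on $\bm{\rho}_p$ and collecting the data-dependence produces the prefactor $\h{K}(1 + |\bm{\mu}_0| + |\bm{\rho}| + |\bm{\mu} - \bm{\mu}_0|)$. The final step is cosmetic but necessary: the polynomial factors $(1+t)$ and $(1+T-t)$ coming from Lemma \ref{l:convergence_mu_rho_2} are absorbed by passing to any strictly smaller rate $\h{\lambda} < \lambda$, using $(1+t)e^{-\lambda t} \les C e^{-\h{\lambda}t}$, which yields the clean exponential form claimed. I expect the main obstacle to be precisely the uniform-in-$N$ bookkeeping of the second paragraph: verifying that every constant (in the bound on $\bm{B}^N$, hence in $K_{\bm{B}^N}$, and in the finite-horizon a priori bounds) can be chosen independently of $N$ and depends on the data only through $1 + |\bm{\mu}_0|$, since this is what ultimately renders the uniform estimate \eqref{eq:estimate_mu_rho_i_uniform} of Proposition \ref{p:convergence_of_Riccati} accessible after division by $\sqrt{N}$.
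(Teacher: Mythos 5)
Your proposal is correct and follows essentially the same route as the paper: your particular solution $\bm{\rho}_p$ is exactly the paper's $\wt{\bm{\rho}}$, the combined forcing $\bm{B}^N(t)=\bm{R}(\h{\bm{\Lambda}}(t)\bm{\mu}_T(t)+\bm{\rho}_p(t))$ and the subsequent splitting into a homogeneous part (Lemma \ref{l:convergence_mu_rho_1}) plus a forced part with zero boundary data (Lemma \ref{l:convergence_mu_rho_2}) match the paper's decomposition, and the absorption of the polynomial factors by shrinking the rate is handled identically. The uniform-in-$N$ bookkeeping you flag as the main obstacle is resolved exactly as you anticipate, via the bounds $\sup_s|\bm{\rho}_T(s)|+\sup_s|\bm{\mu}_T(s)|\les K(1+|\bm{\mu}_0|)$ inherited from the a priori estimates of Lemma \ref{l:solvability_mu_rho_i}.
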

\begin{proof}
Let $\wt{\bm{\rho}}$ denote the solution to 
\begin{equation*}
\begin{cases}
\vspace{4pt}
\displaystyle \frac{d}{dt} \wt{\bm{\rho}}(t) = - \bar{\bm{A}} \wt{\bm{\rho}}(t) + \h{\bm{\Lambda}}(t) \bm{R} \bm{\rho}_T(t), \\
\displaystyle \wt{\bm{\rho}}(T) = 0.
\end{cases}
\end{equation*}
Then, the explicit representation of $\wt{\bm{\rho}}$ is given by
$$\wt{\bm{\rho}}(t) = - \int_t^T e^{\bar{\bm{A}}(s - t)} \h{\bm{\Lambda}}(s) \bm{R} \bm{\rho}_T(s) ds.$$
From the estimates in \eqref{eq:norm_wt_A} and \eqref{eq:estimate_lambda_uniform_lambda_*}, the following estimates hold for all $N \ges 2$:
$$\big\|e^{\bar{\bm{A}}(s - t)} \big\| \les K_1^{*} e^{-\lambda^{*}(s-t)}, \quad \text{and} \quad \|\h{\bm{\Lambda}}(s) \| \les K_2^* e^{-2\lambda^{*}(T-s)}, \quad \forall 0 \les t \les s \les T,$$
which yields that
$$|\wt{\bm{\rho}}(t)| \les K_1^* K_2^* K (1 + |\bm{\mu}_0|)\int_t^T e^{-\lambda^{*} (s - t)} e^{-2\lambda^{*} (T-s)} ds \les K(1 + |\bm{\mu}_0|) e^{-\lambda^{*} (T - t)}$$
for some $K > 0$ by the inequality $|\bm{\rho}_T(t)| \les K(1 + |\bm{\mu}_0|)$ for all $t \in [0, T]$. Let $(\bm{\rho}_1, \bm{\mu}_1) = (\h{\bm{\rho}} - \wt{\bm{\rho}}, \h{\bm{\mu}})$. 
Then, $(\bm{\rho}_1, \bm{\mu}_1)$ can also be decomposed by $$(\bm{\rho}_1, \bm{\mu}_1) = (\h{\bm{\rho}}_1, \h{\bm{\mu}}_1) + (\h{\bm{\rho}}_2, \h{\bm{\mu}}_2),$$
where $(\h{\bm{\rho}}_1, \h{\bm{\mu}}_1)$ is the solution to \eqref{eq:rho_mu_diff_matrix_1} and $(\h{\bm{\rho}}_2, \h{\bm{\mu}}_2)$ is the solution to \eqref{eq:rho_mu_diff_matrix_2} with $\bm{B}(t):= \bm{R} (\h{\bm{\Lambda}}(t) \bm{\mu}_T(t) + \wt{\bm{\rho}}(t))$. As $\bm{R}$ is uniformly bounded from Assumption \ref{a:uniform_constants}, it is worth noting that 
$$|\bm{B}(t)| = \big|\bm{R} \big(\h{\bm{\Lambda}}(t) \bm{\mu}_T(t) + \wt{\bm{\rho}}(t) \big) \big| \les K (1 + |\bm{\mu}_0|) e^{-\lambda^{*} (T - t)} = :K_{\bm{B}} e^{-\lambda^{*} (T - t)}, \quad \forall t \in [0, T],$$
by the estimate $|\bm{\mu}_T(t)| \les K(1 + |\bm{\mu}_0|)$ for all $t \in [0, T]$. Hence, applying the result in Lemma \ref{l:convergence_mu_rho_1} and Lemma \ref{l:convergence_mu_rho_2}, there exist positive constants $K$ and $\lambda$, independent of $t, T$ and $N$, such that
\begin{equation*}
\begin{aligned}
& |\bm{\rho}_1(t)| + |\bm{\mu}_1(t)| \\
\les \ & K \big(|\h{\bm{\mu}}(0)| + |\h{\bm{\rho}}(T)| \big) \big(e^{-\lambda t} + e^{-\lambda (T-t)} \big) + K (1 + |\bm{\mu}_0|) \big((1+t) e^{-\lambda t} + (1 + T - t) e^{- \lambda (T-t)} \big) \\
\les \ & K \big(1 + |\bm{\mu}_0| + |\h{\bm{\mu}}(0)| + |\h{\bm{\rho}}(T)| \big) \big((1+t) e^{-\lambda t} + (1 + T - t) e^{- \lambda (T-t)} \big)
\end{aligned}
\end{equation*}
for all $t \in [0, T]$. By the definition of $(\bm{\rho}_1, \bm{\mu}_1)$ and the estimate that $|\wt{\bm{\rho}}(t)| \les K(1 + |\bm{\mu}_0|) e^{-\lambda^* (T - t)}$ for any $t$ in $[0, T]$, we may choose $\h{K} > 0$ and $0 < \h{\lambda} < \min\{\lambda^*, \lambda\}$ to obtain the desired result that
$$|\h{\bm{\rho}}(t)| + |\h{\bm{\mu}}(t)| \les \h{K} \big(1 + |\bm{\mu}_0| + |\bm{\rho}| + |\bm{\mu} - \bm{\mu}_0| \big) \big(e^{-\h{\lambda} t} + e^{-\h{\lambda}(T-t)} \big), \quad t \in [0, T]$$
for all $N \ges 2$.
\end{proof}

In the following, we conclude this section by presenting the proof of Proposition \ref{p:convergence_of_Riccati}.

\begin{proof}[Proof of Proposition \ref{p:convergence_of_Riccati}]
First, under Assumptions \ref{a:solvability_Riccati} and \ref{a:uniform_constants}, the estimate \eqref{eq:estimate_Lambda_i_uniform} follows directly from \eqref{eq:estimate_lambda_uniform_lambda_*} in the proof of Lemma \ref{l:uniform_estimate_norm}, with constants $\h{K} \ges K_2^*$ and $0 < \h{\lambda} \les 2 \lambda^*$, where $\lambda^*$ is from \eqref{eq:norm_A_estimate_uniform}. Then, applying Assumption \ref{a:uniform_constants} once more, estimate \eqref{eq:estimate_Lambda_i_uniform} implies the existence of a constant $K$, independent of $t, T$ and $N$, such that
$$\sup_{N}\sup_{i \in [N]}  \|\Lambda^i_T(t) \| \les \sup_{N}\sup_{i \in [N]} \|\Lambda^i\| + \widehat{K} \les c^{*} + \h{K} \les K$$
for all $t \in [0, T]$. That is, $\Lambda^i_T$ is uniformly bounded on $[0, T]$ for all $i \in [N]$ and $N \ges 2$. Next, the uniform exponential convergence estimate \eqref{eq:estimate_Sigma_i_uniform} is concluded directly from Lemma \ref{l:convergence_Sigma_i}. 

Fix $N \ges 2$. Following the same argument as in Lemma \ref{l:convergence_mu_rho_i}, we deduce the existence of positive constants $\h{K}^{(N)}$ and $\h{\lambda}^{(N)}$, independent of $t$ and $T$, such that
$$|\h{\bm{\rho}}(t)| + |\h{\bm{\mu}}(t)| \les \h{K}^{(N)} \big(1 + |\bm{\mu}_0| + |\bm{\rho}| + |\bm{\mu} - \bm{\mu}_0| \big) \big(e^{-\h{\lambda}^{(N)} t} + e^{-\h{\lambda}^{(N)}(T-t)} \big)$$
for all $t \in [0, T]$. Consequently, we obtain
\begin{equation*}
\begin{aligned}
& \sup_{i \in [N]} \big(|\mu^i_T(t) - \mu^i| + |\rho^i_T(t) - \rho^i| \big) \les |\h{\bm{\rho}}(t)| + |\h{\bm{\mu}}(t)| \\
\les \ & \h{K}^{(N)} \Big(1 + \sqrt{N} \big(\sup_{i \in [N]} |\mu_0^i| + \sup_{i \in [N]} |\rho^i| + \sup_{i \in [N]} |\mu^i| \big) \Big) \big(e^{-\h{\lambda}^{(N)} t} + e^{-\h{\lambda}^{(N)}(T-t)} \big) \\
\les \ & \h{K}^{(N)} \big(e^{-\h{\lambda}^{(N)} t} + e^{-\h{\lambda}^{(N)}(T-t)} \big),
\end{aligned}
\end{equation*}
which yields the estimate \eqref{eq:estimate_mu_rho_i}. Furthermore, by the result of Lemma \ref{l:convergence_mu_rho_i} and the uniform boundedness of $|\mu_0^i|$, $|\mu^i|$ and $|\rho^i|$ as required by Assumption \ref{a:uniform_constant_2}, we derive that
\begin{equation*}
\begin{aligned}
|\h{\bm{\rho}}(t)| + |\h{\bm{\mu}}(t)| &\les \h{K} \Big(1 + \sqrt{N} \big(\sup_{i \in [N]} |\mu_0^i| + \sup_{i \in [N]} |\rho^i| + \sup_{i \in [N]} |\mu^i| \big) \Big) \big(e^{-\h{\lambda} t} + e^{-\h{\lambda}(T-t)} \big) \\
&\les \sqrt{N} \h{K} \big(e^{-\h{\lambda} t} + e^{-\h{\lambda}(T-t)} \big)
\end{aligned}
\end{equation*}
for all $t \in [0, T]$ and $N \ges 2$, which implies the desired uniform estimate \eqref{eq:estimate_mu_rho_i_uniform}. This completes the proof of Proposition \ref{p:convergence_of_Riccati}.
\end{proof}


\section{Proof of Corollary \ref{c:convergence_value_function} and Theorem \ref{t:turnpike_property}: Turnpike properties}
\label{s:proof_of_main_results}

In this section, we provide the proofs of Corollary \ref{c:convergence_value_function} and Theorem \ref{t:turnpike_property}, which address, respectively, the convergence of the time-averaged value function as the time horizon tends to infinity, and the turnpike property of the equilibrium pairs over long time horizon. As in previous sections, throughout the following analysis we denote $K^{(N)}$, $\lambda^{(N)}$, $\wt{K}^{(N)}$ and $\wt{\lambda}^{(N)}$ as the generic positive constants that are independent of $t$ and $T$. Moreover, we use $K$, $\lambda$, $\wt{K}$ and $\wt{\lambda}$ to denote generic positive constants that are independent of $t, T$ and $N$, and which may vary from line to line. 

\vspace{4pt}

\begin{proof}[Proof of Corollary \ref{c:convergence_value_function}]
We start with the proof of \eqref{eq:relative_value_c_turnpike}. Fix $N \ges 2$. By the explicit form of $\mathcal{V}_T^i$ in \eqref{eq:value_function_finite_time_explicit}, for all $x \in \mathbb R^d$ and $i \in [N]$, the following inequalities hold
\begin{equation*}
\begin{aligned}
\Big|\frac{1}{T} \mathcal{V}_T^i(0, x; \bm{m}_0^{-i}) - c^i \Big| &= \Big|\frac{1}{T} \Big(\frac{1}{2} x^\top \Lambda_T^i(0) x + (\rho_T^i(0))^\top x + \kappa_T^i(0) \Big) - c^i \Big| \\
& \les \frac{1}{T} \Big|\frac{1}{2} x^\top \Lambda_T^i(0) x + (\rho_T^i(0))^\top x \Big| + \Big| \frac{1}{T} \kappa_T^i(0) - c^i \Big|.
\end{aligned}
\end{equation*}
From the estimates \eqref{eq:estimate_Lambda_i} and \eqref{eq:estimate_mu_rho_i} in Proposition \ref{p:convergence_of_Riccati}, $\Lambda^i_T$ and $\rho_T^i$ are uniform bounded on $[0, T]$ with respect to $T$ for all $i \in [N]$, thus we have
$$\lim_{T \to \infty} \sup_{i \in [N]} \Big|\frac{1}{T} \mathcal{V}_T^i(0, x; \bm{m}_0^{-i}) - c^i \Big| \les \lim_{T \to \infty} \sup_{i \in [N]} \Big| \frac{1}{T} \kappa_T^i(0) - c^i \Big|.$$
Next, by the explicit form of $\kappa_T^i$ in \eqref{eq:k_i_explicit} and $c^i$ in \eqref{eq:Riccati_system_ergodic}, we derive that
\begin{equation*}
\begin{aligned}
\Big| \frac{1}{T} \kappa_T^i(0) - c^i \Big| & \les \frac{1}{T} \int_0^T \big| \text{tr}\big(\varsigma^i(\Lambda_T^i(s) - \Lambda^i) \big) \big| ds \\
& \hspace{0.3in} + \frac{1}{T} \int_0^T \big|F_0^i(\bm{\mu}_T^{-i}(s), \Sigma_T^i(s)) - F_0^i(\bm{\mu}^{-i}, \Sigma^i) \big| ds \\
& \hspace{0.3in} + \frac{1}{T} \int_0^T \Big| \frac{1}{2} (\rho_T^i(s) - \rho^i)^\top (R^i)^{-1} (\rho_T^i(s) + \rho^i) \Big| ds
\end{aligned}
\end{equation*}
for all $i \in [N]$. By the estimates \eqref{eq:estimate_Lambda_i} and \eqref{eq:estimate_mu_rho_i} again, there exist some constants $K^{(N)}$ and $\lambda^{(N)} > 0$, independent of $t$ and $T$, such that
\begin{equation*}
\begin{aligned}
& \sup_{i \in [N]} \|\Lambda^i_T(t) - \Lambda^i\| \les K^{(N)} e^{-\lambda^{(N)} (T-t)}, \\
& \sup_{i \in [N]} |\rho^i_T(t) - \rho^i| \les K^{(N)} \big(e^{-\lambda^{(N)} t} + e^{-\lambda^{(N)} (T-t)} \big),
\end{aligned}
\end{equation*}
for all $t \in [0, T]$ and $\sup_{t \in [0, T]} |\rho^i_T(t)| \les K^{(N)}$ for all $i \in [N]$. Thus, we conclude that
$$\lim_{T \to \infty} \sup_{i \in [N]} \frac{1}{T} \int_0^T \big| \text{tr}\big(\varsigma^i(\Lambda_T^i(s) - \Lambda^i) \big) \big| ds = 0$$
and
$$\lim_{T \to \infty} \sup_{i \in [N]} \frac{1}{T} \int_0^T \Big| \frac{1}{2} (\rho_T^i(s) - \rho^i)^\top (R^i)^{-1} (\rho_T^i(s) + \rho^i) \Big| ds = 0.$$
Moreover, from the definition of $F_0^i(\bm{\mu}_T^{-i}(t), \Sigma_T^i(t))$ and $F_0^i(\bm{\mu}^{-i}, \Sigma^i)$ in \eqref{eq:F_t}, it is straightforward that
\begin{equation*}
\begin{aligned}
& F_0^i(\bm{\mu}_T^{-i}(t), \Sigma_T^i(t)) - F_0^i(\bm{\mu}^{-i}, \Sigma^i) = - \left(\bar{x}_i^i \right)^\top \Bigg(\sum_{j \neq i} Q_{ij}^i \left(\mu^j_T(t) - \mu^j \right) \Bigg) - \Bigg(\sum_{j \neq i} \left(\mu^j_T(t) - \mu^j \right)^\top Q_{ji}^i \Bigg) \bar{x}_i^i \\
& \hspace{0.5in} + \sum_{j,k \neq i, j \neq k} \left(\mu^j_T(t) - \mu^j \right)^\top Q_{jk}^i \left(\mu^k_T(t) - \bar{x}_i^k \right) + \sum_{j,k \neq i, j \neq k} \left(\mu^j - \bar x_i^j \right)^\top Q_{jk}^i \left(\mu^k_T(t) - \mu^k \right)  \\
& \hspace{0.5in} + \sum_{j \neq i} \left( \text{tr} \left(Q_{jj}^i \big((\Sigma^i_T(t))^{-1} - (\Sigma^i)^{-1} \big) \right)  + \left(\mu^j_T(t) - \mu^j \right)^\top Q_{jj}^i \left(\mu^j_T(t) + \mu^j - 2 \bar{x}_i^j \right) \right).
\end{aligned}
\end{equation*}
Then, by the estimates in \eqref{eq:estimate_Sigma_i} and \eqref{eq:estimate_mu_rho_i}, there exist some constants $K^{(N)}$ and $\lambda^{(N)} > 0$, independent of $t$ and $T$, such that
\begin{equation*}
\begin{aligned}
& \sup_{i \in [N]}  \big\|(\Sigma^i_T(t))^{-1} - (\Sigma^i)^{-1} \big\| \les K^{(N)} e^{-\lambda^{(N)} (T-t)}, \\
& \sup_{i \in [N]}  |\mu^i_T(t) - \mu^i| \les K^{(N)} \big(e^{-\lambda^{(N)} t} + e^{-\lambda^{(N)} (T-t)} \big),
\end{aligned}
\end{equation*}
for all $t \in [0, T]$ and $\sup_{t \in [0, T]} |\mu^i_T(t)| \les K^{(N)}$ for all $i \in [N]$. Thus, we obtain
\begin{equation*}
\begin{aligned}
& \lim_{T \to \infty} \sup_{i \in [N]} \frac{1}{T} \int_0^T \big|F_0^i(\bm{\mu}_T^{-i}(s), \Sigma_T^i(s)) - F_0^i(\bm{\mu}^{-i}, \Sigma^i) \big| ds \\
\les \ & \lim_{T \to \infty} \frac{1}{T} \int_0^T K^{(N)} \big(e^{-\lambda^{(N)} s} + e^{-\lambda^{(N)} (T-s)} \big) ds = 0.
\end{aligned}
\end{equation*}
Then, combining all of the above results, we conclude that
$$\lim_{T \to \infty} \sup_{i \in [N]} \Big| \frac{1}{T} \kappa_T^i(0) - c^i \Big| = 0,$$
which implies the following desired result
$$\lim_{T \to \infty} \sup_{i \in [N]} \Big|\frac{1}{T} \mathcal{V}_T^i(0, x; \bm{m}_0^{-i}) - c^i \Big| = 0.$$

Next, we prove the uniform ergodic property for $\mathcal{V}_T^i(0, x; \bm{m}_0^{-i})$ in \eqref{eq:relative_value_c_turnpike_uniform}. By a similar argument to the non-uniform case, for all $x \in \mathbb R^d$, we have
\begin{equation*}
\begin{aligned}
& \frac{1}{N} \sum_{i=1}^{N} \Big|\frac{1}{T} \mathcal{V}_T^i(0, x; \bm{m}_0^{-i}) - c^i \Big| \\
\les \ & \frac{1}{N} \sum_{i=1}^{N} \frac{1}{T} \Big|\frac{1}{2} x^\top \Lambda_T^i(0) x \Big| + \frac{1}{N} \sum_{i=1}^{N} \frac{1}{T} \big|(\rho_T^i(0))^\top x \big| + \frac{1}{N} \sum_{i=1}^{N} \Big| \frac{1}{T} \kappa_T^i(0) - c^i \Big| \\
\les \ & \frac{1}{T} \sup_{i \in [N]} \Big|\frac{1}{2} x^\top \Lambda_T^i(0) x \Big| + \frac{|x|}{T} \frac{1}{\sqrt{N}} \Big(\sum_{i=1}^{N}  \big| \rho_T^i(0) \big|^2 \Big)^{\frac{1}{2}} + \frac{1}{N} \sum_{i=1}^{N} \Big| \frac{1}{T} \kappa_T^i(0) - c^i \Big| \\
= \ & \frac{1}{T} \sup_{i \in [N]} \Big|\frac{1}{2} x^\top \Lambda_T^i(0) x \Big| + \frac{|x|}{T} \frac{1}{\sqrt{N}} |\bm{\rho}_T(0)| + \frac{1}{N} \sum_{i=1}^{N} \Big| \frac{1}{T} \kappa_T^i(0) - c^i \Big|.
\end{aligned}
\end{equation*}
Applying the estimates \eqref{eq:estimate_Lambda_i_uniform} and \eqref{eq:estimate_mu_rho_i_uniform}, and by Assumption \ref{a:uniform_constant_2}, we derive that $\Lambda^i_T$ and $\frac{1}{\sqrt{N}} \bm{\rho}_T$ are uniform bounded on $[0, T]$ with respect to $T$ for all $i \in [N]$ and $N \ges 2$, and thus
$$\lim_{T \to \infty} \sup_{N} \frac{1}{N} \sum_{i=1}^{N} \Big|\frac{1}{T} \mathcal{V}_T^i(0, x; \bm{m}_0^{-i}) - c^i \Big| \les \lim_{T \to \infty} \sup_{N} \frac{1}{N} \sum_{i=1}^{N} \Big| \frac{1}{T} \kappa_T^i(0) - c^i \Big|.$$
Using the explicit form of $\kappa_T^i$ in \eqref{eq:k_i_explicit} and $c^i$ in \eqref{eq:Riccati_system_ergodic} again, we obtain that
\begin{equation*}
\begin{aligned}
\frac{1}{N} \sum_{i=1}^{N} \Big| \frac{1}{T} \kappa_T^i(0) - c^i \Big| & \les \frac{1}{N} \sum_{i=1}^{N} \frac{1}{T} \int_0^T \big| \text{tr}\big(\varsigma^i(\Lambda_T^i(s) - \Lambda^i) \big) \big| ds \\
& \hspace{0.3in} + \frac{1}{N} \sum_{i=1}^{N} \frac{1}{T} \int_0^T \big|F_0^i(\bm{\mu}_T^{-i}(s), \Sigma_T^i(s)) - F_0^i(\bm{\mu}^{-i}, \Sigma^i) \big| ds \\
& \hspace{0.3in} + \frac{1}{N} \sum_{i=1}^{N} \frac{1}{T} \int_0^T \Big| \frac{1}{2} (\rho_T^i(s) - \rho^i)^\top (R^i)^{-1} (\rho_T^i(s) + \rho^i) \Big| ds.
\end{aligned}
\end{equation*}
From the estimate \eqref{eq:estimate_Lambda_i_uniform} and Assumption \ref{a:uniform_constant_2}, it is clear that
$$\lim_{T \to \infty} \sup_{N} \frac{1}{N} \sum_{i=1}^{N} \frac{1}{T} \int_0^T \big| \text{tr}\big(\varsigma^i(\Lambda_T^i(s) - \Lambda^i) \big) \big| ds = 0.$$
Next, we observe that
\begin{equation*}
\begin{aligned}
& \frac{1}{N} \sum_{i=1}^{N} \frac{1}{T} \int_0^T \Big| \frac{1}{2} (\rho_T^i(s) - \rho^i)^\top (R^i)^{-1} (\rho_T^i(s) + \rho^i)\Big| ds \\
\les \ & \frac{1}{2T} \sup_{i \in [N]} \big\|(R^i)^{-1} \big\| \int_0^T \frac{1}{N} \sum_{i=1}^{N} \big|\rho_T^i(s) - \rho^i \big| \big|\rho_T^i(s) + \rho^i \big| ds \\
\les \ & \frac{1}{2T} \sup_{i \in [N]} \big\|(R^i)^{-1} \big\| \int_0^T \frac{1}{\sqrt{N}} \Big(\sum_{i=1}^{N} \big|\rho_T^i(s) - \rho^i \big|^2 \Big)^{\frac{1}{2}} \frac{1}{\sqrt{N}} \Big(\sum_{i=1}^{N} \big|\rho_T^i(s) + \rho^i \big|^2 \Big)^{\frac{1}{2}} ds \\
\les \ & \frac{1}{2T} \sup_{i \in [N]} \big\|(R^i)^{-1} \big\| \sup_{s \in [0, T]} \frac{\sqrt{2}}{\sqrt{N}} \big(|\bm{\rho}_T(s)| + |\bm{\rho}| \big) \int_0^T \frac{1}{\sqrt{N}} |\h{\bm{\rho}}(s)| ds
\end{aligned}
\end{equation*}
for all $N \ges 2$.
Then, the following limit
$$\lim_{T \to \infty} \sup_{N} \frac{1}{N} \sum_{i=1}^{N} \frac{1}{T} \int_0^T \Big| \frac{1}{2} (\rho_T^i(s) - \rho^i)^\top (R^i)^{-1} (\rho_T^i(s) + \rho^i) \Big| ds = 0$$
follows from the estimate \eqref{eq:estimate_mu_rho_i_uniform} and the assumption that $\|(R^i)^{-1}\|$ and $|\rho^i|$ are uniformly bounded for all $i \in [N]$ and $N \ges 2$. Similarly, by the definition of $F_0^i(\bm{\mu}_T^{-i}(t), \Sigma_T^i(t)) - F_0^i(\bm{\mu}^{-i}, \Sigma^i)$ from \eqref{eq:F_t}, Assumption \ref{a:uniform_constant_2}, and the uniform estimates \eqref{eq:estimate_Sigma_i_uniform} and \eqref{eq:estimate_mu_rho_i_uniform}, we deduce that
$$\lim_{T \to \infty} \sup_{N} \frac{1}{N} \sum_{i=1}^{N} \frac{1}{T} \int_0^T \big|F_0^i(\bm{\mu}_T^{-i}(s), \Sigma_T^i(s)) - F_0^i(\bm{\mu}^{-i}, \Sigma^i) \big| ds  = 0.$$
Combining the above results, we derive
$$\lim_{T \to \infty} \sup_{N} \frac{1}{N} \sum_{i=1}^{N} \Big|\frac{1}{T} \mathcal{V}_T^i(0, x; \bm{m}_0^{-i}) - c^i \Big| \les \lim_{T \to \infty} \sup_{N} \frac{1}{N} \sum_{i=1}^{N} \Big| \frac{1}{T} \kappa_T^i(0) - c^i \Big| = 0,$$
which yields the desired result \eqref{eq:relative_value_c_turnpike_uniform}.

\end{proof}

Finally, we are ready to establish the turnpike properties stated in Theorem \ref{t:turnpike_property}.

\begin{proof}[Proof of Theorem \ref{t:turnpike_property}]
We also begin with the non-uniform turnpike property \eqref{eq:turnpike_property}. Fix $N \ges 2$. Recall that for each $i \in [N]$, $(X_T^i, \alpha_T^i)$ is the equilibrium pair associated with the finite-horizon game defined in \eqref{eq:optimal_path_finite_time} and \eqref{eq:optimal_control_finite_time}, while $(X^i, \alpha^i)$ denotes the equilibrium pair corresponding to the ergodic game formulated in \eqref{eq:optimal_path_ergodic} and \eqref{eq:optimal_control_ergodic}. We define the deviation process $\h{X}^{i}(t) := X_T^{i}(t) - X^i(t)$
for all $t \in [0, T]$. It follows that
\begin{equation*}
\begin{cases}
\vspace{4pt}
\displaystyle
d \h{X}^{i}(t) = \big[(A^i - (R^i)^{-1} \Lambda^i_T(t)) \h{X}^{i}(t) + (R^i)^{-1} (\Lambda^i - \Lambda^i_T(t)) X^i(t) + (R^i)^{-1} (\rho^i - \rho^i_T(t)) \big] dt, \\
\displaystyle \h{X}^{i}(0) = 0 \in \mathbb R^d.
\end{cases}
\end{equation*}
By the integrating factor method and recall that $(\bar{A}_T^i(t))^\top = A^i - (R^i)^{-1} \Lambda^i_T(t)$, we obtain the explicit representation of $\h{X}^i$ in terms of $X^i$
\begin{equation*}
\h{X}^i(t) = \int_0^t e^{\int_s^t (\bar{A}_T^i(r))^\top dr} \big((R^i)^{-1} (\Lambda^i - \Lambda_T^i(s)) X^i(s) + (R^i)^{-1} (\rho^i - \rho_T^i(s)) \big) ds
\end{equation*}
as $\h{X}^i(0) = 0 \in \mathbb R^d$. By the basic inequality, for the second moment of $\h{X}^i$, we have
\begin{equation*}
\begin{aligned}
\mathbb E \Big[ \big|\h{X}^i(t) \big|^2 \Big] & \les 2 \mathbb E \Big[ \Big|\int_0^t e^{\int_s^t (\bar{A}_T^i(r))^\top dr} (R^i)^{-1} (\Lambda^i - \Lambda_T^i(s)) X^i(s) ds \Big|^2 \Big] \\
& \hspace{0.3in} + 2 \Big|\int_0^t e^{\int_s^t (\bar{A}_T^i(r))^\top dr} (R^i)^{-1} (\rho^i - \rho_T^i(s)) ds \Big|^2.
\end{aligned}
\end{equation*}
From the estimate \eqref{eq:estimate_mu_rho_i} in Proposition \ref{p:convergence_of_Riccati} and the estimate \eqref{eq:norm_bar_A_i_T_estimate} in Lemma \ref{l:norm_A_i_T_estimate}, there exist some constants $K^{(N)}$ and $\lambda^{(N)} > 0$, independent of $t$ and $T$, such that
$$\sup_{i \in [N]} |\rho^i - \rho^i_T(t)| \les K^{(N)} \big(e^{-2 \lambda^{(N)} t} + e^{-2 \lambda^{(N)} (T-t)} \big),$$
and 
$$\sup_{i \in [N]} \big\|e^{\int_s^t (\bar{A}_T^i(r))^\top dr} \big\| \les K^{(N)} e^{-\lambda^{(N)} (t-s)}$$
for all $0 \les s \les t \les T$. Thus, we derive the following estimates
\begin{equation*}
\begin{aligned} 
\sup_{i \in [N]} \Big|\int_0^t e^{\int_s^t (\bar{A}_T^i(r))^\top dr} (R^i)^{-1} (\rho^i - \rho_T^i(s)) ds \Big|^2 &\les K^{(N)} \Big|\int_0^t e^{-\lambda^{(N)} (t-s)} \big(e^{-2 \lambda^{(N)} s} + e^{-2 \lambda^{(N)} (T-s)} \big) ds \Big|^2 \\
& \les K^{(N)} \big(e^{-2 \lambda^{(N)} t} + e^{-2 \lambda^{(N)} (T-t)} \big).
\end{aligned}
\end{equation*}
Moreover, from the estimate \eqref{eq:norm_A_i_estimate} in Lemma \ref{l:solvability_and_convergence_Lambda}, the second moment of $X^i$ is uniformly bounded by some positive constant $K^{(N)}$, i.e., 
$$\mathbb E \Big[ \big|X^i(t) \big|^2 \Big] \les K^{(N)}, \quad \forall t \ges 0$$
for all $i \in [N]$. Then, using the estimates \eqref{eq:estimate_Lambda_i} and \eqref{eq:norm_bar_A_i_T_estimate}, and by H\"older's inequality, we derive that
\begin{equation*}
\begin{aligned}
& \sup_{i \in [N]} \mathbb E \Big[ \Big|\int_0^t e^{\int_s^t (\bar{A}_T^i(r))^\top dr} (R^i)^{-1} (\Lambda^i - \Lambda_T^i(s)) X^i(s) ds \Big|^2 \Big] \\
\les \ & K^{(N)} \sup_{i \in [N]} \int_0^t \big\|e^{\int_s^t (\bar{A}_T^i(r))^\top dr} \big\|^2 ds \int_0^t \|\Lambda^i - \Lambda_T^i(s)\|^2 \mathbb E \big[|X^i(s)|^2 \big] ds \\
\les \ & K^{(N)} \int_0^t e^{-2 \lambda^{(N)} (t-s)} ds \int_0^t e^{-2 \lambda^{(N)} (T-s)} ds \\
\les \ & K^{(N)} e^{-2 \lambda^{(N)} (T-t)}
\end{aligned}
\end{equation*}
for all $t \in [0, T]$. Therefore, we conclude that there exist constants $\wt{K}^{(N)} > 0$ and $\wt{\lambda}^{(N)} > 0$, independent of $t$ and $T$, such that
$$\sup_{i \in [N]} \mathbb E \Big[ \big|X_T^{i}(t) - X^i(t) \big|^2 \Big] \les \wt{K}^{(N)} \big(e^{-\wt{\lambda}^{(N)} t} + e^{-\wt{\lambda}^{(N)} (T - t)} \big), \quad \forall t \in [0, T].$$
Next, by the feedback form the equilibrium strategy $\alpha_T^{i}$ in \eqref{eq:optimal_control_finite_time} and $\alpha^i$ in \eqref{eq:optimal_control_ergodic} with respect to $X_T^{i}$ and $X^i$, respectively, and the estimates \eqref{eq:estimate_Lambda_i} and \eqref{eq:estimate_mu_rho_i}, there exist some positive constants $\wt{K}^{(N)}$ and $\wt{\lambda}^{(N)}$, independent of $t$ and $T$, such that
\begin{equation*}
\begin{aligned}
& \sup_{i \in [N]} \mathbb E \Big[ \big|\alpha_T^{i}(t) - \alpha^i(t) \big|^2 \Big]  \\
= \ & \sup_{i \in [N]} \mathbb E \Big[ \big|(R^i)^{-1} \Lambda^i_T(t) \big(X_T^{i}(t) - X^i(t) \big) + (R^i)^{-1} \big(\Lambda^i_T(t) - \Lambda^i \big) X^i(t) \\
& \hspace{0.8in} + (R^i)^{-1} \big(\rho^i_T(t) - \rho^i \big) \big|^2 \Big] \\
\les \ & \wt{K}^{(N)} \sup_{i \in [N]} \Big(\mathbb E \Big[\big|X_T^{i}(t) - X^i(t) \big|^2 \Big] + \big\|\Lambda^i_T(t) - \Lambda^i \big\|^2 + \big| \rho^i_T(t) - \rho^i \big|^2 \Big) \\
\les \ & \wt{K}^{(N)} \big(e^{- \wt{\lambda}^{(N)} t} + e^{-\wt{\lambda}^{(N)} (T - t)} \big)
\end{aligned}
\end{equation*}
for all $t \in [0, T]$. This concludes the proof of the desired turnpike property in \eqref{eq:turnpike_property}.

Next, we focus on the proof of the uniform turnpike property \eqref{eq:turnpike_property_uniform}. Denote that $\h{\bm{X}}(t) := (\h{X}^1(t), \dots, \h{X}^{N}(t))$ for all $t \in [0, T]$. By the definition of $\bar{\bm{A}}_T(t)$, $\bm{R}$, $\h{\bm{\Lambda}}(t)$ and $\h{\bm{\rho}}(t)$, we have
\begin{equation*}
\begin{cases}
\vspace{4pt}
\displaystyle
d \h{\bm{X}}(t) = \big[(\bar{\bm{A}}_T(t))^\top  \h{\bm{X}}(t) - \bm{R} \h{\bm{\Lambda}}(t) \bm{X}(t) - \bm{R} \h{\bm{\rho}}(t) \big] dt, \\
\displaystyle \h{\bm{X}}(0) = 0 \in \mathbb R^{Nd}.
\end{cases}
\end{equation*}
Applying the integrating factor method again, we derive the explicit representation of $\h{\bm{X}}$ as following
$$\h{\bm{X}}(t) = - \int_0^t e^{\int_s^t (\bar{\bm{A}}_T(t))^\top dr} \big(\bm{R}\h{\bm{\Lambda}}(s) \bm{X}(s) + \bm{R} \h{\bm{\rho}}(s) \big) ds,$$
which implies that
\begin{equation*}
\mathbb E \Big[ \big|\h{\bm{X}}(t) \big|^2 \Big] \les 2 \mathbb E \Big[ \Big|\int_0^t e^{\int_s^t (\bar{\bm{A}}_T(r))^\top dr} \bm{R} \h{\bm{\Lambda}}(t) \bm{X}(s) ds \Big|^2 \Big] + 2 \Big|\int_0^t e^{\int_s^t (\bar{\bm{A}}_T(r))^\top dr} \bm{R} \h{\bm{\rho}}(s) ds \Big|^2.
\end{equation*}
By the estimates \eqref{eq:estimate_mu_rho_i_uniform} in Proposition \ref{p:convergence_of_Riccati} and \eqref{eq:norm_bar_A_uniform} in Lemma \ref{l:norm_A_i_T_estimate}, there exist some positive constants $K$ and $\lambda$, independent of $t, T$, and $N$, such that
$$\sup_{N} \frac{1}{\sqrt{N}} |\h{\bm{\rho}}(t)| \les K \big (e^{-2 \lambda t} + e^{- 2 \lambda (T-t)} \big), \quad \text{and} \quad \sup_{N} \big\|e^{\int_s^t (\bar{\bm{A}}_T(r))^\top dr} \big\| \les K e^{- \lambda (t-s)}$$
for all $0 \les s \les t \les T$. Thus, by the uniform boundedness of $\|\bm{R}\|$ from Assumption \ref{a:uniform_constants}, we obtain
\begin{equation*}
\begin{aligned}
\sup_{N} \frac{1}{N} \Big|\int_0^t e^{\int_s^t (\bar{\bm{A}}_T(r))^\top dr} \bm{R} \h{\bm{\rho}}(s) ds \Big|^2 & \les K \Big(\int_0^t e^{-\lambda(t-s)} \big(e^{-2\lambda s} + e^{-2 \lambda(T-s)} \big) ds \Big)^2 \\
& \les K \big (e^{-2 \lambda t} + e^{- 2 \lambda (T-t)} \big).
\end{aligned}
\end{equation*}
Next, from the explicit solution of $\bm{X}(t)$, by Assumptions \ref{a:uniform_constants} and \ref{a:uniform_constant_2}, and the estimate \eqref{eq:norm_wt_A}, the following moment boundedness holds:
$$\sup_{N} \frac{1}{N} \mathbb E\Big[ \big|\bm{X}(t) \big|^2 \Big] \les K, \quad \forall t \ges 0.$$
Then, using the estimates \eqref{eq:estimate_Lambda_i_uniform} and \eqref{eq:norm_bar_A_uniform}, and by H\"older's inequality, we derive that
\begin{equation*}
\begin{aligned}
& \sup_{N} \frac{1}{N} \mathbb E \Big[ \Big|\int_0^t e^{\int_s^t (\bar{\bm{A}}_T(r))^\top dr} \bm{R} \h{\bm{\Lambda}}(t) \bm{X}(s) ds \Big|^2 \Big] \\
\les \ & \sup_{N} \int_0^t \big\| e^{\int_s^t (\bar{\bm{A}}_T(r))^\top dr} \big\|^2 ds \int_0^t \|\bm{R}\|^2 \|\h{\bm{\Lambda}}(s)\|^2 \frac{1}{N} \mathbb E\big[ \big|\bm{X}(s) \big|^2 \big] ds \\
\les \ & K \int_0^t e^{-2 \lambda (t-s)} ds \int_0^t e^{-2 \lambda(T-s)} ds \les K e^{-2\lambda (T-t)}
\end{aligned}
\end{equation*}
for all $t \in [0, T]$. Combining the above results, we conclude that there exist positive constants $\wt{K}$ and $\wt{\lambda}$, independent of $t, T$, and $N$, such that
$$\sup_{N} \frac{1}{N} \mathbb E \Big[ \big|\bm{X}_{T}(t) - \bm{X}(t) \big|^2 \Big] = \sup_{N} \frac{1}{N} \mathbb E \Big[ \big|\h{\bm{X}}(t) \big|^2 \Big] \les \widetilde{K} \big(e^{-\widetilde{\lambda} t} + e^{-\widetilde{\lambda}(T - t)} \big)$$
for all $t \in [0, T]$. Similarly, by the feedback form $\bm{\alpha}_T$ and $\bm{\alpha}$, and using the estimates \eqref{eq:estimate_Lambda_i_uniform} and \eqref{eq:estimate_mu_rho_i_uniform}, we arrive at the desired result that
\begin{equation*}
\begin{aligned}
\sup_{N} \frac{1}{N} \mathbb E \Big[ \big|\bm{\alpha}_{T}(t) - \bm{\alpha}(t) \big|^2 \Big] &\les \wt{K} \sup_{N} \frac{1}{N} \Big(\mathbb E \Big[ \big|\bm{X}_{T}(t) - \bm{X}(t) \big|^2 \Big] + \mathbb E\Big[ \big|\bm{X}(t) \big|^2 \Big] \|\h{\bm{\Lambda}}(t)\|^2 + |\h{\bm{\rho}}(t)|^2 \Big) \\
& \les \widetilde{K} \big(e^{-\widetilde{\lambda} t} + e^{-\widetilde{\lambda}(T - t)} \big)
\end{aligned}
\end{equation*}
for all $t \in [0, T]$. This completes the proof of the uniform turnpike property \eqref{eq:turnpike_property_uniform}.
\end{proof}


\section{Numerical experiments}
\label{s:numerical_example}

To illustrate the convergence of the solutions to the systems \eqref{eq:Riccati_system} and \eqref{eq:Riccati_system_ergodic} established in Proposition \ref{p:convergence_of_Riccati}, as well as the (uniform) turnpike property for the equilibrium state processes and the convergence of the time-averaged value function proved in Theorem \ref{t:turnpike_property} and Corollary \ref{c:convergence_value_function}, we present several numerical experiments in this section. We consider Example 1 in Section \ref{s:examples} with $d = 1$. Specifically, for each $i \in [N]$, we choose the following parameters:
\begin{itemize}
\item $A^i = A = - 0.5, \, \sigma^i = \sigma = 0.8, \, R^i = R = 1, \, Q^{i}_{ii} = Q = 1$;
\item $Q^{i}_{ij} = B = \frac{1}{2N}, \, Q^{i}_{jj} = C_i = \frac{1}{2N}$ for all $j \neq i$;
\item $Q_{jk}^i = D_i = 0$ for all $j, k \neq i, j \neq k$;
\item $\bar{x}_i^i =1$, and $\bar{x}_i^j = 0$ for all $j \neq i$;
\item $\mu_0^i = 1$ and $\Sigma_0^i = 2$.
\end{itemize}

Then, under the above setting, the functions $F_0^i: \mathbb R^{N-1} \times (0, \infty) \to \mathbb R$ and $F_1^i: \mathbb R^{N-1} \to \mathbb R$ defined in \eqref{eq:F_t} simplify to
$$F_0^i (\bm{y}^{-1}, \Upsilon) = 1 - \frac{1}{N} \sum_{j \neq i} y^j + \frac{N-1}{2N} \Upsilon^{-1} + \frac{1}{2N} \sum_{j \neq i}(y^j)^2$$
and
$$F_{1}^i (\bm{y}^{-i}) = - 1 + \frac{1}{2N} \sum_{j \neq i} y^j,$$
respectively, where $\bm{y}^{-i}=(y^1,\ldots,y^{i-1},y^{i+1},\ldots,y^N)\in\mathbb{R}^{N-1}$.

Consequently, the system of equations \eqref{eq:Riccati_system} associated with the finite-horizon game reduces to, for each $i \in [N]$,
\begin{equation*}
\begin{cases}
\vspace{4pt}
\displaystyle \frac{d}{dt} \Lambda_T(t) - \Lambda_T(t) - (\Lambda_T(t))^2 + 2 = 0, \\
\vspace{4pt}
\displaystyle \frac{d}{dt} \Sigma_T(t) + 0.64 (\Sigma_T(t))^2 - \left(1 + 2\Lambda_T(t) \right) \Sigma_T(t) = 0, \\
\vspace{4pt}
\displaystyle \frac{d}{dt} \mu^i_T(t) + \left(0.5 + \Lambda_T(t) \right) \mu^i_T(t) + \rho^i_T(t) = 0, \\
\vspace{4pt}
\displaystyle \frac{d}{dt} \rho^i_T(t) - (0.5 + \Lambda_T(t)) \rho^i_T(t)  + 2 F_{1}^i (\bm{\mu}_T^{-i}(t)) = 0, \\
\displaystyle \frac{d}{dt} \kappa^i_T(t) + 0.32 \Lambda_T(t) - \frac{1}{2} (\rho^i_T(t))^2 + F_0^i(\bm{\mu}_T^{-i}(t), \Sigma_T(t)) = 0
\end{cases}
\end{equation*}
with the initial and terminal conditions
$$\Sigma_T(0) = 2, \, \mu_T^i(0) = 1, \, \Lambda_T(T) = \rho_T^i(T) = \kappa_T^i(T) = 0.$$
Similarly, the system of algebraic equations \eqref{eq:Riccati_system_ergodic} arising from the ergodic game can be reduced to
\begin{equation*}
\begin{cases}
\vspace{4pt}
\displaystyle   \Lambda^2 + \Lambda - 2 = 0, \\
\vspace{4pt}
\displaystyle 0.32\Sigma^2 - \left(0.5 + \Lambda\right) \Sigma = 0, \\
\vspace{4pt}
\displaystyle  \left(0.5 + \Lambda \right) \mu^i + \rho^i = 0, \\
\vspace{4pt}
\displaystyle - (0.5 + \Lambda) \rho^i  + 2 F_{1}^i (\bm{\mu}^{-i}) = 0, \\
\displaystyle 0.32 \Lambda - \frac{1}{2} (\rho^i)^2 + F_0^i(\bm{\mu}^{-i}, \Sigma) = c^i.
\end{cases}
\end{equation*}

By imposing $\Lambda>0$ and $\Sigma>0$, we obtain $\Lambda=1$ and $\Sigma=4.6875$. It then follows from the third equation that $\rho^i = - 1.5 \mu^i$. Substituting this identity into the fourth equation yields
$$\Big(2.25 - \frac{1}{N} \Big) \mu^i - 2 + \frac{1}{N} \sum_{j = 1}^N \mu^j = 0.$$
Thus, we deduce
$$\sum_{j = 1}^N \mu^j = \frac{2N}{3.25 - \frac{1}{N}},$$
and it follows that
$$\mu^i = \mu:= \frac{2}{3.25 - \frac{1}{N}}, \quad \rho^i = \rho := - \frac{3}{3.25 - \frac{1}{N}}$$
for all $i \in [N]$. In particular, $\mu^i$ and $\rho^i$ are identical for all players. Therefore, 
$$c^i = 0.32 - \frac{1}{2} \rho^2 + 1 - \frac{N-1}{N} \mu + \frac{N-1}{2N} \frac{1}{\Sigma} + \frac{N-1}{2N} \mu^2.$$

For this example, we take $T=20$ and report simulations for the number of players $N\in\{5,10,20,100\}$. Figure \ref{fig:convergence_of_ODEs} displays the convergence of the solution to the finite-horizon system \eqref{eq:Riccati_system} toward the solution to the ergodic system \eqref{eq:Riccati_system_ergodic}. This is fully consistent with the exponential turnpike-type estimates established in Proposition \ref{p:convergence_of_Riccati}.

\begin{figure}[htb]
\begin{minipage}{\linewidth}
\centering
\subcaptionbox{}	
{\includegraphics[width=0.45\linewidth, height=0.28\textheight]{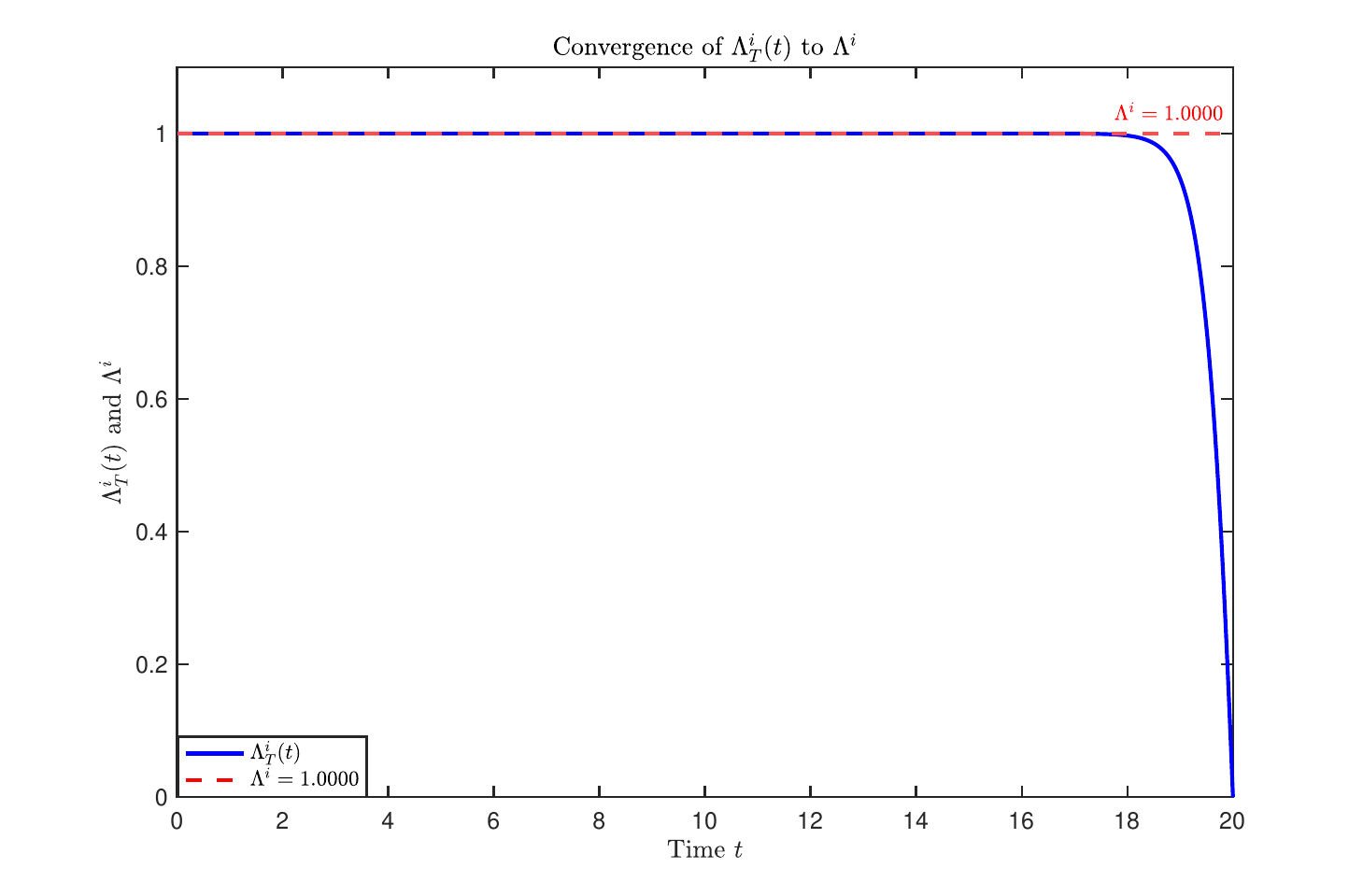}}\quad
\subcaptionbox{}
{\includegraphics[width=0.45\linewidth, height=0.28\textheight]{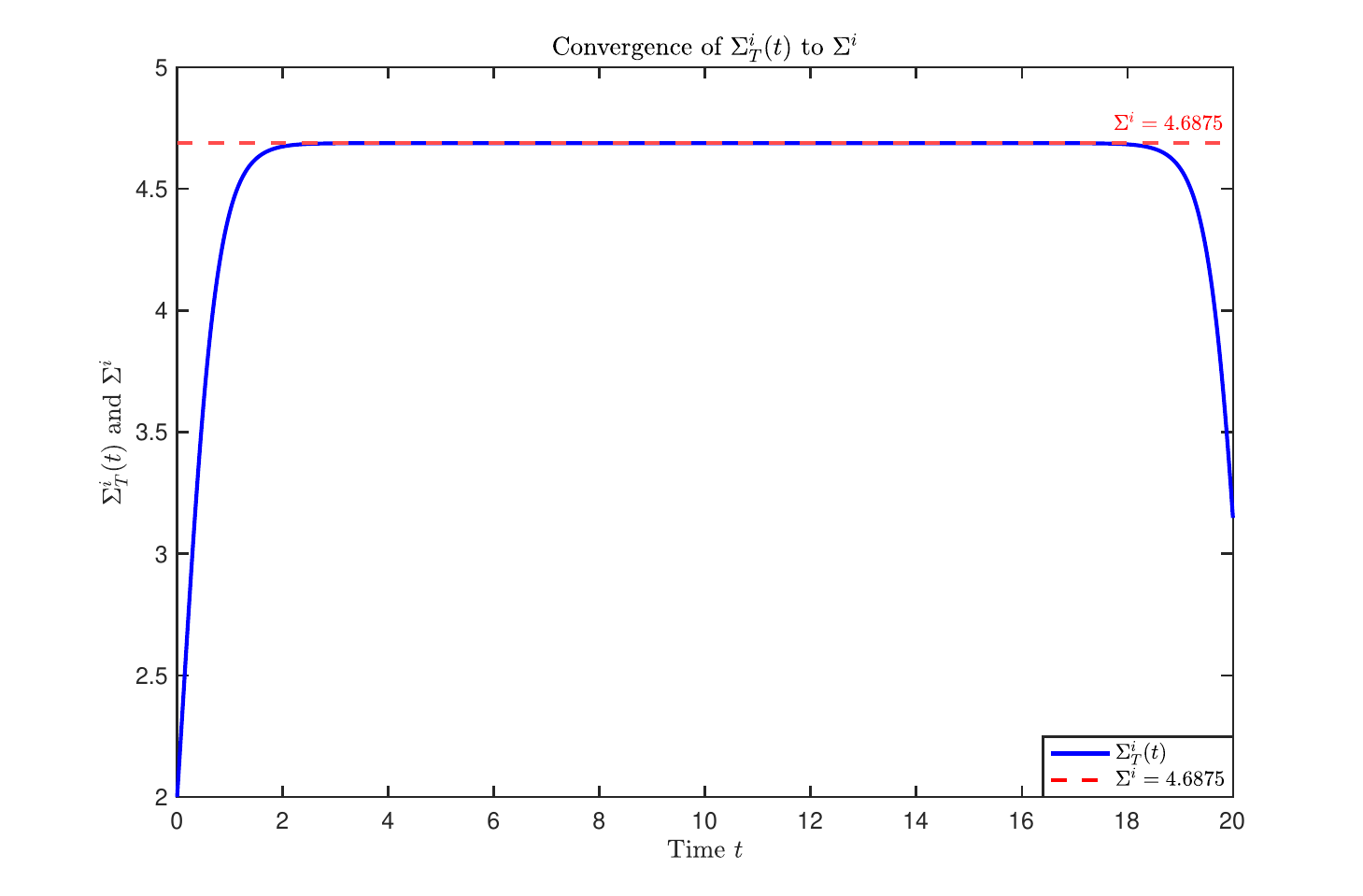}} \\
\subcaptionbox{}	
{\includegraphics[width=0.45\linewidth, height=0.28\textheight]{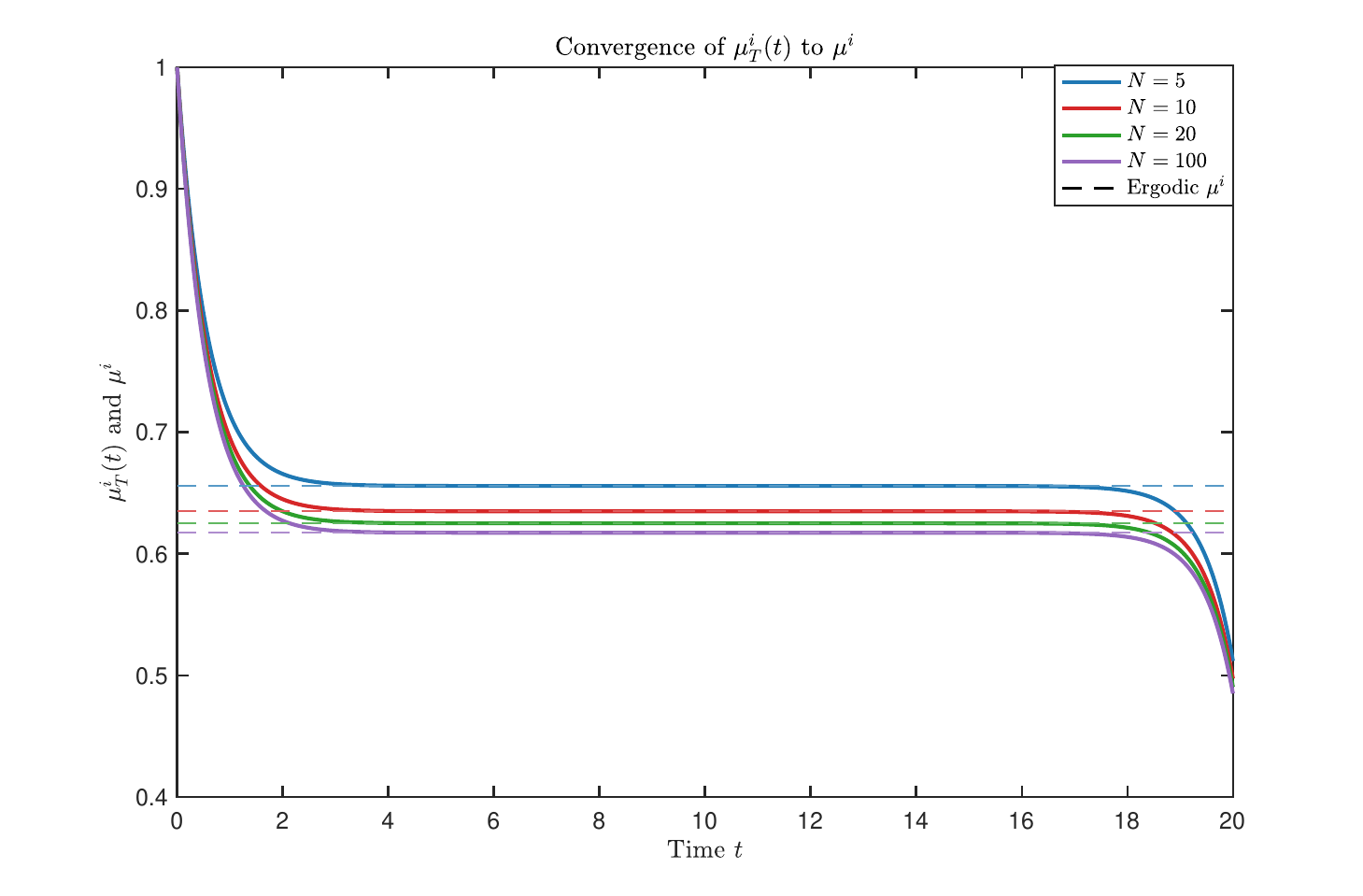}}\quad
\subcaptionbox{}
{\includegraphics[width=0.45\linewidth, height=0.28\textheight]{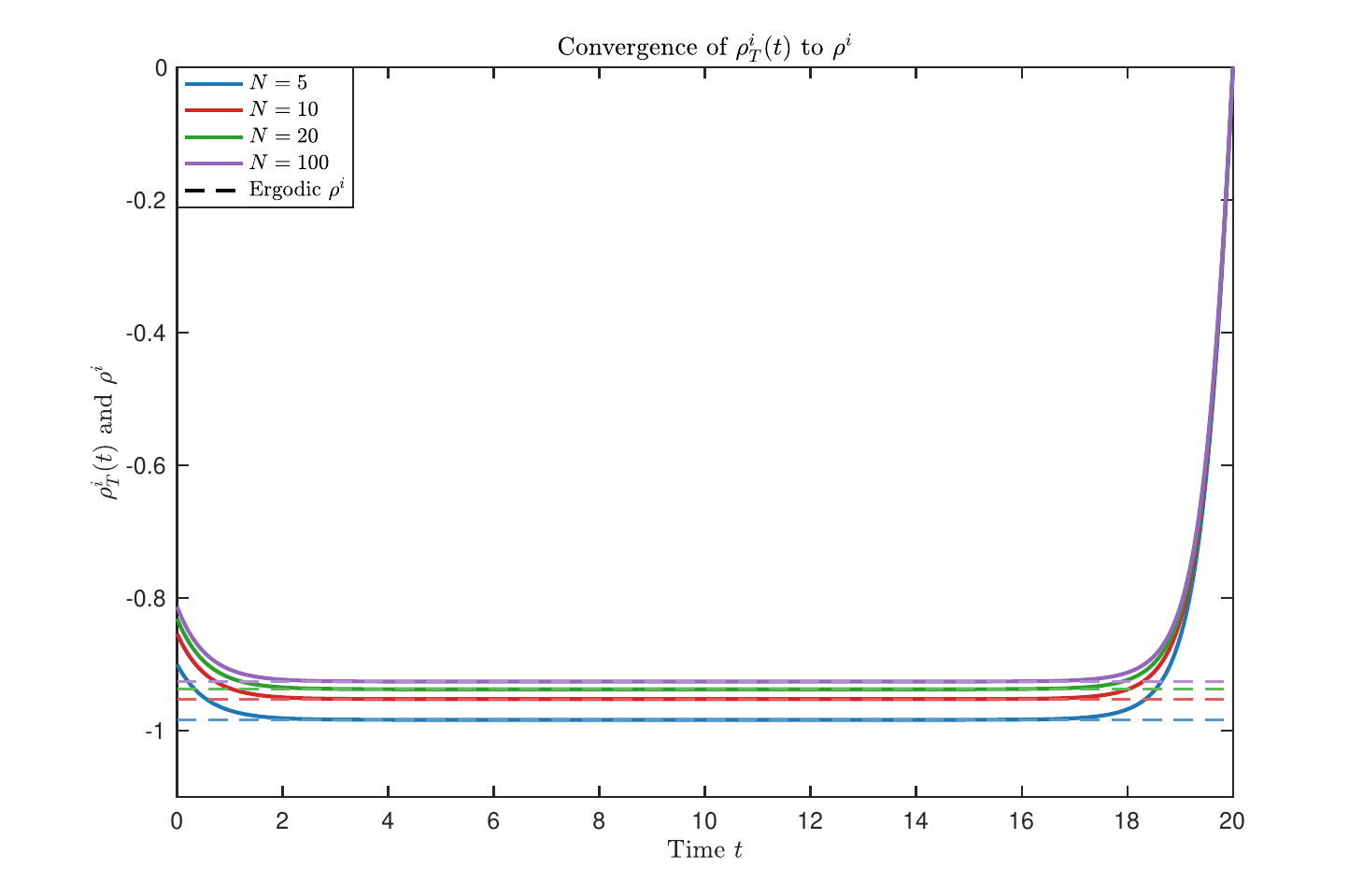}}
\caption{The convergence of solutions to the systems \eqref{eq:Riccati_system} and \eqref{eq:Riccati_system_ergodic}.}
\label{fig:convergence_of_ODEs}
\end{minipage}
\end{figure}

Figure \ref{fig:turnpike} further illustrates the uniform turnpike property for the equilibrium state under different population sizes. In particular, the dashed line represents a uniform turnpike-type bound that is independent of the number of players $N$.

\begin{figure}[htb]
\begin{center}
\includegraphics[width=.6\linewidth]{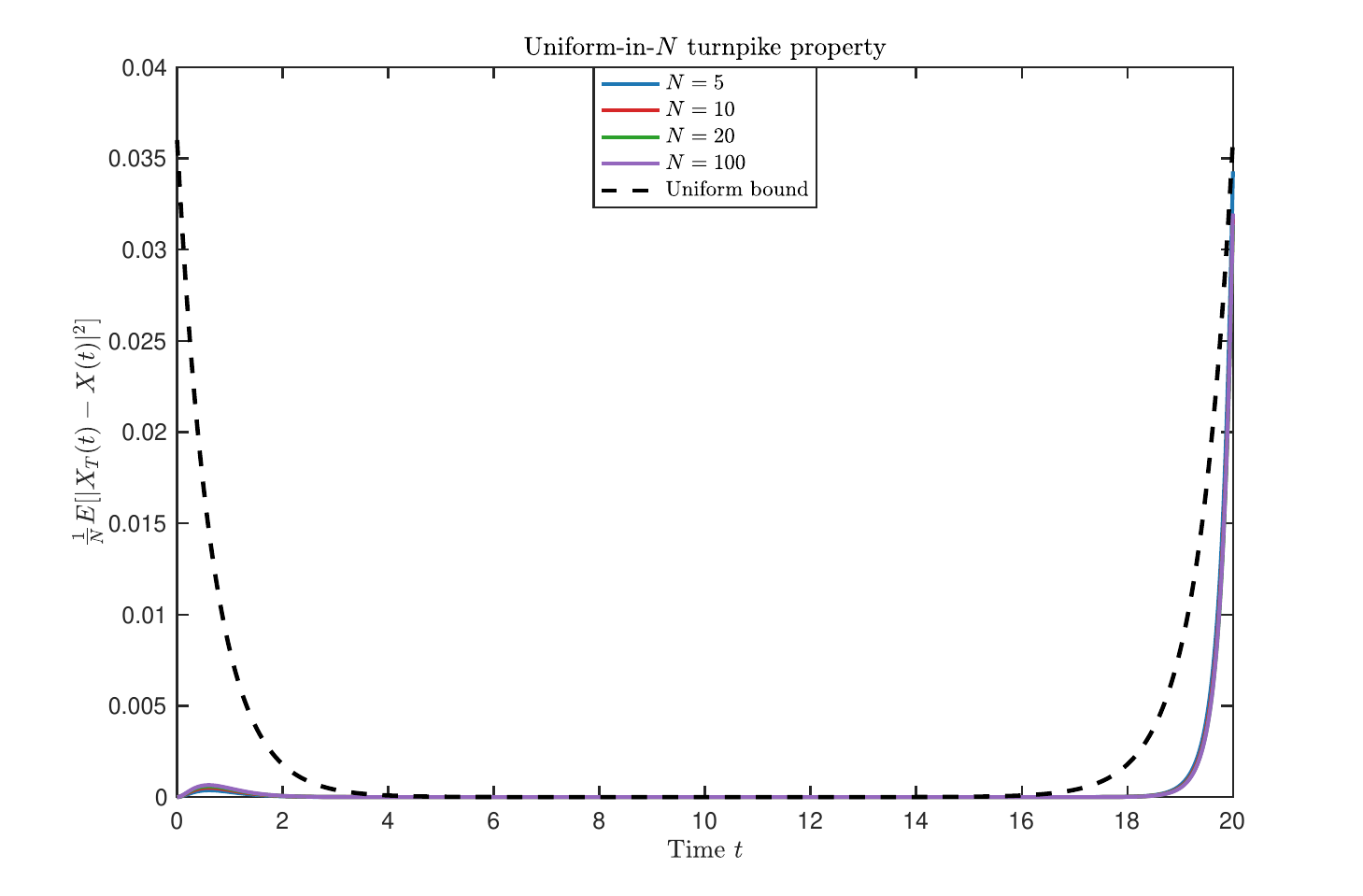} 
\caption{Uniform-in-$N$ turnpike property in \eqref{eq:turnpike_property_uniform}.}
\label{fig:turnpike}
\end{center} 
\end{figure}

Next, we present the simulation for the convergence of the time-averaged value functions with $T \in \{2, 4, 8, 12, 16, 20, 30, 50\}$. The result is displayed in Figure \ref{fig:convergence_of_value}. It shows that, for each $N \in \{5, 10, 20, 100\}$, enlarging the time horizon $T$ reduces the discrepancy between $\frac{1}{T} V_T^i$ and $c^i$. This numerical trend is consistent with the convergence of the time-averaged value function proved in Corollary \ref{c:convergence_value_function}.

\begin{figure}[htb]
\begin{center}
\includegraphics[width=.6\linewidth]{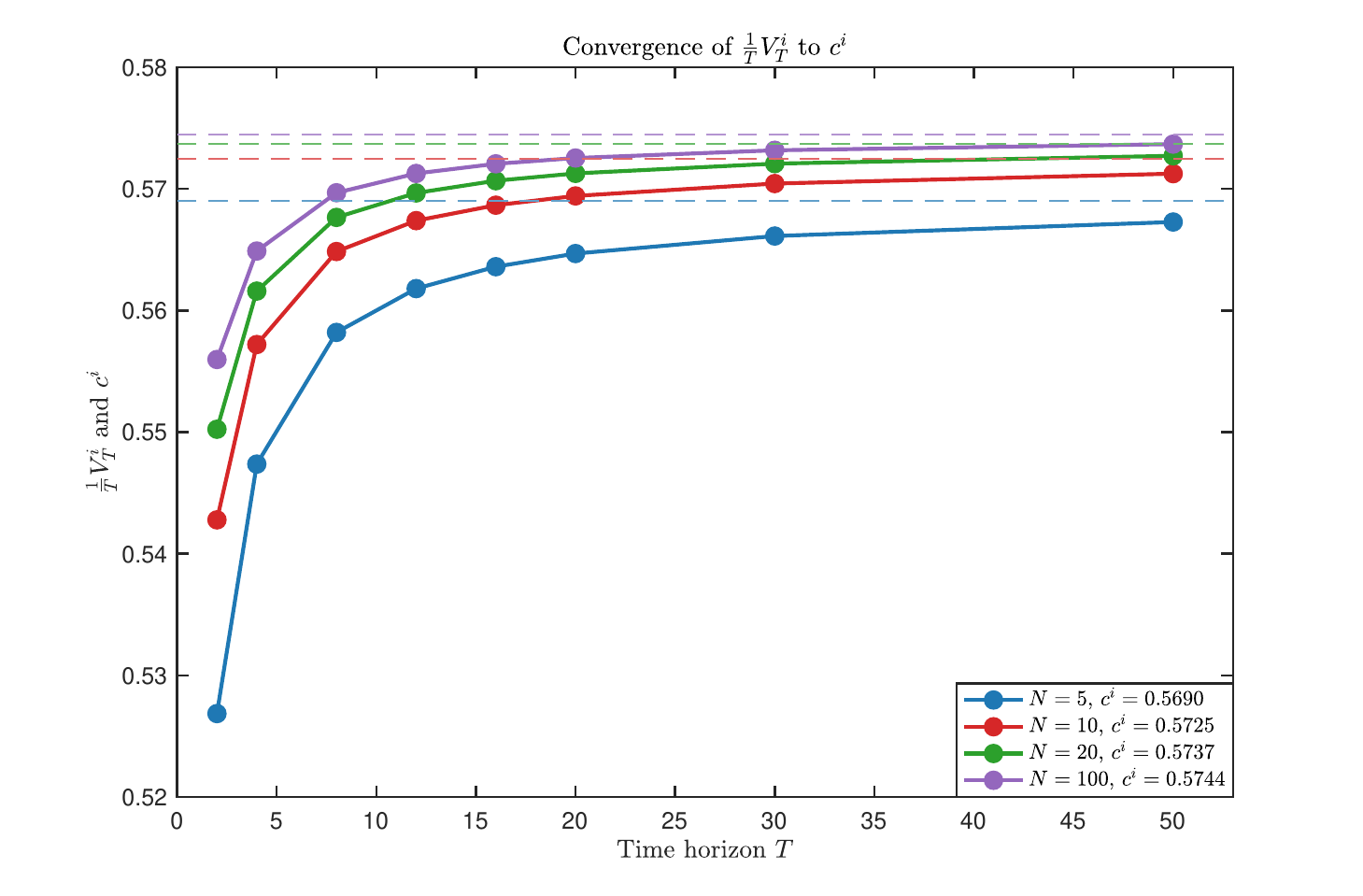} 
\caption{Convergence of time-averaged value function in \eqref{eq:relative_value_c_turnpike}.}
\label{fig:convergence_of_value}
\end{center} 
\end{figure}

In addition, we perform Monte Carlo simulations with $8000$ paths to compare the mean of equilibrium trajectories in the finite-horizon and the ergodic game problems, that is, we simulate $\big|\mathbb E[X_T^i(t)] - \mathbb E[X^i(t)] \big|^2$. The simulated result is presented in Figure \ref{fig:Monte_Carlo_simulation_path}. This figure also illustrates the turnpike property in mean. More precisely, it suggests that
$$\big|\mathbb E[X_T^i(t)] - \mathbb E[X^i(t)] \big|^2 \leq K \big(e^{-\lambda t} + e^{-\lambda(T-t)} \big), \quad \forall t \in [0, T],$$
for some constants $K$ and $\lambda$, which are independent of $t$ and $T$. This estimate follows directly from the turnpike property \eqref{eq:turnpike_property}.

\begin{figure}[htb]
\begin{minipage}{\linewidth}
\centering
\subcaptionbox{}	
{\includegraphics[width=0.4\linewidth, height=0.28\textheight]{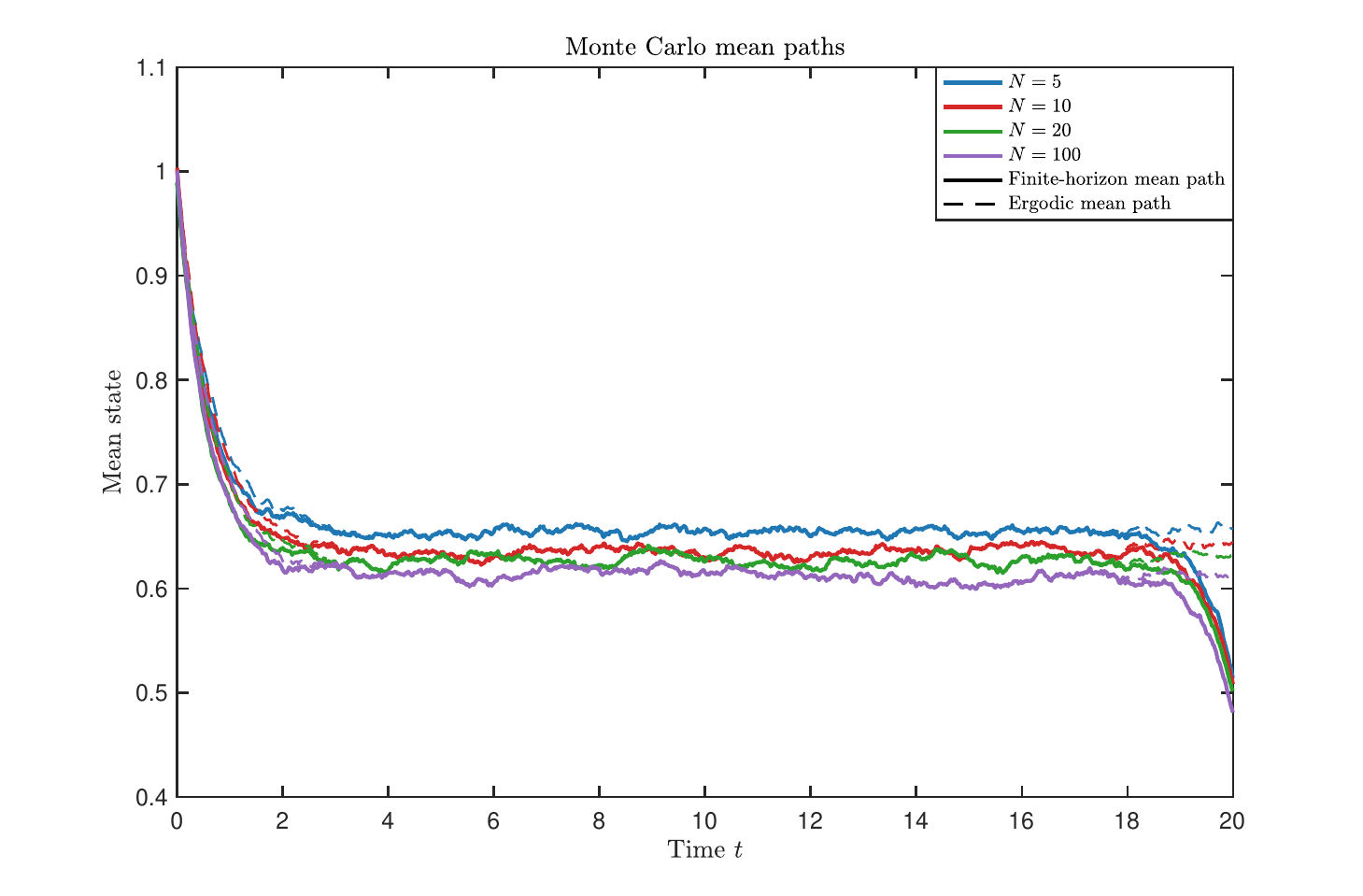}}\quad
\subcaptionbox{}
{\includegraphics[width=0.4\linewidth, height=0.28\textheight]{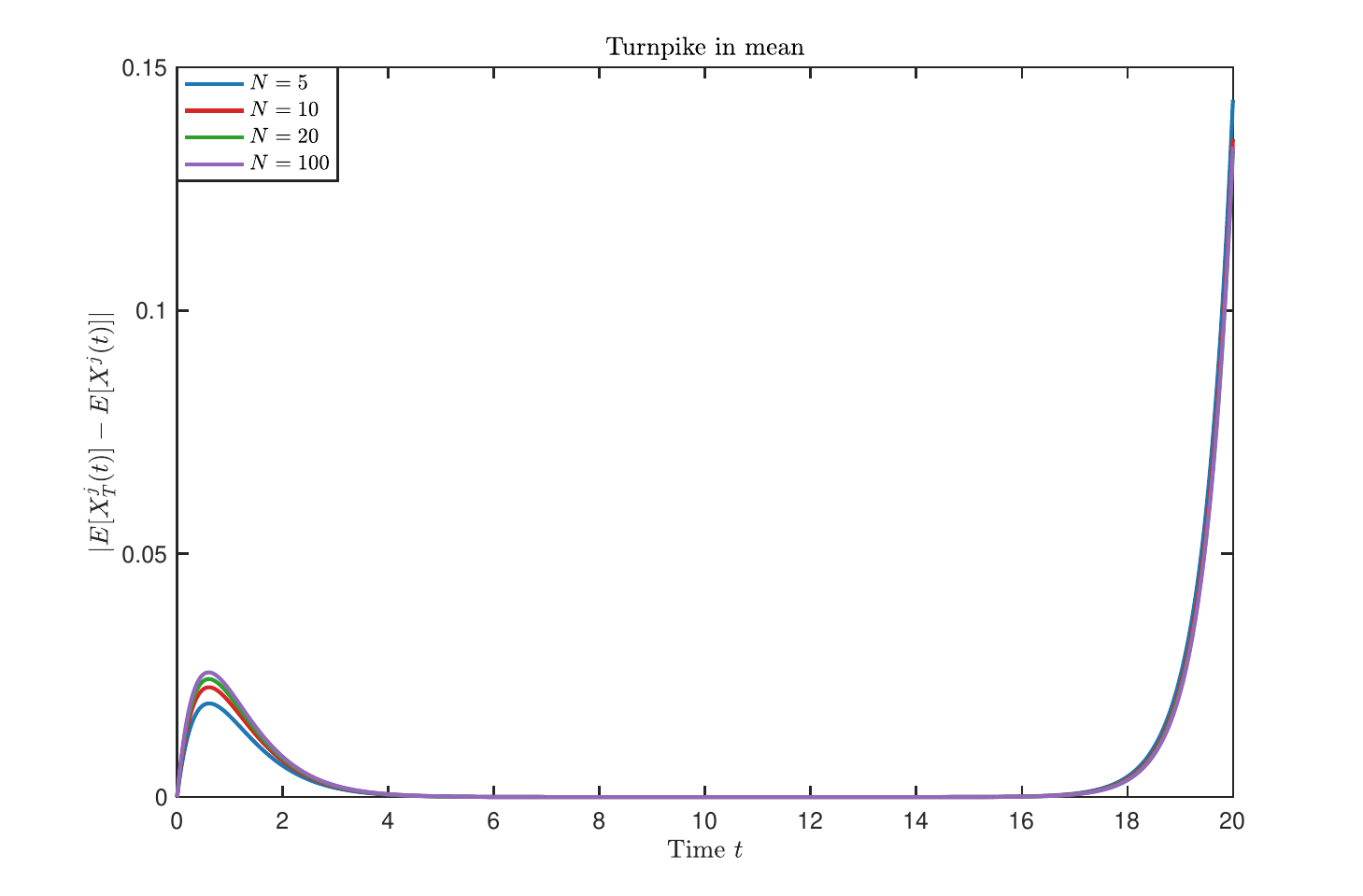}}
\caption{Monte Carlo simulation of path and turnpike property in mean value.}
\label{fig:Monte_Carlo_simulation_path}
\end{minipage}
\end{figure}

Finally, to further illustrate the uniform-in-$N$ turnpike property, we preform a ratio test for the quantity
$$\frac{\frac{1}{N} \mathbb{E} \big[|\bm{X}_T(t) - \bm{X}(t)|^2 \big]}{e^{-\widetilde{\lambda} t} + e^{-\widetilde{\lambda}(T-t)}}$$
for $N \in \{5, 10, 20, 100\}$. Figure \ref{fig:Ratio_test} shows that all the ratios remain bounded by a constant that is uniform in $N$, in agreement with the uniform turnpike estimate proved in Theorem \ref{t:turnpike_property}.

\begin{figure}[htb]
\begin{center}
\includegraphics[width=.6\linewidth]{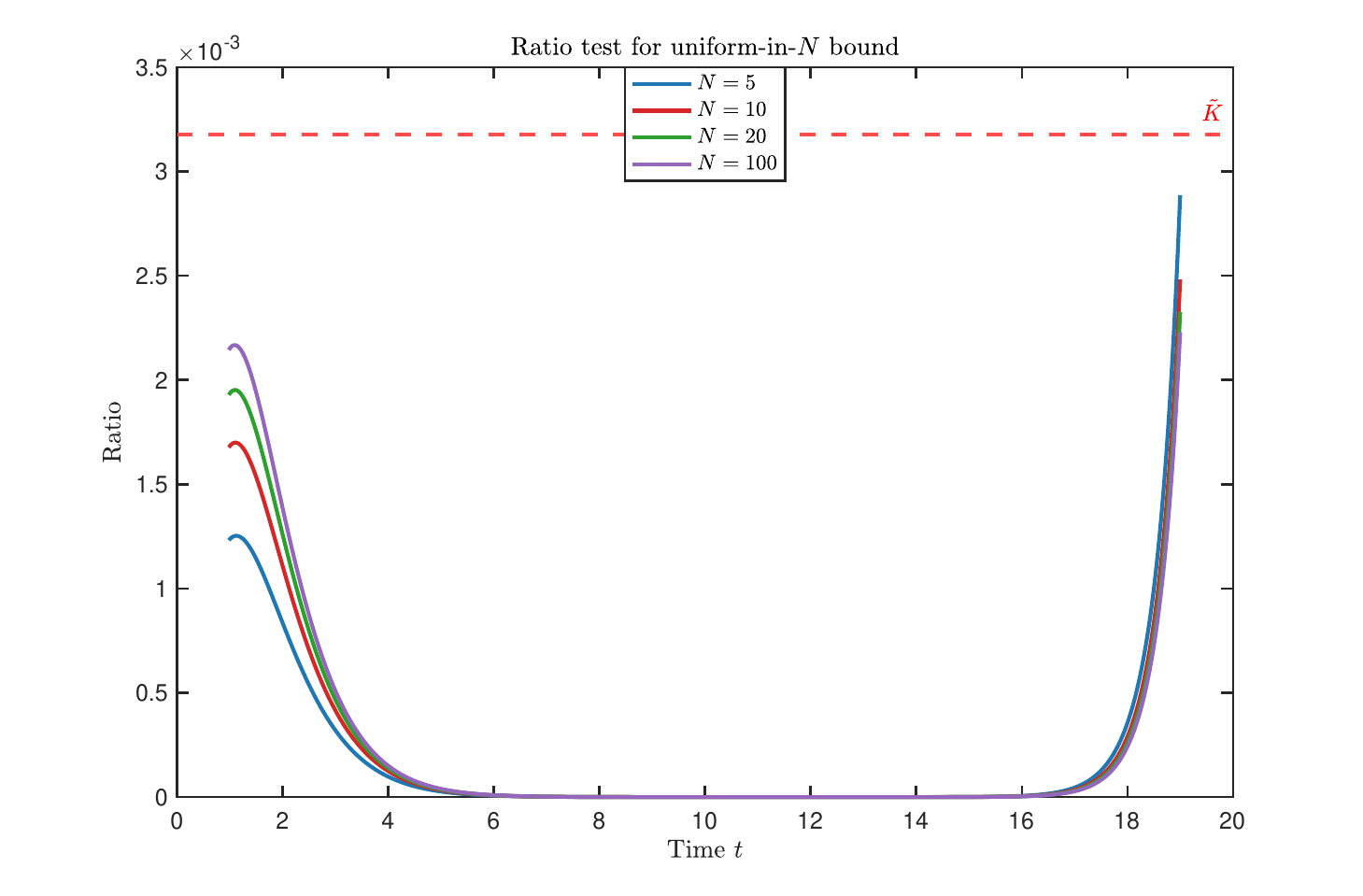} 
\caption{Ratio test for the uniform turnpike property in \eqref{eq:turnpike_property_uniform}.}
\label{fig:Ratio_test}
\end{center} 
\end{figure}


\section{Summary and future outlook}
\label{s:summary}

This paper investigates the long-time behavior of equilibrium strategies and state trajectories in a linear quadratic $N$-player stochastic differential game with Gaussian initial data. We analyze the convergence of the finite-horizon game toward its associated ergodic counterpart, with particular emphasis on the turnpike property---a phenomenon where equilibrium strategies and trajectories remain close to their steady-state counterparts over most of the time horizon. We formulate both of the finite-horizon and ergodic problems, derive the corresponding systems HJB-FP equations, and characterize their solutions via systems of coupled equations. The solvability of these systems and the resulting characterization of equilibrium strategies, state processes, and value functions are established in both settings, including uniform results under additional structural assumptions.

The main technical contribution lies in proving the unique solvability of the system of coupled ODEs arising from the finite-horizon game, and establishing exponential convergence estimates between the solutions to the finite-horizon and ergodic systems. These convergence estimates are then used to demonstrate the convergence of the time-averaged value function and to establish the turnpike properties for the equilibrium pairs of each player. It is important to note that our analysis does not rely on the mean field game limiting framework, which enables a fully uniform treatment with respect to the number of players. As a result, we also derive a uniform turnpike property for the $N$-player game.

A natural direction for future research is to extend the present analysis to more general frameworks, such as linear quadratic differential games with state- and control-dependent diffusion coefficients in the state dynamics, or even to the settings beyond the linear quadratic structure. An important objective would be to investigate whether a uniform turnpike property for $N$-player games continuous to hold in these broader contexts. Such an extension would require the construction of analytically tractable formulations for both the associated finite-horizon and ergodic game problems. Moreover, the convergence analysis for the corresponding system of equations would become substantially more delicate. In particular, one may leverage the analytical tools developed in \cite{Cardaliaguet-Porretta-2019, Cirant-Porretta-2021} to derive uniform estimates for the difference between solutions to the finite-horizon and ergodic HJB-FP systems. Another promising extension is to consider high-dimensional stochastic optimal control problems and their associated mean field control formulations. In this context, the analysis may become more tractable, as it avoids the complexities of fully coupled forward-backward systems and allows for more direct methods to establish convergence and turnpike-type properties.

\vspace{4pt}

{\bf Acknowledgment.} The authors are grateful to the anonymous Associate Editor and the reviewers for their insightful comments, which significantly improved the quality of this paper. A. Cohen gratefully acknowledges support from the National Science Foundation under grant DMS-2505998.

\bibliographystyle{plain}
\bibliography{reference}

\end{document}